



\documentclass[12pt,oneside]{amsart}



   %
   %

      \usepackage{amssymb}
      \usepackage{graphicx}
      \usepackage{dcolumn,indentfirst,cite,color,hyperref}
      \usepackage{fancyhdr}
   %

      \theoremstyle{plain}
     \newtheorem{thm}{Theorem}[section]
\newtheorem{lem}[thm]{Lemma}
\newtheorem{p}[thm]{Problem}

\newtheorem{rem}[thm]{Remark}
\newtheorem{ex}[thm]{Example}
\newtheorem{defi}[thm]{Definition}
\newcommand{\bess}{\begin{eqnarray*}}
\newcommand{\eess}{\end{eqnarray*}}
\input xy
\xyoption{all}



      \makeatletter
      \def\@setcopyright{}
      \def\serieslogo@{}
      \makeatother


\begin{document}

%



  \author[X. Wang]{Xiaoguang Wang}
   
  \address{ICERM, Brown University, Box 1995, Providence, RI 02912}
  \email{wxg688@163.com}







   \title[A decomposition theorem]{A decomposition theorem for Herman maps}


  
   \begin{abstract}
   In 1980s, Thurston
 established a topological characterization theorem for postcritically finite rational maps.
 In this paper, a
   decomposition theorem for a class of postcritically infinite branched covering termed `Herman map' is developed. It's shown that
   every  Herman map can be decomposed along a
stable multicurve into finitely many Siegel maps and Thurston maps,
such that the combinations and rational realizations of these
resulting maps essentially dominate the original one.
This result gives an answer to a problem of McMullen in a sense and
  enables us to prove a  Thurston-type theorem for rational maps with Herman rings.
   \end{abstract}

   \subjclass[2000]{Primary 37F30; Secondary 37F50, 37F10, 37F20}

   \keywords{decomposition theorem, Herman map, branched covering, Thurston obstruction, rational-like map, renormalization}



   \date{\today}


   \maketitle



\section{Introduction}

In 1980s,  Douady and Hubbard \cite{DH1} revealed the complexity of the family of quadratic polynomials. Contemporaneously, Thurston's 3-dimensional insights
revolutionized the theory of Kleinian group \cite{Th1}.
After then, Sullivan \cite{Su} discovered  a dictionary between these two objects. Applying quasiconformal method to rational maps, he translated the Ahlfors' finiteness theorem into a solution of a long-outstanding problem of wandering domains.

Based on Sullivan's dictionary, McMullen asked a question: Is there a 3-dimensional geometric  object naturally associated to a rational map?
For example, it's known that Haken manifolds have a hierarchy,  where they can be split up into 3-balls along incompressible surfaces.
  McMullen suggested to translate the Haken theory on cutting along general incompressible subsurfaces into a theory for rational maps with disconnected Julia sets. He posed the following problem (\cite{McM1}, Problem 5.4):

\begin{p}[McMullen] Develop decomposition and combination theorems for rational maps.\end{p}
In this article, we aim to answer this problem in a sense. We will develop a decomposition theorem
   for  rational maps with  Herman rings, or more generally for `Herman maps'. Roughly speaking, a Herman map is a postcritically infinite branched covering with `Herman rings' and postcritically finite outside the closure of all rotation domains. We will show that a Herman map can always be decomposed along a stable multicurve into two kinds of `simpler' maps --Siegel maps and Thurston maps, such that the  combinations and rational realizations of these
 `simpler' maps  essentially dominate the original one.  Here, roughly, a Siegel map is a postcritically infinite branched covering with `Siegel disks' and postcritically finite outside the closure of all `Siegel disks', a Thurston map is a postcritically finite branched covering.
  The precise formulation of the decomposition theorem requires a fair number of definitions and is put in the next section.


 The significance of the decomposition theorem is that it gives a way to  extend  Thurston's theory beyond postcritically finite setting. The theory deals with the following problem: Given a branched covering, when it is equivalent (in a proper sense) to a rational map?   Thurston \cite{Th2} gave a complete answer to this problem for postcritically finite  cases
  in 1980s by showing that such a  map either is equivalent to an essentially unique rational map    or contains a `Thurston obstruction'. Here, an obstruction is a collection of Jordan curves such that a certain associated matrix has leading eigenvalue greater than 1.
    The  detailed proof of Thurston's theorem is given  by
  Douady and Hubbard \cite{DH} in 1993.  The insights  produce many new, sometimes unexpected applications in complex dynamics \cite{BFH,Br,C,CT1,E,Ge,Go,HSS,P,R1,R2,R3,Se,ST,T1,T2}...
    Since then, many people have tried to extend Thurston's theorem beyond postcritically finite rational maps. Recently, progress has made for several families of holomorphic maps. For  example, Hubbard, Schleicher and Shishikura \cite{HSS} extended Thurston's theorem to
     postsingularly  finite  exponential maps $\lambda e^z$; Cui and Tan \cite{CT1}, Zhang and Jiang  \cite{ZJ}, independently, proved a
     Thurston-type theorem for
     hyperbolic rational maps; other extensions include geometrically finite rational maps and rational maps with Siegel disks \cite{CT2,Z2},...

 The  decomposition theorem enables us to extend Thurston's theorem  to rational maps with Herman rings. More generally,  we have:

 {\it Thurston-type theorems for  rational maps with Herman rings can
be reduced to Thurston-type theorems for  rational maps with Siegel
disks.}

Besides, the decomposition theorem reveals an analogue  between Haken manifolds and Herman maps (compare \cite{McM1,P}):

\vskip 0.2cm

{\small \vskip 2pt \tabcolsep 0.02in
{\center \begin{tabular}{cc}
\hline
 Haken manifold & Herman map \\
[4pt]
\hline

 incompressible surfaces (I.S.)&  stable multicurve  \\


cutting along I.S. & decomposition along stable multicurve \\


  finite procedure to find an I.S.
&finite iterate to get a stable multicurve \\

  resulting pieces are 3-balls & resulting maps are Siegel/Thurston maps\\

  reducing theorems to 3-balls& reducing theorems to Siegel/Thurston maps\\

combination theorem & build up rational map from  renormalizations\\

rigidity theorem & rigidity theorem\\

 hyperbolic structure & rational realization
\\[5pt]
 \hline
\end{tabular}}
    

\section{Definitions and main theorems}\label{c11}

 Let $S^2$ be the two-sphere and
$f:S^2\rightarrow S^2$ be an orientation preserving branched
covering of degree at leat two. We denote by $\deg(f,x)$  the local
degree of $f$ at $x\in S^2$. The critical set $\Omega_f$ of $f$ is
defined by
$$\Omega_f=\{x\in S^2; \deg(f,x)>1\},$$ and the  postcritical set
$P_f$ of $f$ is defined by
$$P_f=\overline{\bigcup_{n\geq1}f^n(\Omega_f)}.$$

We say that $f$ is {\it postcritically finite } if $P_f$ is a finite set. Such a map is also called a
{\it Thurston map}. For a Thurston map, we define a function $\nu_f:
S^2\rightarrow \mathbb{N}\cup\{\infty\}$ in the following way: For
each $x\in S^2$, define $\nu_f(x)$ (may be $\infty$) as the least
common multiple of the local degrees $\deg(f^n,y)$ for all $n>0$ and
all $y\in S^2$ such that $f^n(y)=x$. Note that $\nu_f(x)=1$ if
$x\notin P_f$. We call $\mathcal{O}_f=(S^2, \nu_f)$ the {\it orbifold } of
$f$.

In the following, we will define two classes of postcritically infinite branched covering--Herman map and Siegel map--step by step. These maps are branched coverings
with `rotation domains' and postcritically finite elsewhere. Since the branched covering $f$ will be required to have certain
 regularity (e.g. `holomorphic' or `quasi-regular') in some parts of the two-sphere $S^2$,
 it is reasonable to equip $S^2$ with a complex structure.
For this,  we will identify $S^2$ with $\mathbb{\overline{C}}=\mathbb{C}\cup\{\infty\}$ in our discussion.

\begin{defi}[Rotation domain]\label{def3-1}
 We say $\langle U_0,\cdots, U_{p-1}\rangle$
  is a cycle of rotation disks (resp. annuli) of $f$ if

1. All  $U_i$ are disks (resp. annuli), with disjoint closures. Each
boundary component of  $U_i$ is a Jordan curve.

 2. $f$ should induce conformal isomorphisms
$U_0\xrightarrow[]{\cong} U_1 \xrightarrow[]{\cong} \cdots\xrightarrow[]{\cong}
U_{p-1}\xrightarrow[]{\cong} U_p=U_0$ and  the return map
$f^p:U_0\rightarrow U_0$ is conformally conjugate to an irrational
rotation $z\mapsto e^{2\pi i \theta}z$.


3. Each boundary cycle of $U_i$ contains at least one
critical point of $f$.
\end{defi}

By definition, two different cycles of rotation domains have disjoint closures.
Moreover, in the case that all   $U_i$ are annuli, the union $\cup_{0\leq j<p}\partial U_j$ consists of  two cycles of boundary curves,
 thus there are at least
two critical points on $\cup_{0\leq j<p}\partial U_j$.

One may compare this definition with the definitions of Siegel disks
and Herman rings  for rational maps in \cite{M}. In general,
 for rational maps, whether the boundary of a rotation domain  
  contains a critical point   depends on the rotation number. It's known from Graczyk and Swiatek
   \cite{GS} that for any rational map, the boundary of a Siegel disk (or Herman ring) with bounded type rotation number always contains a critical
    point. On the other hand, there exist quadratic polynomials with Siegel disks
    whose boundaries do not contain any critical point (see \cite{H} or \cite{ABC}).
   We remark that in Definition \ref{def3-1}, whether the boundary of a rotation domain contains a critical point is not essential,
     we need such an assumption simply because we want to  concentrate on the  combination of the
   branched covering rather than
     the complexity caused by the rotation number. Our method can be easily generalized.
     
     \begin{defi} [Herman  map and Siegel map]\label{def3-3}  
We  say that $f$ is a Herman map if $f$  has at least one cycle of rotation annuli
 and postcritically finite outside the union of all rotation domains; a Siegel map if $f$ has
  at least one cycle of rotation disks
  and   postcritically finite outside the union of all rotation disks.
\end{defi}

Note that   a Herman map may have rotation disks and a Siegel map has no rotation annuli.

The category of branched covering consisting of  Herman maps, Siegel maps and Thurston maps are called {\it HST maps}.  Namely,
a HST map is an orientation preserving branched cover
such that each critical orbit either  is finite   or meets  the closure of
some rotation domain (if any).  Given a HST map $f$, let $n_{RD}(f)$ be the number of rotation
disk cycles, $n_{RA}(f)$ be the number of rotation annulus cycles, they satisfy $n_{RD}(f)+2n_{RA}(f)\leq 2 \mathrm{deg}(f)-2.$
Obviously, a Thurston map $f$ is a HST map with $n_{RD}(f)=n_{RA}(f)=0$. The union of all rotation domains
of $f$ is denoted by $R_f$ (probably empty).
     
\begin{defi} [Marked set]\label{def3-4} 
Let $f$ be a HST map. A marked
set $P$ is a compact set that satisfies the following:

1. $f(P)\subset P$.

2. $P\supset P_f\cup \overline{R_f}$ and $P-(P_f\cup
\overline{R_f})$ is a finite set.
\end{defi}

In this paper, we always use a pair $(f,P)$, a branched covering
together with a marked set, to denote a HST map.

\begin{defi}[C-equivalence and q.c-equivalence]\label{def3-4-a}
Two HST maps $(f,P)$  and $(g,Q)$ are called combinatorially
equivalent or `c-equivalent' for short (resp. q.c-equivalent),  if
there is a pair $(\phi,\psi)$ of homeomorphisms (resp.
quasi-conformal maps)  of $\mathbb{\overline{C}}$ such that

1. $\phi\circ f=g\circ \psi$ and $\phi(P)=Q$.

2. $\phi$ and $\psi$ are  holomorphic in   $R_f$.

3. $\phi$ and $\psi$ are isotopic  rel $P$. That is, there is a
continuous map $H:[0,1]\times \mathbb{\overline{C}}\rightarrow
\mathbb{\overline{C}}$ such that for any $t\in [0,1 ]$,
$H(t,\cdot):\mathbb{\overline{C}}\rightarrow \mathbb{\overline{C}}$
is a homeomorphism, $H(0,\cdot)=\phi,
H(1,\cdot)=\psi$ and $H(t,z)=\phi(z)$ for any $t\in [0,1]$ and any
$z\in P$.
\end{defi}
 In this case,
we say that $(f,P)$ is {\it c-equivalent} (resp. {\it q.c-equivalent}) to $(g,Q)$
via $(\phi,\psi)$. If $(g,Q)$ is a rational map, we call $(g,Q)$ a {\it rational realization}  (resp. {\it q.c-rational realization})
of $(f,P)$. Note that if $(f,P)$ has  a q.c-rational realization, then  $(f,P)$ is
necessarily a quasi-regular map\footnote{A {\it quasi-regular map}
 is locally a composition of a holomorphic map and a quasi-conformal map.}.

 A Jordan curve $\gamma\subset\mathbb{\overline{C}}\setminus P$ is called {\it null-homotopic}, (resp. {\it  peripheral}) if a component of
$\mathbb{\overline{C}}\setminus \gamma$ contains no (resp. one) point of $P$; {\it non-peripheral} (or {\it essential}) if each component of
 $\mathbb{\overline{C}}\setminus \gamma$  contains at least two points
of $P$.

\begin{defi}[Multicurve and Thurston obstruction]\label{def3-4-b}
A multicurve $\Gamma=\{\gamma_1, \cdots, \gamma_n\}$ is a
 collection of finite non-peripheral, disjoint, and  no two homotopic Jordan curves in $\mathbb{\overline{C}}\setminus P$.
 Its $(f,P)$-transition matrix
$W_\Gamma=(a_{ij})$ is defined by
$$a_{ij}=\sum_{\alpha\sim
\gamma_i}\frac{1}{\mathrm{deg}(f:\alpha\rightarrow\gamma_j)},$$
where the sum is taken over all the components $\alpha$ of
$f^{-1}(\gamma_j)$ which are homotopic to $\gamma_i$ in
$\mathbb{\overline{C}}\setminus P$.

A multicurve $\Gamma$ in $\mathbb{\overline{C}}\setminus P$ is
called  $(f,P)$-{\it stable} if each non-peripheral component of $f^{-1}(\gamma)$
for $\gamma\in \Gamma$ is
homotopic in $\mathbb{\overline{C}}\setminus P$  to a curve
$\delta\in\Gamma$.

We say that a multicurve $\Gamma$ is a {\it Thurston obstruction} if $\Gamma$ is $(f,P)$-stable and the leading eigenvalue\footnote{The
 {\it leading  eigenvalue} of  a square matrix $A$ is the eigenvalue with largest modulus. It's known that if
 $A$ is non-negative (i.e. each entry is non-negative), then  its   leading  eigenvalue is real and non-negative.}
$\lambda(\Gamma,f)$ of its transition matrix $W_\Gamma$ satisfies
$\lambda(\Gamma,f)\geq1$.
\end{defi}

For convention, an empty set $\Gamma=\emptyset$ is always considered
as a  $(f,P)$-stable multicurve with $\lambda(\Gamma,f)=0$.
 A map without Thurston obstructions is  called an unobstructed map. Else, it is called an obstructed map.

The following characterization theorem, due to Thurston, is fundamental in complex dynamical systems:

\begin{thm}[Thurston, \cite{DH,Th2}]\label{b3-3-a}  Let $(f,P)$ be a Thurston map. Suppose that
$\mathcal{O}_f$ does not have signature $(2,2,2,2)$. Then $(f,P)$ is
c-equivalent to a rational function $(R,Q)$ if and only if
$(f,P)$ has no Thurston obstructions.
The rational function $(R,Q)$ is unique up to M\"obius conjugation.
\end{thm}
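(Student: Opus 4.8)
\smallskip
\noindent\emph{Proof proposal.}
The plan is to run Thurston's iteration on Teichm\"uller space, in the form developed by Douady and Hubbard \cite{DH}. If $|P|\le 3$ then $\mathcal{T}(S^2,P)$ is a point and the statement is trivial, so assume $|P|\ge 4$ and let $\mathcal{T}=\mathcal{T}(S^2,P)$ be the Teichm\"uller space of complex structures on $S^2$ marked by $P$, modulo isotopy rel $P$; it is a finite-dimensional complex manifold whose Kobayashi metric coincides, by Royden's theorem, with the Teichm\"uller metric. Given $\tau\in\mathcal{T}$ represented by a marking $\phi_\tau\colon S^2\to\mathbb{\overline{C}}$, I would pull the standard complex structure back along $\phi_\tau\circ f$, uniformize it by a marking $\psi_\tau$, and record the resulting commuting square $\phi_\tau\circ f=g_\tau\circ\psi_\tau$ in which $g_\tau$ is rational of degree $\deg(f)$. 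This yields a holomorphic self-map $\sigma_f\colon\mathcal{T}\to\mathcal{T}$, $\sigma_f(\tau)=[\psi_\tau]$, and by construction $\tau$ is a fixed point of $\sigma_f$ exactly when $\psi_\tau$ and $\phi_\tau$ are isotopic rel $P$, that is, exactly when $(f,P)$ is c-equivalent to the rational map $(g_\tau,\phi_\tau(P))$. So the whole statement reduces to showing that $\sigma_f$ has a fixed point if and only if $(f,P)$ is unobstructed, together with essential uniqueness of that fixed point.

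For the necessity of the no-obstruction condition, suppose $(f,P)$ is c-equivalent to a rational map $R$; it suffices to prove $\lambda(\Gamma,R)<1$ for every $R$-stable multicurve $\Gamma=\{\gamma_i\}$ and transport the inequality back across the c-equivalence. Since $R$ is holomorphic it fixes the base point of $\mathcal{T}(S^2,P_R)$, and comparing the moduli $M_i$ of the maximal homotopy annuli of the $\gamma_i$ with the moduli of their $R$-preimages (an unbranched cover over $\mathbb{\overline{C}}\setminus P_R$, so moduli divide by the local degrees, and disjoint parallel annuli have additive moduli) yields $M\ge W_\Gamma M$ coordinatewise with $M>0$, whence $\lambda(\Gamma,R)\le 1$ by a Perron-Frobenius argument on the left eigenvector. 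Upgrading this to the strict inequality $\lambda(\Gamma,R)<1$ is where the hypothesis enters: one shows the preimage annuli cannot exhaust the homotopy annulus of $\gamma_i$ unless $\mathcal{O}_R$ is a flat torus orbifold, and flexible Latt\`es maps (signature $(2,2,2,2)$) are precisely the rational maps carrying obstructions with leading eigenvalue exactly $1$.

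For sufficiency I would argue by contradiction: assume $(f,P)$ is unobstructed but $\sigma_f$ has no fixed point. Being holomorphic, $\sigma_f$ is non-expanding for the Teichm\"uller metric, and because $f^{-1}(P)\supsetneq P$ it is in fact strictly contracting on each compact subset of $\mathcal{T}$; hence the orbit $\tau_n:=\sigma_f^{\,n}(\tau_0)$ of any base point $\tau_0$ must leave every compact subset of $\mathcal{T}$, since otherwise it would be Cauchy and converge to a fixed point. By Mumford compactness, after passing to a subsequence the divergence is witnessed by pinching: the set $\Gamma^\infty$ of (necessarily non-peripheral) homotopy classes whose hyperbolic length in the level-$n$ structure tends to $0$ along this subsequence is a nonempty multicurve (short geodesics being simple and disjoint), and it is $f$-stable because the covering $f\colon(S^2\setminus f^{-1}(P),\tau_{n+1})\to(S^2\setminus P,\tau_n)$ shortens the preimages of short geodesics. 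Writing $M^{(n)}_i$ for the modulus of the maximal homotopy annulus of $\gamma_i\in\Gamma^\infty$ in the level-$n$ structure, the same annulus count as above gives the lower bound $M^{(n+1)}_i\ge\sum_j a_{ij}M^{(n)}_j$, where $a_{ij}$ is the $(f,P)$-transition coefficient of $\Gamma^\infty$; the real work is the matching upper bound
$$M^{(n+1)}_i\ \le\ \sum_j a_{ij}M^{(n)}_j\ +\ C$$
with $C$ uniform, which requires showing that a wide annulus around $\gamma_i$ at level $n+1$ is carried by $f$ essentially onto wide annuli around the $\gamma_j$ at level $n$, and which must cope with the critical points of $f$ lying inside such annuli, with the auxiliary marked points of $f^{-1}(P)\setminus P$, and with curves of $\Gamma^\infty$ whose $f$-image is inessential. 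Granting both bounds, $M^{(n+1)}=W_{\Gamma^\infty}M^{(n)}+O(1)$; since $\lambda(\Gamma^\infty,f)<1$ by hypothesis, the spectral radius of $W_{\Gamma^\infty}$ is less than $1$, so $M^{(n)}$ stays bounded, contradicting the divergence of the orbit. Hence $\Gamma^\infty$ would be a Thurston obstruction, a contradiction, so $\sigma_f$ has a fixed point. This upper bound is the step I expect to be the main obstacle.

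Finally, for uniqueness, once $\sigma_f$ has a fixed point $\tau^\ast$ the hypothesis on $\mathcal{O}_f$ forces $\sigma_f$ to be strictly contracting at $\tau^\ast$ (for hyperbolic $\mathcal{O}_f$ one shows $\|D_{\tau^\ast}\sigma_f\|<1$ using the orbifold covering structure; the rigid Euclidean signatures $(3,3,3),(2,4,4),(2,3,6)$ are handled directly, whereas $(2,2,2,2)$ is exactly the flexible Latt\`es case, which is excluded). Consequently $\tau^\ast$ is the unique fixed point and attracts all of $\mathcal{T}$, so any two rational realizations of $(f,P)$ correspond to the same point of $\mathcal{T}$ and are therefore conjugate by a M\"obius transformation; equivalently, one may invoke the absence of an invariant line field on the Julia set of a postcritically finite rational map whose orbifold is not of signature $(2,2,2,2)$.
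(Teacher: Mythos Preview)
The paper does not supply its own proof of this statement: Theorem~\ref{b3-3-a} is quoted as a foundational result, with the original case $P=P_f$ attributed to Douady--Hubbard \cite{DH} and the marked version to \cite{BCT}. Your proposal is a faithful outline of the Douady--Hubbard argument via iteration on Teichm\"uller space, which is exactly the route the paper points to, so there is nothing to compare against here.

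One small remark on your sketch: the assertion that $\sigma_f$ is \emph{strictly} contracting on compacta because $f^{-1}(P)\supsetneq P$ is not quite the right hypothesis---strict contraction of the derivative of $\sigma_f$ at every point is what uses the orbifold hypothesis (hyperbolic orbifold), and the inclusion $f^{-1}(P)\supsetneq P$ can fail for Thurston maps (e.g.\ $z\mapsto z^2$ with $P=\{0,\infty\}$). The correct statement, as in \cite{DH}, is that when $\mathcal{O}_f$ is hyperbolic the co-derivative of $\sigma_f$ has operator norm strictly less than $1$ at every point of $\mathcal{T}$, which is enough to force convergence of the orbit if it stays in a compact set and to give uniqueness of the fixed point. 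This does not affect the overall shape of your argument.
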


The original version ($P=P_f$) of Thurston's theorem is proven by Douady and Hubbard  \cite{DH}, while the `marked' version is
proven in \cite{BCT}.

To extend Thurston's theorem to rational maps with Herman rings, we establish the following main theorem of the  paper:

\begin{thm}[Decomposition  theorem]\label{b3-1}  Let $(f,P)$ be a Herman map, then there exist a
 $(f,P)$-stable multicurve $\Gamma$  and finitely many
Siegel maps and Thurston maps, say $(h_k,P_k)_{k\in \Lambda}$,
where $\Lambda$ is a finite index set, such that

1. (Combination part.) $(f,P)$ has no Thurston obstructions if and
only if $\lambda(\Gamma,f)<1$  and for each $k\in\Lambda$,
$(h_k,P_k)$ has no Thurston obstructions.

2. (Realization part.) $(f,P)$ is c-equivalent to a rational
map  if and only if $\lambda(\Gamma,f)<1$
and for each $k\in\Lambda$, $(h_k,P_k)$ is c-equivalent to a
rational map.
\end{thm}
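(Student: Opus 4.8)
We only sketch the architecture of the argument; carrying it out occupies the bulk of the paper.

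\textbf{Step 1: the decomposing multicurve.}
For each cycle of rotation annuli $\langle U_0,\dots,U_{p-1}\rangle$ of $f$ we choose core curves $\gamma_i\subset U_i$ so that the conformal isomorphisms in Definition \ref{def3-1} carry $\gamma_{i-1}$ onto $\gamma_i$; this is possible because $f^p|_{U_0}$ is conjugate to an irrational rotation, every invariant circle of which will do. Let $\Gamma_0$ be the union of all these core curves, over all rotation-annulus cycles, and inductively let $\Gamma_n$ be a set of representatives, one per homotopy class in $\mathbb{\overline{C}}\setminus P$, of the non-peripheral components of $f^{-1}(\gamma)$, $\gamma\in\Gamma_{n-1}$; realise $\Gamma:=\bigcup_{n\ge 0}\Gamma_n$ by mutually disjoint curves. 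Then $\Gamma\supset\Gamma_0$ and $\Gamma$ is $(f,P)$-stable by construction. The first substantial point is that \emph{$\Gamma$ is finite}: inside $\overline{R_f}$ the preimage of a core curve is again a core curve, so every "new" curve lives in $\mathbb{\overline{C}}\setminus\overline{R_f}$, where $f$ is postcritically finite; combining this with the fact that the total backward orbit of the rotation domains is a locally finite family of disks and annuli, a collapsing argument reduces the claim to the bounded topological complexity of multicurves on a sphere with finitely many marked points. This is the analogue of Haken's finiteness of a hierarchy, and it is where the hypothesis ``postcritically finite off the rotation domains'' is used most essentially.

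\textbf{Step 2: decomposition into Siegel and Thurston maps.}
Thicken $\Gamma$ to a disjoint union of closed annuli; the complement falls into finitely many components $Y$. Cap off the boundary circles of each $Y$ by discs to obtain a topological sphere $S^2_Y$, and use the $(f,P)$-stability of $\Gamma$ to define a branched covering $h_Y:S^2_Y\to S^2_Y$ induced by $f$, well defined up to isotopy rel the capping discs, with marked set $P_Y$ the union of the image of $P\cap Y$, the centres of the capping discs, and the finitely many further points needed so that $h_Y(P_Y)\subset P_Y$. An inherited rotation disk of $f$ remains a rotation disk of $h_Y$, and each half of a rotation annulus of $f$, once its core side is capped, becomes a rotation disk of $h_Y$. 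Since every rotation annulus of $f$ was cut along its core curve, no $h_Y$ has a rotation annulus, so each $(h_Y,P_Y)$ is either a Siegel map or a Thurston map. (That condition 3 of Definition \ref{def3-1} is inessential, as remarked after that definition, is exactly what lets the capped pieces be genuine Siegel maps even when a boundary critical point ended up in a different piece.) Relabel these maps $(h_k,P_k)_{k\in\Lambda}$.

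\textbf{Step 3: the combination equivalence (Part 1).}
If $\lambda(\Gamma,f)\ge 1$, then the stable multicurve $\Gamma$ is itself a Thurston obstruction of $(f,P)$; and if some $(h_k,P_k)$ carries a Thurston obstruction, then pushing it back into $\mathbb{\overline{C}}$ off the capping discs and completing it to an $(f,P)$-stable multicurve produces a Thurston obstruction of $(f,P)$. This gives the forward implication in Part 1. For the converse one must show that every $(f,P)$-stable multicurve $\Gamma'$ satisfies $\lambda(\Gamma',f)<1$. The idea is to analyse the transition matrix of $\Gamma\cup\Gamma'$ after isotoping to minimise intersections: its curves either are homotopic into the interior of some piece, where they behave exactly like curves for the Siegel or Thurston map $h_k$, or they run parallel to curves of $\Gamma$. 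With respect to the natural partial ordering the transition matrix becomes block upper-triangular, with $W_\Gamma$ and the transition matrices of the induced curve families on the pieces on the diagonal; since by hypothesis each diagonal block has spectral radius $<1$, so does the whole matrix, whence $\lambda(\Gamma',f)<1$. This block-triangular bookkeeping is the technical heart of the combination step.

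\textbf{Step 4: the realization equivalence (Part 2), and the main difficulties.}
If $(f,P)$ is c-equivalent to a rational map $R$, then $R$ carries Herman rings, the core curves of $f$ correspond under the equivalence to invariant circles of those rings, and cutting $R$ along these circles and their preimages exhibits each $(h_k,P_k)$ as the straightening of a rational-like renormalization of $R$, hence — by a straightening theorem for rational-like maps — as a rational map; moreover $\lambda(\Gamma,f)<1$, since a rational map with Herman rings carries no Thurston obstruction (a standard extremal-length argument). Conversely, assume $\lambda(\Gamma,f)<1$ and let $(R_k,Q_k)$ be rational realizations of the pieces; reassemble the $R_k$ along annular neighbourhoods of the curves of $\Gamma$ by quasiconformal surgery. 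The hypothesis $\lambda(\Gamma,f)<1$ supplies the definite moduli that make the associated Thurston-type iteration converge, and the limit is a rational map c-equivalent to $(f,P)$; when two glued pieces are Siegel maps joined along a former core curve, this surgery is precisely Shishikura's construction of a Herman ring out of two Siegel disks. I expect the two hardest points to be the finiteness of $\Gamma$ in Step 1 and, in Step 4, running this quasiconformal gluing while keeping every rotation domain genuinely rotating and using $\lambda(\Gamma,f)<1$ for convergence; the block-triangular analysis of transition matrices in Step 3 is the remaining delicate ingredient.
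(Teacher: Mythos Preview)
Your overall architecture is right, but three points diverge from the paper in ways that matter.

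\textbf{The multicurve.} The core curves $\Gamma_0$ lie inside the rotation annuli, hence inside $\overline{R_f}\subset P$; they are therefore not curves in $\mathbb{\overline{C}}\setminus P$ and cannot belong to a multicurve there. The paper keeps $\Gamma_0$ and $\Gamma$ separate: $\Gamma$ consists only of the \emph{new} non-peripheral homotopy classes appearing in $\bigcup_{k\ge 1}f^{-k}(\Gamma_0)\setminus\Gamma_0$ (Lemma \ref{b2a0}), and it is this $\Gamma$ whose leading eigenvalue enters the theorem. Finiteness is then immediate --- $P$ has finitely many connected components, so there are only finitely many homotopy classes of essential curves in $\mathbb{\overline{C}}\setminus P$ --- no collapsing argument is needed.

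\textbf{The pieces.} Cutting along $\Sigma=\Gamma_0\cup\Gamma$ gives pieces $S$, but $f$ does not induce a self-map of each $S$: rather, there is a unique $\mathcal{E}$-piece $E_S\subset S$ parallel to $S$ (Lemma \ref{b2a2}), and $f$ sends $E_S$ to another piece $f_*(S)$. The maps $(h_k,P_k)$ are the \emph{return maps} $H_{f_*^{p_k-1}(S_k)}\circ\cdots\circ H_{S_k}$ along the $f_*$-periodic cycles of pieces, after marked-disk extension. This cycle structure is essential for the block form of the transition matrix in Step 3 (the diagonal blocks are themselves cyclic, and one needs Lemma \ref{b3d} to relate their spectral radii to those of the return maps).

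\textbf{The realization sufficiency.} This is the real gap. The paper does \emph{not} run a Thurston-type iteration. Instead, the condition $\lambda(\Gamma,f)<1$ is given a direct geometric meaning: via Lemma \ref{b4ccc} one extracts a positive vector $v$ with $Wv<v$, refines it to a positive function $\sigma_t$ on the boundary curves of the pieces (Sections \ref{a1}--\ref{a3}), and shows that for large $t$ this function satisfies a strict Gr\"otzsch inequality (Lemma \ref{b4g}). That inequality is exactly what guarantees that the holomorphic models of the $(h_k,P_k)$, restricted to the pieces and bounded by equipotentials of prescribed potential $\sigma_t(\gamma)$, can be embedded essentially and disjointly into a single sphere. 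One then glues once, obtaining a branched cover holomorphic off a controlled set of annuli, and a single application of the Measurable Riemann Mapping Theorem finishes. No iteration on Teichm\"uller space occurs; the role of $\lambda(\Gamma,f)<1$ is to certify that the annular ``gaps'' between pieces are wide enough to absorb all the preimage annuli, not to make an iteration contract.
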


Theorem \ref{b3-1} actually answers a problem of McMullen (\cite{McM1}, Problem 5.4) in a sense. It gives a way to understand Herman maps (obstructed or not), in particular rational maps with Herman rings,
in terms of the simpler ones
$(h_k,P_k)$'s. The theorem develops a theory for rational maps,  parallel to the Haken theory for manifolds.
 It is known that Haken manifolds can be split up into 3-balls along incompressible surfaces. On the other hand, combination theorems of Klein and Maskit allows one to build up a Klein group with disconnected limit set from a number of subgroups with connected limit sets (see \cite{Ma,McM1}). The decomposition theorem translates this theory to Herman maps in the following way:
one can decompose a Herman map along  a stable multicurve  into  several Siegel maps and Thurston maps whose combinations and rational realizations dominate the original one. Conversely,  one can  rebuild a rational map with  disconnected Julia set from a number of renormalizations with connected Julia sets.

Let's briefly sketch how to get the  maps $(h_k,P_k)_{k\in\Lambda}$ in Theorem \ref{b3-1}.
Given a Herman map $(f,P)$, we first choose a collection of $f$-periodic
analytic curves $\Gamma_0$ in the rotation annuli  and their suitably chosen preimages $\Gamma$. The curves in
$\Gamma_0\cup\Gamma$ decompose the complex sphere into finitely many multi-connected pieces, say
$S^1$,$\cdots$, $S^\ell$. The action of $(f,P)$ on the sphere  induces a well-defined map
$$f_*:\{S^1,\cdots, S^\ell\}\rightarrow\{S^1,\cdots, S^\ell\}$$
 from these pieces to themselves. Under the map $f_*$, each piece is pre-periodic. 
  Each cycle of these pieces corresponds to a
 renormalization of $(f,P)$, which takes the form $f^{p_k}:E_k\rightarrow S_k$. Here, $p_k$ is a positive integer, $E_k,S_k$ are
 multi-connected domains with $E_k\subset S_k\in \{S^1,\cdots, S^\ell\}$. These renormalizations  have canonical extensions to the branched coverings of
 the sphere, which are in fact the resulting maps $(h_k,P_k)_{k\in\Lambda}$. Details are put in Section \ref{3-1}.

 \begin{rem} Here are some facts on  Theorem \ref{b3-1}:

1. The multicurve  $\Gamma$ can be an empty set. See Example \ref{b2a1}.

2. The number of the resulting Siegel maps is at least two and at most $n_{RD}(f)+2n_{RA}(f)$.
 The number of the resulting Thurston  maps can be zero (see Example \ref{b2a1}).

3. If $\lambda(\Gamma,f)<1$, then for any resulting Thurston map $(h_k,P_k)$, the signature of its orbifold is not
$(2,2,2,2)$ (see Lemma \ref{b3aa}). By Thurston's theorem,   $(h_k,P_k)$ has no Thurston obstructions if and only if $(h_k,P_k)$ is c-equivalent to a
rational map.
\end{rem}

 Given a HST rational map $(f,P)$, let $R_{top}(f,P)$ (resp. $R_{qc}(f,P)$) be the set of all rational maps
 c-equivalent (resp. q.c-equivalent) to
 $(f,P)$. We define the space  $M_{\omega}(f,P)$ to be $R_{\omega}(f,P)$ modulo M\"obius conjugation, where $\omega\in\{top,qc\}$.
 If $(f,P)$ is postcritically finite, one may verify that $M_{top}(f,P)=M_{qc}(f,P)$; if we further require  $(f,P)$  is not a Latt\`es map, it follows from
  Thurston's theorem (rigidity part) that $M_{top}(f,P)$ is a single point. 
In general, it's not clear whether $M_{top}(f,P)=M_{qc}(f,P)$ or whether $M_{qc}(f,P)$ consists of a single point (this problem is related to the `No Invariant Line Field Conjecture'), but we have the following:

\begin{thm}[Rigidity theorem]\label{b3-11}  Suppose that $(f,P)$ is a Herman rational map.
Then there are finitely many Siegel rational  maps $(h_k,P_k)_{1\leq k\leq m}$ such that
$$M_{qc}(f,P)\cong M_{qc}(h_1,P_1)\times\cdots\times M_{qc}(h_m,P_m).$$
\end{thm}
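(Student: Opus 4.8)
The plan is to run the decomposition machinery of Theorem \ref{b3-1} on the Herman *rational* map $(f,P)$ and then promote the combinatorial statements to quasiconformal ones. First I would apply Theorem \ref{b3-1} to obtain an $(f,P)$-stable multicurve $\Gamma$ and finitely many Siegel maps and Thurston maps $(h_k,P_k)_{k\in\Lambda}$. Since $(f,P)$ is already a rational map it is in particular unobstructed, so the combination part of Theorem \ref{b3-1} forces $\lambda(\Gamma,f)<1$ and each $(h_k,P_k)$ to be unobstructed; by the realization part (or Remark 2.13(3) together with Thurston's theorem) each $(h_k,P_k)$ is c-equivalent to a rational map, which we may take to be Siegel rational maps $(h_k,P_k)_{1\le k\le m}$ after relabelling and discarding the Thurston pieces (a Thurston rational map without a $(2,2,2,2)$ orbifold is rigid, so those pieces contribute trivial factors $M_{qc}\cong\{*\}$ and can be absorbed). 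The core of the proof is then to upgrade the c-equivalences into a q.c.-equivalence *dictionary* between $(f,P)$ and the tuple $((h_1,P_1),\dots,(h_m,P_m))$, i.e. to show that producing a rational map q.c.-equivalent to $(f,P)$ is the same data as producing, for each $k$, a rational map q.c.-equivalent to $(h_k,P_k)$.

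The map $M_{qc}(f,P)\to \prod_k M_{qc}(h_k,P_k)$ should be defined by \emph{renormalization}: given $R\in R_{qc}(f,P)$, transport the curve system $\Gamma_0\cup\Gamma$ (the periodic analytic curves in the rotation annuli and their chosen preimages used to build the $(h_k,P_k)$) to $R$ via the q.c. conjugacy, cut $\overline{\mathbb C}$ into the pieces $S^1,\dots,S^\ell$, extract the renormalizations $R^{p_k}\colon E_k\to S_k$, and extend each canonically to a rational map representing a point of $M_{qc}(h_k,P_k)$. One checks this is well defined (independent of the Möbius representative and of the isotopy class of the transported curves, using that $\Gamma$ is stable and the decomposition of Section \ref{3-1} is canonical up to the allowed isotopies) and respects the q.c.-equivalence relation. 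For the inverse direction I would invoke the combination procedure alluded to after Theorem \ref{b3-1}: given rational representatives $g_k$ of points in $M_{qc}(h_k,P_k)$, one glues them back along the (now rational, hence rigid) annular pieces coming from the rotation annuli — since $\lambda(\Gamma,f)<1$ there is no obstruction to the gluing, and quasiconformal surgery along the invariant curve system $\Gamma$ produces a rational map q.c.-equivalent to $(f,P)$, well defined modulo Möbius. That these two maps are mutually inverse is where one must be careful: it amounts to saying that the renormalization data plus the rotation-annulus combinatorics recover $R$ up to Möbius, which follows because the complement $\overline{\mathbb C}\setminus\bigcup_k(\text{filled pieces})$ is a union of annuli carrying only the rigid rotation dynamics, so there is no residual modulus beyond that recorded in the factors.

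The main obstacle I expect is the \textbf{well-definedness and injectivity of the renormalization map}, specifically two points. The first is that a q.c. conjugacy between two rational models of $(f,P)$ need not be holomorphic on the pieces $S^k$, so the renormalized map $R^{p_k}|_{E_k}$ is only q.c.-equivalent, not conjugate, to $h_k$; one must verify that the canonical extension of a renormalization to the sphere is independent, up to q.c.-equivalence rel the marked set $P_k$, of the choices (the cut curves, the straightening of the outer collars), which is exactly the kind of statement proved in Section \ref{3-1} for the construction of the $(h_k,P_k)$ and should carry over with the same arguments. The second is showing that distinct Möbius classes in $M_{qc}(f,P)$ cannot give the same tuple of renormalization classes: here one uses that two rational maps with the same renormalization data agree on each filled piece up to Möbius, and that these Möbius maps can be chosen compatibly across the gluing annuli (which, being genuine rotation domains, admit no deformation), so they patch to a single global Möbius conjugacy. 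Surjectivity onto the product is the combination/surgery step and is comparatively soft once $\lambda(\Gamma,f)<1$ is known. The rest — counting the $m$ (bounded by $n_{RD}(f)+2n_{RA}(f)$ as in Remark 2.13) and checking the Thurston pieces contribute nothing — is routine bookkeeping.
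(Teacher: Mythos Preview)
Your outline is reasonable in spirit but differs from the paper's route and, more importantly, has a genuine gap in the injectivity step.

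The paper does not argue directly with renormalization-and-gluing of rational representatives. Instead it reduces $M_{qc}$ to the space of invariant line fields: first it shows (Lemma~\ref{b5d}) that q.c-equivalence between two HST rational maps can be upgraded to an actual q.c-conjugacy holomorphic on the Fatou set, whence $M_{qc}(f,P)\cong M_1(J(f),f)$ by McMullen--Sullivan (Lemma~\ref{b5e}). The product decomposition then becomes the statement $M_1(J(f),f)\cong\prod_i M_1(J_i,f^{p_i}|_{U_i})$, which is established via the Herman--Siegel renormalizations of Theorem~\ref{b5a3}. The isomorphism is just restriction of Beltrami differentials in one direction and spreading by pullback in the other; the Thurston pieces drop out because none is Latt\`es.

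The point where your argument breaks is the claim that ``the complement $\overline{\mathbb C}\setminus\bigcup_k(\text{filled pieces})$ is a union of annuli carrying only the rigid rotation dynamics, so there is no residual modulus.'' This is true on the Fatou set, but it ignores the portion of the \emph{Julia set} that never lands in any periodic renormalization piece --- the points whose itinerary visits non-essential $\mathcal{E}$-pieces infinitely often. A priori this set could have positive Lebesgue measure and support an invariant line field, producing q.c-deformations of $(f,P)$ invisible to every renormalization. Ruling this out is exactly the content of part 2 of Theorem~\ref{b5a3}, whose proof relies on McMullen's ergodic-or-attracting dichotomy (Theorem~\ref{b5c}) to show that $J(f)\setminus\bigcup_{k\ge0}f^{-k}(J_1\cup\cdots\cup J_n)$ has measure zero. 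Without this analytic input your injectivity argument does not go through: agreeing on each renormalization up to M\"obius and patching across the rotation annuli does not by itself force global M\"obius conjugacy, because the conjugacy could still be genuinely quasiconformal on that leftover Julia subset. Once you add this measure-zero fact (and the straightening theorem to pass between $M_1(J_i,\cdot)$ and $M_1(J(h_i),h_i)$), your framework becomes equivalent to the paper's, but the line-field formulation makes the bookkeeping much cleaner.
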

Theorem \ref{b3-11} is in fact the `rigidity part' of Theorem \ref{b3-1}. In particular, it implies $M_{qc}(f,P)$
is a single point if and only if all $M_{qc}(h_k,P_k)$ are single points.

For obstructed Herman maps and Siegel maps, it follows from a theorem of McMullen (see \cite{McM2} or Theorem \ref{b4b}) that they have no rational realizations.
In particular, Theorem \ref{b3-1} implies that any Thurston obstruction of  $(f,P)$   either is contained in $\Gamma$ or comes from one of the resulting maps
$(h_k,P_k)$'s.

For unobstructed Herman maps or Siegel maps, whether  they have rational realizations is a little  bit involved.
For example,
we consider   the formal mating (see \cite{YZ} for the definition) of
two quadratic Siegel polynomials $f_{\theta_1}$ and
$f_{\theta_2}$ with bounded type rotation numbers, where $f_\alpha(z)=z^2+\frac{e^{2\pi
i\alpha }}{2}(1-\frac{e^{2\pi i\alpha}}{2})$. The resulting map is an unobstructed Siegel map.
If  $\theta_1+\theta_2=1 { \rm mod \ }\mathbb{Z}$, then the Siegel map has no rational realization.
On the other hand, if $\theta_1+\theta_2\neq 1 { \rm mod \ }\mathbb{Z}$,  Yampolski and Zakeri \cite{YZ} showed that the Siegel map has a unique  rational
realization.
Based on this example and  following the idea of Shishikura \cite{S1}, many unobstructed Herman maps which have no  rational realizations are constructed in \cite{W}.

 The following result reveals an  `equivalence' between one unobstructed Herman map and several unobstructed Siegel maps:

\begin{thm}[Equivalence of rational realizations]\label{b3-111}
Given  an unobstructed  Herman map $(f,P)$, there are at most $n_{RD}(f)+2n_{RA}(f)$ unobstructed  Siegel maps $(h_k,P_k)_{1\leq k\leq m}$, such that  the
following two statements are equivalent:

1.  $(f,P)$ has a rational realization.

2.   Each map of $(h_k,P_k)_{1\leq k\leq m}$ has a  rational realization.
\end{thm}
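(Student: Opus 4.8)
The plan is to obtain Theorem \ref{b3-111} as a direct consequence of the Decomposition Theorem \ref{b3-1}, the only substantive input being the observation that the Thurston pieces produced by the decomposition are automatically rationally realizable. First I would apply Theorem \ref{b3-1} to the given unobstructed Herman map $(f,P)$, obtaining an $(f,P)$-stable multicurve $\Gamma$ together with a finite family $(h_k,P_k)_{k\in\Lambda}$ of Siegel maps and Thurston maps. Split $\Lambda=\Lambda_S\sqcup\Lambda_T$, where $\Lambda_S$ indexes the Siegel maps and $\Lambda_T$ the Thurston maps, and put $m=|\Lambda_S|$. By the Remark following Theorem \ref{b3-1} one has $2\le m\le n_{RD}(f)+2n_{RA}(f)$, which already gives the claimed bound on the number of Siegel maps.

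Next I would exploit the hypothesis that $(f,P)$ has no Thurston obstructions. By part 1 of Theorem \ref{b3-1} this forces simultaneously $\lambda(\Gamma,f)<1$ and the unobstructedness of every piece $(h_k,P_k)$, $k\in\Lambda$; in particular each $(h_k,P_k)$ with $k\in\Lambda_S$ is a genuine unobstructed Siegel map, so $(h_k,P_k)_{k\in\Lambda_S}$ is an admissible choice for the statement. The strict inequality $\lambda(\Gamma,f)<1$ is exactly the numerical hypothesis appearing in both clauses of Theorem \ref{b3-1}, and it will be used below.

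The crucial point is that the Thurston pieces impose no extra condition. Since $\lambda(\Gamma,f)<1$, the Remark following Theorem \ref{b3-1} ensures that for every $k\in\Lambda_T$ the orbifold $\mathcal{O}_{h_k}$ does not have signature $(2,2,2,2)$. As each such $(h_k,P_k)$ is also unobstructed, Thurston's Theorem \ref{b3-3-a} applies and yields that $(h_k,P_k)$ is c-equivalent to a rational map. Thus every Thurston piece automatically has a rational realization, with no further hypothesis on $(f,P)$.

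Finally I would invoke part 2 of Theorem \ref{b3-1}: as $\lambda(\Gamma,f)<1$ holds, $(f,P)$ is c-equivalent to a rational map if and only if $(h_k,P_k)$ is c-equivalent to a rational map for every $k\in\Lambda$. By the preceding paragraph the conditions for $k\in\Lambda_T$ are automatic, so this holds if and only if $(h_k,P_k)$ has a rational realization for every $k\in\Lambda_S$. Relabelling $(h_k,P_k)_{k\in\Lambda_S}$ as $(h_k,P_k)_{1\le k\le m}$ gives precisely the equivalence of statements 1 and 2, together with $m\le n_{RD}(f)+2n_{RA}(f)$. I do not anticipate a real obstacle: all of the analytic work is already encapsulated in Theorem \ref{b3-1}, and the single delicate point is the one just used --- that the unobstructed Thurston pieces realize --- which relies on the orbifold-signature assertion in the Remark and Thurston's theorem, and which would break down for obstructed Herman maps, where McMullen's theorem forbids realization altogether.
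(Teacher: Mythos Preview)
Your proposal is correct and follows essentially the same approach as the paper: apply the Decomposition Theorem~\ref{b3-1}, use the unobstructedness of $(f,P)$ together with part~1 to get $\lambda(\Gamma,f)<1$ and unobstructedness of all pieces, invoke Lemma~\ref{b3aa} (the Remark you cite) to rule out the $(2,2,2,2)$ orbifold for the Thurston pieces so that Thurston's Theorem~\ref{b3-3-a} realizes them automatically, and then conclude via part~2. The paper's proof is just a more terse version of what you wrote.
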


Theorem \ref{b3-111} implies Thurston-type  theorem for Herman rational maps can be reduced to
 Thurston-type  theorem for Siegel rational maps.
 
 At last, we give a significant  application of Theorem \ref{b3-1}. It's a Thurston-type theorem for a class of rational maps with Herman rings:

 \begin{thm}[Characterization of Herman ring]\label{b3-kk}
 Let $(f,P)$ be a Herman map.
    Suppose that  $(f,P)$ has only one fixed annulus $A$ of bounded type rotation number and $P\setminus \overline{A}$ is a finite set.
  Then $(f,P)$ is
c-equivalent to a rational function $(R,Q)$ if and only if
$(f,P)$ has no Thurston obstructions.
The rational function $(R,Q)$ is unique up to M\"obius conjugation.
\end{thm}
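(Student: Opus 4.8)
The plan is to deduce Theorem \ref{b3-kk} from the Decomposition Theorem \ref{b3-1} together with the known Thurston-type theorem for rational maps with Siegel disks (the bounded-type case, as established in \cite{YZ,Z2,CT2}). Write $(f,P)$ for the given Herman map with its single fixed rotation annulus $A$ of bounded type rotation number and $P\setminus\overline A$ finite. First I would apply Theorem \ref{b3-1} to obtain the stable multicurve $\Gamma$ and the finite family of Siegel maps and Thurston maps $(h_k,P_k)_{k\in\Lambda}$ produced by the decomposition, recalling from the Remark that the number of resulting Siegel maps is between $2$ and $n_{RD}(f)+2n_{RA}(f)$ and that, in the present situation, $n_{RA}(f)=1$ and $n_{RD}(f)=0$, so there are exactly two Siegel maps and the remaining $(h_k,P_k)$ are Thurston maps. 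The strategy then has two halves. For the "only if" direction (rational implies unobstructed): this is formal, since a Thurston obstruction is preserved under c-equivalence and McMullen's theorem (Theorem \ref{b4b}) shows an obstructed map cannot be rationally realized; this direction does not even need the decomposition. The substance is the "if" direction.

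For the "if" direction, assume $(f,P)$ has no Thurston obstructions. By part (1) of Theorem \ref{b3-1}, this is equivalent to $\lambda(\Gamma,f)<1$ together with each $(h_k,P_k)$ having no Thurston obstructions. I would then treat the two types of resulting maps separately. For the Thurston maps among the $(h_k,P_k)$: by item 3 of the Remark, $\lambda(\Gamma,f)<1$ forces the orbifold of each such $(h_k,P_k)$ to avoid signature $(2,2,2,2)$, so Thurston's Theorem \ref{b3-3-a} applies and each unobstructed Thurston piece is c-equivalent to a rational map. For the two Siegel maps: here is where the bounded-type hypothesis on $A$ is essential. The rotation annulus $A$ is fixed and of bounded type; the two Siegel maps arising from the decomposition inherit Siegel disks whose rotation numbers are (essentially) those of $A$ — more precisely, the renormalization $f^{p_k}:E_k\to S_k$ along a cycle of pieces that touches $\overline A$ has its Siegel disk return map conformally conjugate to the appropriate iterate of the rotation $z\mapsto e^{2\pi i\theta}z$, and an iterate of a bounded-type rotation number is again of bounded type. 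Hence each of the two Siegel maps is an unobstructed Siegel map all of whose Siegel disks have bounded type rotation number, and by the cited Thurston-type theorem for such Siegel maps (\cite{YZ,CT2,Z2}) each is c-equivalent to a rational map. Now all $(h_k,P_k)$ are rationally realized, so by part (2) of Theorem \ref{b3-1} (using again $\lambda(\Gamma,f)<1$), $(f,P)$ is c-equivalent to a rational map $(R,Q)$.

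For uniqueness up to Möbius conjugation, I would invoke Theorem \ref{b3-11} (the Rigidity Theorem): since $(f,P)$ is a Herman rational map once realized, $M_{qc}(f,P)\cong M_{qc}(h_1,P_1)\times\cdots\times M_{qc}(h_m,P_m)$, and each factor is a single point — for the Thurston factors by the rigidity part of Thurston's theorem (the pieces not being Lattès maps, which again follows from the orbifold signature not being $(2,2,2,2)$), and for the two Siegel factors by the rigidity statement in the bounded-type Siegel Thurston theorem. One still has to pass from $M_{qc}$ being a point to $M_{top}$ being a point, i.e. rule out that two non-q.c.-equivalent rational maps are merely c-equivalent; I would handle this by the standard argument that a c-equivalence between two rational maps that is already holomorphic on the rotation domains can be promoted to a q.c.-equivalence using the bounded-type control on $A$ (quasiconformal surgery on the Herman ring / Siegel disks, as in the constructions underlying \cite{S1,YZ}), so that $M_{top}(R,Q)=M_{qc}(R,Q)$ is the single point in question.

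The main obstacle, and the step needing the most care, is the Siegel-piece analysis: one must verify that the two Siegel maps extracted by the decomposition genuinely have \emph{bounded type} rotation numbers on all their Siegel disks (so that the available Siegel version of Thurston's theorem applies with its rigidity conclusion), and that no \emph{new} rotation domains with uncontrolled rotation numbers are created in the pieces — this is where the hypotheses "only one fixed annulus $A$" and "$P\setminus\overline A$ finite" are used, since they guarantee that the only rotation domains in play are $A$ itself (and its images, but it is fixed) and hence the decomposition produces Siegel maps whose rotation data all descends from $\theta$. A secondary technical point is the surgery argument promoting c-equivalence to q.c-equivalence for the realized rational map; this is routine for bounded type but must be stated carefully since it is exactly the point at which boundedness of the rotation number is indispensable.
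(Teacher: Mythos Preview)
Your proposal is correct and follows essentially the same route as the paper. The paper packages the step ``Thurston pieces are automatically realized'' into Theorem~\ref{b3-111} and then quotes Zhang's Siegel characterization (Theorem~\ref{b3-kkk}) for the two remaining Siegel pieces, while you unpack Theorem~\ref{b3-111} and invoke Theorem~\ref{b3-1} directly; the content is the same. Two minor corrections: since $A$ is \emph{fixed}, the period $p_k$ of each Siegel piece divides $1$, so the rotation number of the resulting Siegel disk is $\theta$ itself rather than an iterate (cf.\ the marked-disk-extension discussion in Section~\ref{3-11}); and in the rigidity step the paper promotes c-equivalence to q.c-equivalence by citing that bounded-type Siegel boundaries are quasi-circles \cite{Z1}, which is exactly the ``bounded-type control'' you allude to. Also note that in the statement of Theorem~\ref{b3-11} the product runs only over the Siegel factors --- the Thurston factors have already been absorbed (being non-Latt\`es they contribute trivial $M_{qc}$) --- so your ``Thurston factors'' remark is correct in spirit but redundant once you cite Theorem~\ref{b3-11}.
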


An irrational number $\theta\in (0,1)$ is of {\it bounded type} if its continued fraction  $[a_1,a_2,\cdots]$ satisfies $\sup\{a_n\}<+\infty$.
  The proof of Theorem \ref{b3-kk}  is based on Theorems \ref{b3-3-a} and \ref{b3-111}, and a theorem of Zhang \cite{Z2}
on characterization of a class of Siegel rational maps.

\vskip0.2cm
\noindent\textbf{Strategy of the proof and organization of the paper.}
The idea `decomposition along a stable multicurve' 
 was initially implicated in Shishikura's paper on Herman-Siegel  surgery \cite{S1}.
 Cui  sketched this idea to prove a Thurston-type theorem for hyperbolic maps in his manuscript \cite{C}. Pilgrim \cite{P,P2} used this idea to develop a decomposition theorem for obstructed Thurston maps.
  In the rewritten work  \cite{CT1} of \cite{C},  Cui and Tan successfully developed this idea to prove a  characterization theorem for hyperbolic rational maps.

Our proof more or less follows the same line as in \cite{CT1}. In both settings, we first choose a specific multicurve and use it
to decompose the complex sphere into several pieces.  The essential difference is:  in their case \cite{CT1},
 these pieces have disjoint closures   and their preimages are compactly contained in themselves; in our case,
 each of these pieces will touch several other pieces and the preimages of them may be
 not compactly contained in themselves. This  leads to several  differences in the proof, especially the
technical difference in  Section \ref{3-3-3}.%

The organization of the paper is as follows:

In  Section \ref{3-1}, we will decompose a Herman map $(f,P)$ into  a number of
Siegel maps, Thurston maps and sphere homeomorphisms $(h_k,P_k)_{1\leq k\leq n}$ along a collection of $f$-periodic
analytic curves $\Gamma_0$ (contained in the rotation annuli)  and their suitably chosen preimages $\Gamma$.

In  Section \ref{3-2}, we study the decomposition of stable multicurves. We show that each stable multicurve $\mathcal{C}$ of $(f,P)$ will
induce a submulticurve $\mathcal{C}_\Gamma$ of $\Gamma$ and a stable multicurve $\Sigma_k$ of $(h_k,P_k)$ for all $1\leq k\leq n$, such that the following identity holds:
$$\lambda(\mathcal{C},f)=\max\Big\{\lambda(\mathcal{C}_\Gamma,f),
\sqrt[p_1]{\lambda(\Sigma_1,h_1)},\cdots,\sqrt[p_n]{\lambda(\Sigma_n,h_n)}\Big\}.$$
Conversely, each stable multicurve $\Sigma_k$ of $(h_k,P_k)$ will generate a  $(f,P)$-stable multicurve $\mathcal{C}$ and a submulticurve $\mathcal{C}_\Gamma$ of $\Gamma$ satisfying the above reduction identity.
This  enables us to prove the `combination part' of Theorem \ref{b3-1}

The `realization part' will be discussed in Sections \ref{3-3} and \ref{3-3-3}. In Section \ref{3-3}, we prove the necessity and a special case
 of sufficiency of the `realization  part' of Theorem \ref{b3-1}.
  In Section \ref{3-3-3}, we prove the sufficiency of the `realization part'
 in the general case $\Gamma\neq\emptyset$.
  The crucial and technical part is to endow the algebraic condition $\lambda(\Gamma,f)<1$ with a geometric meaning. This will be done from
  Section \ref{a1} to Section \ref{a5}.
   We will show that this condition is equivalent to the Gr\"otzsch inequality (Lemma \ref{b4g}).
   Thus it allows us to reconstruct  the rational realization of $(f,P)$ by gluing the holomorphic models of $(h_k,P_k)_{1\leq k\leq n}$ along the multicurve $\Gamma$
without encountering any `gluing obstruction'.

In Section \ref{3-6}, we discuss the renormalizations of rational maps and prove  Theorem  \ref{b3-11}. A straightening theorem
  for rational-like maps  is developed in Section \ref{3-4-1}.
  In Section \ref{3-4-2-1},  we discuss  the  renormaliztions of Herman rational maps. In Section \ref{3-4-3}, we prove Theorem  \ref{b3-11} .

  Theorems \ref{b3-111} and \ref{b3-kk} are consequences of Thurston's theorem and the decomposition theorem, we put the proofs in Section \ref{3-x}.

\vskip0.2cm
\noindent\textbf{Notations and terminologies.} The following are used frequently:  

1. Given  a collection of Jordan curves $\mathcal{C}$ (not
necessarily a multicurve) in $\mathbb{\overline{C}}-P$.  For
 any integer
$k\geq0$, we denote by $f^{-k}(\mathcal{C})$ the collection of all
  components $\delta$  of $f^{-k}(\gamma)$ for $\gamma\in \mathcal{C}$.

2. Let $\mathcal{M}$ be a collection of
subsets of $\mathbb{\overline{C}}$. We
use $\cup \mathcal{M}$ to denote $\cup_{M\in \mathcal{M}}M$.

3. Let $A=(a_{ij})$ be a square real matrix. The Banach norm
$\|A\|$ of $A$ is  defined to be  $(\sum
|a_{ij}|^2)^{1/2}$. The {\it
spectral radius} $\mathrm{sp}(A)$ of $A$ is defined by
$\mathrm{sp}(A):=\lim\sqrt[n]{\|A^n\|}$. It's known from Perron-Frobenius theorem that
 if $A$ is non-negative, then its leading eigenvalue
  is equal to $\mathrm{sp}(A)$.

4. Given two multicurves $\Gamma_1$ and $\Gamma_2$ in
$\mathbb{\overline{C}}-P$. We say that $\Gamma_1$ is {\it
homotopically contained } in $\Gamma_2$, denoted by
$\Gamma_1\prec\Gamma_2$, if each curve $\alpha\in \Gamma_1$  is
homotopic in $\mathbb{\overline{C}}-P$ to some curve
$\beta\in\Gamma_2$. We say that $\Gamma_1$ is identical to
$\Gamma_2$ up to homotopy,  if $\Gamma_1\prec\Gamma_2$ and
$\Gamma_2\prec\Gamma_1$.

5. Let $D$ and $\Omega$ be two planar  domains and  $f: D\rightarrow
\Omega$ be a quasi-regular map, the Beltrami coefficient $\mu_f$ of
$f$ is defined by $\mu_f =\frac{\partial f}{\partial
\overline{z}}/\frac{\partial f}{\partial z}.$

6. For a subset  $E$ of $\mathbb{\overline{C}}$, the
characteristic function $\chi_E:\mathbb{\overline{C}}\rightarrow\{0,1\}$ is defined by
$\chi_E(z)=1$ if $z\in E$ and  $\chi_E(z)=0$ if $z\notin E$.

 7. Let $\Omega$ be a connected and  multi-connected domain in
 $\mathbb{\overline{C}}$, bounded by finitely many Jordan curves.
 We denote by  $\partial\Omega$ the boundary of $\Omega$, and
 $\partial(\Omega)$  the collection of all boundary curves
 of $\Omega$. Obviously, $\partial\Omega=\cup\partial(\Omega)$.

8. The closure and cardinality of the set $E$ are denoted by $\overline{E}$ and $\# E$ respectively.

\vskip0.2cm
\noindent {\bf Acknowledgement.} This work is a part of my thesis \cite{W}.
I would like to thank   Tan Lei for patient guidance, helpful discussions and careful reading the manuscript.
 Thanks go to  Guizhen Cui, Casten Petersen, Kevin Pilgrim,
  Weiyuan Qiu, Mary Rees,
 Mitsuhiro Shishikura and Yongcheng Yin  for  discussions or comments.
 This work was partially supported  by Chinese Scholarship  Council  and CODY network.

\section{Decompositions of Herman maps}\label{3-1}

In this section, we will decompose a Herman map into  finitely many
Siegel maps and Thurston maps along a collection of $f$-periodic
analytic curves and their suitably chosen preimages.
The idea we adopt here is  inspired  by
 Cui-Tan's work on characterizations of hyperbolic
rational maps \cite{CT1} and Shishikura's  `Herman-Siegel'
surgery \cite{S1}.

\subsection{Decomposition along a stable multicurve}\label{3-110}

Let $(f,P)$ be a Herman map, $\mathcal{A}$ be the collection of all
rotation annuli of $f$. For each $A\in \mathcal{A}$, we choose an
 analytic curve $\gamma_A\subset A$ such that $\gamma_A\cap f(P-\cup \mathcal{A})=\emptyset$ (this implies that $\gamma_A$ avoids the postcritical set and
 the images of other marked points) and
$f(\gamma_A)=\gamma_{f(A)}$. It's obvious that if $f^p(A)=A$, then
$f^p(\gamma_A)=\gamma_A$.

Let $\Gamma_0=\{\gamma_A; A\in \mathcal{A}\}$, we first show that
$\Gamma_0$ can generate a unique $(f,P)$-stable multicurve up to
homotopy.

\begin{lem} \label{b2a0} Given a choice of $\Gamma_0$, there is a $(f,P)$-stable multicurve $\Gamma$  such that:

$\bullet$ (Invariant)  For any $\gamma\in \Gamma$, we have
$f(\gamma)\in \Gamma\cup \Gamma_0$.

$\bullet$ (Maximal) $\Gamma$ represents all homotopy classes of
non-peripheral curves of $\cup_{k\geq1}f^{-k}(\Gamma_0)-\Gamma_0$ in
$\mathbb{\overline{C}}-P$.

 Moreover, the  multicurve $\Gamma$ is unique up to homotopy.
\end{lem}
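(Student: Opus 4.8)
The plan is to take $\Gamma$ to be a maximal system of pairwise non‑homotopic, non‑peripheral representatives chosen from
$\mathcal E:=\bigl(\bigcup_{k\ge1}f^{-k}(\Gamma_0)\bigr)\setminus\Gamma_0$,
and then to refine the choice so that the resulting finite multicurve is literally $f$‑invariant. First I would record some structural facts, after the harmless additional normalization $\gamma_A\cap f(\Omega_f)=\varnothing$ (possible because $\Omega_f$ is finite and $\gamma_A$ ranges over the analytic leaves of the rotation foliation of $A$, only finitely many of which are ``bad''). Using that $f$ permutes the cycles of rotation annuli, that $f(\gamma_A)=\gamma_{f(A)}$, and that $\overline{R_f}\subset P$, one checks: (a) $f(\Gamma_0)=\Gamma_0$, hence $\Gamma_0\subset f^{-1}(\Gamma_0)$ and the collections $f^{-k}(\Gamma_0)$ increase with $k$; (b) the only components of $f^{-1}(\gamma_A)$ lying inside a rotation annulus are the curves $\gamma_{A'}$ with $f(A')=A$; (c) every component of $\bigcup_{k\ge1}f^{-k}(\Gamma_0)$ not in $\Gamma_0$ lies in $\mathbb{\overline{C}}-P$ and avoids $\Omega_f$ (induction on the ``level'' $k$, using $\gamma_A\cap f(P-\cup\mathcal{A})=\varnothing$, $\gamma_A\cap f(\Omega_f)=\varnothing$, $f(P)\subset P$ and $f(\Omega_f)\subset P_f\subset P$); and (d) if $c$ is such a curve, at level $k$, then $f|_c$ is a covering of circles, so $f(c)$ is a Jordan curve, and since $f^k(c)=\gamma_A$ it is contained in a component $\delta$ of $f^{-(k-1)}(\Gamma_0)$, hence $f(c)=\delta$.

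Next I would prove finiteness of the set of homotopy classes — which is what turns a system of representatives into an honest multicurve — by a soft argument rather than a dynamical estimate. By (a) the families $f^{-k}(\Gamma_0)$ increase in $k$, and each $f^{-k}(\Gamma_0)$ is a family of pairwise disjoint Jordan curves (preimages of disjoint curves are disjoint); hence all curves in $\Gamma_0\cup\mathcal E$ are pairwise disjoint. On the other hand $\mathbb{\overline{C}}-P=\mathbb{\overline{C}}\setminus(\overline{R_f}\cup F)$, with $F:=P\setminus\overline{R_f}$ finite and $\overline{R_f}$ a finite disjoint union of closed Jordan domains and closed annular regions, is a (possibly disconnected) surface of finite topological type, and on such a surface any family of pairwise disjoint, pairwise non‑homotopic, non‑peripheral simple closed curves is finite. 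So $\mathcal E$ contains only finitely many non‑peripheral homotopy classes, and picking one curve of $\mathcal E$ in each class yields a finite multicurve; stability is then automatic, since for $\gamma\in\Gamma\subset\mathcal E$ every component of $f^{-1}(\gamma)$ again lies in $\mathcal E$ (by (b) it is not inside a rotation annulus, hence not in $\Gamma_0$), so if it is non‑peripheral it is homotopic to a member of $\Gamma$.

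The hard part will be the ``Invariant'' bullet: arranging that $f(\gamma)$ is literally a member of $\Gamma\cup\Gamma_0$ for every $\gamma\in\Gamma$, not merely homotopic to one. Here my plan is: first establish the standard‑type statement that if a component of $f^{-1}(\delta)$ is non‑peripheral then $\delta$ is non‑peripheral — with (d) this gives $f(\mathcal E_{np})\subset\mathcal E_{np}\cup\Gamma_0$, where $\mathcal E_{np}$ denotes the non‑peripheral curves of $\mathcal E$; then take $N$ so large that every non‑peripheral class occurring in $\mathcal E$ already occurs among the finitely many curves of $V:=\mathcal E_{np}\cap f^{-N}(\Gamma_0)$, which, by the monotonicity in (a), satisfies $f(V)\subset V\cup\Gamma_0$; and finally, since the level strictly drops under $f$, no curve of $\mathcal E$ is $f$‑periodic, so every curve of $V$ enters the periodic family $\Gamma_0$ in finitely many steps, and I would build the transversal $\Gamma\subset V$ — exactly one curve per class — by selecting along these finite forward orbits, always choosing for a class a representative whose $f$‑image has already been selected or lies in $\Gamma_0$ (and, for a class that reproduces itself under $f$, taking the last curve of an orbit before it falls into $\Gamma_0$). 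I expect the genuine difficulty to be checking that such a coherent transversal exists; this should reduce to a small combinatorial lemma about transversals of homotopy classes compatible with an acyclic, eventually‑$\Gamma_0$ dynamics on $V$, together with careful tracking of the forward orbits of these preimage curves — the same kind of bookkeeping issue that the authors flag for the general case.

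Finally, with $\Gamma$ so constructed, the ``Invariant'' and ``Maximal'' bullets hold by construction and stability was observed above, so uniqueness up to homotopy is immediate: ``Maximal'' determines exactly which homotopy classes a multicurve satisfying the conclusion must carry, hence any two such multicurves carry the same classes and are therefore homotopic in $\mathbb{\overline{C}}-P$.
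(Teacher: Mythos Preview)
Your proof is correct, and the structural facts (a)--(d), the finiteness argument via disjointness on a finite-type surface, and the uniqueness observation are essentially what the paper does. The difference is in how you secure the ``Invariant'' property. The paper builds $\Gamma$ layer by layer: $\Gamma_1\subset f^{-1}(\Gamma_0)-\Gamma_0$ is a multicurve representing all non-peripheral classes of $f^{-1}(\Gamma_0)-\Gamma_0$; then inductively $\Gamma_n\subset f^{-1}(\Gamma_{n-1})$ is chosen so that $\Gamma_1\cup\cdots\cup\Gamma_n$ is a multicurve representing all non-peripheral classes of $f^{-n}(\Gamma_0)-\Gamma_0$; by finiteness the process terminates at some $N$, and one sets $\Gamma=\Gamma_1\cup\cdots\cup\Gamma_N$. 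With this organization, $f(\gamma)\in\Gamma\cup\Gamma_0$ holds \emph{by construction} for every $\gamma\in\Gamma_n$, so the step you flag as the ``hard part'' never arises. Your ``small combinatorial lemma'' about building a coherent transversal---always choosing a representative whose $f$-image has already been selected---is this layered construction in disguise: the reason such a choice is always possible is that the homotopy class of a component of $f^{-1}(\delta)$ depends only on the class of $\delta$ (lift the homotopy through the covering $f:\mathbb{\overline{C}}-f^{-1}(P)\to\mathbb{\overline{C}}-P$), which is precisely what makes the inductive definition of $\Gamma_n$ go through. So there is no genuine difficulty; reorganizing your argument along the paper's lines collapses the step you were worried about into a one-line observation.
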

\begin{proof}
First, there is a multicurve  $\Gamma_1$ in
$\mathbb{\overline{C}}-P$ such that $\Gamma_1\subset
f^{-1}(\Gamma_0)-\Gamma_0$ and  $\Gamma_1$ represents all homotopy
classes of non-peripheral curves of $f^{-1}(\Gamma_0)-\Gamma_0$.

Such $\Gamma_1$ is not uniquely chosen. But any two such multicurves
 are identical up to homotopy, thus they have the same number of curves.

For $n\geq2$, we define $\Gamma_n$ inductively in the following way:

$\bullet$   $\Gamma_n\subset f^{-1}(\Gamma_{n-1})$.

$\bullet$   $\Gamma_1\cup\cdots\cup \Gamma_n$ is a multicurve in
$\mathbb{\overline{C}}-P$.

$\bullet$   $\Gamma_1\cup\cdots\cup \Gamma_n$ represents all
homotopy classes of non-peripheral curves of
$f^{-n}(\Gamma_0)-\Gamma_0$.

Since any two distinct curves in
$\cup_{k\geq1}f^{-k}(\Gamma_0)-\Gamma_0$ are disjoint
and $P$ has finitely many components, we conclude that
$\cup_{k\geq1}f^{-k}(\Gamma_0)-\Gamma_0$ has finitely many homotopy
classes of non-peripheral curves in $\mathbb{\overline{C}}-P$. It
turns out that
$\#(\Gamma_1\cup\cdots\cup \Gamma_n)$ is uniformly bounded above by
some constant $C(P)$. Thus there is an integer $N\geq0$ such that
$\Gamma_{N}\neq\emptyset$ and
$\Gamma_{N+1}=\Gamma_{N+2}=\cdots=\emptyset$. (It can happen that
$N=0$, see Example \ref{b2a1}.)

We set $\Gamma=\emptyset$ if $N=0$ and  $\Gamma=\cup_{1\leq j\leq
N}\Gamma_j$ if $N\geq1$.
 By the choice of $N$,   $\Gamma$
is a $(f,P)$-stable multicurve. By construction, for any $\gamma\in
\Gamma$, we have $f(\gamma)\in \Gamma\cup \Gamma_0$. The homotopy
classes of $\Gamma$ is uniquely determined by those of
non-peripheral curves in $\cup_{k\geq1}f^{-k}(\Gamma_0)-\Gamma_0$.
So $\Gamma$ is unique up to homotopy.
\end{proof}

Here we give an example to show that $\Gamma$ can be an empty set.

\begin{ex} \label{b2a1} {\bf ($\Gamma=\emptyset$)}    The example is from
Shishikura's paper \cite{S1}. Let
$$f(z)=\frac{e^{i\alpha}}{z}\Big(\frac{z-r}{1-rz}\Big)^2,$$
where $\alpha\in\mathbb{R}$  and $0<r<1/5$. We may assume that
$\alpha$ is properly chosen such that $f$ has a fixed Herman ring
$H$  containing  the unit circle $\mathbb{S}$, with bounded type
rotation number (Remark: in this case, each boundary component of
$H$ is a quasi-circle containing a critical point of $f$). There are
two other critical points: $r$ and $1/r$, both of which are
eventually mapped to a repelling cycle of  period two, and
$f(r)=f^3(r)=0, f^2(r)=f(1/r)=\infty$. We choose
$\Gamma_0=\{\mathbb{S}\}$.
 Let $P=\overline{H}\cup P_f=\overline{H}\cup \{0,\infty\}$. Since each component of
$\mathbb{\overline{C}}-\overline{H}$ is a disk containing exactly
one marked point in $P$, the set $\Gamma$ is necessarily empty.
\end{ex}

Let $\Sigma=\Gamma_0\cup\Gamma$. In the following, we will use
$\Sigma$ to decompose the complex sphere $\mathbb{\overline{C}}$
into finitely many pieces. We define \bess &
\mathcal{S}=\{\overline{U};\  U \text{ is a
connected component of } \mathbb{\overline{C}}-\cup\Sigma\},&\\
&\mathcal{E}=\{\overline{V};\  V \text{ is a connected component of
} \mathbb{\overline{C}}-\cup f^{-1}(\Sigma)\}.& \eess

Each element of $\mathcal{S}$ (resp. $\mathcal{E}$) is called an
$\mathcal{S}$-piece (resp. $\mathcal{E}$-piece). The following facts
are easy to verify:

$\bullet$ {\it Every $\mathcal{E}$-piece $E$ is contained in a
unique $\mathcal{S}$-piece and $f(E)\in \mathcal{S}$.}

$\bullet$ {\it For  every $\mathcal{S}$-piece $S$, we have $\#(S\cap
P)+\#\partial(S) \geq3$. Moreover, the $\mathcal{E}$-pieces
contained in $S$ form a partition of $S$, that is, $S=\cup
\{E\in\mathcal{E};E\subset S\}$.}

$\bullet$ {\it  For each curve $\gamma\in \Sigma$, there exist
exactly two $\mathcal{S}$-pieces, say $S_\gamma^+$ and $S_\gamma^-$,
that share $\gamma$ as a common boundary component.}

\begin{defi} Let $T$ be a connected and closed
subset of some  $\mathcal{S}$-piece $S$.  We say that $T$ is parallel to $S$ if $\partial T\cap
P=\emptyset$ and each component of $S\setminus T$ is  either
 an annulus contained in  $S-P$, or  a disk containing at most one point in $P$.
\end{defi}

\begin{figure}[h]
\centering{
\includegraphics[height=9cm]{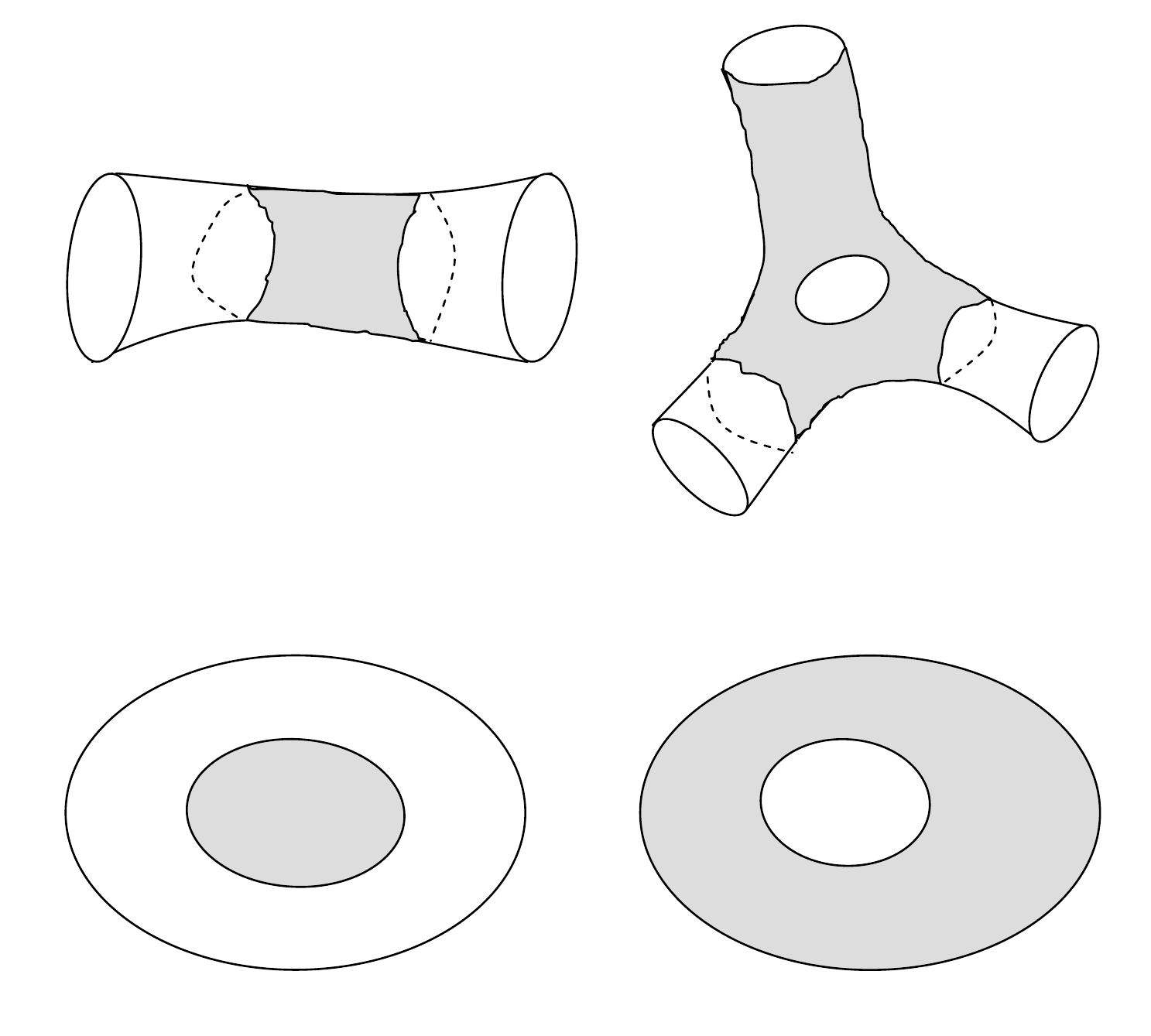}
\put(-210,195){$\bullet \ p_1$} 
\put(-205,180){$E_1$}  \put(-200,140){$S_1$}
 \put(-90,200){$E_2$}
\put(-70,140){$S_2$}
 \put(-234,55){$E_3$}
\put(-220,0){$S_3$} \put(-220,45){$\bullet \ p_3$}
\put(-220,60){$\bullet \ q_3$} \put(-100,32){$E_4$}
\put(-70,0){$S_4$} \put(-70,32){$\bullet \ p_4$}
\put(-80,60){$\bullet \ q_4$}
 \caption{Four examples: $E_i$ (shadow region) is parallel to
 $S_i$.  $p_i,q_i$ are marked points in $P$. Here, $S_1$ is an annulus with
 one marked point,  $S_2$ has three boundary curves and contains no
 marked point, both $S_3$ and $S_4$ are disks with two marked
 points.}}
\end{figure}

Note that if $T$ is parallel to $S$ and $A$ is an annular
component of $S\setminus T$, then one boundary curve of $A$ is on
$S$. Moreover, $\#(T\cap P)+\#\partial(T)\geq \#(S\cap
P)+\#\partial(S)\geq3$.

Here is an important property of the $\mathcal{S}$-pieces:

\begin{lem} \label{b2a2} For every $\mathcal{S}$-piece $S$, there is a unique
$\mathcal{E}$-piece parallel to $S$.
\end{lem}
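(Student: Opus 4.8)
The plan is to exhibit the parallel $\mathcal{E}$-piece explicitly and then argue uniqueness by a counting/topological argument. Fix an $\mathcal{S}$-piece $S$. Since the $\mathcal{E}$-pieces contained in $S$ partition $S$, and since $\partial S\subset \cup\Sigma$ while $f^{-1}(\Sigma)\supset\Sigma$ (every curve of $\Sigma$ is either a component of $\Sigma$ mapped into $\Gamma_0\cup\Gamma$ under $f$, or is itself such a preimage), the boundary curves of $S$ are themselves curves of $f^{-1}(\Sigma)$; hence each boundary curve of $S$ is a boundary curve of exactly one $\mathcal{E}$-piece inside $S$. The curves of $f^{-1}(\Sigma)$ lying in the interior of $S$ are, by the Maximal property in Lemma \ref{b2a0} together with the choice of $\gamma_A$ avoiding $f(P-\cup\mathcal{A})$, all null-homotopic or peripheral in $\mathbb{\overline{C}}-P$ relative to $S$ — that is, each such interior curve bounds, on one side, a disk meeting $P$ in at most one point. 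I would define $E_S$ to be the unique $\mathcal{E}$-piece in $S$ all of whose boundary curves either lie on $\partial S$ or are interior curves of $S$ separating a once-or-less-punctured disk from $E_S$; equivalently, $E_S$ is the "central" piece obtained from $S$ by deleting the collection of outermost trivial interior curves together with the disk/annulus regions they cut off. By construction $\partial E_S\cap P=\emptyset$ (the $\gamma_A$ and their preimages avoid $P$), and each component of $S\setminus E_S$ is either an annulus in $S-P$ (bounded by an interior curve and a piece of $\partial S$, when an interior curve is homotopic to a boundary curve of $S$) or a disk with at most one marked point (when the interior curve is null-homotopic or peripheral). Hence $E_S$ is parallel to $S$.

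For existence it remains to check that such a central $\mathcal{E}$-piece genuinely exists, i.e. that the region left after removing the outermost trivial curves and their cut-off regions is a single $\mathcal{E}$-piece rather than several. This follows because the interior curves of $f^{-1}(\Sigma)$ in $S$ are pairwise disjoint and none is essential in $\mathbb{\overline{C}}-P$, so their "trivial sides" (the disk-or-annulus sides) are nested or disjoint; deleting the maximal such regions leaves a connected complement, which is therefore one $\mathcal{E}$-piece. I would phrase this via the observation that an $\mathcal{E}$-piece $E\subset S$ fails to be parallel to $S$ exactly when some component of $S\setminus E$ is a disk with $\geq 2$ marked points or an annulus with a marked point or a piece with $\geq 3$ boundary-plus-marked data — and each such "bad" component, being cut off by interior curves of $f^{-1}(\Sigma)$, contains strictly fewer $\mathcal{E}$-pieces than $S$ itself, so a straightforward induction on the number of $\mathcal{E}$-pieces in $S$ locates the unique parallel one.

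For uniqueness, suppose $E$ and $E'$ are both parallel to $S$. Each of $\partial(E)$ and $\partial(E')$ consists of curves of $f^{-1}(\Sigma)$; since $E\neq E'$ as elements of the partition of $S$, some curve $\delta\in f^{-1}(\Sigma)$ separates $E$ from $E'$ inside $S$. But $\delta$ is then an interior curve of $S$, hence null-homotopic or peripheral relative to $S$, so one side of $\delta$ is a disk $D$ with $\#(D\cap P)\leq 1$ and $\#\partial(D\cap S)=1$. One of $E,E'$, say $E'$, lies on that side, inside $D$; then the component of $S\setminus E$ containing $E'$ is contained in $D$, hence is a disk with at most one marked point and with only the relevant portion of its boundary — but that forces $E'\subset D$ to satisfy $\#(E'\cap P)+\#\partial(E')\leq 2$, contradicting the inequality $\#(T\cap P)+\#\partial(T)\geq 3$ that every piece parallel to $S$ must satisfy. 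Hence no separating curve exists and $E=E'$.

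The main obstacle I anticipate is the existence half: making rigorous the claim that peeling off the outermost trivial interior curves of $f^{-1}(\Sigma)$ inside $S$ leaves a \emph{single} $\mathcal{E}$-piece. This is really a statement about the combinatorics of the laminated region $S$ cut by $f^{-1}(\Sigma)$, and the cleanest route is an induction on the number $N(S)$ of $\mathcal{E}$-pieces in $S$: if $N(S)=1$ then $S$ itself is the piece; if $N(S)>1$ pick an interior curve $\delta$ cutting off a minimal disk/annulus region $R$, note $R$ together with the adjacent $\mathcal{E}$-piece merges into a region with fewer pieces that is still an $\mathcal{S}$-piece-like object satisfying the same hypotheses (or directly: the union of $S$ with the removal of $R$ is parallel-equivalent to $S$), and apply the inductive hypothesis. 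The disjointness and non-essentialness of the interior curves, guaranteed by Lemma \ref{b2a0} and by the choice $\gamma_A\cap f(P-\cup\mathcal{A})=\emptyset$, are exactly what make the nesting structure manageable.
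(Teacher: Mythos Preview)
Your approach is headed in the same direction as the paper's---build $E_S$ by peeling off the ``trivial'' regions cut off by interior curves of $f^{-1}(\Sigma)$---but there is a genuine error in your central claim about those interior curves.

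You assert that every curve of $f^{-1}(\Sigma)$ lying in the interior of $S$ is ``null-homotopic or peripheral in $\mathbb{\overline{C}}-P$ relative to $S$'', and you gloss this as ``each such interior curve bounds, on one side, a disk meeting $P$ in at most one point.'' This is false. Interior curves of $f^{-1}(\Sigma)$ in $S$ can perfectly well be non-peripheral in $\mathbb{\overline{C}}-P$; indeed, whenever $\partial_1(S)\neq\emptyset$ there are such curves. What is true---and this is exactly where the $(f,P)$-stability of $\Gamma$ and the maximality in Lemma~\ref{b2a0} enter---is that any \emph{non-peripheral} interior curve $\gamma$ must be homotopic rel $P$ to some curve of $\Sigma$, hence to a boundary curve $\tau_\gamma$ of $S$. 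Such a $\gamma$ does not bound a disk with at most one marked point; it bounds an annulus $A(\gamma)\subset S-P$ with $\tau_\gamma$. The paper's proof handles this by explicitly separating the interior curves into two classes $\mathcal{C}_1$ (non-peripheral, each homotopic to a boundary curve of $S$) and $\mathcal{C}_2$ (null-homotopic or peripheral), removing the maximal annuli $\bigcup_{\gamma\in\mathcal{C}_1}A(\gamma)$ first and then the maximal disks $\bigcup_{\alpha\in\mathcal{C}_2}D(\alpha)$. You seem to be aware of the annulus case (you mention it in passing), but your formal argument never accommodates it, and your uniqueness proof breaks precisely here: when the separating curve $\delta$ is non-peripheral, neither side of $\delta$ is a disk with $\le 1$ marked point.

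A second, smaller gap: even if $E'$ were contained in a disk $D$ with at most one marked point, it does not follow that $\#(E'\cap P)+\#\partial(E')\leq 2$, since $E'$ may have many boundary curves inside $D$. The contradiction for uniqueness should come from the \emph{other} side: the component of $S\setminus E'$ containing $S\setminus D$ carries all of $\partial(S)$ and at least $\#(S\cap P)-1$ marked points, plus a boundary curve on $\partial E'$, so it has complexity $\geq 3$ and cannot be a disk or annulus---contradicting $E'$ being parallel to $S$. The same argument works when $\delta$ is non-peripheral and one side of $\delta$ in $S$ is an annulus rather than a disk. Finally, the induction on the number of $\mathcal{E}$-pieces is unnecessary: once you have the correct dichotomy (annuli from $\mathcal{C}_1$, disks from $\mathcal{C}_2$), nestedness gives connectedness of the remainder directly, as in the paper.
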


\begin{proof} Let $\mathcal{C}_1$ be the collection of all curves of
$f^{-1}(\Sigma)$, contained in the interior of $S$ and non
peripheral in $\mathbb{\overline{C}}-P$. Since $\Gamma$ is
$(f,P)$-stable, each curve $\gamma\in \mathcal{C}_1$ is homotopic in
$\mathbb{\overline{C}}-P$ to exactly one boundary curve, say
$\tau_\gamma$, of $S$. Let $A(\gamma)$ be the open annulus bounded
by $\gamma$ and $\tau_\gamma$. Since distinct curves in
$f^{-1}(\Sigma)$ are disjoint, we conclude that for any two curves
$\gamma_1,\gamma_2\in \mathcal{C}_1$, the annuli $A(\gamma_1)$ and
$A(\gamma_2)$ either are disjoint or one contains another. Thus
$\cup_{\gamma\in \mathcal{C}_1}A(\gamma)$ consists of finitely many
annular components.

Let $\mathcal{C}_2$ be the collection of all curves of
$f^{-1}(\Sigma)$, contained in $S-\cup_{\gamma\in
\mathcal{C}_1}(A(\gamma)\cup\tau_{\gamma})$, peripheral or null
homotopic  in $\mathbb{\overline{C}}-P$. Then each curve $\alpha\in
\mathcal{C}_2$ bounds an open disk $D(\alpha)\subset S$. Moreover,
for any two curves $\alpha_1,\alpha_2\in \mathcal{C}_2$, the disks
$D(\alpha_1)$ and $D(\alpha_2)$ either are disjoint or one contains
another. Thus $\cup_{\alpha\in \mathcal{C}_2}D(\alpha)$ consists of
finitely many disk components.

The set $E_S:=S-\cup_{\gamma\in
\mathcal{C}_1}(A(\gamma)\cup\tau_{\gamma})-\cup_{\alpha\in
\mathcal{C}_2}D(\alpha)$ is a closed and non-empty set. It is
connected since  each component of $\mathbb{\overline{C}}-E_S$ is a
disk. The interior of $E_S$ contains no curve of $f^{-1}(\Sigma)$,
thus it is an $\mathcal{{E}}$-piece. It is in fact the unique
$\mathcal{{E}}$-piece parallel to $S$ by construction.
\end{proof}

See Figure 1 for the examples of `parallel' pieces. In the following
discussion, we always use $E_S$ to denote the $\mathcal{E}$-piece
parallel to $S$.

Based on Lemma \ref{b2a2}, we see that $(f,P)$ induces a
well-defined  map $f_*$ from $\mathcal{S}$ to itself:
$$f_*:\mathcal{S}\ni S\mapsto f(E_S)\in \mathcal{S}.$$

Since there are finitely many $\mathcal{S}$-pieces, every
$\mathcal{S}$-piece is pre-periodic.

For each curve $\gamma\in\partial(S)$, there is a unique boundary
curve $\beta_\gamma\in \partial (E_S)$ such that either
$\beta_\gamma=\gamma$, or  $\beta_\gamma$ and $\gamma$ bound an
annulus in $S-P$. We define three
 sets
$\partial_0(S),
\partial_1(S), \partial_2(S)$ as follows: \bess
\partial_0(S)&=&\{\gamma\in\partial(S); \gamma\in\Gamma_0\},\\
\partial_1(S)&=&\{\gamma\in\partial(S); \gamma\ne\beta_\gamma\},\\
\partial_2(S)&=&\{\gamma\in\partial(S); \gamma=\beta_\gamma\}-\Gamma_0. \eess

\begin{lem} \label{b2a} If $\partial_0(S)\neq\emptyset$, then we have:

1. For any $\gamma\in \partial_0(S)$, $\gamma=\beta_\gamma$.

 2. $S$
is $f_*$-periodic.

3. $\#\partial_0(S)=\#\partial_0(f_*(S))$.
\end{lem}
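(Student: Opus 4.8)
The plan is to analyze the three parts in order, using the structure of rotation annuli (Definition \ref{def3-1}) together with the way $\Gamma_0$ was chosen, namely $f(\gamma_A)=\gamma_{f(A)}$ and $\gamma_A\cap f(P-\cup\mathcal{A})=\emptyset$. For part 1, let $\gamma\in\partial_0(S)$, so $\gamma=\gamma_A$ for some rotation annulus $A\in\mathcal{A}$. Since $\gamma_A$ lies in the open rotation annulus $A$ and $f$ restricts to a conformal isomorphism on $A$ carrying $\gamma_A$ onto $\gamma_{f(A)}$, the curve $\gamma_A$ has a preimage component that is exactly a curve of $\Gamma_0$ (inside a rotation annulus mapping onto $A$), and that preimage is isotopic to $\gamma_A$ rel $P$ only via itself, because two distinct curves of $\Gamma_0$ have disjoint closures and a rotation-annulus core curve is never isotopic to a curve in a different piece. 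More precisely, I would argue directly that $\beta_\gamma\in\partial(E_S)$ is the unique boundary curve of $E_S$ with $\beta_\gamma=\gamma$ or bounding an annulus in $S-P$ together with $\gamma$; I claim the annulus case cannot occur when $\gamma\in\Gamma_0$. Indeed if it did, then in passing to $E_S$ we would have "absorbed" a nontrivial annulus adjacent to $\gamma_A$; but the construction of $E_S$ in Lemma \ref{b2a2} only absorbs annuli $A(\delta)$ bounded by a non-peripheral preimage curve $\delta$ isotopic to a boundary curve of $S$, and no preimage curve can be isotopic to $\gamma_A$ and distinct from it, since the only component of $f^{-1}(\gamma_{f(A)})$ lying near the core of a rotation annulus over $A$ is the corresponding $\Gamma_0$-curve itself (the rotation annuli over $A$ are mapped isomorphically), and curves outside the rotation annuli cannot be isotopic to $\gamma_A$ across a piece boundary. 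Hence $\gamma=\beta_\gamma$.

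For part 2, I would use part 1 together with the invariance property from Lemma \ref{b2a0}: since $\gamma=\beta_\gamma$ for every $\gamma\in\partial_0(S)$, such a $\gamma$ is a boundary curve of $E_S$, and $f(\gamma)=\gamma_{f(A)}\in\Gamma_0$ is a boundary curve of $f_*(S)=f(E_S)$. Thus $\partial_0$-curves are carried by $f$ to $\partial_0$-curves of the image piece, so $f_*$ maps the (finite) set of $\mathcal{S}$-pieces having a $\Gamma_0$-boundary curve into itself. Moreover, since each $\gamma_A$ is $f$-periodic (because $A$ is $f$-periodic and $f^p(\gamma_A)=\gamma_A$), the piece $S$ sharing $\gamma_A$ on its boundary returns: the two $\mathcal{S}$-pieces $S_{\gamma_A}^{\pm}$ adjacent to $\gamma_A$ are permuted by $f_*$ in step with the period of $A$ (one has to check $f_*$ respects the two sides, which follows from orientation and the fact that $f$ is an orientation-preserving local homeomorphism near $\gamma_A$). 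Since there are finitely many pieces, the orbit of $S$ under $f_*$ is eventually periodic, and the presence of a periodic boundary curve $\gamma_A$ forces $S$ itself to be periodic, not merely preperiodic — because $f_*$ restricted to the finite set of sides of the finite $\Gamma_0$-orbit is a bijection onto that set, hence a permutation.

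For part 3, once we know $S$ and $f_*(S)$ are periodic and that $f$ sends $\partial_0(S)$ into $\partial_0(f_*(S))$ via $\gamma\mapsto f(\gamma)$, I would show this map is a bijection. Injectivity: if $f(\gamma_{A_1})=f(\gamma_{A_2})=\gamma_B$ with $\gamma_{A_1},\gamma_{A_2}\in\partial(E_S)$, then $A_1,A_2$ both map onto $B$; but distinct rotation annuli have disjoint closures, and $f$ restricted to the closure of $E_S$ is injective on each boundary curve — in fact, since $E_S$ is an $\mathcal{E}$-piece mapped by $f$ homeomorphically onto an $\mathcal{S}$-piece (up to the identifications in Lemma \ref{b2a2}), distinct boundary curves of $E_S$ cannot collapse to the same boundary curve of $f(E_S)$. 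Surjectivity: given $\gamma_B\in\partial_0(f_*(S))$, periodicity of $S$ lets us use the inverse permutation — applying $f_*^{\,\text{period}-1}$ we locate a $\Gamma_0$-curve on $\partial(S)$ mapping to $\gamma_B$; alternatively, a direct counting argument via the permutation structure on the finite $\Gamma_0$-orbit gives $\#\partial_0(S)=\#\partial_0(f_*(S))$ for every $S$ in the cycle. The main obstacle I anticipate is part 1 — carefully ruling out the possibility that $\beta_\gamma\neq\gamma$ for $\gamma\in\Gamma_0$, i.e. showing no nontrivial "parallel annulus" can form adjacent to a rotation-annulus core curve; this requires tracking the preimages of $\gamma_A$ explicitly using conformality of $f$ on rotation annuli and the disjointness of distinct rotation-domain cycles, rather than any soft homotopy argument.
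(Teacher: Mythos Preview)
Your overall structure is right, but you are missing the one-line observation that makes part~1 immediate, and your part~3 contains an incorrect claim.

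For part~1, the key fact you overlook is that $\gamma\in\Gamma_0$ implies $\gamma\subset P$: the curve $\gamma=\gamma_A$ lies inside the rotation annulus $A$, and $A\subset R_f\subset P$ by the definition of the marked set. Now recall that each component of $S\setminus E_S$ is either an annulus contained in $S-P$ or a disk with at most one point of $P$. If $\gamma\neq\beta_\gamma$, then $\gamma$ lies in such an annular component (the one bounded by $\gamma$ and $\beta_\gamma$), which is contained in $S-P$; but $\gamma\subset P$, a contradiction. That is the entire argument. Your attempt to track preimage curves and argue that nothing else is isotopic to $\gamma_A$ is unnecessary, and as written it is awkward precisely because $\gamma_A$ itself lies in $P$, so the phrase ``curves isotopic to $\gamma_A$ in $\mathbb{\overline{C}}-P$'' needs careful interpretation. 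The paper's argument bypasses all of this.

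For part~3, your claim that ``$E_S$ is an $\mathcal{E}$-piece mapped by $f$ homeomorphically onto an $\mathcal{S}$-piece'' is false: $f|_{E_S}:E_S\to f(E_S)$ is a branched cover of degree $\deg(f|_{E_S})$, which is typically greater than one, so in general distinct boundary curves of $E_S$ \emph{can} map to the same boundary curve of $f(E_S)$. Injectivity on $\partial_0(S)$ does hold, but for the reason you mention first and then abandon: $f$ permutes the rotation annuli of each cycle bijectively, so $\gamma_A\mapsto\gamma_{f(A)}$ is injective on $\Gamma_0$. The paper avoids the surjectivity argument entirely: once you know $\gamma\mapsto f(\gamma)$ sends $\partial_0(S)$ injectively into $\partial_0(f_*(S))$, you get $\#\partial_0(S)\leq\#\partial_0(f_*(S))\leq\#\partial_0(f_*^2(S))\leq\cdots$, and periodicity of $S$ (part~2) forces all inequalities to be equalities.
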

\begin{proof}
1. Note that each component of $S- E_S$ is either a disk containing
at most one point in $P$, or an annulus in
$\mathbb{\overline{C}}-P$. It follows that  if
$\gamma\in\partial_0(S)$, then $\gamma\subset P$ and
$\gamma=\beta_\gamma$.

2. Take a curve $\gamma\in \partial_0(S)$ and  let $A_\gamma\in
\mathcal{A}$ be the rotation annulus  containing $\gamma$. Then from
1 we see that $S\cap A_\gamma=E_S\cap A_\gamma$. This implies
$f(S\cap A_\gamma)=f_*(S)\cap f(A_\gamma)$. Let $k\geq1$ be the
period of $A_\gamma$. Then we have $S\cap A_\gamma=f^k(S\cap
A_\gamma)=f^k_*(S)\cap f^k(A_\gamma)=f^k_*(S)\cap A_\gamma $. Thus
$f^k_*(S)=S$  and the period of $S$ is a divisor of $k$.

3. It follows from 1 that if $\gamma\in \partial_0(S)$, then
$f(\gamma)\in \partial_0(f_*(S))$. So
$\#\partial_0(S)\leq\#\partial_0(f_*(S))\leq \cdots$. Since $S$ is
$f_*$-periodic (by 2), we have
$\#\partial_0(S)=\#\partial_0(f_*(S))$.
\end{proof}

 It follows from Lemma \ref{b2a} that $\partial_i(S), i\in\{0,1,2\}$ are mutually disjoint and $\partial(S)=\partial_0(S)\sqcup
\partial_1(S)\sqcup\partial_2(S)$.

\begin{rem}
Suppose $\partial_0(S)\neq\emptyset$. For each $\gamma\in
\partial_0(S)$, let ${\rm per}(\gamma)$ be the period of $\gamma$. From  Lemma
\ref{b2a} we see that the $f_*$-period of $S$ is a devisor of ${\rm
gcd}\{{\rm per}(\gamma); \gamma\in\partial_0(S)\}$.
 In particular, if ${\rm gcd}\{{\rm
per}(\gamma); \gamma\in\partial_0(S)\}=1$, then $f_*(S)=S$ and for
any $ \gamma\in\partial_0(S)$ and any $k\geq0$, we have
$f^k(\gamma)\in \partial_0(S)$.

For example, suppose that $(f,P)$ has two cycles of rotation annuli
whose periods are different prime numbers, say $p$ and $q$. If
$\partial_0(S)\neq\emptyset$, then $\#\partial_0(S)$ takes only four
possible values: 1, $p$, $q$ and $p+q$.
\end{rem}

\subsection{Marked disk extension}\label{3-11}

For each $\mathcal{S}$-piece $S$, we denote by
$\mathbb{\overline{C}}(S)$ the Riemann sphere containing $S$. We
always consider that different  $\mathcal{S}$-pieces are embedded
into different copies of   Riemann spheres.

In the following, we will
 extend $f|_{E_S}$ to a  branched
covering $H_S: \mathbb{\overline{C}}(S)\rightarrow
\mathbb{\overline{C}}(f_*(S))$ with $\deg(H_S)=\deg(f|_{E_S})$.
The extension is canonical and unique up to c-equivalence.  If $f$ is  quasi-regular, we may also require
$H_S$ is quasi-regular.
 To
do this, we need to define the map
$H_S:\mathbb{\overline{C}}(S)-E_S\rightarrow
\mathbb{\overline{C}}(f_*(S))-f_*(S)$ such that $H_S|_{\partial
E_S}=f|_{\partial E_S}$. We will define $H_S$ component by
component.

Note that each  component of $\mathbb{\overline{C}}(S)-E_S$ is a
disk. Let $U$ be such a component with boundary curve $\gamma$.

We first deal with the case when $\gamma\in
\partial_0(S)$.
In this case, there is a rotation annulus $A_\gamma$ containing
$\gamma$. Let $k\geq1$ be the period of $A_\gamma$ and $\phi_0:
S\cap A_\gamma\rightarrow \mathbb{A}_R:=\{z\in \mathbb{C};
1<|z|<R\}$ be the conformal map such that $\phi_0 f^k
\phi^{-1}_0(z)=e^{2\pi i \theta}z$ for $z\in \mathbb{A}_R$. For
$1\leq j\leq k-1$, we define a conformal map from $f^j(S\cap
A_\gamma)$ onto $\mathbb{A}_R$ by $\phi_j=\phi_0 f^{k-j}|_{f^j(S\cap
A_\gamma)}$. Then we have the following commutative diagram

 $$
\xymatrix{& S\cap A_\gamma \ar[d]_{\phi_0} \ar[r]^{f} &f(S\cap
A_\gamma)\ar[r]^{f}\ar[d]_{\phi_1}
 & \cdots \ar[r]^{}
&f^{k-1}(S\cap A_\gamma)\ar[r]^{f}\ar[d]_{\phi_{k-1}}
 & S\cap A_\gamma\ar[d]^{\phi_0}\\
& \mathbb{A}_R\ar[r]_{z\mapsto e^{2\pi i \theta}z}
 &\mathbb{A}_R\ar[r]_{id}
& \cdots \ar[r]_{}& \mathbb{A}_R\ar[r]_{id}   & \mathbb{A}_R
 }
$$

Let $\mathbb{D}_R=\{z\in \mathbb{C}; |z|<R\}$. For $0\leq j<k$, we
consider the disk $\Delta_j$ obtained by gluing $f^j(S\cap
A_\gamma)$ and $\mathbb{D}_R$ via the map $\phi_j$. The disk
$\Delta_j$ inherits a natural complex structure from $\mathbb{D}_R$
since $\phi_j$ is holomorphic.

\begin{figure}[h]
\begin{center}
\includegraphics[height=10cm]{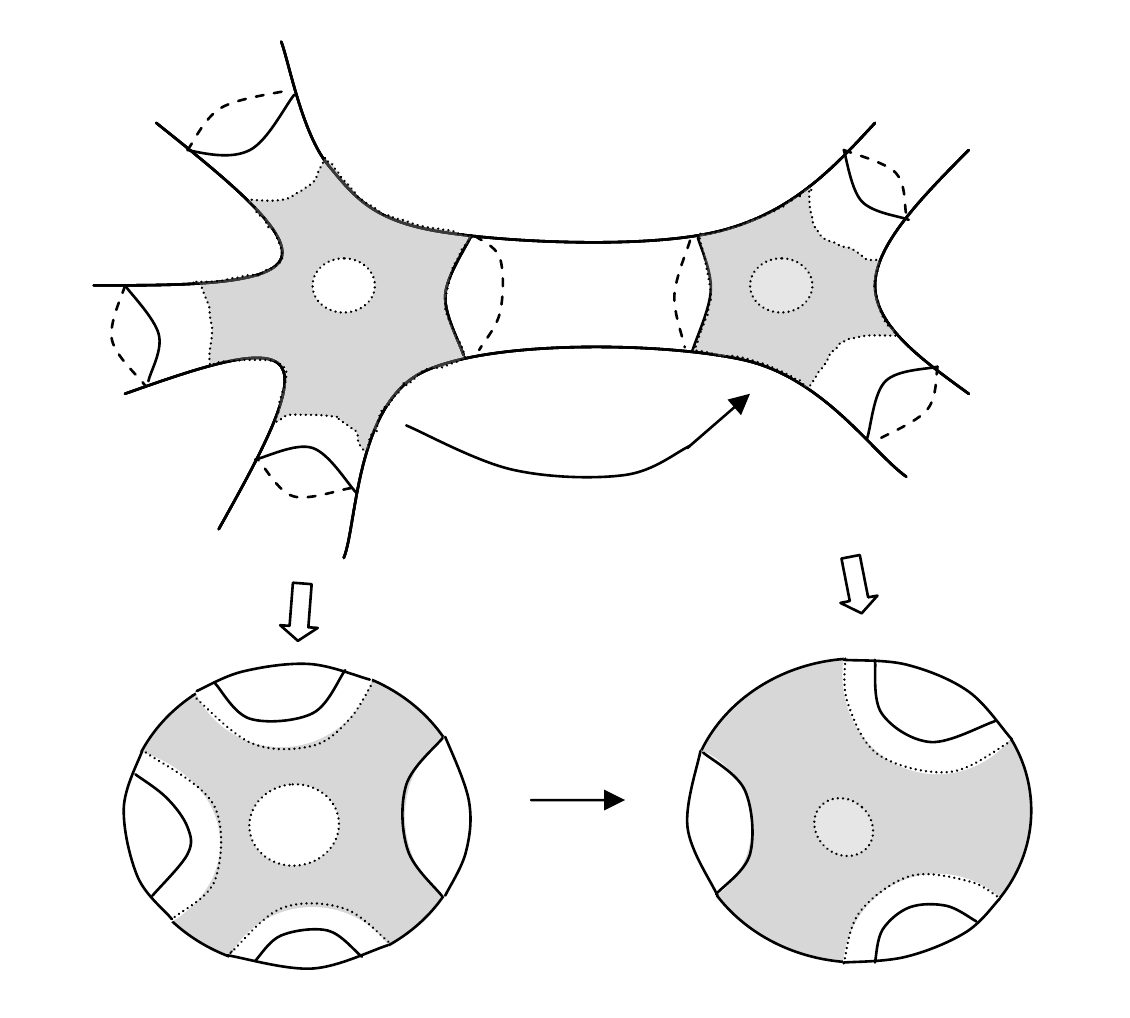}
 \put(-70,170){$\gamma_6$}\put(-70,240){$\gamma_7$}
\put(-140,200){$\gamma_5$}
\put(-160,160){$f_*$}\put(-165,200){$\bullet$}
\put(-190,200){$\gamma_4$}\put(-290,190){$\gamma_2$}\put(-210,233){$S$}
\put(-250,195){$E_S$} \put(-225,200){$\bullet$}
\put(-110,190){$E_{f_*(S)}$}\put(-130,233){$f_*(S)$}
\put(-265,260){$\gamma_1$}\put(-240,150){$\gamma_3$}
\put(-70,20){$\gamma_6$}\put(-60,20){$\bullet$}\put(-65,80){$\gamma_7$}\put(-60,90){$\bullet$}
\put(-120,62){$\gamma_5$} \put(-120,50){$\bullet$}
\put(-216,50){$\gamma_4$}\put(-195,52){$\bullet$}\put(-275,50){$\gamma_2$}\put(-280,50){$\bullet$}
\put(-255,30){$E_S$}\put(-235,50){$\bullet$}
\put(-65,50){$E_{f_*(S)}$}
\put(-250,85){$\gamma_1$}\put(-240,90){$\bullet$}
\put(-230,25){$\gamma_3$}\put(-235,16){$\bullet$}
\put(-244,-5){$\mathbb{\overline{C}}(S)$}\put(-95,-5){$\mathbb{\overline{C}}(f_*(S))$}
\put(-170,70){$H_S$} 
 \caption{Marked disk extension. Here $\partial S=\gamma_1\cup\gamma_2\cup\gamma_3\cup\gamma_4$,
 $\partial f_*(S)=\gamma_5\cup\gamma_6\cup\gamma_7$. Marked points are labeled by `$\bullet$'.}
\end{center}\label{f5}
\end{figure}

The map $H_{f_*^j(S)}: \Delta_j\rightarrow\Delta_{j+1}$ defined by
\begin{equation*}
H_{f_*^j(S)}(z)=\begin{cases}
 f(z),\ \  &z\in f^j(S\cap A_\gamma),\ 0\leq j<k,\\
e^{2\pi i \theta}z,\ \ &z\in\mathbb{D},\ j=0,\\
z, \ \ &z\in \mathbb{D},\ 1\leq j<k.
\end{cases}
\end{equation*}
 is  a holomorphic extension of $f|_{E_{f_*^j(S)}}$ along the
boundary curve $f^j(\gamma)\in
\partial_0(f_*^j(S))$. We call $\Delta_j$ a  {\it holomorphic
disk } of $H_{f_*^j(S)}$. This construction allows us  to define the
extensions of $f|_{E_{S}},\cdots,f|_{E_{f_*^{l-1}(S)}}$ (where $l$
is the $f_*$-period of $S$)
 along the curves in $\partial_0(S)\cup\cdots\cup
\partial_0(f_*^{l-1}(S))$ at the same time. We denote by
$\Delta_j^0\subset\Delta_j$ the sub-disk of $\Delta_j$ with boundary
curve $f^j(\gamma)$,  $0_j$ the center of $\Delta_j$. In this case,
we get a marked disk $(\Delta_j^0,0_j)$.

Now, we consider the case when $\gamma\in \partial(E_S)\setminus\partial_0(S)$. Note that either $U\subset S$ and $U$
contains at most one point in $P$, or $U$ contains exactly one
component $V$ of $\mathbb{\overline{C}}(S)-S$. In the former case,
if $U$ contains a marked point $p\in P$, then we get a marked disk
$(U,p)$; if $U\cap P=\emptyset$, then we don't mark any point in
$U$. In the latter case, we mark a  point $p\in V$ and get two
marked disks $(U,p)$ and $(V,p)$.

Now we extend $f|_{E_S}$ to $U$ in the following fashion:

We require that $H_S$ maps $U$ onto $(W,q)$ with
$\deg(H_S|_U)=\deg(f|_{\partial U})$, where $(W,q)$ is the marked
disk of $\mathbb{\overline{C}}(f_*(S))-f_*(S)$ whose boundary curve
is $f(\partial U)$. If $U$ contains a marked point $p$, we require
further $H(p)=q$ and the local degree of $H_S$ at $p$ is equal to
$\deg(f|_{\partial U})$. If $U$ contains no marked point, we require
that $q$ is the only possible critical value (this implies that $U$
contains at most one ramification point of $H_S$).

In this way, for each $\mathcal{S}$-piece $S$, we can get an
extension  $H_S: \mathbb{\overline{C}}(S)\rightarrow
\mathbb{\overline{C}}(f_*(S))$ of $f|_{E_S}$. Let $Z(S)=\{p; (V,p) \text{ is a marked disk in  } \mathbb{\overline{C}}(S)-S\}$,  ${D}(S)$ be  the
union of all  holomorphic   disks of $H_S$. Note that if
$\partial_0(S)=\emptyset$, then ${D}(S)=\emptyset$. Set
$$P(S)=(P\cap S)\cup Z(S)\cup {D}(S).$$ We call
$(\mathbb{\overline{C}}(S), P(S))$  a marked sphere of
$\mathbb{\overline{C}}(S)$. By the construction of $H_S$, we see
that $H_S(P(S))\subset P(f_*(S))$.

We know that every $\mathcal{S}$-piece is eventually periodic under
the map $f_*$. Let $n$ be the number of all $f_*$-cycles of
$\mathcal{S}$-pieces. These cycles are listed as follows:
$$S_\nu\mapsto f_*(S_\nu)\mapsto\cdots\mapsto f_*^{p_\nu-1}(S_\nu)\mapsto f_*^{p_\nu}(S_\nu)=S_\nu,\ \   1\leq \nu\leq n,$$
where $S_\nu$ is a representative of the $\nu$-th  cycle and $p_\nu$
is the period of  $S_\nu$.

Set $$h_\nu=H_{f_*^{p_\nu-1}(S_\nu)}\circ\cdots\circ
H_{f_*(S_\nu)}\circ H_{S_\nu}, \ P_\nu=P(S_\nu),\ \ 1\leq \nu\leq
n.$$
Then $h_\nu: \mathbb{\overline{C}}(S_\nu)\rightarrow
\mathbb{\overline{C}}(S_\nu)$ is a branched covering
with $h_\nu(P_\nu)\subset P_\nu$.

 These resulting maps $(h_1,P_1),\cdots,(h_n,P_n)$
can be considered as  the renormalizations of the original map
$(f,P)$. There are three types of them:

$\bullet$  $\partial_0(S_\nu)\neq\emptyset$ or $S_\nu$ contains at
least one rotation disk of $(f,P)$. In this case, $(h_\nu, P_\nu)$
has at least one cycle of rotation disks,
 so $(h_\nu, P_\nu)$ is a Siegel
map. Moreover, a curve $\gamma\in
\partial_0(S_\nu)$  in a rotation annulus of $(f,P)$ with
period $p$ and rotation number $\theta$ becomes a periodic curve in
a rotation disk of $(h_\nu,P_\nu)$, with period $p/p_\nu$ and
rotation number $\theta$. One may verify that the number of these
resulting Siegel maps is at least two, and at most
$2n_{RA}(f)+n_{RD}(f)$.

$\bullet$  $\partial_0(S_\nu)=\emptyset$,  $S_\nu$ contains no
rotation disk of $(f,P)$ and $\deg(h_\nu)>1$. In this case, $P_\nu$
is a finite set and $(h_\nu, P_\nu)$ is a Thurston map.

$\bullet$  $\partial_0(S_\nu)=\emptyset$,  $S_\nu$ contains no
rotation disk of $(f,P)$ and $\deg(h_\nu)=1$. In this case, $(h_\nu,
P_\nu)$ is a homeomorphism of $\mathbb{\overline{C}}(S_\nu)$ and
$h_\nu(P_\nu)= P_\nu$. So every point of $P_\nu$ is periodic.
Moreover, for any $S\in\{S_\nu,f_*(S_\nu),\cdots,
f_*^{p_\nu-1}(S_\nu)\}$, each component of $S-E_S$ is an annulus.

\

Let $\Lambda$ be the index set consisting of all
$\nu\in\{1,\cdots,n\}$ such that $\deg(h_\nu)>1$. That is, for each
$\nu\in \Lambda$, $(h_\nu,P_\nu)$ is either a Siegel map or a
Thurston map. Let $\Lambda^*=\{1,\cdots,n\}-\Lambda$.

   We use the
following notation to record the above decomposition and marked disk
extension procedure:
$${\rm Dec}(f,P)=\Bigg(\bigoplus_{\nu\in \Lambda\cup\Lambda^*}(h_\nu, P_\nu)\Bigg)_\Gamma.$$

\begin{lem}\label{b3aa}
If $\lambda(\Gamma,f)<1$, then

1.  For any $1\leq\nu\leq n$, every point in
$Z(S_\nu)$ is eventually
mapped to either the center of some rotation disk or a periodic
critical point of $(h_\nu, P_\nu)$.

2. $\Lambda^*=\emptyset$.

3.  If $(h_\nu, P_\nu)$ is a Thurston map, then the
 signature of the orbifold of $(h_\nu, P_\nu)$ is not $(2,2,2,2)$.
\end{lem}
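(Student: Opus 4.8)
\textbf{Proof strategy for Lemma \ref{b3aa}.}

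The plan is to exploit the hypothesis $\lambda(\Gamma,f)<1$ to control the ramification behavior inside the disks that were glued during the marked disk extension, and to rule out the degenerate cases. First I would observe that the statement is really about what happens over many iterates: a point $p\in Z(S_\nu)$ sits in a component $V$ of $\mathbb{\overline{C}}(S_\nu)-S_\nu$, and under the map $f_*$ (equivalently under the $h_\nu$'s) the sequence of such components is pre-periodic, so $p$ is eventually periodic in the combinatorial sense. For part 1, the point to verify is that if $p$ eventually lands on a cycle which is not a cycle of rotation-disk centers and not a cycle of critical points, then along that cycle we can build a growing sequence of disjoint annuli nested around the curves $\partial V, f(\partial V),\dots$ in $\mathbb{\overline{C}}\setminus P$, whose moduli are forced to be bounded below (since they pull back with degree controlled by the transition matrix $W_\Gamma$). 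If $\lambda(\Gamma,f)<1$, the relevant portion of $W_\Gamma$ acting on the homotopy classes threading these annuli is eventually contracting, so the total modulus available is infinite — contradicting finiteness of $\mathbb{\overline{C}}\setminus P$. Concretely, I would phrase this in the language of the length–area/Gr\"otzsch inequality: the curve family winding around $p$ has extremal length whose $f$-pullback relation is governed by $W_\Gamma$, and $\lambda(\Gamma,f)<1$ makes the extremal lengths diverge unless $p$ is absorbed into a rotation disk or becomes a (super)attracting periodic ramification point of $h_\nu$.

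For part 2, suppose for contradiction that $\nu\in\Lambda^*$, i.e.\ $\deg(h_\nu)=1$, so $(h_\nu,P_\nu)$ is a homeomorphism of the sphere with $h_\nu(P_\nu)=P_\nu$ and, as noted in the excerpt, each component of $S-E_S$ is an annulus for every $S$ in this cycle. This means the curves of $\Gamma$ lying on the boundaries of the pieces in this cycle form an $f$-invariant (up to homotopy) subcollection on which $f$ acts, after passing to the period, by a homeomorphism that is homotopic to the identity rel $P$ — and each such boundary curve $\gamma$ has a preimage component $\beta_\gamma\ne\gamma$ (since these are $\partial_1$-type boundaries, arising because $E_S\subsetneq S$). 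I would extract from this a subset $\Gamma'\subset\Gamma$ closed under the preimage-component relation along which each transition coefficient is exactly $1$ (the covering degree $\deg(f:\beta_\gamma\to\gamma)$ is $1$ because $h_\nu$ restricted there is a homeomorphism), forcing $\lambda(\Gamma',f)\ge 1$ and hence $\lambda(\Gamma,f)\ge\lambda(\Gamma',f)\ge1$ — contradiction. The key bookkeeping is that $\Gamma'$ is a genuine multicurve (non-peripheral, no two homotopic), which follows because it is a subset of $\Gamma$.

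For part 3, assume $(h_\nu,P_\nu)$ is a Thurston map whose orbifold $\mathcal{O}_{h_\nu}$ has signature $(2,2,2,2)$. Such maps are exactly (up to the orbifold structure) the ones covered by a torus endomorphism, and a classical fact (from the Thurston/Douady–Hubbard theory) is that a $(2,2,2,2)$-orbifold Thurston map always carries a canonical stable multicurve $\Sigma_\nu$ with $\lambda(\Sigma_\nu,h_\nu)=1$ — indeed every such map has a degenerate ``Latt\`es-like'' obstruction coming from a core curve of the torus. I would then invoke the reduction identity promised in Section \ref{3-2}, namely that a stable multicurve $\Sigma_\nu$ of $(h_\nu,P_\nu)$ lifts to a $(f,P)$-stable multicurve $\mathcal{C}$ with
$$\lambda(\mathcal{C},f)=\max\Big\{\lambda(\mathcal{C}_\Gamma,f),\sqrt[p_1]{\lambda(\Sigma_1,h_1)},\dots,\sqrt[p_n]{\lambda(\Sigma_n,h_n)}\Big\}\ \ge\ \sqrt[p_\nu]{\lambda(\Sigma_\nu,h_\nu)}=1,$$
so $\lambda(\Gamma,f)\ge\lambda(\mathcal{C},f)\ge 1$ (using that $\mathcal{C}_\Gamma\prec\Gamma$ controls $\mathcal{C}$ together with $\Gamma$), again contradicting $\lambda(\Gamma,f)<1$.

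The main obstacle, I expect, is part 1: turning the algebraic statement $\lambda(\Gamma,f)<1$ into the geometric conclusion that wandering (non-rotational, non-critical) orbits in $Z(S_\nu)$ cannot persist. This requires carefully setting up the nested annuli / extremal length estimate so that the relevant pullback operator is genuinely a submatrix (or a matrix dominated by a submatrix) of $W_\Gamma$, handling the possibility that an orbit visits several pieces and several curves of $\Gamma$ before closing up, and dealing with the cases where a marked point in $Z(S_\nu)$ might a priori be a non-periodic ramification point of $h_\nu$ whose forward orbit stays finite only combinatorially. Parts 2 and 3 are comparatively short given the machinery of Section \ref{3-2} and the standard structure theory of $(2,2,2,2)$-orbifold maps; the only care needed there is to check that the multicurves produced are honest multicurves in $\mathbb{\overline{C}}\setminus P$ and that the degree-one covering relations really do give transition entries equal to $1$.
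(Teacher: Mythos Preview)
Your part 3 argument has two genuine errors. First, the claim that every $(2,2,2,2)$-orbifold Thurston map carries a stable multicurve with $\lambda=1$ is false: such a map is covered by a torus endomorphism $z\mapsto Az+b$, and when the eigenvalues of $A$ are non-real (e.g.\ the Latt\`es example from multiplication by $1+i$ on $\mathbb{C}/\mathbb{Z}[i]$) there is no invariant rational slope, hence no stable curve at all --- recall that with $\#P_\nu=4$ a multicurve contains at most one curve. Second, and more fundamentally, even if you produced an $(f,P)$-stable multicurve $\mathcal{C}$ with $\lambda(\mathcal{C},f)\ge 1$, this does \emph{not} contradict the hypothesis $\lambda(\Gamma,f)<1$: the lemma assumes only that the specific multicurve $\Gamma$ has small eigenvalue, not that $(f,P)$ is unobstructed. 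Your inequality ``$\lambda(\Gamma,f)\ge\lambda(\mathcal{C},f)$'' is simply wrong in general, since $\mathcal{C}$ contains curves (those coming from $\Sigma_\nu$) not homotopic to anything in $\Gamma$.

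The paper's route is far more direct and you have missed its key observation: all three parts follow from a single local-degree computation. Take a periodic $z_0\in Z(S_\nu)$ of $h_\nu$-period $k$ that is not a rotation-disk center; let $\beta\in\partial(S_\nu)\cap\Gamma$ be the boundary curve enclosing $z_0$, and let $\alpha\subset S_\nu$ be the component of $h_\nu^{-k}(\beta)$ homotopic to $\beta$. Then
\[
\deg(h_\nu^k,z_0)=\deg(f^{kp_\nu}\!:\alpha\to\beta)\ \ge\ \lambda(\Gamma,f)^{-kp_\nu}\ >\ 1,
\]
the middle inequality because $1/\deg(f^{kp_\nu}\!:\alpha\to\beta)$ is a summand in the diagonal entry $(W_\Gamma^{kp_\nu})_{\beta\beta}$, and diagonal entries of powers of a non-negative matrix are bounded by powers of the spectral radius. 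Thus $z_0$ is a periodic critical point of $h_\nu$; this single fact gives part 1, forces $\deg(h_\nu)>1$ (part 2), and rules out signature $(2,2,2,2)$ since such an orbifold cannot have $\nu_{h_\nu}(z_0)=\infty$ (part 3). Your extremal-length scheme for part 1 is an elaborate repackaging of exactly this degree estimate; replacing it with the one-line inequality above both simplifies the argument and makes parts 2 and 3 immediate corollaries rather than separate proofs requiring the (not-yet-proved) machinery of Section~\ref{3-2}.
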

\begin{proof}
Since $h_\nu(Z(S_\nu))\subset Z(S_\nu)$ and
$Z(S_\nu)$ is a finite set, we
conclude that every point in
$Z(S_\nu)$ is eventually
periodic under the iterations of $(h_\nu,P_\nu)$.

If $\Gamma=\emptyset$, then $\partial(S_\nu)\subset \Gamma_0$ and
all resulting maps $(h_\nu,P_\nu)$ are Siegel maps. The marked disk
extension procedure implies that every point in
$Z(S_\nu)$ is   the center of
some rotation disk. The conclusions follows immediately in this
case.

 In the following, we assume $\Gamma\neq\emptyset$. Let
$z_0$ be a periodic point in
$Z(S_\nu)$ with period $k$.
Suppose that $z_0$ is not  the center of any  rotation disk, and let
$\beta$ be the boundary curve of $S_\nu$ that encloses $z_0$. Then
there is a unique component of $h_\nu^{-k}(\beta)$, say $\alpha$,
contained in  $S_\nu$ and  homotopic to $\beta$ in
$\mathbb{\overline{C}}(S_\nu)-P_\nu$. Thus
$${\rm deg}(h_\nu^k, z_0)=
{\rm deg}(h_\nu^k:\alpha\rightarrow\beta)={\rm
deg}(f^{kp_\nu}:\alpha\rightarrow\beta)\geq
\lambda(\Gamma,f)^{-kp_\nu}>1.$$ This implies that $z_0$ lies in a
critical cycle and $\deg(h_\nu)>1$.  It follows that
$\Lambda^*=\emptyset$ and there is no $(2,2,2,2)$-type Thurston map
among  $(h_\nu, P_\nu)_{\nu\in \Lambda}$.
\end{proof}

\section{Combination part: decompositions of stable multicurve} \label{3-2}

In this section, we will  prove the following:

\begin{thm} \label{b3a} Let $(f,P)$ be a Herman map,  and
$${\rm Dec}(f,P)=\Bigg(\bigoplus_{\nu\in \Lambda\cup\Lambda^*}(h_\nu, P_\nu)\Bigg)_\Gamma.$$
Then $(f,P)$ has no Thurston obstructions if and only if
$\lambda(\Gamma,f)<1$  and for each $\nu\in \Lambda$,
$(h_\nu,P_\nu)$ has no Thurston obstructions.
\end{thm}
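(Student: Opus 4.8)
The plan is to prove the two implications of Theorem~\ref{b3a} by translating Thurston obstructions of $(f,P)$ back and forth with Thurston obstructions of the renormalizations $(h_\nu,P_\nu)$ via the pieces $\mathcal{S}$ and the multicurve $\Sigma=\Gamma_0\cup\Gamma$. The central tool will be the "reduction identity" for leading eigenvalues announced in the introduction (Section~\ref{3-2}),
\[
\lambda(\mathcal{C},f)=\max\Big\{\lambda(\mathcal{C}_\Gamma,f),\ \lambda(\Sigma_1,h_1)^{1/p_1},\ \dots,\ \lambda(\Sigma_n,h_n)^{1/p_n}\Big\},
\]
which should be established first as a separate lemma. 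To get there I would set up the correspondence between curves: given a stable multicurve $\mathcal{C}$ of $(f,P)$, a standard isotopy argument (pushing $\mathcal{C}$ off $\cup\Sigma$ using that $\Gamma$ is $(f,P)$-stable) lets me assume every curve of $\mathcal{C}$ lies in the interior of some $\mathcal{S}$-piece or is homotopic to a curve of $\Gamma$. This partitions $\mathcal{C}$ into a sub-multicurve $\mathcal{C}_\Gamma\prec\Gamma$ and, for each cycle of pieces $S_\nu\mapsto\cdots\mapsto f_*^{p_\nu}(S_\nu)=S_\nu$, a multicurve $\Sigma_\nu$ on the marked sphere $(\mathbb{\overline{C}}(S_\nu),P_\nu)$. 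Conversely one checks that a stable $\Sigma_\nu$ on $(h_\nu,P_\nu)$, together with its "spreading around" $\Gamma_0\cup\Gamma$, generates an $(f,P)$-stable $\mathcal{C}$. The key point making the curve bookkeeping work is that non-peripheral curves of $f^{-1}(\Sigma)$ inside a piece $S$ are, by $(f,P)$-stability of $\Gamma$ and Lemma~\ref{b2a2}, either homotopic to $\partial(E_S)$ or peripheral/null-homotopic there, so preimages under $f$ decompose cleanly through the $\mathcal{E}$-pieces.

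For the eigenvalue identity itself, I would write the $(f,P)$-transition matrix $W_\mathcal{C}$ in block form according to the decomposition $\mathcal{C}=\mathcal{C}_\Gamma\sqcup\bigsqcup_\nu\mathcal{C}_\nu$ (where $\mathcal{C}_\nu$ is the part of $\mathcal{C}$ meeting the $\nu$-th cycle of pieces). Because a preimage of a curve in piece $S$ lives in $E_{S'}$ for pieces $S'$ with $f_*(S')=S$, and because $f_*$ is pre-periodic, the matrix is block upper-triangular up to reordering: the "$\Gamma_0$-to-$\Gamma$" transitions and the inter-cycle transitions only feed forward, so $\mathrm{sp}(W_\mathcal{C})$ equals the maximum of the spectral radii of the diagonal blocks. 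The diagonal block on $\mathcal{C}_\Gamma$ is exactly $W_{\mathcal{C}_\Gamma}$ for $(f,P)$ restricted to $\Gamma$-classes; the diagonal block on $\mathcal{C}_\nu$ is, after identifying $\mathcal{C}_\nu$ with $\Sigma_\nu$ on $\mathbb{\overline{C}}(S_\nu)$, the $p_\nu$-th transition matrix — i.e. $W_{\Sigma_\nu,h_\nu}$ records one full return trip around the cycle, so its spectral radius is $\lambda(\Sigma_\nu,h_\nu)$ while the $(f,P)$-block iterated $p_\nu$ times gives that, whence the $p_\nu$-th root. I would use $\mathrm{sp}(A)=\lim\sqrt[m]{\|A^m\|}$ and Perron--Frobenius (as recalled in the Notations) to pass between $\mathrm{sp}$ and leading eigenvalue, and to handle the fact that the blocks are non-negative.

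With the identity in hand the theorem is almost immediate. If $(f,P)$ has no Thurston obstruction, then in particular the stable multicurve $\Gamma$ itself is not obstructing, so $\lambda(\Gamma,f)<1$; and if some $(h_\nu,P_\nu)$ with $\nu\in\Lambda$ had an obstruction $\Sigma_\nu$, the generated $(f,P)$-stable $\mathcal{C}$ would satisfy $\lambda(\mathcal{C},f)\ge\lambda(\Sigma_\nu,h_\nu)^{1/p_\nu}\ge 1$, a contradiction. Conversely, suppose $\lambda(\Gamma,f)<1$ and every $(h_\nu,P_\nu)$, $\nu\in\Lambda$, is unobstructed; given any $(f,P)$-stable $\mathcal{C}$, decompose it as above and apply the identity: $\lambda(\mathcal{C}_\Gamma,f)\le\lambda(\Gamma,f)<1$ since $\mathcal{C}_\Gamma\prec\Gamma$ (monotonicity of the leading eigenvalue under $\prec$ among stable multicurves, which I would record as a small lemma), and each $\lambda(\Sigma_\nu,h_\nu)<1$ because $\Sigma_\nu$ is a stable multicurve of the unobstructed map $(h_\nu,P_\nu)$ — here one must note $\Sigma_\nu$ is genuinely a multicurve on $(\mathbb{\overline{C}}(S_\nu),P_\nu)$, i.e. its curves are non-peripheral there, which follows from $\mathcal{C}$ being non-peripheral in $\mathbb{\overline{C}}-P$ together with the definition of $P_\nu=P(S_\nu)$. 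For $\nu\in\Lambda^*$ there is nothing to check since $\deg h_\nu=1$ forces the corresponding block to be empty. Hence $\lambda(\mathcal{C},f)<1$ for all stable $\mathcal{C}$, i.e. $(f,P)$ is unobstructed.

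The main obstacle I expect is the curve-correspondence step: making precise, with the pieces no longer having disjoint closures (so a single curve of $\Gamma$ bounds two pieces $S_\gamma^\pm$ and $f$-preimages inside a piece need not be compactly contained), that every stable $\mathcal{C}$ can be isotoped into "standard position" relative to $\Sigma$ and that the induced $\Sigma_\nu$ are stable under $h_\nu$ — and, in the reverse direction, that the $\mathcal{C}$ spread around from a stable $\Sigma_\nu$ is itself stable and that no homotopy classes are lost or doubled in the passage. This is exactly the place where the present situation diverges from \cite{CT1}, and it is where the bulk of Section~\ref{3-2} will have to go.
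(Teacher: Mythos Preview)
Your strategy is the paper's: reduce to a block-triangular transition matrix and read off both implications from the resulting identity. But your partition $\mathcal{C}=\mathcal{C}_\Gamma\sqcup\bigsqcup_\nu\mathcal{C}_\nu$ is not exhaustive, and this is a genuine gap. After isotoping $\mathcal{C}$ into the interiors of $\mathcal{S}$-pieces, some curves may lie in \emph{strictly $f_*$-preperiodic} pieces; these are neither homotopic into $\Gamma$ nor sitting in any periodic cycle, so they form an extra block $\mathcal{C}_s$ that you have omitted. The paper's decomposition of $W_\mathcal{C}$ has this additional diagonal block $W_s$, and one must prove $\lambda(\mathcal{C}_s,f)=0$ separately: with $M$ the maximal preperiod among preperiodic $\mathcal{S}$-pieces, every non-peripheral curve in $f^{-M}(\gamma)$ for $\gamma\in\mathcal{C}_s$ is forced to be homotopic into $\Gamma$ (otherwise one finds a preperiodic piece of preperiod $>M$), so $W_s^M=0$. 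Without this nilpotency step the reduction identity does not follow.

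Two smaller points. Your claim that $\deg h_\nu=1$ forces the $\nu$-block to be empty is false: a homeomorphism permuting non-peripheral curves gives $\lambda(\Sigma_\nu,h_\nu)=1$, which would wreck the sufficiency argument. The correct reason one may ignore $\Lambda^*$ is that the hypothesis $\lambda(\Gamma,f)<1$ already forces $\Lambda^*=\emptyset$ (Lemma~\ref{b3aa}). Finally, the ``main obstacle'' you flag (pieces touching along $\Gamma_0$) is not where the difficulty lies in this section; once curves are pushed into piece interiors --- the paper does this by passing to a deep preimage $\mathcal{C}_n\subset f^{-n}(\mathcal{C}_0)$ (Lemma~\ref{b3b}) rather than an abstract isotopy --- the bookkeeping is clean. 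The delicate step in the necessity direction is instead to show that the curves in $\bigcup_k f^{-k}(\Sigma)$, generated from a stable $\Sigma$ on $(h_\nu,P_\nu)$, are pairwise homotopically disjoint when not homotopic, so that they actually assemble into an $(f,P)$-multicurve.
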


Note that if $(f,P)$ has no Thurston obstructions or
$\lambda(\Gamma,f)<1$, then $\Lambda^*=\emptyset$ (see Lemma
\ref{b3aa}).

The proof of the  `sufficiency' of Theorem \ref{b3a} is based on the
decomposition of   $(f,P)$-stable multicurves. We will show that
every $(f,P)$-stable multicurve  contains an `essential'
submulticurve (Lemma \ref{b3b}), and every such essential
submulticurve can be  decomposed into a `$\Gamma$-part' multicurve
together with a $(h_\nu,P_\nu)$-stable multicurve for each
$\nu\in[1,n]$.
 The important fact of this decomposition is that
the leading eigenvalues of the transition matrices
satisfy the so-called `reduction identity' (Theorem \ref{b3c}).

To prove  the  `necessity' of Theorem \ref{b3a}, we will show that
every $(h_\nu,P_\nu)$-stable multicurve $\Sigma$ can generate a
$(f,P)$-stable multicurve $\mathcal{C}$ with
$\lambda(\Sigma,h_\nu)\leq\lambda(\mathcal{C},f)^{p_\nu}$.

\begin{lem}[`Essential' submulticurve]\label{b3b}   Let $\mathcal{C}_0$ be a  $(f,P)$-stable multicurve,  then
there is a  $(f,P)$-stable multicurve $\mathcal{C}$, such that

1. $\mathcal{C}$ is homotopically contained in $\mathcal{C}_0$.

2. Each curve of $\mathcal{C}$ is contained in the interior of some
$\mathcal{S}$-piece.

3. $\lambda(\mathcal{C},f)=\lambda(\mathcal{C}_0,f)$.
\end{lem}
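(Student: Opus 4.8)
The plan is to split $\mathcal C_0$ according to how its curves sit relative to $\cup\Sigma$ and to show that the ``crossing'' part contributes nothing to the leading eigenvalue. Two preliminary observations will be used. First, $\cup\Gamma_0\subset\cup\mathcal A\subset R_f\subset P$, so a curve of $\mathcal C_0$ (which lies in $\overline{\mathbb C}-P$) meets $\cup\Sigma$ only along $\cup\Gamma$; thus the relevant dichotomy is with respect to $\cup\Gamma$. Second, by the ``Invariant'' clause of Lemma \ref{b2a0} we have $f(\gamma)\in\Sigma$ for every $\gamma\in\Gamma$, while $f(\gamma_A)=\gamma_{f(A)}\in\Gamma_0$ for $\gamma_A\in\Gamma_0$; hence $f(\cup\Sigma)\subset\cup\Sigma$ and therefore $\cup\Sigma\subset f^{-1}(\cup\Sigma)\subset f^{-2}(\cup\Sigma)\subset\cdots$. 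Together with Lemma \ref{b2a2} this says that the $\mathcal E$-pieces refine the $\mathcal S$-pieces inessentially: every $\mathcal E$-piece lies in a unique $\mathcal S$-piece, and the complement of the parallel $\mathcal E$-piece in its $\mathcal S$-piece consists of disks carrying at most one marked point and of annuli disjoint from $P$.

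Now partition $\mathcal C_0=\mathcal C_0^{\mathrm{in}}\sqcup\mathcal C_0^{\mathrm{cr}}$, where $\mathcal C_0^{\mathrm{in}}$ consists of those $\gamma\in\mathcal C_0$ homotopic in $\overline{\mathbb C}-P$ into the interior of some $\mathcal S$-piece (equivalently, with zero geometric intersection number with $\cup\Sigma$), and $\mathcal C_0^{\mathrm{cr}}$ of those essentially crossing $\cup\Gamma$. The key structural step is: \emph{every component of $f^{-1}(\gamma)$ with $\gamma\in\mathcal C_0^{\mathrm{in}}$ is again homotopic into an $\mathcal S$-piece interior.} Indeed, isotope $\gamma$ into $\operatorname{int}S\subset\overline{\mathbb C}-\cup\Sigma$; since $\cup\Sigma\subset f^{-1}(\cup\Sigma)$, each component of $f^{-1}(\gamma)$ then lies in $\overline{\mathbb C}-f^{-1}(\cup\Sigma)$, i.e.\ in the interior of a single $\mathcal E$-piece, hence in the interior of the $\mathcal S$-piece containing that $\mathcal E$-piece. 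Because $\mathcal C_0$ is $(f,P)$-stable, each non-peripheral component of $f^{-1}(\gamma)$ is homotopic to a curve of $\mathcal C_0$, and by the preceding sentence that curve lies in $\mathcal C_0^{\mathrm{in}}$. Hence $\mathcal C_0^{\mathrm{in}}$ is $(f,P)$-stable, and, ordering $\mathcal C_0$ as $(\mathcal C_0^{\mathrm{in}},\mathcal C_0^{\mathrm{cr}})$, its transition matrix is block upper triangular,
\[
W_{\mathcal C_0}=\begin{pmatrix} W_{\mathcal C_0^{\mathrm{in}}} & * \\[2pt] 0 & W^{\mathrm{cr}}\end{pmatrix},
\]
so $\lambda(\mathcal C_0,f)=\max\{\lambda(\mathcal C_0^{\mathrm{in}},f),\ \operatorname{sp}(W^{\mathrm{cr}})\}$ by Perron--Frobenius.

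Choosing inside each homotopy class of $\mathcal C_0^{\mathrm{in}}$ a representative in minimal position with respect to $\cup\Sigma$ — which, having zero geometric intersection with $\cup\Sigma$, can be isotoped off $\cup\Sigma$ into the interior of one $\mathcal S$-piece — one obtains a multicurve $\mathcal C$ satisfying conditions (1) ($\mathcal C\prec\mathcal C_0^{\mathrm{in}}\prec\mathcal C_0$) and (2), with $\lambda(\mathcal C,f)=\lambda(\mathcal C_0^{\mathrm{in}},f)$. Thus (3) is reduced to $\operatorname{sp}(W^{\mathrm{cr}})\le\lambda(\mathcal C_0^{\mathrm{in}},f)$, and it suffices to show that $W^{\mathrm{cr}}$ is nilpotent, i.e.\ that no homotopy class in $\mathcal C_0^{\mathrm{cr}}$ is recurrent under taking preimages.

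\textbf{Main obstacle.} The delicate point is exactly this last statement about $\mathcal C_0^{\mathrm{cr}}$. Suppose some $\gamma\in\mathcal C_0^{\mathrm{cr}}$ were recurrent, so that $f^{-s}(\gamma)$ has a component $\alpha$ homotopic to $\gamma$ with $d=\deg(f^s:\alpha\to\gamma)$. Putting $\cup\Gamma$ in minimal position divides $\gamma$ by $\cup\Gamma$ into $i(\gamma,\cup\Gamma)\ge1$ arcs, which determine a cyclic word in the $\mathcal S$-pieces — the $\mathcal S$-piece itinerary of $\gamma$, invariant under the induced map $f_*$ — and one must control how this itinerary, equivalently the intersection number $i(\gamma,\cup f^{-n}(\Gamma))$, behaves under lifting. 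If $d\ge2$ the lifted itinerary is genuinely longer, so it cannot return to that of $\gamma$; if $d=1$, then $\gamma$ is invariant up to homotopy under the first-return map of a cycle of $\mathcal S$-pieces, which should force $\gamma$ into $\overline{R_f}\subset P$, contradicting $\gamma\subset\overline{\mathbb C}-P$. Making the $d\ge2$ case precise — i.e.\ controlling the cancellations in the lifted itinerary produced by the branching of $f$, and showing $i(\gamma,\cup f^{-n}(\Gamma))\to\infty$ so that every crossing preimage has degree over $\gamma$ tending to infinity and the entries of $(W^{\mathrm{cr}})^n$ tend to $0$ — is the real work. This is precisely where our situation differs from \cite{CT1}: the $\mathcal S$-pieces here touch one another along $\Gamma$ and the $\mathcal E$-pieces need not be compactly contained in them, so the bookkeeping of intersection numbers with the refined multicurves $f^{-n}(\Gamma)$ must be carried out with care.
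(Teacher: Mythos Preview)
Your block-triangular reduction is sound: $\mathcal C_0^{\mathrm{in}}$ is indeed $(f,P)$-stable, and (3) does reduce to showing $W^{\mathrm{cr}}$ is nilpotent. The gap is your proposed attack on the nilpotence. The itinerary/degree argument is both unnecessary and, as sketched, incorrect: in the $d\ge2$ case there is no reason the $\mathcal S$-piece itinerary of a preimage $\alpha$ with respect to $\cup\Gamma$ should be longer than that of $\gamma$ (you are confusing intersections with $\cup\Gamma$ and with $f^{-s}(\cup\Gamma)$), and in the $d=1$ case a degree-one homotopy-periodic curve need not lie in $\overline{R_f}$.

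The paper bypasses this entirely with a one-line observation that also settles your nilpotence claim. Rather than splitting $\mathcal C_0$, it \emph{pulls it back}: set $\mathcal C_n\subset f^{-1}(\mathcal C_{n-1})$ to represent all non-peripheral homotopy classes of $f^{-1}(\mathcal C_{n-1})$. Stability gives $\mathcal C_n\prec\mathcal C_{n-1}$ and the same block shape
\[
W_{\mathcal C_{n}}=\begin{pmatrix} W_{\mathcal C_{n+1}} & * \\ 0 & 0\end{pmatrix},
\]
so $\lambda(\mathcal C_n,f)=\lambda(\mathcal C_0,f)$ for all $n$. Now use precisely your first preliminary observation: $\cup\Gamma_0\subset P$ is disjoint from $\cup\mathcal C_0$, hence $f^{-n}(\cup\Gamma_0)$ is disjoint from $f^{-n}(\cup\mathcal C_0)\supset\cup\mathcal C_n$. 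By the construction of $\Gamma$ in Lemma~\ref{b2a0} there is $N$ with $\cup\Gamma\subset f^{-n}(\cup\Gamma_0)$ for all $n\ge N$; therefore $\cup\mathcal C_n\cap\cup\Sigma=\emptyset$ for $n\ge N$, and $\mathcal C:=\mathcal C_N$ already sits in the interiors of $\mathcal S$-pieces. In particular, every non-peripheral curve in $f^{-N}(\mathcal C_0)$ is homotopic to a curve of $\mathcal C_0^{\mathrm{in}}$, which is exactly $(W^{\mathrm{cr}})^N=0$ --- the nilpotence you were after, obtained without any itinerary bookkeeping.
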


\begin{proof} For $n\geq1$, we define a multicurve $\mathcal{C}_n$ inductively: $\mathcal{C}_n\subset f^{-1}(\mathcal{C}_{n-1})$
and $\mathcal{C}_n$ represents all homotopy classes of
non-peripheral curves of $f^{-1}(\mathcal{C}_{n-1})$. Since
$\mathcal{C}_0$ is a $(f,P)$-stable multicurve, we conclude that all
$\mathcal{C}_n$ are $(f,P)$-stable,  and $\mathcal{C}_n$ is
homotopically contained in $\mathcal{C}_{n-1}$. Let $W_n$ be the
$(f,P)$-transition matrix of $\mathcal{C}_n$ for $n\geq0$, then

$$W_{n}=\left(
\begin{array} {cc}
W_{n+1}  &   *\\
O  &    O
\end{array}
\right).$$

Thus
$\lambda(\mathcal{C}_0,f)=\lambda(\mathcal{C}_1,f)=\lambda(\mathcal{C}_2,f)=\cdots$.
By the construction of $\Gamma$, there is an integer $N\geq0$ such
that $\Gamma\subset f^{-n}(\Gamma_0)$ for all $n\geq N$, where $\Gamma_0$ is the
choice of a  collection of $f$-periodic curves in the rotation
annuli (see the previous section). Since  $\cup\Gamma_0$ has no
intersection with $\cup\mathcal{C}_0$,  we conclude that
$f^{-n}(\cup\Gamma_0)$ has no intersection with
$f^{-n}(\cup\mathcal{C}_0)$ for all $n\geq1$.  Thus when $n\geq N$,
we have $\cup\mathcal{C}_n\subset f^{-n}(\cup\mathcal{C}_0)
 \subset\mathbb{\overline{C}}\setminus f^{-n}(\cup\Gamma_0)\subset\mathbb{\overline{C}}\setminus \cup(\Gamma\cup\Gamma_0)$. This implies that  each curve of
$\mathcal{C}_n$ is contained in the interior of some
$\mathcal{S}$-piece. The proof is completed if we set
$\mathcal{C}=\mathcal{C}_n$ for some $n\geq N$.
\end{proof}




\begin{thm} [Decomposition of stable multicurve] \label{b3c}    Let $\mathcal{C}$ be a  $(f,P)$-stable
multicurve. Suppose that  each curve of  $\mathcal{C}$ is contained
in the interior of some $\mathcal{S}$-piece. Let \bess
&\mathcal{C}_\Gamma=\{\gamma \in \mathcal{C}; \gamma \text{ is
homotopic in } \mathbb{\overline{C}}-P \text{
to a curve  of } \Gamma \},&\\
&\Sigma_\nu=\{\gamma \in \mathcal{C}-\mathcal{C}_\Gamma; \gamma
\text{ is contained in } S_\nu\}, \  \nu\in
\Lambda\cup\Lambda^*=[1,n].& \eess
Then $\mathcal{C}_\Gamma$ is a $(f,P)$-stable multicurve,
$\Sigma_\nu$ is a $(h_\nu,P_\nu)$-stable multicurve for each
$\nu\in[1,n]$, and we have the following reduction identity:
$$\lambda(\mathcal{C},f)=\max\Big\{\lambda(\mathcal{C}_\Gamma,f),
\sqrt[p_1]{\lambda(\Sigma_1,h_1)},\cdots,\sqrt[p_n]{\lambda(\Sigma_n,h_n)}\Big\}.$$
\end{thm}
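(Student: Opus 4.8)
The plan is to analyze the $(f,P)$-transition matrix $W_{\mathcal C}$ by sorting the curves of $\mathcal C$ according to which piece of the partition $\{\Gamma_0\cup\Gamma,\ S_1,\dots,S_n\}$ they live in, and then to show that this sorting block-triangularizes (up to a permutation) the matrix $W_{\mathcal C}$, with the diagonal blocks being exactly $W_{\mathcal C_\Gamma}$ and the iterated transition matrices $W_{\Sigma_\nu}(h_\nu^{p_\nu})$. First I would verify that $\mathcal C_\Gamma$, $\Sigma_1,\dots,\Sigma_n$ give a genuine partition of $\mathcal C$ up to homotopy: every curve of $\mathcal C$ lies in the interior of a unique $\mathcal S$-piece by hypothesis, and either it is homotopic to a curve of $\Gamma$ (hence in $\mathcal C_\Gamma$) or it is not, in which case the $\mathcal S$-piece containing it is well-defined up to the $f_*$-cycle it belongs to, so after replacing $\mathcal C$ by a homotopic copy one may assume each non-$\Gamma$ curve sits in exactly one of the representatives $S_\nu$. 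The key dynamical input is the relation between $f^{-1}$ on curves inside an $\mathcal S$-piece $S$ and $H_S^{-1}$ on the marked sphere $\mathbb{\overline C}(S)$: since $\Gamma$ is $(f,P)$-stable and $f|_{E_S}$ coincides with $H_S|_{E_S}$, a non-peripheral component of $f^{-1}(\gamma)$ for $\gamma\subset S$ either is homotopic to a curve of $\Gamma$, or lies in a preimage piece $E_{S'}\subset S'$ with $f_*(S')=S$, i.e. it corresponds under the extension to a non-peripheral component of $H_{S'}^{-1}(\gamma)$ in $\mathbb{\overline C}(S')$. This is the dictionary that lets one identify $\Sigma_\nu$ with a $(h_\nu,P_\nu)$-stable multicurve and compare degrees.

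Next I would establish the stability statements. For $\mathcal C_\Gamma$: any non-peripheral component $\delta$ of $f^{-1}(\gamma)$ with $\gamma\in\mathcal C_\Gamma$ is a non-peripheral component of $f^{-1}$ of a curve homotopic to something in $\Gamma$; by stability of $\mathcal C$ it is homotopic to a curve $\gamma'\in\mathcal C$, and one checks $\gamma'$ must again be homotopic to a curve of $\Gamma$ (using that $\Gamma$ is $(f,P)$-stable and ``maximal'' in the sense of Lemma \ref{b2a0}, so the homotopy classes hit by preimages of $\Gamma$-classes stay inside $\Gamma\cup\Gamma_0$, and $\Gamma_0$ is disjoint from $\mathcal C$). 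For $\Sigma_\nu$: a non-peripheral (in $\mathbb{\overline C}(S_\nu)-P_\nu$) component of $h_\nu^{-1}(\gamma)$, $\gamma\in\Sigma_\nu$, unwinds through the composition $h_\nu=H_{f_*^{p_\nu-1}S_\nu}\circ\cdots\circ H_{S_\nu}$ into a chain of preimage pieces ending at $S_\nu$; tracing it back through the $f^{-p_\nu}$-preimage structure and using stability of $\mathcal C$ in $\mathbb{\overline C}$, it is homotopic to a curve of $\mathcal C$ lying in $S_\nu$ and not in $\mathcal C_\Gamma$ — one must rule out that it becomes peripheral in $\mathbb{\overline C}(S_\nu)$ but non-peripheral in $\mathbb{\overline C}$, which is where the marked set $P_\nu=(P\cap S_\nu)\cup Z(S_\nu)\cup D(S_\nu)$ is designed precisely so that peripherality is detected correctly. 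Then $\Sigma_\nu$ is $(h_\nu,P_\nu)$-stable.

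For the eigenvalue identity I would set up the block structure explicitly. Order the curves of $\mathcal C$ as $\mathcal C_\Gamma$ first, then $\Sigma_1,\dots,\Sigma_n$. A preimage under $f$ of a curve in $\Sigma_\nu\subset S_\nu$ is either in $\mathcal C_\Gamma$, or lies in the unique preimage piece $E_{S'}$ with $f_*(S')=S_\nu$, i.e. in $S'=f_*^{p_\nu-1}(S_\nu)$ in the chosen cycle representatives — so within the $\Sigma$-blocks the matrix $W_{\mathcal C}$ restricted to the non-$\Gamma$ curves is, up to the permutation cyclically rotating each $f_*$-cycle, block-diagonal with one block per cycle, and that block is the $h_\nu$-transition matrix along the cycle. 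Because $(h_\nu,P_\nu)$ is defined as the $p_\nu$-fold composition traversing the whole cycle, the return matrix on $\Sigma_\nu$ is $W_{\Sigma_\nu}(h_\nu)$ and the matrix on the full cycle has spectral radius equal to $\lambda(\Sigma_\nu,h_\nu)^{1/p_\nu}$ — this is the standard fact that a cyclic block matrix $\begin{pmatrix}0&&B_{p}\\B_1&0&\\&\ddots&\\&&B_{p-1}&0\end{pmatrix}$ has spectral radius equal to $\mathrm{sp}(B_p\cdots B_1)^{1/p}$, and one must check the bookkeeping so that $B_p\cdots B_1 = W_{\Sigma_\nu}(h_\nu)$ rather than a conjugate. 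Finally, $W_{\mathcal C}$ in the chosen block order is block upper triangular with diagonal blocks $W_{\mathcal C_\Gamma}$ and the cyclic $\Sigma_\nu$-blocks, because a preimage of a $\mathcal C_\Gamma$-curve is never homotopic to a non-$\Gamma$ curve of $\mathcal C$ (it stays in $\mathcal C_\Gamma$ by the stability argument above), giving a zero off-diagonal block in the other direction. Hence $\mathrm{sp}(W_{\mathcal C})=\max\{\mathrm{sp}(W_{\mathcal C_\Gamma}),\ \mathrm{sp}(\text{cyclic }\Sigma_\nu\text{-block})\}=\max\{\lambda(\mathcal C_\Gamma,f),\ \lambda(\Sigma_\nu,h_\nu)^{1/p_\nu}\}$, and by Perron–Frobenius the spectral radius equals the leading eigenvalue throughout, which is the claimed reduction identity.

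The main obstacle I expect is the careful matching of homotopy classes and covering degrees across the extension $H_S$ — specifically, showing that a non-peripheral component of $f^{-1}(\gamma)$ contained in an $\mathcal S$-piece $S'$ corresponds bijectively, and with the same covering degree, to a non-peripheral component of $H_{S'}^{-1}(\gamma)$ in $\mathbb{\overline C}(S')$, including the delicate cases where the component is peripheral with respect to one marked set but not the other (handled by the choice of $Z(S')$ and $D(S')$), and the symmetric direction where a $(h_\nu,P_\nu)$-stable multicurve lifts back to $\mathbb{\overline C}$. Once this dictionary between $f$-preimages inside pieces and $H_S$-preimages is nailed down, the matrix block decomposition and the spectral-radius computation are essentially formal.
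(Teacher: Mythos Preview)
Your overall strategy matches the paper's, but there is a genuine gap in the partition of $\mathcal C$ and hence in the block decomposition of $W_{\mathcal C}$.

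First, the claim that ``after replacing $\mathcal C$ by a homotopic copy one may assume each non-$\Gamma$ curve sits in exactly one of the representatives $S_\nu$'' is false. A curve in $f_*^j(S_\nu)$ for $0<j<p_\nu$ is separated from $S_\nu$ by curves in $\Gamma_0\cup\Gamma\subset P$, so it cannot be homotoped into $S_\nu$ rel $P$. The correct partition keeps track of \emph{all} the pieces along each cycle: set $\Sigma_\nu^k=\{\gamma\in\mathcal C-\mathcal C_\Gamma:\gamma\subset f_*^k(S_\nu)\}$ and $\mathcal C_\nu=\Sigma_\nu^0\cup\cdots\cup\Sigma_\nu^{p_\nu-1}$. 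Your cyclic block matrix then lives on $\mathcal C_\nu$, not on $\Sigma_\nu$ alone, and the $(h_\nu,P_\nu)$-transition matrix of $\Sigma_\nu=\Sigma_\nu^0$ is the product $B_0B_1\cdots B_{p_\nu-1}$ of the one-step blocks.

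Second, and more seriously, you have omitted the strictly $f_*$-preperiodic $\mathcal S$-pieces entirely. Curves of $\mathcal C-\mathcal C_\Gamma$ can sit in such pieces; call this collection $\mathcal C_s$. Your assertion that a preimage of $\gamma\in\Sigma_\nu$ lies in the ``unique preimage piece $E_{S'}$ with $f_*(S')=S_\nu$'' is wrong precisely because several preperiodic pieces can also map to $S_\nu$ under $f_*$. So $W_{\mathcal C}$ has an extra diagonal block $W_s$ corresponding to $\mathcal C_s$, and the reduction identity requires showing $\lambda(\mathcal C_s,f)=0$, i.e.\ that $W_s$ is nilpotent. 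This is not automatic: one argues that if $M=\max\tau(S)$ over preperiodic pieces (where $\tau(S)$ is the time to reach a periodic piece), then any non-peripheral component of $f^{-M}(\gamma)$ for $\gamma\in\mathcal C_s$ that is not homotopic to a $\Gamma$-curve must sit in a piece with $\tau\ge M+1$, a contradiction; hence $W_s^M=0$. Without this step your block-triangular computation of $\mathrm{sp}(W_{\mathcal C})$ is incomplete and the identity does not follow.
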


\begin{rem} In Theorem \ref{b3c},  the multicurve $\Sigma_\nu$ can be viewed as a multicurve of
$(h_\nu,P_\nu)$, this is because  under the inclusion map
$\iota_\nu: S_\nu\hookrightarrow \mathbb{\overline{C}}(S_\nu)$, the
set $\iota_\nu(\Sigma_\nu):=\{\iota_\nu(\gamma);
\gamma\in\Sigma_\nu\}$ is a multicurve in
$\mathbb{\overline{C}}(S_\nu)-P_\nu$. We still use $\Sigma_\nu$ to
denote the multicurve $\iota_\nu(\Sigma_\nu)$ if there is no
confusion.

One may show directly that if $\Lambda^*\neq\emptyset$, then for any
$\nu\in\Lambda^*$, \bess \lambda(\Sigma_\nu,h_\nu)=\begin{cases}
1,   & \text{ if }  \Sigma_\nu\neq\emptyset,\\
0,   & \text{ if }  \Sigma_\nu=\emptyset.
 \end{cases}
 \eess

This observation can simplify the reduction identity.
\end{rem}

\begin{proof}
The fact that $\mathcal{C}_\Gamma$ is $(f,P)$-stable is easy to
verify since both $\Gamma$ and $\mathcal{C}$ are $(f,P)$-stable. Let
$\Sigma_\nu^k=\{\gamma \in \mathcal{C}-\mathcal{C}_\Gamma; \gamma
\text{ is contained in } f^k_*(S_\nu) \}$ for $0\leq k\leq p_\nu$.
It's obvious that $\Sigma_\nu^0=\Sigma_\nu^{p_\nu}=\Sigma_\nu$.
Since $\mathcal{C}$ is $(f,P)$-stable, each non-peripheral component
of $f^{-1}(\gamma)$ for $\gamma\in \Sigma_\nu^{k+1} (0\leq k<
p_\nu)$ is homotopic  in $\mathbb{\overline{C}}-P$  to  either  a
curve $\alpha\in \mathcal{C}_\Gamma$, or   a curve $\beta\in
\Sigma_\nu^{k}$, or a curve $\delta$ contained in a strictly
preperiodic $\mathcal{S}$-piece.

By the definition of the marked set $P(f^k_*(S_\nu))$, one can
verify that the set $\Sigma_\nu^{k}$ is a multicurve in
$\mathbb{\overline{C}}(f^k_*(S_\nu))-P(f^k_*(S_\nu))$. Moreover,
each curve $\gamma\in \mathcal{C}_\Gamma$ contained  in
$f^k_*(S_\nu)$  is peripheral or null-homotopic in
$\mathbb{\overline{C}}(f^k_*(S_\nu))-P(f^k_*(S_\nu))$. Thus for any
$0\leq k< p_\nu$ and any curve $\gamma\in \Sigma_\nu^{k+1} $, each
non-peripheral component of $H_{f^k_*(S_\nu)}^{-1}(\gamma)$ is
homotopic to a curve $\delta\in \Sigma_\nu^{k}$ in
$\mathbb{\overline{C}}(f^k_*(S_\nu))-P(f^k_*(S_\nu))$. It follows
that each  non-peripheral component of $h_{\nu}^{-1}(\gamma)$ with
$\gamma\in \Sigma_\nu$ is homotopic to a curve $\delta\in
\Sigma_\nu$ in $\mathbb{\overline{C}}(S_\nu)-P_\nu$. This means
$\Sigma_\nu$ is a $(h_\nu, P_\nu)$-stable multicurve.

In the following, we will prove the `reduction identity'. Let
$W_{\mathcal{C}_\Gamma}$ be the $(f,P)$-transition matrix of
$\mathcal{C}_\Gamma$. We define $\mathcal{C}_{s}:=\{\gamma\in
\mathcal{C}-\mathcal{C}_\Gamma; \gamma \text{ is contained in}$ $\text{a
strictly preperiodic } \mathcal{S}\text{-piece}\}$ with
$(f,P)$-transition matrix $W_{s}$. Let
$\mathcal{C}_\nu=\Sigma_\nu^0\cup \cdots \cup\Sigma_\nu^{p_\nu-1}$
with $(f,P)$-transition matrix $W_{\nu}$. Then the
$(f,P)$-transition matrix $W_{\mathcal{C}}$ of $\mathcal{C}$ has the
following block decomposition:
$$W_{\mathcal{C}}=\left(
\begin{array} {ccccc}
W_{\mathcal{C}_\Gamma} & * & * &\cdots &* \\ O & W_{s} & * &\cdots
&*
\\O&O
&W_1 &\cdots&*\\ \vdots &\vdots &\vdots &\ddots&\vdots  \\
O&O&O&\cdots & W_n
\end{array}
\right). $$

It follows that
$\lambda(\mathcal{C},f)=\max\big\{\lambda(\mathcal{C}_\Gamma,f),\lambda(\mathcal{C}_{s},f)
,\lambda(\mathcal{C}_{1},f),\cdots,\lambda(\mathcal{C}_{n},f)\big\}$.

We claim that $\lambda(\mathcal{C}_{s},f)=0$. To see this, let
$\mathcal{S}_s$ be the collection of all strictly preperiodic
$\mathcal{S}$-pieces. For each $S\in \mathcal{S}_s$, let $\tau(S)$
be the least integer $k\geq1$ such that $f_*^k(S)$  is a periodic
$\mathcal{S}$-piece. Set $M=\max\{\tau(S);S\in \mathcal{S}_s\}$. For
any $\gamma\in\mathcal{C}_{s}$, let  $\alpha$ be a non-peripheral
component of $f^{-M}(\gamma)$. If  $\alpha$ is not homotopic to any
curve in $\mathcal{C}_\Gamma$, then there is
$S_\alpha\in\mathcal{S}_s$ such that $\alpha$ is contained in the
$\mathcal{E}$-piece $E_{S_\alpha}$ parallel to $S_\alpha$. Moreover,
for any $1\leq j<M$, $f^{j}(\alpha)$ is not homotopic to any curve
in $\mathcal{C}_\Gamma$ and $f^{j}(\alpha)\subset
E_{f^j_*(S_\alpha)}$. In particular, $f^M(\alpha)=\gamma\subset
f^M_*(S_\alpha)\in \mathcal{S}_s$. This implies $\tau(S_\alpha)\geq
M+1$. But this contradicts the choice of $M$. Thus,
 $\alpha$ is either null-homotopic, or peripheral, or homotopic to a
curve $\delta\in\mathcal{C}_\Gamma$ in $\mathbb{\overline{C}}-P$.
Equivalently, $W^M_{s}=0$ and $\lambda(\mathcal{C}_{s},f)=0$. So we
have
$$\lambda(\mathcal{C},f)=\max\Big\{\lambda(\mathcal{C}_\Gamma,f),\lambda(\mathcal{C}_{1},f),\cdots,\lambda(\mathcal{C}_{n},f)\Big\}.$$

Notice that the $(f,P)$-transition matrix $W_{\nu}$ of
$\mathcal{C}_\nu$ takes the form

$$W_\nu=\left(
\begin{array} {ccccc}
O & B_0 & O &\cdots &O\\ O & O& B_1  & \cdots &O\\
\vdots &\vdots &\vdots &\ddots&\vdots
\\O&O&O&\cdots& B_{p_\nu-2}\\
B_{p_\nu-1}&O&O&\cdots &O
\end{array}
\right),$$ where $B_{j}$ is a  $n_j\times n_{j+1}$ matrix, $n_j$ is
equal to the number of curves in $\Sigma_\nu^j$ for $0\leq j\leq
p_\nu-1$. A direct calculation yields

$$W^{p_\nu}_\nu=\left(
\begin{array} {ccccc}
B_0 B_1\cdots B_{p_\nu-1} & O & \cdots &O\\ O & B_1 B_2\cdots B_0& \cdots &O\\
\vdots &\vdots &\ddots&\vdots\\
O&O&\cdots &B_{p_\nu-1} B_0\cdots B_{p_\nu-2}
\end{array}
\right).$$

For any $k\geq1$, we have
$$\|(W_\nu^{p_\nu})^k\|^2=\|(B_0 B_1\cdots B_{p_\nu-1})^k\|^2+\cdots+\|(B_{p_\nu-1} B_0\cdots B_{p_\nu-2})^k\|^2.$$

It follows from Lemma \ref{b3d} that
$$\mathrm{sp}(W_\nu)^{p_\nu}=\mathrm{sp}(B_0 B_1\cdots B_{p_\nu-1}
)=\cdots=\mathrm{sp}(B_{p_\nu-1} B_0\cdots B_{p_\nu-2}).$$

On the other hand,  one can  verify that the
$(h_\nu,P_\nu)$-transition matrix of $\Sigma_\nu$ is $B_0 B_1\cdots
B_{p_\nu-1}$. It follows from  Perron-Frobenius Theorem that
$$\lambda(\Sigma_\nu,h_\nu)=\mathrm{sp}(B_0 B_1\cdots B_{p_\nu-1})
=\mathrm{sp}(W_\nu)^{p_\nu}=\lambda(\mathcal{C}_{\nu},f)^{p_\nu}.$$

Finally, we get the reduction identity \bess \lambda(\mathcal{C},f)
&=&\max\Big\{\lambda(\mathcal{C}_\Gamma,f),\sqrt[p_1]{\lambda(\Sigma_1,h_1)},\cdots,\sqrt[p_n]{\lambda(\Sigma_n,h_n)}\Big\}.\eess
\end{proof}

\begin{lem}\label{b3d}  Let $B_{\nu}$ be a $n_\nu\times n_{\nu+1}$
real matrix for $1\leq\nu\leq k$, $n_{k+1}=n_1$, then
$$\mathrm{sp}(B_{1}B_{2}\cdots B_{k})=\mathrm{sp}(B_{2}B_{3}\cdots B_{1})=\cdots=\mathrm{sp}(B_{k} B_{1}\cdots
B_{k-1}).$$
\end{lem}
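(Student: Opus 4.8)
The plan is to reduce the general cyclic-product statement to the simplest case $k=2$ by an induction on $k$, and to prove the $k=2$ case directly from the definition of spectral radius via Banach norm. For $k=2$, I want to show $\mathrm{sp}(B_1 B_2) = \mathrm{sp}(B_2 B_1)$ where $B_1$ is $n_1\times n_2$ and $B_2$ is $n_2\times n_1$. The key observation is the identity $(B_1 B_2)^{m+1} = B_1 (B_2 B_1)^m B_2$ for every $m\geq 0$. Applying submultiplicativity of the Banach norm $\|\cdot\|$ (which satisfies $\|XY\|\leq\|X\|\,\|Y\|$, since it is the Frobenius norm) gives $\|(B_1 B_2)^{m+1}\| \leq \|B_1\|\,\|(B_2 B_1)^m\|\,\|B_2\|$. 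Taking $(m+1)$-st roots and letting $m\to\infty$ yields $\mathrm{sp}(B_1 B_2) \leq \mathrm{sp}(B_2 B_1)$, using $\lim_m \|B_1\|^{1/(m+1)} = \lim_m \|B_2\|^{1/(m+1)} = 1$. By symmetry (swapping the roles of $B_1$ and $B_2$) we get the reverse inequality, hence equality.

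For the inductive step, suppose the claim holds for products of $k-1$ matrices. Given $B_1,\dots,B_k$, set $C = B_1$ (of size $n_1\times n_2$) and $D = B_2 B_3\cdots B_k$ (of size $n_2\times n_1$). By the $k=2$ case, $\mathrm{sp}(B_1 B_2\cdots B_k) = \mathrm{sp}(C D) = \mathrm{sp}(D C) = \mathrm{sp}(B_2 B_3\cdots B_k B_1)$. This is exactly one cyclic shift. Iterating this cyclic-shift identity (each time peeling off the leading factor and moving it to the end) realizes every cyclic permutation of the product $B_1 B_2\cdots B_k$, and each application preserves the spectral radius. Therefore all $k$ cyclic products $B_1\cdots B_k$, $B_2\cdots B_k B_1$, \dots, $B_k B_1\cdots B_{k-1}$ have the same spectral radius, which is the assertion. (One need not even invoke the induction hypothesis beyond the $k=2$ case; a single clean cyclic-shift lemma suffices, but organizing it as a cyclic shift applied $k$ times is the cleanest exposition.)

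The only genuinely delicate point is making sure the norm-root argument is airtight when the matrices are rectangular: $\mathrm{sp}$ is defined as $\lim_n \|A^n\|^{1/n}$ only for square $A$, and here $B_1 B_2$ and $B_2 B_1$ are square of possibly different sizes $n_1$ and $n_2$, so the two limits are a priori unrelated — it is precisely the identity $(B_1 B_2)^{m+1} = B_1(B_2 B_1)^m B_2$ that links them. I should also note that the limit $\lim_n \|A^n\|^{1/n}$ exists (Fekete's subadditive lemma applied to $\log\|A^n\|$, using submultiplicativity) so that the manipulations with limits are legitimate; this is standard and can be cited rather than reproved. No positivity or Perron--Frobenius input is needed for Lemma \ref{b3d} itself — it is a purely linear-algebraic fact — which is why it applies verbatim to the blocks $B_j$ appearing in the proof of Theorem \ref{b3c}, regardless of sign.
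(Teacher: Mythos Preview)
Your proof is correct and uses the same core ingredient as the paper --- submultiplicativity of the Frobenius norm applied to the telescoping identity for powers of a cyclic product --- but you organise the argument differently. The paper first assumes all $n_\nu$ are equal and compares $B_1\cdots B_k$ directly with an arbitrary cyclic shift $B_\nu\cdots B_{\nu-1}$ via the identity $(B_1\cdots B_k)^n=(B_1\cdots B_{\nu-1})(B_\nu\cdots B_{\nu-1})^{n-1}(B_\nu\cdots B_k)$; it then handles the rectangular case by embedding each $B_\nu$ as a block in a larger square matrix $\widehat{B}_\nu$ padded with zeros and observing $\|(B_1\cdots B_k)^n\|=\|(\widehat{B}_1\cdots\widehat{B}_k)^n\|$. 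Your reduction to the $k=2$ case is arguably cleaner: the identity $(B_1B_2)^{m+1}=B_1(B_2B_1)^mB_2$ already makes sense for rectangular $B_1,B_2$, so the zero-padding device is unnecessary, and each cyclic shift is then obtained by grouping $C=B_1$, $D=B_2\cdots B_k$ and applying the two-factor case. One small remark: your claim $\lim_m\|B_i\|^{1/(m+1)}=1$ tacitly assumes $B_i\neq 0$; the degenerate case $B_i=0$ is trivial but worth a one-line dismissal.
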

\begin{proof}
 First we assume $n_1=\cdots=n_{k}$, fix some
$1\leq\nu\leq k$, by Cauchy-Schwarz inequality (i.e. $\|AB\|\leq\|A\|\|B\|$),
 \bess \mathrm{sp}(B_{1}B_{2}\cdots
B_{k})&=&\lim_{n\rightarrow\infty}\sqrt[n]{\|(B_{1}B_{2}\cdots
B_{k})^n\|}\\
&=&\lim_{n\rightarrow\infty}\sqrt[n]{\|(B_{1}\cdots B_{\nu-1})(B_\nu
B_{\nu+1}\cdots B_{\nu-1})^{n-1}(B_{\nu}\cdots B_{k})\|}\\
&\leq&\lim_{n\rightarrow\infty}\sqrt[n]{\|B_{1}\cdots
B_{\nu-1}\|\|(B_\nu B_{\nu+1}\cdots
B_{\nu-1})^{n-1}\|\|B_{\nu}\cdots B_{k}\|}\\
&=&\mathrm{sp}(B_{\nu}B_{\nu+1}\cdots B_{\nu-1}).
 \eess
 The same argument leads to the  other direction of the inequality. In the following, we
 deal with the general case. Choose $n\geq \max\{n_1,\cdots,n_k\}$,
 for any $1\leq \nu \leq k$,
 we define a $n\times n$ matrix $\widehat{B}_\nu$ by
 $$\widehat{B}_\nu=\left(
\begin{array} {cc}
{B}_\nu & O_{n_\nu\times(n-n_{\nu+1})}\\ O_{(n-n_{\nu})\times
n_{\nu+1}}&O_{(n-n_{\nu})\times(n-n_{\nu+1})}
\end{array}
\right),$$ where we use $O_{p\times q}$ to denote the $p\times q$
zero matrix. Then by the above argument,
$\mathrm{sp}(\widehat{B}_{1}\widehat{B}_{2}\cdots \widehat{B}_{k})=
\mathrm{sp}(\widehat{B}_{\nu}\widehat{B}_{\nu+1}\cdots
\widehat{B}_{\nu-1}).$ On the other hand,
$$\widehat{B}_{\nu}\widehat{B}_{\nu+1}\cdots
\widehat{B}_{\nu-1}=\left(
\begin{array} {cc}
B_{\nu}B_{\nu+1}\cdots B_{\nu-1} & O_{n_\nu\times(n-n_{\nu})}\\
O_{(n-n_{\nu})\times n_{\nu}}&O_{(n-n_{\nu})\times(n-n_{\nu})}
\end{array}
\right).$$ This implies that  $\|(B_{1}B_{2}\cdots
B_{k})^n\|=\|(\widehat{B}_{1}\widehat{B}_{2}\cdots
\widehat{B}_{k})^n\|$ for all $n\geq1$. So
$$\mathrm{sp}(B_{1}\cdots
B_{k})=\mathrm{sp}(\widehat{B}_{1}\cdots
\widehat{B}_{k})=
\mathrm{sp}(\widehat{B}_{\nu}\cdots
\widehat{B}_{\nu-1})=\mathrm{sp}(B_{\nu}\cdots
B_{\nu-1}).$$
\end{proof}

\noindent\textbf{Proof of Theorem \ref{b3a}.}
{\it Sufficiency.} Let  $\mathcal{C}$ be a  $(f,P)$-stable
multicurve in $\mathbb{\overline{C}}-P$. We may assume that each
curve $\gamma\in\mathcal{C}$ is contained in the interior of some
$\mathcal{S}$-piece by Lemma \ref{b3b}.  The multicurves
$\mathcal{C}_\Gamma, \Sigma_1, \cdots, \Sigma_n$ are the subsets of
$\mathcal{C}$ defined in Theorem \ref{b3c}. If $\lambda(\Gamma,f)<1$
(note that this implies $\Lambda^*=\emptyset$ by Lemma \ref{b3aa})
and $(h_\nu,P_\nu)$ has no Thurston obstructions for each $\nu\in
\Lambda$, then by Theorem \ref{b3c}, we have
 \bess \lambda(\mathcal{C},f)
&=&\max\Big\{\lambda(\mathcal{C}_\Gamma,f),\sqrt[p_1]{\lambda(\Sigma_1,h_1)},\cdots,\sqrt[p_n]{\lambda(\Sigma_n,h_n)}\Big\}\\
&\leq&\max\Big\{\lambda(\Gamma,f),\sqrt[p_1]{\lambda(\Sigma_1,h_1)},\cdots,\sqrt[p_n]{\lambda(\Sigma_n,h_n)}\Big\}<1.\eess
This means $(f,P)$ has no Thurston obstructions.

{\it Necessity.} Suppose that  $(f,P)$ has no Thurston obstructions.
Then $\lambda(\Gamma,f)<1$ and $\Lambda^*=\emptyset$. Let $\Sigma$
be  a $(h_\nu,P_\nu)$-stable multicurve  in
$\mathbb{\overline{C}}(S_\nu)-P_\nu$. Up to homotopy, we may assume
that each curve $\gamma\in \Sigma$ is contained in the interior of
$S_\nu$, so $ \Sigma$ can be considered as a multicurve in
$\mathbb{\overline{C}}-P$. In the following, we will use $\Sigma$ to
generate a $(f,P)$-stable multicurve $\mathcal{C}$.

For $k\geq0$, let $\Lambda_k\subset f^{-k}(\Sigma)$ be a multicurve
 in $\mathbb{\overline{C}}-P$, representing all homotopy classes of non-peripheral
curves in $f^{-k}(\Sigma)$. We claim that

{\it For any $\alpha\in\Lambda_i, \beta\in \Lambda_j$ with $0\leq
i<j$, if $\alpha$ is not homotopic to $\beta$ in
$\mathbb{\overline{C}}-P$, then $\alpha$ and $\beta$ are
homotopically disjoint. (`homotopically disjoint' means that the
homotopy classes of $\alpha$ and $\beta$ can be represented by two
disjoint Jordan curves.)}

In fact, the claim is obviously true in  the following two cases:

1. The curves $\alpha$ and $\beta$  are contained in two different
$\mathcal{S}$-pieces.

2.  Either $\alpha$ or $\beta$ is homotopic a curve in $\Gamma$.

In what follows, we assume that $\alpha$ and $\beta$ are contained
in the same $\mathcal{S}$-piece $S$, and neither  is homotopic to a
boundary curve of  $S$. We assume further that  they intersect
homotopically. In this case,  one may check that both $f^i(\alpha)$
and $f^i(\beta)$ are contained in $f^i_*(S)=S_\nu$, but neither of
$f^i(\alpha)$ and $f^i(\beta)$ is homotopic to a boundary curve of
$S_\nu$. So $f^i(\beta)$ is contained in the unique component of
$f^{i-j}(S_\nu)$ that is parallel to $S_\nu$. This implies $i \equiv
j {\ \rm mod }\ p_\nu$. Since $f^j(\beta)\in\Sigma$ and $\Sigma$ is
$(h_\nu,P_\nu)$-stable, we have that   $f^i(\beta)$ is homotopic in
$\mathbb{\overline{C}}-P$ to either  a curve of $\Sigma$ or a curve
of $\Gamma$. But this is a contradiction because we assume that
$\alpha$ and $\beta$ intersect homotopically. This ends the proof of
the claim.

For $k\geq0$, we define a collection of Jordan curves
$\mathcal{C}_k$ such that
$\Sigma\subset\mathcal{C}_k\subset\Lambda_0\cup\cdots\cup\Lambda_k$
and $\mathcal{C}_k$ represents  all homotopy classes of
non-peripheral curves in $\Lambda_0\cup\cdots\cup\Lambda_k$. It
follows from the above claim that we can consider $\mathcal{C}_k$ to
be a multicurve in $\mathbb{\overline{C}}-P$ up to
 homotopy. Note that $\mathcal{C}_k$ is homotopically contained in
$\mathcal{C}_{k+1}$, we have $\#\mathcal{C}_k\leq
\#\mathcal{C}_{k+1}$. Since $P$ has finitely many components,
$\#\mathcal{C}_k$ is uniformly bounded above for all $k$. So there
is an integer $N\geq0$, such that
$\#\mathcal{C}_n=\#\mathcal{C}_{N}$ for all $n\geq N$.

Let $\mathcal{C}=\mathcal{C}_N$, then $\mathcal{C}$ is a
$(f,P)$-stable multicurve by the choice of $N$. Let
$\mathcal{C}_\Gamma= \{\gamma \in\mathcal{C}; \gamma \text{ is
homotopic to a curve in } \Gamma \}$,
 one may verify that
$\Sigma= \{\gamma \in \mathcal{C}-\mathcal{C}_\Gamma; \gamma \text{
is contained in } S_\nu\}$. By Theorem \ref{b3c},
$$\lambda(\Sigma,h_\nu)\leq \lambda(\mathcal{C},f)^{p_\nu}<1.$$

Thus   $(h_\nu,P_\nu)$ has no Thurston obstructions.  \hfill $\Box$

\section{Realization part I: gluing holomorphic models}\label{3-3}

The aim of the following two sections is to prove:

\begin{thm}\label{b4a} Let $(f,P)$ be a Herman map,  and
$${\rm Dec}(f,P)=\Bigg(\bigoplus_{\nu\in \Lambda\cup\Lambda^*}(h_\nu, P_\nu)\Bigg)_\Gamma.$$
Then $(f,P)$ is c-equivalent to a rational map if and only if
$\lambda(\Gamma,f)<1$ and for each $\nu\in \Lambda$, $(h_\nu,P_\nu)$
is c-equivalent to a rational map.
\end{thm}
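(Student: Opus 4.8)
The plan is to prove the two directions separately, treating necessity first (which is easier) and then the sufficiency in two installments: a special case $\Gamma=\emptyset$ handled here, and the general case $\Gamma\ne\emptyset$ deferred to Section \ref{3-3-3}. For \emph{necessity}, suppose $(f,P)$ is c-equivalent to a rational map $(R,Q)$ via $(\phi,\psi)$. Then $\lambda(\Gamma,f)<1$: indeed, if $\lambda(\Gamma,f)\ge 1$, then since $\Gamma$ is $(f,P)$-stable (Lemma \ref{b2a0}), it would be a Thurston obstruction, and by the combination part (Theorem \ref{b3a}, or directly by pulling $\Gamma$ over to $(R,Q)$ via $\phi$ and invoking that a rational map carries no Thurston obstruction) we reach a contradiction; in particular $\Lambda^*=\emptyset$ by Lemma \ref{b3aa}. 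Next, for each $\nu\in\Lambda$ I would show $(h_\nu,P_\nu)$ is c-equivalent to a rational map. The homeomorphisms $(\phi,\psi)$ conjugate $(f,P)$ to $(R,Q)$, and the multicurve $\Sigma:=\Gamma_0\cup\Gamma$ is mapped to a multicurve in $\overline{\mathbb C}\setminus Q$; the pieces $\mathcal S$ and their parallel $\mathcal E$-pieces transport to the rational-map side. The renormalization $f^{p_\nu}:E_{S_\nu}\to S_\nu$ then corresponds under $(\phi,\psi)$ to a genuine holomorphic renormalization $R^{p_\nu}:\phi(E_{S_\nu})\to\phi(S_\nu)$ of $(R,Q)$; straightening this rational-like restriction (using the straightening theorem developed in Section \ref{3-4-1}) and comparing with the marked disk extension that defines $(h_\nu,P_\nu)$ produces a rational realization of $(h_\nu,P_\nu)$. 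The point is that the marked disk extension is canonical up to c-equivalence, so it does not matter whether we extend the branched covering $f|_{E_{S_\nu}}$ abstractly or extend the holomorphic model coming from $R$.

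For the \emph{sufficiency} in the special case $\Gamma=\emptyset$: here every $\mathcal S$-piece has boundary in $\Gamma_0$, all resulting maps $(h_\nu,P_\nu)_{\nu\in\Lambda}$ are Siegel maps (and $\Lambda^*=\emptyset$, $\lambda(\Gamma,f)=0<1$ trivially). Assuming each $(h_\nu,P_\nu)$ is c-equivalent to a Siegel rational map $(g_\nu,Q_\nu)$, I would reconstruct a rational realization of $(f,P)$ directly. Each $g_\nu$ carries its rotation disks; a curve $\gamma\in\partial_0(S_\nu)$ sitting in a rotation annulus of $(f,P)$ of period $p$ becomes a periodic curve inside a rotation disk of $(h_\nu,P_\nu)$ of period $p/p_\nu$. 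Cutting each $g_\nu$ open along the images of these periodic curves in the rotation disks recovers holomorphic annular models for the rotation annuli of $f$; gluing the finitely many cut surfaces back together along the cycle structure of $f_*$ on $\mathcal S$ reassembles the sphere and produces a quasi-regular map $F$ on $\overline{\mathbb C}$ which is holomorphic off a measurable invariant Beltrami field supported away from the rotation domains and with bounded dilatation. A standard application of the Measurable Riemann Mapping Theorem then straightens $F$ to a rational map c-equivalent to $(f,P)$. Checking that the gluing can be done compatibly with the conformal conjugacies on the rotation disks — i.e., with no mismatch of rotation numbers or moduli — is where one uses that the $(h_\nu,P_\nu)$ record exactly the combinatorial data of $(f,P)$.

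The main obstacle is the \emph{general case} $\Gamma\ne\emptyset$, which is postponed to Section \ref{3-3-3}; the difficulty is that the $\mathcal S$-pieces are not compactly contained in one another (unlike the hyperbolic setting of \cite{CT1}), so merely straightening each $(h_\nu,P_\nu)$ and gluing does not automatically close up — one may encounter a \emph{gluing obstruction} in the moduli of the annuli associated to the curves of $\Gamma$. The key insight, which I would set up here and execute in Sections \ref{a1}--\ref{a5}, is to endow the algebraic condition $\lambda(\Gamma,f)<1$ with a geometric meaning: it is equivalent to a Gr\"otzsch-type inequality (Lemma \ref{b4g}) guaranteeing that the required annuli have enough room to be glued with matching moduli. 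Concretely, one fixes rational models of all $(h_\nu,P_\nu)_{\nu\in\Lambda}$, marks the finitely many curves of $\Gamma$ on them, and uses $\lambda(\Gamma,f)<1$ to choose a consistent system of conformal annuli along $\Gamma$ whose pullback relations under $f$ are contracting in modulus; these are pasted in by quasiconformal interpolation to build a quasi-regular $F$ on $\overline{\mathbb C}$ with invariant bounded Beltrami coefficient, and the Measurable Riemann Mapping Theorem furnishes the rational realization of $(f,P)$. Thus the hard technical core is precisely the passage from the eigenvalue bound to the Gr\"otzsch inequality and the verification that the interpolation respects $f$-equivariance.
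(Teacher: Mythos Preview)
Your overall architecture matches the paper's exactly: necessity first, then sufficiency in the special case $\Gamma=\emptyset$, with the general case deferred to Section~\ref{3-3-3} and organized around the passage from $\lambda(\Gamma,f)<1$ to a Gr\"otzsch inequality. Your description of the sufficiency argument and of the technical core in Section~\ref{3-3-3} is faithful to what the paper does.

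The one genuine difference is in the \emph{necessity} argument. You propose transporting the decomposition to the rational side and invoking the straightening theorem of Section~\ref{3-4-1}; note that you would actually need the HS-renormalization version, Theorem~\ref{b5a3}, since along $\Gamma_0$ the pieces $E_\nu$ and $S_\nu$ share boundary curves and the hypothesis $\overline{U}\subset V$ of Theorem~\ref{b5a0} fails. The paper instead argues directly, without any forward appeal to Section~\ref{3-6}: it builds the lifting sequence $\phi_0,\phi_1,\phi_2,\ldots$ with $\phi_k f=R\,\phi_{k+1}$ and $\phi_{k+1}$ isotopic to $\phi_k$ rel $f^{-k}(P)$, chooses $\ell\ge p_\nu$ large enough that $\cup\Gamma\subset f^{-\ell+p_\nu}(P)$ (so that $\phi_{\ell+p_\nu}$ and $\phi_\ell$ agree on $\partial E_\nu$), extends $\phi_\ell|_{S_\nu}$ and $\phi_{\ell+p_\nu}|_{E_\nu}$ to a pair $(\Phi,\Psi)$ on $\mathbb{\overline{C}}(S_\nu)$ exhibiting $(h_\nu,P_\nu)$ as q.c-equivalent to a map $(g_\nu,Q_\nu)$ holomorphic outside finitely many thin annuli, and then finishes with a surgery. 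Your straightening route is valid and conceptually cleaner, but it imports machinery proved later; the paper's lifting argument is more elementary and keeps Section~\ref{3-3} self-contained. For $\lambda(\Gamma,f)<1$ the paper appeals directly to the marked McMullen theorem (Theorem~\ref{b4b}), which is precisely your parenthetical alternative; routing through Theorem~\ref{b3a} would be slightly roundabout since its necessity direction ultimately rests on the same McMullen result.
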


In the proof of Theorem \ref{b4a}, without loss of generality, we assume that
$(f,P)$ and $(h_k,P_k)$ are  quasi-regular,
and the rational realizations are   q.c-rational realizations. This assumption is not essential,  we need it  simply because we want to use the language of quasi-conformal surgery. Without this assumption, one
  just need   replace the  `Measurable Riemann Mapping Theorem' by the `Uniformization Theorem'  in the proof but with no other essential differences.

 In Section \ref{3-3-1}, we prove the necessity of
Theorem \ref{b4a}. The idea is as follows: we use the rational
realization of $(f,P)$, say $(R,Q)$, to generate the partial
holomorphic models of $(h_\nu,P_\nu)_{\nu\in\Lambda}$. These partial
holomorphic models take the form $R^{p_\nu}|_{E_\nu},
\nu\in\Lambda$, where $E_\nu$ is a multi-connected domain in the
Riemann sphere $\mathbb{\overline{C}}$. The holomorphic map
$R^{p_\nu}|_{E_\nu}$ can be extended to a Siegel map or a Thurston
map, say $(g_\nu,Q_\nu)$, q.c-equivalent to $(h_\nu,P_\nu)$.
Moreover, $(g_\nu,Q_\nu)$ can be made holomorphic outside a
neighborhood of the boundary $\partial E_\nu$.  In the final step,
we use quasi-conformal surgery to make the  map $(g_\nu,Q_\nu)$
globally holomorphic and get a rational realization of
$(h_\nu,P_\nu)$.


In Section \ref{3-3-2},  we prove the sufficiency  of Theorem
\ref{b4a} assuming $\Gamma=\emptyset$. This part is the inverse
procedure of Section \ref{3-3-1}. We use the rational realizations
of $(h_\nu,P_\nu), \nu\in\Lambda$ to generate the partial
holomorphic models of $(f,P)$. These partial holomorphic models can
be glued along $\Sigma=\Gamma_0$ into a branched covering $(g,Q)$,
holomorphic in most part of $\mathbb{\overline{C}}$ and
q.c-equivalent to $(f,P)$. Finally, we apply quasi-conformal surgery
to make  the map $(g,Q)$ globally holomorphic.


The proof the sufficiency   of Theorem \ref{b4a} in the more general
case $\Gamma\neq\emptyset$ is put in the next section.


\subsection{Proof of the necessity of Theorem \ref{b4a}}\label{3-3-1}

 To prove the necessity of Theorem \ref{b4a}, we need a result of McMullen \cite{McM2}:

\begin{thm}[Marked McMullen Theorem] \label{b4b}
Let $R$ be a rational map, $M$ be a closed set containing the
postcritical set $P_R$ and $R(M)\subset M$. Let $\Gamma$ be a
multicurve in $\mathbb{\overline{C}}-M$. Then
$\lambda(\Gamma,R)\leq1$. If $\lambda(\Gamma,R)=1$, then either $R$
is postcritically finite whose orbifold has signature (2, 2, 2, 2);
or $R$ is  postcritically infinite, and $\Gamma$ includes a curve
contained in a periodic Siegel disk or Herman ring.
\end{thm}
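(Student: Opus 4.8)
The plan is to run the classical extremal-length argument of McMullen, set up so that the only properties of $M$ used are $R^{-1}(\mathbb{\overline{C}}\setminus M)\subseteq\mathbb{\overline{C}}\setminus M$ (which is exactly $R(M)\subseteq M$) and that $\mathbb{\overline{C}}\setminus M$ is hyperbolic (otherwise there are no non-peripheral Jordan curves, $\Gamma=\emptyset$, and the statement is vacuous). First I would reduce to the case that $W_\Gamma$ is irreducible: for a nonnegative matrix the spectral radius is attained by some irreducible principal submatrix, and the principal submatrix of $W_\Gamma$ indexed by a subset $\Gamma'\subseteq\Gamma$ is precisely $W_{\Gamma'}$ (the entries only involve preimages of one curve homotopic to another, with no reference to the remaining curves). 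So after replacing $\Gamma$ by such a $\Gamma'$ we may assume $W_\Gamma$ is irreducible with Perron eigenvalue $\lambda=\lambda(\Gamma,R)$ and a strictly positive right eigenvector $v$; note this does not affect the conclusions, since they concern $R$ itself or assert that \emph{some} curve of $\Gamma$ lies in a rotation domain, and $\Gamma'\subseteq\Gamma$.

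The engine of the proof is a pullback estimate for moduli. Given pairwise disjoint embedded annuli $A_1,\dots,A_n\subset\mathbb{\overline{C}}\setminus M$ with $A_j$ homotopic to $\gamma_j$, every component $\alpha$ of $R^{-1}(\gamma_j)$ homotopic to some $\gamma_i$ carries an annular neighbourhood inside the component of $R^{-1}(A_j)$ containing it — which lies in $\mathbb{\overline{C}}\setminus M$ by invariance — of modulus at least $\mathrm{mod}(A_j)/\mathrm{deg}(R:\alpha\to\gamma_j)$, by sub-multiplicativity of modulus under branched covers (a preimage component that is an annulus is in fact unramified over $A_j$ by Riemann--Hurwitz). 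Over all $j$ and all $\alpha\sim\gamma_i$ these neighbourhoods are pairwise disjoint; stacking those homotopic to $\gamma_i$ via Grötzsch's inequality produces an annulus $A_i^{+}\sim\gamma_i$, and the $A_i^{+}$ can be taken pairwise disjoint (a region between two consecutive $\gamma_i$-annuli is again a $\gamma_i$-annulus, hence contains no $\gamma_{i'}$-annulus for $i'\ne i$ since $\Gamma$ is a multicurve). This yields $\mathrm{mod}(A_i^{+})\ge(W_\Gamma\mathbf{m})_i$ where $\mathbf{m}=(\mathrm{mod}(A_j))_j$. Iterating from an initial choice with $\mathbf{m}^{(0)}$ a small positive multiple of $v$ gives annuli $A_i^{(k)}$ with $\mathrm{mod}(A_i^{(k)})\ge(W_\Gamma^{k}\mathbf{m}^{(0)})_i$, which by Perron--Frobenius grows like $\lambda^{k}$. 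But for each fixed $i$ all the $A_i^{(k)}$ sit in the fixed hyperbolic surface $\mathbb{\overline{C}}\setminus M$ with core in the fixed class $\gamma_i$, so their moduli are bounded above (collar lemma, in terms of the hyperbolic length of the geodesic representative of $\gamma_i$). Hence $\lambda\le 1$.

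Now suppose $\lambda=1$. Put $M_i=\sup\{\mathrm{mod}(A):A\sim\gamma_i,\ A\subset\mathbb{\overline{C}}\setminus M\}$, finite and positive; applying the pullback estimate to near-extremal annuli gives $W_\Gamma\mathbf{M}\le\mathbf{M}$ with $\mathbf{M}=(M_i)>0$, and irreducibility together with $\mathrm{sp}(W_\Gamma)=1$ forces $W_\Gamma\mathbf{M}=\mathbf{M}$, so $\mathbf{M}$ is a positive multiple of $v$ and every inequality in sight is an equality. Passing to limits of maximizing annuli one gets genuine extremal annuli $A_i$ for which the Grötzsch stacking is an equality: the preimage annuli tile each $A_i$ up to a set of zero modulus, and each maps onto its target as an unramified covering. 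Taking the forward grand orbit of $\bigcup_i A_i$ then produces a nonempty $R$-forward-invariant open set whose periodic return maps, being degree-one and modulus-preserving, are conformal automorphisms of the relevant disks and annuli. If $R$ is postcritically infinite, the Fatou classification identifies these automorphisms as irrational rotations, so some $\gamma_i$ lies in a periodic Siegel disk or Herman ring of $R$. If $R$ is postcritically finite, such a modulus-preserving invariant domain could only sit in an attracting or (super)attracting basin, contradicting modulus preservation unless the orbifold metric of $R$ is flat; by the Douady--Hubbard orbifold-metric argument (in its marked form over $M$) a non-Euclidean orbifold forces strict contraction and $\lambda<1$, so $\mathcal{O}_R$ must be Euclidean, and among Euclidean signatures only $(2,2,2,2)$ supports non-peripheral curves.

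The inequality $\lambda\le 1$ is routine once the Grötzsch pullback estimate and the a priori modulus bound are in place. The hard part is the equality case: (i) upgrading the asymptotic equalities into an \emph{honest} extremal annulus and an \emph{honest} forward-invariant domain (a normal-families / compactness argument, where one must take care that limiting annuli do not degenerate), and (ii) correctly importing the Fatou-component and orbifold classifications to turn ``modulus is exactly preserved'' into ``the domain is a rotation domain, or $R$ is a $(2,2,2,2)$-map'' — in particular ruling out that the invariant domain lives in an attracting or parabolic basin. I would expect most of the technical weight of a full write-up to fall on step (i).
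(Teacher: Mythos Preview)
The paper does not actually prove this theorem: it is quoted as a result of McMullen \cite{McM2}, with the remark that ``Theorem \ref{b4b} is slightly stronger than McMullen's original result, but the proof works equally well.'' So there is no proof in the paper to compare against; your proposal is, in effect, a sketch of the argument the paper is invoking.

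That said, your sketch is essentially the standard McMullen modulus/extremal-length argument, and the inequality $\lambda(\Gamma,R)\le 1$ is handled correctly: reduce to an irreducible block, iterate the Gr\"otzsch pullback inequality against a Perron eigenvector, and bound moduli of essential annuli in the hyperbolic surface $\mathbb{\overline{C}}\setminus M$. For the equality case your outline is also along the right lines, though a couple of points deserve care. First, in the postcritically finite case, invoking ``only $(2,2,2,2)$ supports non-peripheral curves'' is not quite the distinction you want: the other Euclidean orbifolds are ruled out not because they lack non-peripheral curves but because a Thurston map with those signatures is either of degree one or has $\lambda<1$ anyway (in McMullen's argument one uses that the orbifold metric is strictly expanded unless the orbifold is $(2,2,2,2)$). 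Second, in the postcritically infinite case, the passage from ``modulus-preserving forward-invariant annulus'' to ``rotation domain'' is exactly McMullen's analysis of when equality holds in the pullback estimate, and you are right to flag the compactness step (i) as the delicate one; in a full write-up one typically works with the hyperbolic metric on $\mathbb{\overline{C}}\setminus P_R$ (or $\mathbb{\overline{C}}\setminus M$) and the Schwarz lemma rather than literally extracting a limiting annulus, which sidesteps some of the degeneration issues you anticipate.
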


We  remark that the definition of the multicurve in
$\mathbb{\overline{C}}-M$ is similar to the definition of the
multicurve in $\mathbb{\overline{C}}-P$.
Theorem \ref{b4b} is slightly stronger than McMullen's original result, but
the proof works equally well.

\vskip 0.3cm
\noindent \textit{Proof of the necessity of Theorem \ref{b4a}.}  Suppose that
$(f,P)$ is q.c-equivalent to a rational map $(R,Q)$ via a pair of
quasi-conformal maps $(\phi_0,\phi_1)$. Then the $(f,P)$-stable
multicurve $\Gamma$ in $\mathbb{\overline{C}}-P$ induces a
$(R,Q)$-stable multicurve $\phi_0(\Gamma):= \{\phi_0(\gamma);
\gamma\in \Gamma\}$ in $\mathbb{\overline{C}}-Q$. Since the marked
set $Q$ contains all possible Siegel disks and Herman rings of $R$,
it follows from Theorem \ref{b4b} that $\lambda(\Gamma,
f)=\lambda(\phi_0(\Gamma), R)<1$.

Note that $\lambda(\Gamma,f)<1$ implies $\Lambda^*=\emptyset$ (Lemma
\ref{b3aa}). In the following, we will show that for each
$\nu\in\Lambda$, $(h_\nu,P_\nu)$ is q.c-equivalent to a rational
map.

Let $H_0:[0,1]\times \mathbb{\overline{C}}\rightarrow
\mathbb{\overline{C}}$ be an isotopy between $\phi_0$ and $\phi_1$
rel $P$. That is, $H_0:[0,1]\times \mathbb{\overline{C}}\rightarrow
\mathbb{\overline{C}}$ is a continuous map such that
 $H_0(0,\cdot)=\phi_0,
H_0(1,\cdot)=\phi_1$ and $H_0(t,z)=\phi_0(z)$ for all $(t,z)\in
[0,1]\times P$. Moreover, for any  $t\in [0,1]$, $H_0(t,\cdot):
\mathbb{\overline{C}}\rightarrow \mathbb{\overline{C}}$ is a
quasi-conformal map. Then by induction, for any $k\geq0$, there is a
unique lift of $H_k$, say $H_{k+1}$, such that $H_k(t,
f(z))=R(H_{k+1}(t,z))$ for all $(t,z)\in [0,1]\times
\mathbb{\overline{C}}$, with basepoint
$H_{k+1}(0,\cdot)=\phi_{k+1}$. Set $\phi_{k+2}=H_{k+1}(1,\cdot)$. In
this way, we can get a sequence of quasi-conformal maps
$\phi_0,\phi_1,\phi_2,\cdots$, such that the following diagram
commutes.
$$
\xymatrix{& \cdots\ar[r]^{f }
&(\mathbb{\overline{C}},P)\ar[r]^{f}\ar[d]_{\phi_3}
 &(\mathbb{\overline{C}},P)\ar[r]^{f} \ar[d]_{\phi_2}
&(\mathbb{\overline{C}},P)\ar[r]^{f}\ar[d]_{\phi_1}
 & (\mathbb{\overline{C}},P)\ar[d]_{\phi_0}\\
&\cdots\ar[r]_{R }
 & (\mathbb{\overline{C}},Q)\ar[r]_{R}
&(\mathbb{\overline{C}},Q)\ar[r]_{R} &
(\mathbb{\overline{C}},Q)\ar[r]_{R} & (\mathbb{\overline{C}},Q)
 }
$$
One can verify that for any $k\geq0$, $\phi_{k+1}$ is isotopic to
$\phi_{k}$ rel $f^{-k}(P)$.

Fix some $\nu\in\Lambda$, let $D_\nu$ be the union of all rotation
disks of $(h_\nu,P_\nu)$ intersecting  $\partial S_\nu$. We set
$D_\nu=\emptyset$ if $\partial_0(S_\nu)=\emptyset$.

Choose an integer $\ell\geq p_\nu$ such that $\cup\Gamma\subset
f^{-\ell+p_\nu}(P)$, then we extend $\phi_\ell|_{S_\nu}$ to  a
quasi-conformal map $\Phi: \mathbb{\overline{C}}(S_\nu)\rightarrow
\mathbb{\overline{C}}$. We require that $\Phi$ is holomorphic in
$D_\nu$ if $D_\nu\neq\emptyset$.

 Note that
there is a unique component $E_\nu$ of $f^{-p_\nu}(S_\nu)$  parallel
to $S_\nu$. The choice of $\ell$ implies $\partial E_\nu\subset
f^{-p_\nu}(\cup \Gamma)\subset f^{-\ell}(P)$. By the construction of
$\phi_k$, we know that  $\phi_{\ell+p_\nu}$ and $\phi_\ell$ are
isotopic rel $f^{-\ell}(P)$. In particular,
$\phi_{\ell+p_\nu}|_{\partial E_\nu}=\phi_{\ell}|_{\partial
E_\nu}=\Phi|_{\partial E_\nu}$.

 Denote the components of
$\mathbb{\overline{C}}(S_\nu)-(E_{\nu}\cup D_\nu)$ by $\{U_j; j\in
I\}$, where $I$ is a finite index set. Each $U_j$ is a disk,
containing at most one point in $P_\nu$. For any $j\in I$, let
$V_j\Subset U_j$ be a disk such that 
 $U_j\setminus {V_j}\subset f^{-\ell}(P)\setminus P$. By Measurable
Riemann Mapping Theorem, there is a quasi-conformal homeomorphism
$\Psi_j: V_j\rightarrow\Phi(V_j)$ whose Beltrami coefficient
satisfies $\mu_{\Psi_j}(z)=\mu_{\Phi\circ h_\nu}(z)$ for $z\in V_j$.
If $U_j$ contains a point $p\in P_\nu$ (then $V_j$ necessarily
contains $p$), we further require that $\Psi_j(p)=\Phi(p)$.

We can define a quasi-conformal map
$\Psi:\mathbb{\overline{C}}(S_\nu)\rightarrow \mathbb{\overline{C}}$
by

\begin{equation*}
\Psi(z)=\begin{cases}
\Phi(z),\ \  &z\in D_\nu,\\
 \phi_{\ell+p_\nu}(z),\ \  &z\in E_\nu,\\
\Psi_j(z),\ \ &z\in V_j, j\in I,\\
q.c \ interpolation, \ \ &z\in U_j\setminus V_j, j\in I.
\end{cases}
\end{equation*}

One may verify that $\Phi$ is homotopic to $\Psi$ rel $P_\nu$. Thus
$(h_\nu,P_\nu)$ is q.c-equivalent to $(g_\nu,Q_\nu):=(\Phi\circ
h_\nu\circ \Psi^{-1}, \Phi(P_\nu))$ via $(\Phi,\Psi)$. Moreover,
$(g_\nu,Q_\nu)$ is holomorphic outside $\Psi(\cup_{j\in
I}(U_j\setminus \overline{V_j}))$.

In the following, we will construct a $(g_\nu,Q_\nu)$-invariant
complex structure. For each $j\in I$, we may assume that the annulus
$U_j\setminus \overline{V_j}$ is thin enough such that for some
$k>1$ large enough, the set $g_\nu^k(\Psi(U_j\setminus
\overline{V_j}))$ is contained either in a rotation disk of
$(g_\nu,Q_\nu)$, or in a neighborhood of a   critical cycle (note
that  $(g_\nu,Q_\nu)$ is holomorphic near this cycle). Let
$k_j\geq1$ be the first integer so that $(g_\nu,Q_\nu)$ is
holomorphic in $g_\nu^{k_j}(\Psi(U_j\setminus \overline{V_j}))$.
Define a complex structure in $\Psi(U_j\setminus \overline{V_j})$ by
pulling back the standard complex structure in
$g_\nu^{k_j}(\Psi(U_j\setminus \overline{V_j}))$ via $g_\nu^{k_j}$.
Then we define a complex structure in $g_\nu^{-k}(\Psi(\cup_{j\in
I}(U_j\setminus \overline{V_j})))$ by pulling back the complex
structure in $\Psi(\cup_{j\in I}(U_j\setminus \overline{V_j}))$ via
$g_\nu^{k}$ for all $k\geq0$ and define the standard complex
 structure elsewhere. In this way, we get a $(g_\nu,Q_\nu)$-invariant
 complex structure $\sigma$. The Beltrami coefficient $\mu$ of
 $\sigma$ satisfies $\|\mu\|_\infty<1$ since $(g_\nu,Q_\nu)$ is holomorphic outside $\Psi(\cup_{j\in
I}(U_j\setminus \overline{V_j}))$.

 By Measurable Riemann Mapping
Theorem, there is a quasi-conformal map
$\zeta:\mathbb{\overline{C}}\rightarrow \mathbb{\overline{C}}$ whose
Beltrami coefficient is $\mu$.
Let $f_\nu=\zeta\circ g_\nu\circ\zeta^{-1}$, then $f_\nu$ is a
rational map and $(h_\nu, P_\nu)$ is q.c-equivalent to $(f_\nu,
\zeta\circ\Phi(P_\nu))$ via $(\zeta\circ\Phi, \zeta\circ\Psi)$. See
the following commutative diagram.
$$
\xymatrix{
 & \mathbb{\overline{C}}(S_\nu)\ar[r]^{\Psi} \ar[d]_{h_\nu}
& \mathbb{\overline{C}}\ar[r]^{\zeta}\ar[d]_{g_\nu}
 & \mathbb{\overline{C}}\ar[d]_{f_\nu}\\
&\mathbb{\overline{C}}(S_\nu)\ar[r]_{\Phi} &
\mathbb{\overline{C}}\ar[r]_{\zeta} & \mathbb{\overline{C}}
 }
$$

\subsection{Proof of the sufficiency of  Theorem \ref{b4a} ($\Gamma=\emptyset$)} \label{3-3-2}

Since  $\Gamma=\emptyset$, for each $\mathcal{S}$-piece $S$, we have
$\partial(S)=\partial_0(S)\subset\Gamma_0$, where $\Gamma_0$ is  the
collection of $(f,P)$-periodic curves defined in Section \ref{3-1}.
It follows from Lemma \ref{b2a} that $S$ is $f_*$-periodic. So
$\mathcal{S}$ can be written as $\{f^j_*(S_\nu);0\leq j<p_\nu,
\nu\in\Lambda\}$. Moreover, any two $\mathcal{S}$-pieces  contained
in the same $f_*$-cycle have the same number of boundary curves.

Suppose that $(h_\nu,P_\nu)$ is q.c-equivalent to a rational map
$(R_\nu,Q_\nu)$ via a pair of quasi-conformal maps
$(\Phi_\nu,\Psi_\nu)$ for $\nu\in\Lambda=[1,n]$.

\vskip0.3cm
\noindent\textbf{Step 1: Getting partial holomorphic models. }{\it  For each
$\mathcal{S}$-piece $S$, there exist a pair of quasi-conformal maps
$(\Phi_S,\Psi_S):\mathbb{\overline{C}}(S)\rightarrow\mathbb{\overline{C}}$
and  a rational map $R_S$ such that the following diagram commutes:

$$
\xymatrix{& \mathbb{\overline{C}}(S)\ar[r]^{H_S} \ar[d]_{\Psi_S}
& \mathbb{\overline{C}}(f_*(S))\ar[d]^{\Phi_{f_*(S)}}\\
&\mathbb{\overline{C}}\ar[r]_{R_S} & \mathbb{\overline{C}}
 }
$$

}

It suffices to show that for each $f_*$-cycle $\langle S_\nu,\cdots,
f_*^{p_\nu-1}(S_\nu)\rangle$, there exist
 a sequence of quasi-conformal maps
$\Psi_{S_\nu}, \ \Phi_{f_*^k(S_\nu)}, 0\leq k< p_\nu$ and a sequence
of rational maps $R_{f_*^k(S_\nu)}, 0\leq k< p_\nu$ such that the
following diagram commutes

\bess \xymatrix{&\mathbb{\overline{C}}(S_\nu)\ar[r]^{H_{S_\nu}}
\ar[d]_{\Psi_{S_\nu}}
&\mathbb{\overline{C}}(f_*(S_\nu))\ar[r]^{H_{f_*(S_\nu)}}\ar[d]_{\Phi_{f_*(S_\nu)}}
  &\cdots \ar[r]^{} &
\mathbb{\overline{C}}(f_*^{p_\nu-1}(S_\nu))\ar[r]^{H_{f_*^{p_\nu-1}(S_\nu)}}\ar[d]_{\Phi_{f_*^{p_\nu-1}(S_\nu)}}
 & \mathbb{\overline{C}}(S_\nu)\ar[d]_{\Phi_{S_\nu}}\\
& \mathbb{\overline{C}}\ar[r]_{R_{S_\nu}}
 &\mathbb{\overline{C}}\ar[r]_{R_{f_*(S_\nu)}}
 & \cdots \ar[r]_{}
&\mathbb{\overline{C}}\ar[r]_{R_{f_*^{p_\nu-1}(S_\nu)}}   &
\mathbb{\overline{C}}}\eess

The constructions of the two sequences of maps are as follows:
First, we set $\Phi_{S_\nu}=\Phi_\nu$ and $\Psi_{S_\nu}=\Psi_\nu$.
By Measurable Riemann Mapping Theorem, there is a quasi-conformal map
$\Phi_{f_*^{p_\nu-1}(S_\nu)}:\mathbb{\overline{C}}(f_*^{p_\nu-1}(S_\nu))\rightarrow
\mathbb{\overline{C}}$ such that
$\Phi_{f_*^{p_\nu-1}(S_\nu)}^*(\sigma_0)=(\Phi_{S_\nu}\circ
H_{f_*^{p_\nu-1}(S_\nu)})^*(\sigma_0)$, where $\sigma_0$ is the
standard complex structure.
 Then
$R_{f_*^{p_\nu-1}(S_\nu)}=\Phi_{S_\nu}\circ
H_{f_*^{p_\nu-1}(S_\nu)}\circ\Phi_{f_*^{p_\nu-1}(S_\nu)}^{-1}$ is a
rational map.

Inductively, for $i=p_\nu-2,\cdots,1$, we can get a quasi-conformal
map
$\Phi_{f^i_{*}(S_\nu)}:\mathbb{\overline{C}}(f_*^{i}(S_\nu))\rightarrow
\mathbb{\overline{C}}$ so that
$R_{f_*^{i}(S_\nu)}=\Phi_{f_*^{i+1}(S_\nu)}\circ
H_{f_*^{i}(S_\nu)}\circ\Phi_{f_*^{i}(S_\nu)}^{-1}$ is a rational
map.

Finally, we set $R_{S_\nu}=\Phi_{f_*(S_\nu)}\circ
H_{S_\nu}\circ\Psi_{S_\nu}^{-1}$. Then the relation
$R_\nu=R_{f_*^{p_\nu-1}(S_\nu)}\circ\cdots\circ R_{f_*(S_\nu)}\circ
R_{S_\nu}$ implies that $R_{S_\nu}$ is also a rational map.

 Set
$\Psi_{f^i_{*}(S_\nu)}=\Phi_{f^i_{*}(S_\nu)}$ for $1\leq i<p_\nu$.
The pair of quasi-conformal maps
$(\Phi_{f^i_{*}(S_\nu)},\Psi_{f^i_{*}(S_\nu)})$ and the rational map
$R_{f^i_{*}(S_\nu)}  (0\leq i<p_\nu)$  are as required.

\vskip0.3cm
\noindent\textbf{Step 2: Gluing holomorphic models. } For each
$\mathcal{S}$-piece $S$, recall that $E_S$ is the unique
$\mathcal{E}$-piece parallel to $S$. Since $\Gamma=\emptyset$, each
boundary curve of $S$ is also a boundary curve of $E_S$. So each
component of $S-E_S$ is a disk, containing at most one point in $P$.
 Let
$\{U_k; k\in I_S\}$ be the collection of all components of
$S\setminus E_S$, where $I_S$ is the finite index set induced by
$S$. For any $k\in I_S$, let $V_k\Subset U_k$ be a disk such that
$U_k\setminus {V_k}\subset f^{-1}(P)\setminus P$ (this implies
$V_k\cap P=U_k\cap P$). By the Measurable Riemann Mapping Theorem,
there is a quasi-conformal homeomorphism $\phi_{k}:V_k\rightarrow
\Psi_S(V_k)$ whose Beltrami coefficient satisfies

$$\mu_{\phi_{k}}(z)=\sum_{\mathcal{E}\ni E\subset U_k}\chi_E(z)\mu_{\Phi_{f(E)}\circ f}(z), \ z\in V_k.$$
Here the sum is taken over all the $\mathcal{E}$-pieces
contained in $U_k$. If $V_k$ contains a point $p\in P$, we further
require that $\phi_{k}(p)=\Phi_S(p)$.

Now we define a quasi-conformal homeomorphism
$\psi_S:S\rightarrow\Phi_S(S)$ by

\begin{equation*}
\psi_S(z)=\begin{cases}
 \Psi_S(z),\ \  &z\in E_S,\\
\phi_{k}(z),\ \ &z\in V_k, k\in I_S,\\
q.c \ interpolation, \ \ &z\in U_k\setminus V_k, k\in I_S.
\end{cases}
\end{equation*}

Define a quasi-conformal map $\Theta:\mathbb{\overline{C}}\rightarrow
\mathbb{\overline{C}}$ by $\Theta|_S=\psi_S^{-1}\circ \Phi_S$ for
all $S\in\mathcal{S}$. The map $\Theta$ is isotopic to the identity
map rel $P$. Let
$\Phi:\mathbb{\overline{C}}\rightarrow\mathbb{\overline{C}}$ be a
quasi-conformal map such that
$$\mu_\Phi(z)=\sum_{S\in\mathcal{S}}\chi_S(z)\mu_{\Phi_S}(z), \  z\in\mathbb{\overline{C}}.$$

Let $\Psi=\Phi\circ\Theta^{-1}$. The  pair of quasi-conformal maps
$(\Phi,\Psi)$ can be considered to be the gluing of
$(\Phi_S|_S,\Psi_S|_S)_{S\in \mathcal{S}}$. In this way, $(f,P)$ is
q.c-equivalent to the Herman map $(g,Q):=(\Phi\circ f\circ\Psi^{-1},
\Phi(P))$ via $(\Phi,\Psi)$.


\vskip0.3cm
\noindent\textbf{Step 3: Applying quasi-conformal surgery.} We first show
that the Herman map  $(g,Q)$ is holomorphic in most parts of
$\mathbb{\overline{C}}$. In fact, it is holomorphic outside
$X:=\Psi(\cup_{S\in\mathcal{S}} \cup_{k\in I_S}(U_k\setminus V_k))$.
To see this, we fix some $\mathcal{S}$-piece $S$. The restriction
$g|_{\Psi(E_S)}$ can be decomposed into 
$$g|_{\Psi(E_S)}=(\Phi\circ\Phi_{f(E_S)}^{-1})\circ(\Phi_{f(E_S)}\circ
f\circ\Psi_S^{-1})|_{\Psi_S(E_S)}\circ(\Phi_S\circ\Phi^{-1})|_{\Psi(E_S)}.$$

For any $k\in I_S$, any $\mathcal{E}$-piece $E \subset U_k$, the
restriction $g|_{\Psi(V_k\cap E)}$  can be decomposed into
$$g|_{\Psi(V_k\cap E)}=(\Phi\circ \Phi_{f(E)}^{-1})\circ(\Phi_{f(E)}\circ
f\circ\phi_{k}^{-1})|_{\phi_{k}(V_k\cap
E)}\circ(\Phi_S\circ\Phi^{-1})|_{\Psi(V_k\cap E)}.$$

In either case, each factor of the decompositions of $g$ is
holomorphic in its domain of definition. So $g|_S$ is holomorphic
outside $\Psi(\cup_{k\in I_S}(U_k\setminus V_k))$.
It follows that $(g,Q)$ is holomorphic outside $X$.

Let $R_A$ be the union of all rotation annuli of $g$. Then one can
check that $X\subset g^{-1}(R_A)\setminus R_A$. Let $\sigma_0$ be
the standard complex structure in $\mathbb{\overline{C}}$, we define
a $(g,Q)$-invariant complex structure $\sigma$ by

\begin{equation*}
\sigma=\begin{cases}
 (g^k)^*(\sigma_0),\ \  &\text{ in } g^{-k}(R_{A})\setminus g^{-k+1}(R_A), \ k\geq1,\\
\sigma_0,\ \ &\text{ in } \mathbb{\overline{C}}-\cup_{k\geq1}(
g^{-k}(R_A)\setminus g^{-k+1}(R_A)).
\end{cases}
\end{equation*}

Since $(g,Q)$ is  holomorphic outside $X$, the Beltrami coefficient
$\mu$ of $\sigma$ satisfies $\|\mu\|_\infty<1$. By Measurable
Riemann Mapping Theorem, there is a quasi-conformal map
$\zeta:\mathbb{\overline{C}}\rightarrow \mathbb{\overline{C}}$ such
that $\zeta^*(\sigma_0)=\sigma$. Let $R=\zeta\circ
g\circ\zeta^{-1}$, then $R$ is a rational map and $(f, P)$ is
q.c-equivalent to $(R, \zeta\circ\Phi(P))$ via
$(\zeta\circ\Phi,\zeta\circ\Psi)$. \hfill $\Box$

\section{Realization part II: general case} \label{3-3-3}

In this section,  we prove the sufficiency   of Theorem \ref{b4a} in
the more general case $\Gamma\neq\emptyset$. This is the technical
part. We assume in this section that $\Gamma\neq\emptyset$,
$\lambda(\Gamma,f)<1$ and for each $\nu\in \Lambda=[1,n]$, the map
$(h_\nu,P_\nu)$ is q.c-equivalent to a rational map, we will show
that $(f,P)$ is q.c-equivalent to a rational map.

 The idea
 is to glue the holomorphic models of $(h_k,P_k)_{1\leq k\leq n}$ along the curves in $\Gamma\cup \Gamma_0$, similar to
Section \ref{3-3-2}. But this section provides very interesting and
technical flavor because of the algebraic condition
$\lambda(\Gamma,f)<1$. In most part of this section, we deal with
this condition and we will show that it is actually
equivalent to the Gr\"otzsch inequality in the holomorphic setting.
Thus it enables us to glue the partial holomorphic models of $(f,P)$
along $\Sigma$ (in a suitable fashion) into a branched covering
$(g,Q)$, holomorphic in most part of $\mathbb{\overline{C}}$ and
q.c-equivalent to $(f,P)$. The last step is similar to the previous
sections, it is a quasi-conformal surgery procedure.

\subsection{The algebraic condition $\lambda(\Gamma,f)<1$}\label{a1}

To begin, we recall a result on non-negative matrix.
 Let $W$ be a $m\times m$  non-negative  square matrix (i.e. each entry is
a non-negative real number). It's known from Perron-Frobenius
Theorem that the spectral radius of $W$ is an eigenvalue of  $W$,
named the {\it leading eigenvalue}. Let
$v=(v_1,\cdots,v_m)^t\in\mathbb{R}^m$ be a vector, we say $v>0$ if
for each $i$, $v_i>0$.

\begin{lem}[\cite{CT1}, Lemma A.1]\label{b4ccc}
 Let $W$ be a non-negative square matrix with
leading eigenvalue $\lambda$. Then $\lambda<1$ iff there is a vector
$v>0$ such that $Wv<v$.
\end{lem}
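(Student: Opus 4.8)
The plan is to establish both implications of the equivalence by elementary arguments: a Neumann series for the ``only if'' direction and a contraction estimate for the ``if'' direction. The only external input is the Perron--Frobenius fact, already recalled in the notations, that $\lambda=\mathrm{sp}(W)$ for a non-negative square matrix, together with the characterization $\mathrm{sp}(W)=\lim_k\sqrt[k]{\|W^k\|}$.

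First I would treat the direction $\lambda<1\Rightarrow \exists\,v>0$ with $Wv<v$. Since $\mathrm{sp}(W)=\lambda<1$, pick $\lambda_1\in(\lambda,1)$; then $\|W^k\|\le\lambda_1^k$ for all large $k$, so $\sum_{k\ge0}W^k$ converges entrywise, and from $(I-W)\sum_{k=0}^N W^k=I-W^{N+1}\to I$ its sum is $(I-W)^{-1}$. Let $e=(1,\dots,1)^t$ and set $v:=(I-W)^{-1}e=\sum_{k\ge0}W^k e$. Each term $W^k e$ is a non-negative vector and the $k=0$ term equals $e>0$, so $v>0$; moreover $Wv=\sum_{k\ge1}W^k e=v-e<v$ coordinatewise, as required.

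For the converse, suppose $v=(v_1,\dots,v_m)^t>0$ satisfies $Wv<v$. Set $c:=\max_{1\le i\le m}(Wv)_i/v_i$; since $Wv<v$, $v>0$, and the index set is finite, this maximum is attained and $0\le c<1$, with $Wv\le c\,v$ coordinatewise. Because $W$ is non-negative, multiplying the inequality by $W$ preserves it, so by induction $W^k v\le c^k v$ for all $k\ge0$. Reading off entries, $(W^k)_{ij}\,v_j\le(W^kv)_i\le c^k v_i$, hence $(W^k)_{ij}\le c^k(\max_i v_i)/(\min_j v_j)$, which yields $\|W^k\|\le C'c^k$ for a constant $C'$ independent of $k$. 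Therefore $\mathrm{sp}(W)=\lim_k\sqrt[k]{\|W^k\|}\le c<1$, and since $\lambda=\mathrm{sp}(W)$ we conclude $\lambda<1$.

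The argument is essentially mechanical; the only points needing a little care are checking that the vector $v$ constructed in the first implication is strictly positive in \emph{every} coordinate (immediate, as the $k=0$ term $e$ dominates the non-negative tail) and verifying in the second implication that the scalar $c$ is genuinely less than $1$ (which uses both the strictness of $Wv<v$ and finiteness of the index set, so that the maximum defining $c$ is attained). These are the only real ``obstacles,'' and both are routine. Alternatively one may simply invoke this as Lemma A.1 of \cite{CT1}, but the self-contained proof above is short enough to record.
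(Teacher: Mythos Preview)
Your proof is correct. Both directions are handled cleanly: the Neumann series construction for the forward direction and the contraction bound via the weighted $\ell^\infty$-type norm for the converse are standard and complete.

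Note that the paper does not actually prove this lemma; it is stated with a citation to \cite{CT1}, Lemma A.1, and no proof is given in the paper itself. So there is no ``paper's own proof'' to compare against. Your self-contained argument is a perfectly acceptable substitute for the citation, and is short enough that including it does no harm.
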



With the help of Lemma \ref{b4ccc}, we turn to our discussion.
First,   $\lambda(\Gamma,f)<1$ implies $Wv<v$, where $W$ is the
$(f,P)$-transition matrix of $\Gamma$  and $v\in \mathbb{R}^\Gamma$
is a positive vector. That is, there is a positive function
$v:\Gamma\rightarrow \mathbb{R}^+$ such that for any
$\gamma\in\Gamma$,
$$(Wv)(\gamma)=\sum_{\beta\in \Gamma}
\sum_{\alpha\sim\gamma}\frac{v(\beta)}{{\rm
deg}(f:\alpha\rightarrow\beta)}< v(\gamma),$$ where the second
sum is taken over all components $\alpha$ of $f^{-1}(\beta)$
 homotopic to $\gamma$ in $\mathbb{\overline{C}}-P$.

\begin{figure}[h]
\centering{
\includegraphics[height=6cm]{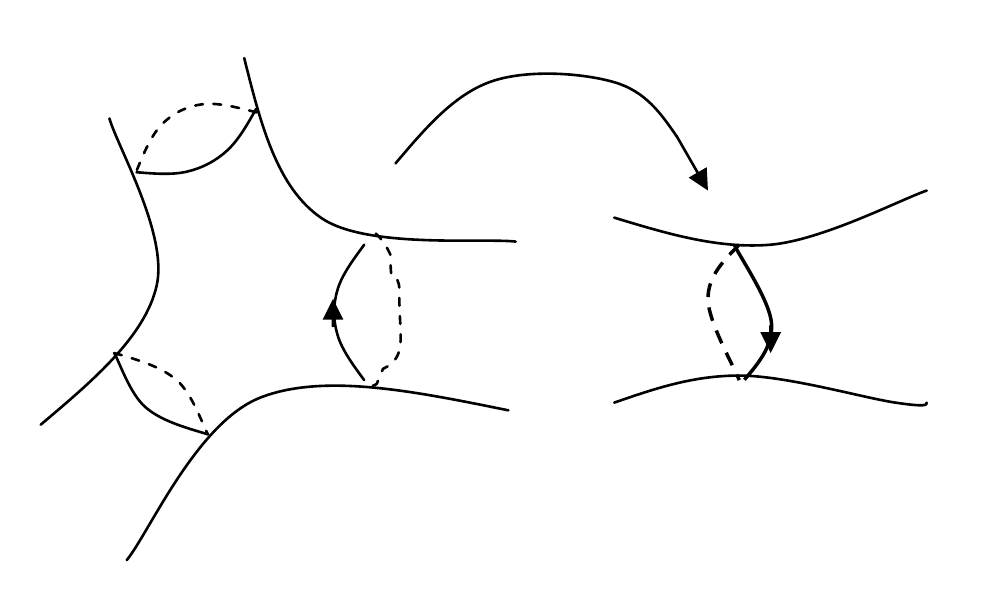}
  \put(-220,80){$S_\gamma^+$} \put(-200,80){$\gamma$}
  \put(-130,133){$f$} \put(-160,80){$S_\gamma^-$}
  \put(-110,80){$S_{f(\gamma)}^-$} \put(-60,70){$f(\gamma)$} \put(-35,80){$S_{f(\gamma)}^+$}
 \caption{Orientation and labeling}}
\end{figure}

Recall that for each curve $\gamma\in \Sigma$, there exist exactly
two $\mathcal{S}$-pieces, say $S_\gamma^+$ and $S_\gamma^-$, such
that $S_\gamma^+\cap S_\gamma^-=\gamma$. For each curve $\gamma\in
\Sigma$, we can associate an orientation preserved by $f$.
 We may assume that the notations  $S_\gamma^+$ and
$S_\gamma^-$  are chosen such that $S_\gamma^+$ lies on the left
side of $\gamma$ and $S_\gamma^-$ lies on the right side of
$\gamma$.

Here, we borrow some notations from Lemma \ref{b2a0}. Recall that
$\Gamma_0$ is the collection of the $(f,P)$-periodic curves that
generates $\Gamma$. For
$n\geq1$, the set $\Gamma_n$ is defined by $\Gamma_n=\{\gamma\in \Gamma;  n \text{ is
the first integer such that } f^n(\gamma)\in \Gamma_0\}$.

One may verify that   if $\delta\in f^{-1}(\Gamma)$ is homotopic to
a curve $\gamma\in\Gamma$ in $\mathbb{\overline{C}}-P$, then
$\delta$ is necessarily contained in $S^+_\gamma\cup S^-_\gamma$.
One may verify that if $\gamma\in\Gamma_1$, then $\delta\neq \gamma$; if
$\gamma\in\Gamma_k$ for some $k\geq 2$, it can happen that
$\delta=\gamma$.

For each curve $\gamma\in \Gamma=\bigcup_{n\geq1}\Gamma_n$, we will
associate two positive numbers $\rho(S_\gamma^+,\gamma)$ and
$\rho(S_\gamma^-,\gamma)$ inductively, as follows:

If $\gamma\in \Gamma_1$, we choose two positive numbers
$\rho(S_\gamma^+,\gamma)$ and $\rho(S_\gamma^-,\gamma)$ such that
$$\rho(S_\gamma^+,\gamma)+\rho(S_\gamma^-,\gamma)=1,$$
$$\sum_{\beta\in \Gamma}
\sum_{\alpha\sim\gamma, \alpha\subset
S_\gamma^\omega}\frac{v(\beta)}{{\rm
deg}(f:\alpha\rightarrow\beta)}<
v(\gamma)\rho(S_\gamma^\omega,\gamma), \ \omega\in \{\pm\}.$$

Suppose that for each curve $\alpha\in  \Gamma_1\cup\cdots\cup
\Gamma_k$, we have already chosen two numbers
$\rho(S_\alpha^+,\alpha)$ and $\rho(S_\alpha^-,\alpha)$. Then for
$\gamma\in \Gamma_{k+1}$ (note
 that  $f(\gamma)\in \Gamma_{k}$), we can find two positive
numbers $\rho(S_\gamma^+,\gamma)$ and $\rho(S_\gamma^-,\gamma)$ such
that:
$$\rho(S_\gamma^+,\gamma)+\rho(S_\gamma^-,\gamma)=1,$$
$$\frac{v(f(\gamma))}{{\rm
deg}(f|_{\gamma})}\rho(S_{f(\gamma)}^\omega, f(\gamma))
+\sum_{\beta\in \Gamma} \sum_{\alpha\sim\gamma, \alpha\subset
S_\gamma^\omega\setminus\gamma}\frac{v(\beta)}{{\rm
deg}(f:\alpha\rightarrow\beta)}<
v(\gamma)\rho(S_\gamma^\omega,\gamma),\  \omega\in \{\pm\}.$$ In
fact, we can take
$$\rho(S_\gamma^\omega,\gamma)=\frac{\displaystyle\frac{v(f(\gamma))}{{\rm
deg}(f|_{\gamma})}\rho(S_{f(\gamma)}^\omega, f(\gamma))
+\sum_{\beta\in \Gamma} \sum_{\alpha\sim\gamma, \alpha\subset
S_\gamma^\omega\setminus\gamma}\frac{v(\beta)}{{\rm
deg}(f:\alpha\rightarrow\beta)}}{\displaystyle\sum_{\beta\in \Gamma}
\sum_{\alpha\sim\gamma}\frac{v(\beta)}{{\rm
deg}(f:\alpha\rightarrow\beta)}},\  \omega\in \{\pm\}.$$

\subsection{Equipotentials in the marked disks of rational
maps}\label{a2}

Suppose that $(f,P)$  is either a Thurston rational  map or a Siegel
rational map, with a non-empty Fatou set. Recall that $P$ is a
marked set containing the postcritical set $P_f$. Then each periodic
Fatou component is either a superattracting domain or a Siegel disk.
If $f$ has a superattracting Fatou component $D$, then every Fatou
component $\Delta$ which is eventually mapped onto $D$ can be marked
by a unique pre-periodic point $a\in \Delta$. We call
$(\Delta,a)$ a I-type marked disk of $f$. Note that every
equipotential  in a superattracting Fatou component corresponds to a
round circle in B\"{o}ttcher coordinate. If $f$ has a Siegel disk
$D$, then it is known that the boundary $\partial D$ is contained in
the postcritical set $P_f$. Let $z_0$ be the center of the Siegel
disk $D$, the intersection $P\cap(D-\{z_0\})$ is either empty or
consists of finitely many $(f,P)$-periodic analytic curves. Let
$D_0\subset D$ be the component of
$\mathbb{\overline{C}}-(P\setminus\{z_0\})$ containing $z_0$.
 For
any $k\geq0$ and any component $\Delta$ of $f^{-k}(D_0)$, one can
verify that $\Delta$ is a disk and there is a unique point $a\in
\Delta\cap f^{-k}(z_0)$. We call $(\Delta, a)$ a II-type marked disk
of $(f,P)$.

In this way, for each Fatou component, we can associate a marked disk $(\Delta,a)$.
 An equipotential
$\gamma$ of  $(\Delta, a)$ is an analytic curve that is mapped to a
round circle with center zero under a Riemann mapping
$\phi:\Delta\rightarrow\mathbb{D}=\{|z|<1\}$ with $\phi(a)=0$. The potential
$\varpi(\gamma)$ of $\gamma$ is defined to be
$\textrm{mod}(A(\partial\Delta,\gamma))$, the modulus of the annulus
between $\partial\Delta$ and $\gamma$. One may check that these
definitions are independent of  the choice of the map $\phi$.


\subsection{A positive function}\label{a3}

For each curve $\gamma\in \Sigma$, we associate an open annular
neighborhood $A^\gamma$ of $\gamma$. The annulus $A^\gamma$ is
chosen as follows: If $\gamma\in \Gamma_0$, we take  $A^\gamma$ as
a proper subset of the rotation annulus containing $\gamma$ such
that $f(A^\gamma)=A^{f(\gamma)}$ and $\overline{A^\gamma}\cap
f(P-\cup\mathcal{A})=\emptyset$. If $\gamma\in \Gamma_k$ for some
$k\geq1$, then $A^\gamma$ is the component of
$f^{-k}(A^{f^k(\gamma)})$ containing $\gamma$.

We define \bess \ &\mathcal{S}^\star=\{U;\ U \text{ is a connected
component of } \mathbb{\overline{C}}-\cup_{\gamma\in
\Sigma}A^\gamma\},& \ \\
\ &\mathcal{E}^\star=\{{V};\  V \text{ is a connected component of }
\mathbb{\overline{C}}-f^{-1}(\cup_{\gamma\in \Sigma}A^\gamma)\}.& \
\eess
 Each element of $\mathcal{S}^\star$ (resp.
$\mathcal{E}^\star$) is called an $\mathcal{S}^\star$-piece (resp.
$\mathcal{E}^\star$-piece).
 We will use $S^\star$ (resp. $E^\star$) to
denote an $\mathcal{S}^\star$-piece (resp.
$\mathcal{E}^\star$-piece).
 We remark that if we use $S$
to denote an $\mathcal{S}$-piece, then the notation $S^*$ means the
unique $\mathcal{S}^\star$-piece  contained in $S$; on the other
hand, if we use $S^*$ to denote an $\mathcal{S}^*$-piece, then the
notation $S$ means the unique $\mathcal{S}$-piece  containing $S^*$.
The convention also applies to the $\mathcal{E}$-pieces and
$\mathcal{E}^\star$-pieces.

 Similarly as in Section \ref{3-1}, we define
$E_{S^\star}$ to be the unique $\mathcal{E}^\star$-piece parallel to
$S^\star$. The map $f_*:\mathcal{S}^\star\rightarrow
\mathcal{S}^\star$ is defined by $f_*(S^\star)=f(E_{S^\star})$. The
marked sphere  $\mathbb{\overline{C}}(S^\star)$,  the marked disk
extension
$H_{S^\star}:\mathbb{\overline{C}}(S^\star)\rightarrow\mathbb{\overline{C}}(f_*(S^\star))$,
the marked set $P(S^\star)$ and also the sets
$\partial_0(S^\star),\partial_1(S^\star),\partial_2(S^\star)$ are
defined in the same way. Set
$$h^\star_\nu=H_{f_*^{p_\nu-1}(S^\star_\nu)}\circ\cdots\circ
H_{f_*(S^\star_\nu)}\circ H_{S^\star_\nu}, \
{P}^\star_\nu=P(S^\star_\nu), \ 1\leq \nu\leq n.$$

Consider the maps  $(h_\nu,P_\nu)$ and
$({h}^\star_\nu,{P}^\star_\nu)$ for $\ 1\leq \nu\leq n$. It is clear
that

$\bullet$  $(h_\nu,P_\nu)$  has no Thurston obstructions iff
$({h}^\star_\nu,{P}^\star_\nu)$ has no Thurston obstructions.

$\bullet$  $(h_\nu,P_\nu)$  is q.c-equivalent to a rational map iff
 $({h}^\star_\nu,{P}^\star_\nu)$  is q.c-equivalent to a rational
map.

We will use $({h}^\star_\nu,{P}^\star_\nu)$ in place of
$({h}_\nu,{P}_\nu)$ in the following discussions.  This will allow us to
construct deformations in a neighborhood of each curve $\gamma\in
\Sigma$. The advantage of this replacement will be seen in the last
step of the proof of Theorem \ref{b4a} (see Section \ref{aab}) where
we  apply the quasi-conformal surgery to glue all holomorphic models
together to obtain a rational realization of $(f,P)$.

\

\begin{figure}[h]
\centering{
\includegraphics[height=6cm]{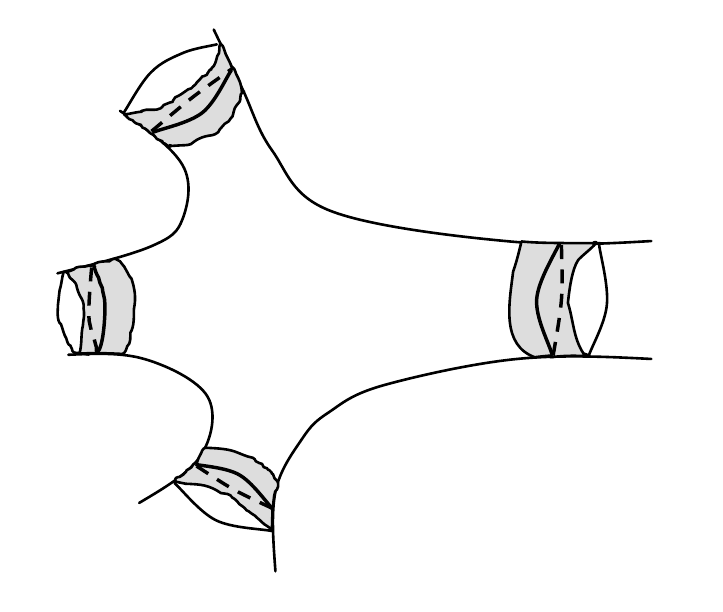}  \put(-90, 30){$S$}
 \put(-163, 80){$\beta_2$} \put(-75, 80){$\alpha_4$}  \put(-52, 105){$\gamma_4$}
 \put(-30, 80){$\beta_4$} \put(-110, 80){$S^\star$}
\put(-170, 150){$\alpha_1$} \put(-147, 120){$\beta_1$} \put(-135,
150){$\gamma_1$}\put(-200, 80){$\alpha_2$}\put(-180,
102){$\gamma_2$} \put(-150, 15){$\alpha_3$} \put(-135,
47){$\beta_3$} \put(-125, 30){$\gamma_3$}
 \caption{An $\mathcal{S}$-piece $S$ with boundary curves $\gamma_1,\gamma_2,\gamma_3, \gamma_4$. $S$ contains an $\mathcal{S}^\star$-piece
 $S^\star$, whose boundary curves are
 $\beta_1,\beta_2,\beta_3$ and $\alpha_4$.}}
\end{figure}

For each curve $\gamma\in \Sigma$, let $\alpha_\gamma,\beta_\gamma$
be the two boundary curves of $A^\gamma$.  Define
$$\Sigma^\star=\{\alpha_\gamma,\beta_\gamma; \gamma\in \Sigma\},\ \
\Gamma^\star=\{\alpha_\gamma,\beta_\gamma; \gamma\in \Gamma\},\ \
\Gamma_k^\star=\{\alpha_\gamma,\beta_\gamma; \gamma\in \Gamma_k\},
k\geq0.$$ We define a map $\pi: \Sigma^\star\rightarrow\Sigma$ by
$\pi(\alpha)=\gamma$ if $\alpha$ is a boundary curve of $A^\gamma$.
Obviously, for each curve $\gamma\in\Sigma$, we have
$\pi^{-1}(\gamma)=\{\alpha_\gamma,\beta_\gamma\}$. For 
$\delta\in\Sigma^\star$, let $S_\delta$ (resp. $S^\star_\delta$) be
the unique $\mathcal{S}$-piece (resp. $\mathcal{S}^\star$-piece)
containing $\delta$.

\

Now we define a positive function $\sigma_t: \Sigma^\star\rightarrow
\mathbb{R}^+$, where $t$ is a positive parameter, as follows:

First, we consider $\gamma\in \Gamma_0^\star$. In this case, some iterate
$f^k(\gamma)$ is contained in the rotation disk $\Delta$ of some Siegel
map $(h^\star_\nu,P^\star_\nu)$. Note that there is a largest open
annulus $A\subset \Delta$ such that

$\bullet$ the inner boundary of $A$ is $f^k(\gamma)$,


$\bullet$ $A\cap h^\star_\nu(P^\star_\nu-R_D)=\emptyset$, where
$R_D$ is the union of all rotation disks of
$(h^\star_\nu,P^\star_\nu)$.

 We define
$\sigma_t(\gamma)$ to be the modulus of $A$. By definition,
$\sigma_t(\gamma)=\sigma_t(f(\gamma)).$

Now, we consider $\gamma\in \Gamma^\star$. In this case, $\gamma\in
\partial_1(S^\star_\gamma)\cup \partial_2(S^\star_\gamma)$. If
$\gamma\in\partial_1(S^\star_\gamma)$, we define
$$\sigma_t(\gamma)=t\cdot\rho(S_\gamma, \pi(\gamma))\cdot v(\pi(\gamma)).$$
If $\gamma\in\partial_2(S^\star_\gamma)$, we define
\begin{equation*}
\sigma_t(\gamma)=\begin{cases}
\dfrac{\sigma_t(f(\gamma))}{{\rm deg}(f|_\gamma)},\ \  &\text{if } S^\star_\gamma\in \mathcal{S}^\star-\{S^\star_1,\cdots,S^\star_n\},\\
t\cdot\rho(S_\gamma, \pi(\gamma))\cdot v(\pi(\gamma)),\ \ &\text{if
} S^\star_\gamma\in \{S^\star_1,\cdots,S^\star_n\}.
 \end{cases}
 \end{equation*}

In  this way, for all curves $\gamma\in \Sigma^\star$, the quantity
$\sigma_t(\gamma)$ is well-defined.

\begin{lem}\label{b4c}  When $t$ is large enough, the function $\sigma_t:
\Sigma^\star\rightarrow \mathbb{R}^+$ satisfies:

1. For any $\gamma\in \Gamma^\star$, $\sigma_t(\gamma)\leq
t\cdot\rho(S_\gamma, \pi(\gamma))\cdot v(\pi(\gamma))$.

2. For every $\gamma\in \Sigma$, suppose that
$\pi^{-1}(\gamma)=\{\alpha_\gamma,\beta_\gamma\}$. Then
\begin{equation*}
\sigma_t(\alpha_\gamma)+\sigma_t(\beta_\gamma)\leq \begin{cases}
tv(\gamma),\ \ &\text{if } \gamma\in
\Gamma,\\
{\rm mod}(A_\gamma),\ \ &\text{if } \gamma\in \Gamma_0,
 \end{cases}
 \end{equation*}
 where $A_\gamma$ is
the rotation annulus of $(f,P)$ that contains $\gamma$ if $\gamma\in
\Gamma_0$.

3. For every $\gamma\in \Gamma^\star$, if $\gamma\in
\partial_1(S^\star_\gamma)$, then we have the following
inequality:
$$\sum_{\beta\in \Gamma^\star}
\sum_{\alpha\sim\gamma, \alpha\subset
S^\star_\gamma}\frac{\sigma_t(\beta)}{{\rm
deg}(f:\alpha\rightarrow\beta)}<\sigma_t(\gamma),$$ where the second
sum is taken over all components of $f^{-1}(\beta)$  contained
in $S^\star_\gamma$ and homotopic to $\gamma$ in
$\mathbb{\overline{C}}-P$ .

\end{lem}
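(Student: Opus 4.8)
The plan is to verify the three assertions in the order (1), (3), (2), exploiting the inductive definition of $\sigma_t$ along the filtration $\Gamma^\star=\bigcup_{k\geq 1}\Gamma_k^\star$ together with the properties built into $\rho(\cdot,\cdot)$ and $v$ in Section~\ref{a1}. For (1), I would induct on the least $k\geq 1$ with $f^k(\gamma)\in\Gamma_0^\star$. If $\gamma\in\partial_1(S^\star_\gamma)$ then $\sigma_t(\gamma)=t\cdot\rho(S_\gamma,\pi(\gamma))\cdot v(\pi(\gamma))$ by definition, so equality holds. If $\gamma\in\partial_2(S^\star_\gamma)$, then either $S^\star_\gamma\in\{S^\star_1,\dots,S^\star_n\}$, in which case again the formula gives exactly $t\cdot\rho(S_\gamma,\pi(\gamma))\cdot v(\pi(\gamma))$; or $S^\star_\gamma$ is strictly preperiodic, in which case $\sigma_t(\gamma)=\sigma_t(f(\gamma))/\deg(f|_\gamma)\leq\sigma_t(f(\gamma))$. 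In the latter case $f(\gamma)\in\Gamma^\star$ is closer to $\Gamma_0^\star$, so the inductive hypothesis gives $\sigma_t(f(\gamma))\leq t\cdot\rho(S_{f(\gamma)},\pi(f(\gamma)))\cdot v(\pi(f(\gamma)))\leq t\cdot v(\pi(f(\gamma)))=t\cdot v(f(\pi(\gamma)))$; combined with $\deg(f|_\gamma)\geq 1$ and the fact that $\rho(S_\gamma,\pi(\gamma))$ was chosen in Section~\ref{a1} to absorb the $v(f(\pi(\gamma)))/\deg(f|_{\pi(\gamma)})$ term, one gets $\sigma_t(\gamma)\leq t\cdot\rho(S_\gamma,\pi(\gamma))\cdot v(\pi(\gamma))$. (The constraint $t$ large enough enters only through the comparison with the fixed moduli attached to curves in $\Gamma_0^\star$, which are bounded independently of $t$.)

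For (3), fix $\gamma\in\Gamma^\star$ with $\gamma\in\partial_1(S^\star_\gamma)$, so $\sigma_t(\gamma)=t\cdot\rho(S_\gamma,\pi(\gamma))\cdot v(\pi(\gamma))$. The curves $\alpha\subset S^\star_\gamma$ with $\alpha\sim\gamma$ and $\alpha$ a component of $f^{-1}(\beta)$ for $\beta\in\Gamma^\star$ are in bijection (via $\pi$) with the corresponding data for $\pi(\gamma)\in\Gamma$ in $\mathbb{\overline{C}}-P$, and the degrees $\deg(f:\alpha\to\beta)$ match $\deg(f:\pi(\alpha)\to\pi(\beta))$. Using (1) to bound each $\sigma_t(\beta)$ by $t\cdot\rho(S_\beta,\pi(\beta))\cdot v(\pi(\beta))$, the left-hand sum is at most
\[
t\sum_{b\in\Gamma}\ \sum_{\substack{a\sim\pi(\gamma),\ a\subset S_{\pi(\gamma)}}}\frac{\rho(S_{\pi(\gamma)},b)\,v(b)}{\deg(f:a\to b)}
\ \leq\ t\sum_{b\in\Gamma}\ \sum_{\substack{a\sim\pi(\gamma),\ a\subset S^\omega_{\pi(\gamma)}}}\frac{v(b)}{\deg(f:a\to b)},
\]
where $S^\omega_{\pi(\gamma)}$ is the side of $\pi(\gamma)$ on which $S_{\pi(\gamma)}$ sits; here one must be slightly careful to separate out, when $\gamma\in\Gamma_k^\star$ with $k\geq 2$, the possible component $a=\pi(\gamma)$ itself, which contributes $v(f(\pi(\gamma)))\rho(S^\omega_{f(\pi(\gamma))},f(\pi(\gamma)))/\deg(f|_{\pi(\gamma)})$. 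Precisely this combination is what the defining inequality for $\rho(S^\omega_\gamma,\gamma)$ in Section~\ref{a1} bounds strictly above by $v(\pi(\gamma))\rho(S_\gamma,\pi(\gamma))$, which after multiplying by $t$ is $\sigma_t(\gamma)$. So (3) follows directly from the choice of $\rho$, and this is essentially a bookkeeping translation of the two displayed inequalities in Section~\ref{a1} from $\Gamma$ to $\Gamma^\star$.

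For (2), given $\gamma\in\Sigma$ with $\pi^{-1}(\gamma)=\{\alpha_\gamma,\beta_\gamma\}$: if $\gamma\in\Gamma_0$, then $\sigma_t(\alpha_\gamma)$ and $\sigma_t(\beta_\gamma)$ are moduli of two disjoint annuli inside the rotation annulus $A_\gamma$ lying on opposite sides of $\gamma$ (by the construction in Section~\ref{a3}, since some forward iterate lands in a rotation disk and the relevant sub-annuli are nested, pull back under the conformal return dynamics), so the Grötzsch inequality gives $\sigma_t(\alpha_\gamma)+\sigma_t(\beta_\gamma)\leq\mathrm{mod}(A_\gamma)$. If $\gamma\in\Gamma$, then by (1) both $\sigma_t(\alpha_\gamma)$ and $\sigma_t(\beta_\gamma)$ are at most $t\cdot\rho(S_{\alpha_\gamma},\gamma)\cdot v(\gamma)$ and $t\cdot\rho(S_{\beta_\gamma},\gamma)\cdot v(\gamma)$ respectively; since $\{S_{\alpha_\gamma},S_{\beta_\gamma}\}=\{S^+_\gamma,S^-_\gamma\}$ and $\rho(S^+_\gamma,\gamma)+\rho(S^-_\gamma,\gamma)=1$, the sum is at most $t\cdot v(\gamma)$. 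I expect the main obstacle to be the careful verification in (3) of the case $\gamma\in\Gamma_k^\star$, $k\geq2$, where one of the preimage curves $\alpha$ homotopic to $\gamma$ may be $\gamma$ itself; this is exactly the subtlety flagged after Example~\ref{b2a1} ("if $\gamma\in\Gamma_k$ for some $k\geq 2$, it can happen that $\delta=\gamma$"), and it forces one to use the two-part structure of the defining inequality for $\rho$ (the $v(f(\gamma))/\deg(f|_\gamma)$ term plus the residual sum) rather than the simpler one-part bound valid when $\gamma\in\Gamma_1$. Everything else is a routine transcription from $\Gamma$ to $\Gamma^\star$ via the bijection $\pi$ and the observation that moduli attached to $\Gamma_0^\star$ are $t$-independent, hence dominated once $t$ is large.
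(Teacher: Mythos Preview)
Your overall strategy matches the paper's, but there is a real gap in part~(1) that propagates into~(3).

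In the inductive step for (1) you write
\[
\sigma_t(f(\gamma))\ \leq\ t\cdot\rho(S_{f(\gamma)},\pi(f(\gamma)))\cdot v(\pi(f(\gamma)))\ \leq\ t\cdot v(\pi(f(\gamma))),
\]
discarding the factor $\rho(S_{f(\gamma)},\pi(f(\gamma)))$, and then claim that $\rho(S_\gamma,\pi(\gamma))$ was chosen ``to absorb the $v(f(\pi(\gamma)))/\deg(f|_{\pi(\gamma)})$ term''. That is not what Section~\ref{a1} provides: the defining inequality for $\rho$ only guarantees
\[
\frac{v(f(\beta))\,\rho(S^\omega_{f(\beta)},f(\beta))}{\deg(f|_\beta)}\ <\ v(\beta)\,\rho(S^\omega_\beta,\beta)
\qquad(\beta\in\Gamma\setminus\Gamma_1),
\]
not the stronger bound with $\rho(S^\omega_{f(\beta)},f(\beta))$ replaced by $1$. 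It is easy to cook up admissible data where $v(f(\beta))/\deg(f|_\beta)$ is much larger than $v(\beta)\rho(S^\omega_\beta,\beta)$ while the displayed inequality still holds. The correct argument---and this is exactly what the paper does---is to \emph{keep} the $\rho$ factor in the inductive bound and iterate the one-step monotonicity above (using that the orientation labeling makes $S_{f(\gamma)}$ sit on the same side $\omega$ as $S_\gamma$, and $\deg(f|_\gamma)=\deg(f|_{\pi(\gamma)})$). The base case, where the forward orbit first lands in $\Gamma_0^\star$, is handled separately because $\sigma_t$ is $t$-independent there; you allude to this but should state it as the place where ``$t$ large'' is used.

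The same loss of a $\rho$ factor threatens your bound in~(3). If you estimate each $\sigma_t(\beta)$ separately by $t\,v(\pi(\beta))$, then for each $b\in\Gamma$ the two curves in $\pi^{-1}(b)$ together contribute $2t\,v(b)/\deg$ rather than $t\,v(b)/\deg$, and the $\rho$-inequality no longer closes. The paper avoids this by pairing: it groups the two $\zeta\in\pi^{-1}(\delta)$ and invokes~(2) to get $\sum_{\zeta}\sigma_t(\zeta)\leq t\,v(\delta)$ in one stroke. Your displayed expression with ``$\rho(S_{\pi(\gamma)},b)$'' is not well-formed; if you intended to sum the two side-factors and use $\rho^+ + \rho^- = 1$, that is exactly the pairing via~(2), and you should say so. Your handling of the exceptional term $\alpha=\gamma$ (present only when $\gamma\in\Gamma_k^\star$, $k\geq 2$) is correct and matches the paper. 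Part~(2) is fine as you wrote it.
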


\begin{proof} 1. Notice that if $\gamma\in \Gamma^\star$, then
$\gamma\in
\partial_1(S^\star_\gamma)\cup\partial_2(S^\star_\gamma)$.
If $\gamma\in
\partial_1(S^\star_\gamma)$ or $S^\star_\gamma\in\{S^\star_1,\cdots,S^\star_n\}$, then by evaluation,
$\sigma_t(\gamma)= t\rho(S_\gamma, \pi(\gamma))v(\pi(\gamma))$. Now
suppose $\gamma\in
\partial_2(S^\star_\gamma)$ and $S^\star_\gamma\in\mathcal{S}^\star-\{S^\star_1,\cdots,S^\star_n\}$.
Let $p\geq1$ be the first integer such that
$f_*^p(S^\star_\gamma)\in \{S^\star_1,\cdots,S^\star_n\}$. There is
a largest number $k\in \{0,\cdots, p\}$ such that $f^j(\gamma)\in
\partial_2(f^j_*(S^\star_\gamma))$ for $0\leq j<k$. Thus we have
$$\sigma_t(\gamma)=\frac{\sigma_t(f(\gamma))}{{\rm deg}(f|_\gamma)}=
\cdots=\frac{\sigma_t(f^k(\gamma))}{{\rm deg}(f^k|_\gamma)}.$$

If $f^k(\gamma)\in
\partial_0(f^k_*(S^\star_\gamma))$, then $\sigma_t(f^k(\gamma))$
is a constant independent of $t$, thus  $\sigma_t(\gamma)\leq
t\rho(S_\gamma, \pi(\gamma))v(\pi(\gamma))$ when $t$ is large.

If $f^k(\gamma)\in
\partial_1(f^k_*(S^\star_\gamma))$, then
$$\sigma_t(\gamma)=\frac{t\cdot\rho(S_{f^k(\gamma)},\pi(f^k(\gamma)))\cdot v(\pi(f^k(\gamma)))}{{\rm
deg}(f^k|_\gamma)}.$$

By the choice of the numbers $\{\rho(S_\gamma^+,\gamma),
\rho(S_\gamma^-,\gamma); \gamma\in\Gamma\}$, we see that for any
curve $\beta\in\Gamma-\Gamma_1=\cup_{j\geq2}\Gamma_j$,

$$\frac{v(f(\beta))\rho(S_{f(\beta)}^\omega, f(\beta))}{{\rm
deg}(f|_{\beta})} < v(\beta)\rho(S_\beta^\omega,\beta),\ \omega\in
\{\pm\}.$$

 Since for each $\gamma\in \Gamma^\star$, ${\rm
deg}(f|_\gamma)={\rm deg}(f|_{\pi(\gamma)})$, we have that \bess
 \sigma_t(\gamma)&<&\frac{t\rho(S_{f^{k-1}(\gamma)},\pi(f^{k-1}(\gamma)))
 v(\pi(f^{k-1}(\gamma)))}{ {\rm deg}(f^{k-1}|_\gamma)}<\cdots\\
 &<&\frac{t\rho(S_{f(\gamma)},\pi(f(\gamma)))
 v(\pi(f(\gamma)))}{ {\rm deg}(f|_\gamma)}<t\rho(S_{\gamma},\pi(\gamma))
 v(\pi(\gamma)).
 \eess

If $f^k(\gamma)\in
\partial_2(f^k_*(S^\star_\gamma))$, then we have $k=p$ by
the choice of $k$ and
$$\sigma_t(\gamma)=\frac{t\rho(S_{f^p(\gamma)},\pi(f^p(\gamma)))v(\pi(f^p(\gamma)))}{{\rm
deg}(f^p|_\gamma)}.$$ With the same argument as above, we have
$\sigma_t(\gamma)< t\rho(S_\gamma, \pi(\gamma))v(\pi(\gamma))$.

2. The conclusion  follows from 1 and  the definition of
$\sigma_t$.

3. We verify the inequality directly, as follows:

\bess &&\sum_{\beta\in \Gamma^\star} \sum_{\alpha\sim\gamma,
\alpha\subset S^\star_\gamma}\frac{\sigma_t(\beta)}{{\rm
deg}(f:\alpha\rightarrow\beta)}\\
&=& \sum_{\beta\in \Gamma^\star} \sum_{\alpha\sim\gamma,
\alpha\subset
S^\star_\gamma\setminus{\gamma}}\frac{\sigma_t(\beta)}{{\rm
deg}(f:\alpha\rightarrow\beta)}+\frac{\sigma_t(f(\gamma))}{{\rm
deg}(f|_\gamma)}\\
&\leq & \sum_{\beta\in \Gamma^\star} \sum_{\alpha\sim\gamma,
\alpha\subset
S^\star_\gamma\setminus{\gamma}}\frac{\sigma_t(\beta)}{{\rm
deg}(f:\alpha\rightarrow\beta)}+\frac{ t\rho(S_{f(\gamma)},
\pi({f(\gamma)}))v(\pi({f(\gamma)}))}{{\rm deg}(f|_\gamma)} \ \ (By \ 1)\\
&=& \sum_{\delta\in \Gamma}\sum_{\zeta\in\pi^{-1}(\delta)}
\sum_{\alpha\sim\gamma, \alpha\subset
S^\star_\gamma\setminus{\gamma}}\frac{\sigma_t(\zeta)}{{\rm
deg}(f:\alpha\rightarrow\zeta)}+\frac{ t\rho(S_{f(\gamma)},
\pi({f(\gamma)}))v(\pi({f(\gamma)}))}{{\rm deg}(f|_\gamma)}\\
&=&\sum_{\delta\in \Gamma}\sum_{\alpha\sim\pi(\gamma), \alpha\subset
{S}_\gamma\setminus{\pi(\gamma)}}
\frac{\sum_{\zeta\in\pi^{-1}(\delta)}\sigma_t(\zeta)}{{\rm
deg}(f:\alpha\rightarrow\delta)}+\frac{ t\rho(S_{f(\gamma)},
\pi({f(\gamma)}))v(\pi({f(\gamma)}))}{{\rm deg}(f|_\gamma)}\\
&\leq&\sum_{\delta\in \Gamma}\sum_{\alpha\sim\pi(\gamma),
\alpha\subset {S}_\gamma\setminus{\pi(\gamma)}} \frac{t
v(\delta)}{{\rm deg}(f:\alpha\rightarrow\delta)}+\frac{
t\rho(S_{f(\gamma)},
\pi({f(\gamma)}))v(\pi({f(\gamma)}))}{{\rm deg}(f|_\gamma)}\ \ (By \ 2)\\
&<& t\rho(S_\gamma, \pi(\gamma))v(\pi(\gamma)) =\sigma_t(\gamma). \
\ (By\ the \ choice \ of \ the \ number \ \rho)\eess

\end{proof}

\subsection{Holomorphic models} \label{a4}

We first decompose $\mathcal{S}^\star$ into
$\mathcal{S}^\star_0\sqcup \mathcal{S}^\star_1\sqcup\cdots$, where
\bess &\mathcal{S}^\star_0=\{f_*^j(S^\star_\nu); 0\leq
j<p_\nu, 1\leq \nu\leq n\},&\\
&\mathcal{S}^\star_k=\{S^\star\in \mathcal{S}^\star; k \text{ is the
first integer such that } f_*^k(S^\star)\in\mathcal{S}^\star_0 \},
k\geq1.&  \eess
It's obvious that $\mathcal{S}^\star_0$ consists of all
$f_*$-periodic $\mathcal{S}^\star$-pieces.

\begin{lem}[Pre-holomorphic models]\label{b4d}   Suppose 
 $({h}_\nu^\star,{P}^\star_\nu)$ is q.c-equivalent to a rational map $(R_\nu,Q_\nu)$
via a pair of quasi-conformal maps $(\Phi_\nu,\Psi_\nu)$ for
$1\leq\nu\leq n$. Then for each $\mathcal{S}^\star$-piece $S^\star$,
there exist a pair of quasi-conformal maps
$(\Phi_{S^\star},\Psi_{S^\star}):\mathbb{\overline{C}}(S^\star)\rightarrow\mathbb{\overline{C}}$
and  a rational map $R_{S^\star}$ such that $\Phi_{S^\star}$ is
isotopic to $\Psi_{S^\star}$ rel $P(S^\star)$ and the following
diagram commutes:
$$
\xymatrix{& \mathbb{\overline{C}}(S^\star)\ar[r]^{H_{S^\star}}
\ar[d]_{\Psi_{S^\star}}
& \mathbb{\overline{C}}(f_*(S^\star))\ar[d]^{\Phi_{f_*(S^\star)}}\\
&\mathbb{\overline{C}}\ar[r]_{R_{S^\star}} & \mathbb{\overline{C}}
 }
$$
\end{lem}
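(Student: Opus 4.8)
The plan is to construct the maps $(\Phi_{S^\star},\Psi_{S^\star})$ and $R_{S^\star}$ by induction on the pre-period of $S^\star$ under $f_*$, exactly as in the case $\Gamma=\emptyset$ treated in Step 1 of Section \ref{3-3-2}, but starting the induction from the periodic $\mathcal{S}^\star$-pieces where the rational models $(R_\nu,Q_\nu)$ are already supplied by hypothesis. First I would treat the pieces in $\mathcal{S}^\star_0$: for a cycle $\langle S^\star_\nu,\dots,f_*^{p_\nu-1}(S^\star_\nu)\rangle$, set $\Phi_{S^\star_\nu}=\Phi_\nu$, $\Psi_{S^\star_\nu}=\Psi_\nu$, and then run the usual descent along the cycle --- at each intermediate level $i=p_\nu-1,\dots,1$, use the Measurable Riemann Mapping Theorem to produce a quasi-conformal map $\Phi_{f_*^i(S^\star_\nu)}$ whose pull-back of $\sigma_0$ agrees with $(\Phi_{f_*^{i+1}(S^\star_\nu)}\circ H_{f_*^i(S^\star_\nu)})^*\sigma_0$, so that $R_{f_*^i(S^\star_\nu)}:=\Phi_{f_*^{i+1}(S^\star_\nu)}\circ H_{f_*^i(S^\star_\nu)}\circ\Phi_{f_*^i(S^\star_\nu)}^{-1}$ is rational; finally $R_{S^\star_\nu}:=\Phi_{f_*(S^\star_\nu)}\circ H_{S^\star_\nu}\circ\Psi_{S^\star_\nu}^{-1}$ is rational because the composition around the cycle equals $R_\nu$. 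Set $\Psi_{f_*^i(S^\star_\nu)}=\Phi_{f_*^i(S^\star_\nu)}$ for $1\le i<p_\nu$; the isotopy rel $P(S^\star)$ on each periodic piece comes from the isotopy between $\Phi_\nu$ and $\Psi_\nu$ rel $P^\star_\nu$ provided by q.c-equivalence, transported along the cycle via the lifting/homotopy-lifting argument used in Section \ref{3-3-1}.

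Next I would handle the strictly pre-periodic pieces, proceeding by induction on $k$ so that when $S^\star\in\mathcal{S}^\star_k$ we already have $(\Phi_{f_*(S^\star)},\Psi_{f_*(S^\star)})$ and $R_{f_*(S^\star)}$ available for $f_*(S^\star)\in\mathcal{S}^\star_{k-1}$. Since $H_{S^\star}\colon\mathbb{\overline C}(S^\star)\to\mathbb{\overline C}(f_*(S^\star))$ is a branched covering and $H_{S^\star}(P(S^\star))\subset P(f_*(S^\star))$, the Measurable Riemann Mapping Theorem yields a quasi-conformal $\Psi_{S^\star}$ with $\Psi_{S^\star}^*(\sigma_0)=(\Phi_{f_*(S^\star)}\circ H_{S^\star})^*(\sigma_0)$; then $R_{S^\star}:=\Phi_{f_*(S^\star)}\circ H_{S^\star}\circ\Psi_{S^\star}^{-1}$ is holomorphic, hence rational. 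I would then set $\Phi_{S^\star}$ to be $\Psi_{S^\star}$ precomposed (equivalently postcomposed on the sphere side) with a quasi-conformal isotopy so that $\Phi_{S^\star}$ agrees with $\Psi_{S^\star}$ on $P(S^\star)$ and is isotopic to it rel $P(S^\star)$; concretely one takes $\Phi_{S^\star}=\Psi_{S^\star}$, which already satisfies the conclusion since an isotopy rel $P(S^\star)$ from a map to itself is constant --- the content is only that the diagram commutes with $R_{S^\star}$ rational, which it does by construction. (If one instead wants $\Phi_{S^\star}$ to match a prescribed boundary behavior for the later gluing, one absorbs that into the choice via Measurable Riemann Mapping Theorem as in Section \ref{3-3-1}.)

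The main obstacle is bookkeeping rather than a genuine difficulty: one must check that the marked-disk extension $H_{S^\star}$ is a well-defined branched covering of the correct degree sending $P(S^\star)$ into $P(f_*(S^\star))$ --- this was established in Section \ref{3-11} (adapted to $\mathcal{S}^\star$-pieces in Section \ref{a3}) --- and that the quasi-conformal maps produced have uniformly bounded dilatation, which holds automatically since there are only finitely many pieces and each $H_{S^\star}$ is quasi-regular (being a piecewise composition of $f$, which we assumed quasi-regular, with conformal gluing maps). The isotopy-rel-$P(S^\star)$ clause requires a little care on the periodic pieces, but it reduces to the standard fact that a lift of an isotopy rel a forward-invariant set is again such an isotopy, exactly as in the commutative-diagram argument of Section \ref{3-3-1}; no new ingredient beyond the Measurable Riemann Mapping Theorem and the hypothesis that each $(h^\star_\nu,P^\star_\nu)$ is q.c-equivalent to a rational map is needed.
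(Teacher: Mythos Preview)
Your proposal is correct and follows essentially the same approach as the paper: decompose $\mathcal{S}^\star$ into periodic and strictly pre-periodic pieces, handle the periodic cycles exactly as in Step~1 of Section~\ref{3-3-2} (set $\Phi_{S^\star_\nu}=\Phi_\nu$, $\Psi_{S^\star_\nu}=\Psi_\nu$, then descend along the cycle via the Measurable Riemann Mapping Theorem, and put $\Psi_{f_*^i(S^\star_\nu)}=\Phi_{f_*^i(S^\star_\nu)}$ for $1\le i<p_\nu$), and then induct on the pre-period for pieces in $\mathcal{S}^\star_k$, $k\ge1$, again pulling back the standard complex structure by $\Phi_{f_*(S^\star)}\circ H_{S^\star}$ and setting $\Phi_{S^\star}=\Psi_{S^\star}$. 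The only remark is that your discussion of the isotopy rel $P(S^\star)$ is slightly more elaborate than necessary: in the paper the isotopy is trivial on every piece except the base $S^\star_\nu$ of each cycle (where it is supplied directly by the hypothesis), since elsewhere one simply takes $\Phi_{S^\star}=\Psi_{S^\star}$.
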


\begin{proof} Using the  same argument as the proof of the sufficiency of  Theorem \ref{b4a} (see Section \ref{3-3-2}, step 1),  one
can show that for any $1\leq\nu\leq n$ and any $0\leq k< p_\nu$,
 there exist a quasi-conformal map  $\Phi_{f_*^k(S^\star_\nu)}$ and a rational map $R_{f_*^k(S^\star_\nu)}$ such that the
following diagram commutes
$$
\xymatrix{&
\mathbb{\overline{C}}(S^\star_\nu)\ar[r]^{H_{S^\star_\nu}}
\ar[d]_{\Psi_{S^\star_\nu}=\Psi_\nu}
&\mathbb{\overline{C}}(f_*(S^\star_\nu))\ar[r]^{H_{f_*(S^\star_\nu)}}\ar[d]_{\Phi_{f_*(S^\star_\nu)}}
 & \cdots \ar[r]^{}
&\mathbb{\overline{C}}(f_*^{p_\nu-1}(S^\star_\nu))\ar[r]^{H_{f_*^{p_\nu-1}(S^\star_\nu)}}\ar[d]_{\Phi_{f_*^{p_\nu-1}(S^\star_\nu)}}
 & \mathbb{\overline{C}}(S^\star_\nu)\ar[d]^{\Phi_{S^\star_\nu}=\Phi_\nu}\\
& \mathbb{\overline{C}}\ar[r]_{R_{S^\star_\nu}}
 &\mathbb{\overline{C}}\ar[r]_{R_{f_*(S^\star_\nu)}}
& \cdots \ar[r]_{}&
\mathbb{\overline{C}}\ar[r]_{R_{f_*^{p_\nu-1}(S^\star_\nu)}}   &
\mathbb{\overline{C}}
 }
$$
We set $\Psi_{f_*^k(S^\star_\nu)}=\Phi_{f_*^k(S^\star_\nu)}$ for
$0<k< p_\nu$.

For each $S^\star\in\mathcal{S}^\star_1$, notice that
$f_*(S^\star)\in\mathcal{S}^\star_0$, we pull back the standard
complex structure of $\mathbb{\overline{C}}$ to
$\mathbb{\overline{C}}(S^\star)$ via $\Phi_{f_*(S^\star)}\circ
H_{S^\star}$ and integrate it to get a quasi-conformal map
$\Phi_{S^\star}:\mathbb{\overline{C}}(S^\star)\rightarrow\mathbb{\overline{C}}$.
Then $R_{S^\star}:=\Phi_{f_*(S^\star)}\circ
H_{S^\star}\circ\Phi_{S^\star}^{-1} $ is a rational map. We set
$\Psi_{S^\star}=\Phi_{S^\star}$.


By the inductive procedure, for each $\mathcal{S}^\star_k$-piece
($k=2,3,\cdots$), we can  get a pair of quasi-conformal maps
$(\Phi_{S^\star},\Psi_{S^\star})$ and a rational map $R_{S^\star}$,
as required.
\end{proof}

\begin{lem}[Holomorphic models for periodic pieces]\label{b4e}  Fix a periodic piece
$S^\star\in\mathcal{S}^\star_0$. Let $p$ be the period of $S^\star$.
Then for any large parameter $t>0$, there exist a pair of
quasi-conformal maps
$(\Phi^t_{S^\star},\Psi^t_{S^\star}):\mathbb{\overline{C}}(S^\star)\rightarrow\mathbb{\overline{C}}$
such that

1. $\Psi^t_{S^\star}$ is isotopic to $\Phi^t_{S^\star}$  rel
$P(S^\star)$.

2. $\Phi^t_{f_*(S^\star)}\circ f\circ
(\Psi^t_{S^\star})^{-1}|_{\Psi^t_{S^\star}(E_{S^\star})}=R_{S^\star}|_{\Psi^t_{S^\star}(E_{S^\star})}$,
where $R_{S^\star}$ is defined in Lemma \ref{b4d}.

3. The return map $f_i:=R_{f_*^{i-1}(S^\star)}\circ\cdots\circ
R_{S^\star}\circ R_{f_*^{p-1}(S^\star)}\circ\cdots\circ
R_{f_*^{i}(S^\star)}$ is either a Siegel map or a Thurston map.

4. For each $i\geq0$ and each curve $\gamma\in
\partial(f^i_*(S^\star))$, let $\beta_\gamma$ be the  boundary  curve
of $ E_{f^i_*(S^\star)}$ such that either $\gamma=\beta_\gamma$, or
$\gamma$ and  $\beta_\gamma$ bound  an annulus in
  $S^\star-P$. Then both
$\Phi^t_{f_*^{i}(S^\star)}(\gamma)$ and
$\Psi^t_{f_*^{i}(S^\star)}(\beta_\gamma)$ are equipotentials in the
same marked disk of $f_i$, with potentials

$$\varpi(\Phi^t_{f_*^{i}(S^\star)}(\gamma))=\sigma_t(\gamma),\
\varpi(\Psi^t_{f_*^{i}(S^\star)}(\beta_\gamma))=\frac{\sigma_t(f(\beta_\gamma))}{{\deg(f|_{\beta_\gamma})}}.$$
\end{lem}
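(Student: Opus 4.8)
The plan is to build the maps $(\Phi^t_{S^\star},\Psi^t_{S^\star})$ cycle by cycle, starting from the pre-holomorphic models $(\Phi_{S^\star},\Psi_{S^\star})$ and $R_{S^\star}$ supplied by Lemma \ref{b4d}, and then to deform them in collar neighborhoods of the boundary curves so that the potentials of the images of the $\gamma$'s and $\beta_\gamma$'s are exactly the prescribed values $\sigma_t(\gamma)$ and $\sigma_t(f(\beta_\gamma))/\deg(f|_{\beta_\gamma})$. First I would fix a periodic cycle $\langle S^\star, f_*(S^\star),\dots,f_*^{p-1}(S^\star)\rangle$ inside $\mathcal{S}^\star_0$ and recall that each return map $f_i$ is, by the last bullet list of Section \ref{3-11} together with Lemma \ref{b3aa} (which applies since $\lambda(\Gamma,f)<1$), either a Siegel map or a Thurston map; its Fatou set is non-empty (superattracting basins around the periodic critical points coming from the $Z(S^\star)$'s and the holomorphic disks, plus rotation disks for the Siegel case), so the I-type and II-type marked disks and equipotentials of Section \ref{a2} are available on the target sphere. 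The key point is that every boundary curve $\gamma\in\partial(f^i_*(S^\star))$ and its companion $\beta_\gamma$ are mapped by the holomorphic model $R_{f^i_*(S^\star)}$-dynamics eventually into such a marked disk (a rotation disk if $\gamma$ lies over $\Gamma_0$, i.e.\ $\gamma\in\partial_0$; a superattracting basin otherwise, because under $f_*$ the curve is eventually absorbed into a component of $h^\star_\nu{}^{-k}$ of the center of a marked disk).

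Second I would carry out the deformation. For each $\gamma\in\partial_1\cup\partial_2$, the value $\sigma_t(\gamma)$ was defined in Section \ref{a3} precisely as $t\cdot\rho(S_\gamma,\pi(\gamma))\cdot v(\pi(\gamma))$ (or a pulled-back version of it); for $\gamma\in\partial_0$ it is the modulus of the maximal admissible sub-annulus of the rotation disk. In either case $\sigma_t(\gamma)$ is a finite positive number, and for $t$ large it is small compared to the modulus of the full marked disk (for $\partial_0$ curves it is by definition at most that modulus; for $\partial_1,\partial_2$ curves one uses that $\sigma_t$ grows only linearly in $t$ while the available modulus in the superattracting basin is infinite, or one first rescales by applying a quasiconformal deformation supported in the basin to enlarge the modulus). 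I would then choose the Riemann map of each marked disk normalizing the center to $0$, take the round circle of the corresponding potential, and modify $\Phi_{f^i_*(S^\star)}$ on a thin annular collar of $\gamma$ inside $S^\star$ so that its image is exactly that equipotential; this is a standard quasiconformal collar-interpolation, done simultaneously on all boundary curves of all pieces in the cycle, and it changes $\Phi$ only by something isotopic to the identity rel $P(S^\star)$, so conclusion 1 is preserved. I would define $\Psi^t_{f^i_*(S^\star)}$ on $E_{f^i_*(S^\star)}$ by the conjugacy relation (conclusion 2), namely $\Psi^t_{S^\star}=R_{S^\star}^{-1}\circ\Phi^t_{f_*(S^\star)}\circ f$ on $E_{S^\star}$, extended quasiconformally across the annular components of $S^\star\setminus E_{S^\star}$; then $\Psi^t_{f^i_*(S^\star)}(\beta_\gamma)$ is the $R$-preimage of $\Phi^t_{f^{i+1}_*(S^\star)}(\gamma)$, whose potential divides by $\deg(f|_{\beta_\gamma})$ under the proper holomorphic map, giving exactly $\sigma_t(f(\beta_\gamma))/\deg(f|_{\beta_\gamma})$ — this is conclusion 4, and conclusion 3 is just the restatement of Lemma \ref{b4d} and Lemma \ref{b3aa} for the return maps.

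The main obstacle I expect is conclusion 4 together with the compatibility of the prescribed potentials around a cycle. The numbers $\sigma_t(\gamma)$ were designed so that the defining relation $\sigma_t(\gamma)=\sigma_t(f(\gamma))/\deg(f|_\gamma)$ holds along $\partial_2$-orbits and so that the inequalities of Lemma \ref{b4c} hold, but one must check that when a boundary curve is $f_*$-periodic (the $\partial_0$ case, where the curve returns to itself after the period of its rotation annulus divided by $p_\nu$) the potential $\sigma_t(\gamma)$ is genuinely realizable as an equipotential of the rotation disk of the return map $f_i$ — i.e.\ that the annulus $A$ used to define $\sigma_t(\gamma)$ really sits inside that disk and is bounded by the right curve — and that the Riemann-map normalization is consistent with the rotation. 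This forces me to verify that the marked disk extension $H_{S^\star}$ sends the holomorphic disk around $\gamma$ to the holomorphic disk around $f(\gamma)$ as a rotation, so that the equipotential structure is preserved; that is exactly the content of the commutative diagram of $\phi_j$'s in Section \ref{3-11}, and the only care needed is to make the quasiconformal deformation of $\Phi$ holomorphic on the rotation disks (as is always required in the definition of c- and q.c-equivalence, Definition \ref{def3-4-a}). Once these bookkeeping points are nailed down, the rest is routine quasiconformal interpolation, and the lemma follows.
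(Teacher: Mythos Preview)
Your proposal is in the right spirit and would, with care, yield a proof; it is essentially the same strategy as the paper's, but the paper organizes the bookkeeping differently in a way that makes conclusion~1 and the compatibility issue you flag much cleaner.

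The paper does \emph{not} deform each $\Phi_{f_*^i(S^\star)}$ independently on collars and then define $\Psi^t$ by lifting, as you propose. Instead it works \emph{backwards} around the cycle: it first modifies $\Phi_{S^\star_\nu}$ to $\Phi^t_{S^\star_\nu}$ so that every $\gamma\in\partial(S^\star_\nu)$ lands on an equipotential of potential $\sigma_t(\gamma)$; then it \emph{lifts} this through $R_{f_*^{p-1}(S^\star_\nu)}$ and $H_{f_*^{p-1}(S^\star_\nu)}$ to produce $\widehat\Phi^t$ at the previous piece, and simultaneously modifies the marked disk extension $H$ to a new extension $\widehat H=H\circ\xi$ (with $\xi$ isotopic to the identity rel $E\cup P$) so that $\Phi^t_{S^\star_\nu}(\widehat H(\gamma))$ is an equipotential of potential $\deg(f|_{\beta_\gamma})\sigma_t(\gamma)$ for each $\gamma\in\partial_1$. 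The composite $\widehat\Phi^t\circ\xi$ is then declared to be $\Phi^t$ at that piece, and one continues inductively back to $S^\star_\nu$, where the final lift is $\Psi^t_{S^\star_\nu}$. In this scheme $\Psi^t_{f_*^i(S^\star)}=\Phi^t_{f_*^i(S^\star)}$ for all $1\le i<p$, and conclusion~1 at the base piece follows because every step (lifting an isotopy, postcomposing with $\xi\sim\mathrm{id}$) preserves the isotopy class rel $P$. In your setup $\Psi^t\neq\Phi^t$ at every piece and you must argue the isotopy separately each time; this is doable but you do not actually carry it out.

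The ``obstacle'' you correctly identify---compatibility of potentials around the cycle---is resolved in the paper precisely by the inequality $\deg(f|_{\beta_\gamma})\sigma_t(\gamma)>\sigma_t(f(\beta_\gamma))$ for $\gamma\in\partial_1$, which is Lemma~\ref{b4c}.3. This inequality is what guarantees that one can choose $\widehat H(\gamma)$ to land on an equipotential strictly deeper than $f(\beta_\gamma)$, so that after pulling back through $R$ one obtains $\varpi(\Phi^t(\gamma))=\sigma_t(\gamma)$; the paper then verifies the potential formulas case by case ($\partial_0$, $\partial_1$, $\partial_2$) in a separate Step~3. You gesture at Lemma~\ref{b4c} but do not pinpoint that this is where it enters. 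Finally, you dismiss conclusion~3 as ``a restatement of Lemma~\ref{b4d} and Lemma~\ref{b3aa}''; the paper actually has to build auxiliary lifts $\phi_{f_*^i(S^\star_\nu)}$ to exhibit a q.c-equivalence between the composite of the $H$'s (which is the Siegel/Thurston map $h^\star_\nu$ up to cyclic shift) and $f_i$, with a specified marked set---this is short but not automatic.
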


\begin{proof} For each $\nu\in [1,n]$ and each $i\geq0$, the critical
values of $H_{f_*^i(S^\star_\nu)}$ are contained in
$P(f_*^{i+1}(S^\star_\nu))$ and
$H_{f_*^i(S^\star_\nu)}(P(f_*^{i}(S^\star_\nu)))\subset
P(f_*^{i+1}(S^\star_\nu))$. Let $(\Phi_{f_*^{i}(S^\star_\nu)},
\Psi_{f_*^{i}(S^\star_\nu)}):\mathbb{\overline{C}}(f_*^{i}(S^\star_\nu))\rightarrow\mathbb{\overline{C}}$
be the quasi-conformal maps constructed in Lemma \ref{b4d}. Since
$\Phi_{S^\star_\nu}$ is isotopic to $\Psi_{S^\star_\nu}$ rel
${P}^\star_\nu=P(S^\star_\nu)$, there is a 
quasi-conformal map $\phi_{f_*^{p_\nu-1}(S^\star_\nu)}:
\mathbb{\overline{C}}(f_*^{p_\nu-1}(S^\star_\nu))\rightarrow\mathbb{\overline{C}}$
isotopic to $\Phi_{f_*^{p_\nu-1}(S^\star_\nu)}$ rel
$P(f_*^{p_\nu-1}(S^\star_\nu))$ and $\Psi_{S^\star_\nu}\circ
H_{f_*^{p_\nu-1}(S^\star_\nu)}=R_{f_*^{p_\nu-1}(S^\star_\nu)}\circ
\phi_{f_*^{p_\nu-1}(S^\star_\nu)}$. Inductively, there is a sequence
of quasi-conformal maps $\phi_{f_*^{i}(S^\star_\nu)}:
\mathbb{\overline{C}}(f_*^{i}(S^\star_\nu))\rightarrow\mathbb{\overline{C}}$
for $i=p_\nu-2,\cdots,1$, such that $\phi_{f_*^{i}(S^\star_\nu)}$ is
isotopic to $\Phi_{f_*^{i}(S^\star_\nu)}$ rel
$P(f_*^i(S^\star_\nu))$ and the following diagram commutes:

$$
\xymatrix{&
\mathbb{\overline{C}}(f_*(S^\star_\nu))\ar[r]^{H_{f_*(S^\star_\nu)}}
\ar[d]_{\phi_{f_*(S^\star_\nu)}}
&\mathbb{\overline{C}}(f_*^2(S^\star_\nu))\ar[r]^{H_{f_*^2(S^\star_\nu)}}\ar[d]_{\phi_{f_*^2(S^\star_\nu)}}
 & \cdots \ar[r]^{}
&\mathbb{\overline{C}}(f_*^{p_\nu-1}(S^\star_\nu))\ar[r]^{H_{f_*^{p_\nu-1}(S^\star_\nu)}}
\ar[d]_{\phi_{f_*^{p_\nu-1}(S^\star_\nu)}}
 & \mathbb{\overline{C}}(S^\star_\nu)\ar[d]_{\Psi_{S^\star_\nu}}\\
& \mathbb{\overline{C}}\ar[r]_{R_{f_*(S^\star_\nu)}}
 &\mathbb{\overline{C}}\ar[r]_{R_{f_*^2(S^\star_\nu)}}
& \cdots \ar[r]_{}&
\mathbb{\overline{C}}\ar[r]_{R_{f_*^{p_\nu-1}(S^\star_\nu)}}   &
\mathbb{\overline{C}}
 }
$$


This diagram together with the diagram in Lemma \ref{b4d} implies
that for any $1\leq i<p_\nu$, the map
$H_{f_*^{i-1}(S^\star_\nu)}\circ\cdots\circ H_{S^\star_\nu}\circ
H_{f_*^{p_\nu-1}(S^\star_\nu)}\circ\cdots\circ
H_{f_*^{i}(S^\star_\nu)}$ is q.c-equivalent to
$f_i=R_{f_*^{i-1}(S^\star_\nu)}\circ\cdots\circ R_{S^\star_\nu}\circ
R_{f_*^{p_\nu-1}(S^\star_\nu)}\circ\cdots\circ
R_{f_*^{i}(S^\star_\nu)}$ via
$(\Phi_{f_*^{i}(S^\star_\nu)},\phi_{f_*^{i}(S^\star_\nu)})$. Notice
that
$f_i(\phi_{f_*^{i}(S^\star_\nu)}(P(f_*^{i}(S^\star_\nu))))\subset
\phi_{f_*^{i}(S^\star_\nu)}(P(f_*^{i}(S^\star_\nu)))$,  $f_i$ is
either a Siegel map or a Thurston map.

The relation $f_{i+1}\circ
R_{f_*^i(S^\star_\nu)}=R_{f_*^i(S^\star_\nu)}\circ f_i$ with
$f_{p_\nu}=R_\nu$ (here, $R_\nu$ is the rational map defined in
Lemma \ref{b4d}) means that $R_{f_*^i(S^\star_\nu)}$ is a
semi-conjugacy between $f_{i+1}$ and $f_i$, so their Julia sets
satisfy $J(f_i)=R_{f_*^i(S^\star_\nu)}^{-1}(J(f_{i+1}))$. One can
check that $R_{f_*^i(S^\star_\nu)}$ maps the marked disks of $f_i$
onto the marked disks of $f_{i+1}$, and maps the equipotentials of
$f_i$ to the  equipotentials of $f_{i+1}$.

In the following, we will construct a pair of  quasi-conformal maps
$(\Phi^t_{S^\star},\Psi^t_{S^\star}):\mathbb{\overline{C}}(S^\star)\rightarrow\mathbb{\overline{C}}$
that satisfy the required properties.

\vskip0.3cm
\noindent\textbf{Step 1: Construction of $\Phi^t_{S^\star_\nu}$ and
${\Phi}^t_{f_*^{p_\nu-1}(S^\star_\nu)}$. }
 We first modify $\Phi_{S^\star_\nu}$ to  a new quasi-conformal map
$\Phi^t_{S^\star_\nu}:
\mathbb{\overline{C}}(S^\star_\nu)\rightarrow\mathbb{\overline{C}}$
such that $\Phi^t_{S^\star_\nu}$ is isotopic to $\Phi_{S^\star_\nu}$
rel $P(S^\star_\nu)$, and for each curve $\gamma\in
\partial(S^\star_\nu)$, the curve
$\Phi^t_{S^\star_\nu}(\gamma)$ is the equipotential in a  marked
disk of $f_{p_\nu}=R_\nu$ with potential
$\varpi(\Phi^t_{S^\star_\nu}(\gamma))=\sigma_t(\gamma)$.
Then, we lift $\Phi^t_{S^\star_\nu}$ via
$R_{f_*^{p_\nu-1}(S^\star_\nu)}$ and
$H_{f_*^{p_\nu-1}(S^\star_\nu)}$ and get a quasi-conformal  map
$\widehat{\Phi}^t_{f_*^{p_\nu-1}(S^\star_\nu)}$ isotopic to
$\Phi_{f_*^{p_\nu-1}(S^\star_\nu)}$ rel
$P(f_*^{p_\nu-1}(S^\star_\nu))$. See the following commutative
diagram:

$$
\xymatrix{&
\mathbb{\overline{C}}(f_*^{p_\nu-1}(S^\star_\nu))\ar[r]^{H_{f_*^{p_\nu-1}(S^\star_\nu)}}
\ar[d]_{\widehat{\Phi}^t_{f_*^{p_\nu-1}(S^\star_\nu)}(\sim
\Phi_{f_*^{p_\nu-1}(S^\star_\nu)})} &
\mathbb{\overline{C}}(S^\star_\nu)\ar[d]^{\Phi^t_{S^\star_\nu}(\sim
\Phi_{S^\star_\nu})}\\
&\mathbb{\overline{C}}\ar[r]_{R_{f_*^{p_\nu-1}(S^\star_\nu)}} &
\mathbb{\overline{C}}
 }
$$

Now, we modify $H_{f_*^{p_\nu-1}(S^\star_\nu)}$ to another marked
disk extension of $f|_{E_{f_*^{p_\nu-1}(S^\star_\nu)}}$, say
$\widehat{H}_{f_*^{p_\nu-1}(S^\star_\nu)}$, 
 such that
for each curve $\gamma\in
\partial_1(f_*^{p_\nu-1}(S^\star_\nu))$, the curve
$\Phi^t_{S^\star_\nu}(\widehat{H}_{f_*^{p_\nu-1}(S^\star_\nu)}(\gamma))$
is an equipotential in some marked disk  of $f_{p_\nu}=R_\nu$. Since
$\gamma\in
\partial_1(f_*^{p_\nu-1}(S^\star_\nu))$, the potential of
 $\Phi^t_{S^\star_\nu}(\widehat{H}_{f_*^{p_\nu-1}(S^\star_\nu)}(\gamma))$  should be larger than
$\varpi(\Phi^t_{S^\star_\nu}(f(\beta_\gamma)))=\sigma_t(f(\beta_\gamma))$.
It follows from Lemma \ref{b4c} that
$\deg(f|_{\beta_\gamma})\sigma_t(\gamma)>\sigma_t(f(\beta_\gamma))$
when $t$ is large. So it is reasonable to designate
$\varpi(\Phi^t_{S^\star_\nu}(\widehat{H}_{f_*^{p_\nu-1}(S^\star_\nu)}(\gamma)))$
to be $\deg(f|_{\beta_\gamma})\cdot\sigma_t(\gamma)$.

Since both $H_{f_*^{p_\nu-1}(S^\star_\nu)}$ and
$\widehat{H}_{f_*^{p_\nu-1}(S^\star_\nu)}$ are marked disk
extensions of $f|_{E_{f_*^{p_\nu-1}(S^\star_\nu)}}$, there is  a
quasi-conformal map
$\xi_{p_\nu-1}:\mathbb{\overline{C}}(f_*^{p_\nu-1}(S^\star_\nu))
\rightarrow\mathbb{\overline{C}}(f_*^{p_\nu-1}(S^\star_\nu))$
isotopic to the identity map rel $E_{f_*^{p_\nu-1}(S^\star_\nu)}\cup
P(f_*^{p_\nu-1}(S^\star_\nu))$ such that
$\widehat{H}_{f_*^{p_\nu-1}(S^\star_\nu)}=H_{f_*^{p_\nu-1}(S^\star_\nu)}\circ\xi_{p_\nu-1}$.


 We set
${\Phi}^t_{f_*^{p_\nu-1}(S^\star_\nu)}=\widehat{\Phi}^t_{f_*^{p_\nu-1}(S^\star_\nu)}\circ
\xi_{p_\nu-1}$. It's obvious that
$\Phi^t_{S^\star_\nu}\circ\widehat{H}_{f_*^{p_\nu-1}(S^\star_\nu)}
=R_{f_*^{p_\nu-1}(S^\star_\nu)}\circ{\Phi}^t_{f_*^{p_\nu-1}(S^\star_\nu)}$.

\vskip0.3cm
\noindent\textbf{Step 2: Construction of $\Phi^t_{f^i_*(S^\star_\nu)}$ for
$i=p_\nu-2,\cdots,1$ and ${\Psi}^t_{S^\star_\nu}$. } By the same
argument as in Step 1, we can lift
${\Phi}^t_{f_*^{p_\nu-1}(S^\star_\nu)}$ via
$R_{f_*^{p_\nu-2}(S^\star_\nu)}$ and
$H_{f_*^{p_\nu-2}(S^\star_\nu)}$ and get a map
$\widehat{\Phi}^t_{f_*^{p_\nu-2}(S^\star_\nu)}$ isotopic to
$\Phi_{f_*^{p_\nu-2}(S^\star_\nu)}$ rel
$P(f_*^{p_\nu-2}(S^\star_\nu))$. Then we modify
$H_{f_*^{p_\nu-2}(S^\star_\nu)}$ to another marked disk extension of
$f|_{E_{f_*^{p_\nu-2}(S^\star_\nu)}}$, say
$\widehat{H}_{f_*^{p_\nu-2}(S^\star_\nu)}=H_{f_*^{p_\nu-2}(S^\star_\nu)}\circ\xi_{p_\nu-2}$,
where
$\xi_{p_\nu-2}:\mathbb{\overline{C}}(f_*^{p_\nu-2}(S^\star_\nu))
\rightarrow\mathbb{\overline{C}}(f_*^{p_\nu-2}(S^\star_\nu))$ is a
quasi-conformal map isotopic to the identity map rel
$E_{f_*^{p_\nu-2}(S^\star_\nu)}\cup P(f_*^{p_\nu-2}(S^\star_\nu))$,
such that for each $\gamma\in
\partial_1(f_*^{p_\nu-2}(S^\star_\nu))$, the curve
$\Phi^t_{f_*^{p_\nu-1}(S^\star_\nu)}(\widehat{H}_{f_*^{p_\nu-2}(S^\star_\nu)}(\gamma))$
is an equipotential of $f_{p_\nu-1}$ with potential equal to
$\deg(f|_{\beta_\gamma})\sigma_t(\gamma)$. We set
${\Phi}^t_{f_*^{p_\nu-2}(S^\star_\nu)}=\widehat{\Phi}^t_{f_*^{p_\nu-2}(S^\star_\nu)}\circ
\xi_{p_\nu-2}$.

Inductively, we can get a sequence of new marked disk extensions
$\widehat{H}_{f_*^{i}(S^\star_\nu)}, i= p_\nu-1,\cdots, 0,$ and a
sequence of quasi-conformal maps $\Phi^t_{f_*^{i}(S^\star_\nu)},
i=p_\nu-1,\cdots, 1,\ \Psi^t_{S^\star_\nu}$ such that the following
diagram commutes

$$
\xymatrix{&
\mathbb{\overline{C}}(S^\star_\nu)\ar[r]^{\widehat{H}_{S^\star_\nu}}
\ar[d]_{\Psi^t_{S^\star_\nu}}
&\mathbb{\overline{C}}(f_*(S^\star_\nu))\ar[r]^{\widehat{H}_{f_*(S^\star_\nu)}}\ar[d]_{\Phi^t_{f_*(S^\star_\nu)}}
 & \cdots \ar[r]^{}
&\mathbb{\overline{C}}(f_*^{p_\nu-1}(S^\star_\nu))\ar[r]^{\widehat{H}_{f_*^{p_\nu-1}(S^\star_\nu)}}\ar[d]_{\Phi^t_{f_*^{p_\nu-1}(S^\star_\nu)}}
 & \mathbb{\overline{C}}(S^\star_\nu)\ar[d]^{\Phi^t_{S^\star_\nu}}\\
& \mathbb{\overline{C}}\ar[r]_{R_{S^\star_\nu}}
 &\mathbb{\overline{C}}\ar[r]_{R_{f_*(S^\star_\nu)}}
& \cdots \ar[r]_{}&
\mathbb{\overline{C}}\ar[r]_{R_{f_*^{p_\nu-1}(S^\star_\nu)}}   &
\mathbb{\overline{C}}
 }
$$

Moreover, for each $i\in[0,p_\nu-1]$ and each curve $\gamma\in
\partial_1(f_*^{i}(S^\star_\nu))$, we require
$$\varpi(\Phi^t_{f_*^{i+1}(S^\star_\nu)}(\widehat{H}_{f_*^{i}(S^\star_\nu)}(\gamma)))=\deg(f|_{\beta_\gamma})\sigma_t(\gamma).$$

Finally, we set
$\Psi^t_{f_*^{i}(S^\star_\nu)}=\Phi^t_{f_*^{i}(S^\star_\nu)}$ for
$1\leq i\leq p_\nu-1$.

\vskip0.3cm
\noindent\textbf{Step 3: Prescribed potentials. } In this step, we will show
that for each $0\leq i\leq p_\nu-1$ and each curve $\gamma\in
\partial(f_*^{i}(S^\star_\nu))$,

\begin{equation}\label{lb3}
\varpi(\Phi^t_{f_*^{i}(S^\star_\nu)}(\gamma))=\sigma_t(\gamma),\
\varpi(\Psi^t_{f_*^{i}(S^\star_\nu)}(\beta_\gamma))=\frac{\sigma_t(f(\beta_\gamma))}{{\deg(f|_{\beta_\gamma})}}.
\end{equation}

Notice that for each curve $\gamma\in
\partial(S^\star_\nu)\cup\cup_{0<i<p_\nu}\partial_0(f_*^{i}(S^\star_\nu))$,
the first equation of  (\ref{lb3}) holds by the   evaluation of
$\varpi$.

If $\gamma\in
\partial_1(f_*^i(S^\star_\nu))$ for some $0< i<p_\nu$, then
by Step 2,
$\Phi^t_{f_*^{i+1}(S^\star_\nu)}(\widehat{H}_{f_*^{i}(S^\star_\nu)}(\gamma))$
is an equipotential in a marked disk $(\Delta_{i+1},a)$ of
$f_{i+1}$. Since
$\Phi^t_{f_*^{i+1}(S^\star_\nu)}\circ\widehat{H}_{f_*^{i}(S^\star_\nu)}(\gamma)
=R_{f_*^{i}(S^\star_\nu)}\circ{\Phi}^t_{f_*^{i}(S^\star_\nu)}(\gamma)$,
we conclude that $\Phi^t_{f_*^{i}(S^\star_\nu)}(\gamma)$ is also an
equipotential of some marked disk of $f_i$, denoted  by
$(\Delta_i,b)$. Then $R_{f_*^{i}(S^\star_\nu)}:\Delta_i-\{b\}
\rightarrow \Delta_{i+1}-\{a\}$ is a covering map of degree
${\deg}(f|_{\beta_\gamma})$.  The  potential of
${\Phi}^t_{f_*^{i}(S^\star_\nu)}(\gamma)$ satisfies (here, we use
$A(\alpha,\beta)$ to denote the annulus bounded by $\alpha$ and
$\beta$) \bess
\varpi({\Phi}^t_{f_*^{i}(S^\star_\nu)}(\gamma))&=&{\rm
mod}\Big(A(\partial\Delta_i,
{\Phi}^t_{f_*^{i}(S^\star_\nu)}(\gamma))\Big) \\
&=&{\rm mod}\Big(A(\partial\Delta_{i+1},
{\Phi}^t_{f_*^{i+1}(S^\star_\nu)}(\widehat{H}_{f_*^{i}(S^\star_\nu)}(\gamma)))\Big)/{\deg}(f|_{\beta_\gamma})\\
&=&\varpi(\Phi^t_{f_*^{i+1}(S^\star_\nu)}(\widehat{H}_{f_*^{i}(S^\star_\nu)}(\gamma)))/{\deg}(f|_{\beta_\gamma})\\
&=&\sigma_t(\gamma). \eess

Now we consider  $\gamma\in
\partial_2(f_*^i(S^\star_\nu))$ for some $0<i<p_\nu$. In this
case, by the same argument as above, we can see that
$$\varpi(\Phi^t_{f_*^{i}(S^\star_\nu)}(\gamma))
=\frac{\varpi(\Phi^t_{f_*^{i+1}(S^\star_\nu)}(f(\gamma)))}{{\deg}(f|_{\gamma})}.$$
By the definition of $\sigma_t$,  for $\gamma\in
\partial_2(f_*^i(S^\star_\nu))$,  we have
 $$\sigma_t(\gamma)
=\frac{\sigma_t(f(\gamma))}{{\deg}(f|_{\gamma})}.$$ Based on this
observation, we conclude by induction  that
$\varpi({\Phi}^t_{f_*^{i}(S^\star_\nu)}(\gamma))=\sigma_t(\gamma)$.

Finally, we show that the second equation of (\ref{lb3}) holds.
Since for each $i\in[0,p_\nu-1]$ and each curve $\gamma\in
\partial(f_*^{i}(S^\star_\nu))$, the curve $\Phi^t_{f_*^{i+1}(S^\star_\nu)}(f(\beta_\gamma))$
is an equipotential, 
it follows from the relation
$$\Phi^t_{f^{i+1}_*(S^\star_\nu)}\circ f\circ
(\Psi^t_{f^{i}_*(S^\star_\nu)})^{-1}|_{\Psi^t_{f^{i}_*(S^\star_\nu)}(E_{f^{i}_*(S^\star_\nu)})}
=R_{f^{i}_*(S^\star_\nu)}|_{\Psi^t_{f^{i}_*(S^\star_\nu)}(E_{f^{i}_*(S^\star_\nu)})}$$
that $\Psi^t_{f_*^{i}(S^\star_\nu)}(\beta_\gamma)$ is also an
equipotential. The same argument as above  yields
$$\varpi(\Psi^t_{f_*^{i}(S^\star_\nu)}(\beta_\gamma))=\frac{\varpi(\Phi^t_{f_*^{i+1}(S^\star_\nu)}(f(\beta_\gamma)))}{{\deg}(f|_{\beta_\gamma})}
=\frac{\sigma_t(f(\beta_\gamma))}{{\deg}(f|_{\beta_\gamma})}.$$ The
proof is completed.
\end{proof}

\

Now, we deal with the strictly pre-periodic
$\mathcal{S}^\star$-pieces. Let $S^\star\in \mathcal{S}^\star_k$ for
some $k\geq1$. Then $f_*^{k}(S^\star)$ is a $f_*$-periodic
$\mathcal{S}^\star$-piece. Notice that for $0\leq i<k$,
$H_{f_*^{i}(S^\star)}(P(f_*^{i}(S^\star)))\subset
P(f_*^{i+1}(S^\star))$ and each critical value of
$H_{f_*^{i}(S^\star)}$ is contained in $P(f_*^{i+1}(S^\star))$, we
have that
$R_{f_*^i(S^\star)}\circ\Phi_{f_*^i(S^\star)}(P(f_*^{i}(S^\star)))\subset
\Phi_{f_*^{i+1}(S^\star)} (P(f_*^{i+1}(S^\star)))$ and every
critical value of $R_{f_*^{k-1}(S^\star)}\circ\cdots\circ
R_{S^\star}$ is contained in
$\Phi_{f_*^{k}(S^\star)}(P(f_*^{k}(S^\star)))
=\Phi^t_{f_*^{k}(S^\star)}(P(f_*^{k}(S^\star)))$, here
$R_{f_*^{i}(S^\star)}$ and $\Phi_{f_*^{i}(S^\star)}$ are defined in
Lemma \ref{b4d}. For any marked point $a\in
P(S^\star)\cap(\mathbb{\overline{C}}(S^\star)-S^\star)$, the point
$R_{f_*^{k-1}(S^\star)}\circ\cdots\circ
R_{S^\star}(\Phi_{S^\star}(a))$ is the center of some marked disk
$(\Delta,q)$ of some $f_j$, where $f_j$ is a return map defined in
Lemma \ref{b4e}. The component $\Delta_{\Phi_{S^\star}(a)}$ of
$(R_{f_*^{k-1}(S^\star)}\circ\cdots\circ R_{S^\star})^{-1}(\Delta)$
that contains $\Phi_{S^\star}(a)$ is also a disk. We call
$(\Delta_{\Phi_{S^\star}(a)}, \Phi_{S^\star}(a))$  a marked disk of
$R_{f_*^{k-1}(S^\star)}\circ\cdots\circ R_{S^\star}$.

With the same argument as that of  Lemma \ref{b4e}, we can show that

\begin{lem}\label{b4f}  For any $k\geq1$, any $S^\star\in\mathcal{S}^\star_k$ and any large parameter $t>0$, there exist a pair of
quasi-conformal maps
$\Phi^t_{S^\star}=\Psi^t_{S^\star}:\mathbb{\overline{C}}(S^\star)\rightarrow\mathbb{\overline{C}}$
such that

1. $\Phi^t_{f_*(S^\star)}\circ f\circ
(\Psi^t_{S^\star})^{-1}|_{\Psi^t_{S^\star}(E_{S^\star})}=R_{S^\star}|_{\Psi^t_{S^\star}(E_{S^\star})}$,
where $R_{S^\star}$ is defined in Lemma \ref{b4d}.

2. For each curve $\gamma\in \partial(S^\star)$, let $\beta_\gamma$
be the unique curve in $\partial(E_{S^\star})$ homotopic to $\gamma$
in $\mathbb{\overline{C}}-P$. Then both $\Phi^t_{S^\star}(\gamma)$
and $\Phi^t_{S^\star}(\beta_\gamma)$ are equipotentials in the same
marked disk   of $R_{f_*^{k-1}(S^\star)}\circ\cdots\circ
R_{S^\star}$, with potentials

$$\varpi(\Phi^t_{S^\star}(\gamma))=\sigma_t(\gamma),\
\varpi(\Phi^t_{S^\star}(\beta_\gamma))=\frac{\sigma_t(f(\beta_\gamma))}{{\deg(f|_{\beta_\gamma})}}.$$
\end{lem}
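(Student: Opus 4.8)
The plan is to follow the template of Lemma \ref{b4e}, but since the strictly pre-periodic pieces carry no rotation domains coming from $\partial_0$, the construction is actually cleaner: there is no issue of lifting through a whole cycle and closing it up, only a finite descending chain $S^\star \mapsto f_*(S^\star)\mapsto\cdots\mapsto f_*^k(S^\star)$, where the last piece is $f_*$-periodic and has already been handled by Lemma \ref{b4e}. So I would start from the holomorphic model $(\Phi^t_{f_*^k(S^\star)},\Psi^t_{f_*^k(S^\star)})$ furnished on the periodic end by Lemma \ref{b4e}, together with the rational maps $R_{f_*^i(S^\star)}$ and the pre-holomorphic maps $\Phi_{f_*^i(S^\star)}$ of Lemma \ref{b4d}, and work backwards through $i=k-1,k-2,\dots,0$.

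At each step I would do the same two-move adjustment as in Step 1--2 of the proof of Lemma \ref{b4e}. First, lift $\Phi^t_{f_*^{i+1}(S^\star)}$ through $R_{f_*^i(S^\star)}$ and $H_{f_*^i(S^\star)}$ to obtain a quasi-conformal map $\widehat\Phi^t_{f_*^i(S^\star)}$ isotopic rel $P(f_*^i(S^\star))$ to the pre-holomorphic $\Phi_{f_*^i(S^\star)}$ of Lemma \ref{b4d}; this is possible because $H_{f_*^i(S^\star)}(P(f_*^i(S^\star)))\subset P(f_*^{i+1}(S^\star))$ and the critical values of $H_{f_*^i(S^\star)}$ lie in the target marked set, so the lifting diagram from Section \ref{3-3-2} applies verbatim. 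Second, replace $H_{f_*^i(S^\star)}$ by another marked disk extension $\widehat H_{f_*^i(S^\star)} = H_{f_*^i(S^\star)}\circ\xi_i$, with $\xi_i$ isotopic to the identity rel $E_{f_*^i(S^\star)}\cup P(f_*^i(S^\star))$, chosen so that for each boundary curve $\gamma\in\partial(f_*^i(S^\star))$ the image $\Phi^t_{f_*^{i+1}(S^\star)}(\widehat H_{f_*^i(S^\star)}(\gamma))$ is an equipotential in a marked disk of $f_{j}$ (the appropriate return map of Lemma \ref{b4e}) with potential $\deg(f|_{\beta_\gamma})\cdot\sigma_t(\gamma)$. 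Here I need $\deg(f|_{\beta_\gamma})\,\sigma_t(\gamma) > \varpi(\Phi^t_{f_*^{i+1}(S^\star)}(f(\beta_\gamma))) = \sigma_t(f(\beta_\gamma))$ for $t$ large, which is exactly the inequality $\deg(f|_{\beta_\gamma})\sigma_t(\gamma) > \sigma_t(f(\beta_\gamma))$ that follows from part 1 of Lemma \ref{b4c} (since $\gamma\in\partial_1(S^\star_\gamma)\cup\partial_2(S^\star_\gamma)$ forces $\sigma_t(\gamma)$ to be bounded by $t\rho(S_\gamma,\pi(\gamma))v(\pi(\gamma))$ while $\sigma_t(f(\beta_\gamma))/\deg(f|_{\beta_\gamma})$ obeys the defining relation of $\sigma_t$ on $\partial_2$). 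Set $\Phi^t_{f_*^i(S^\star)}=\widehat\Phi^t_{f_*^i(S^\star)}\circ\xi_i$ and, because these pieces have trivial $\partial_0$ and no pre-periodicity issue, simply put $\Psi^t_{f_*^i(S^\star)}=\Phi^t_{f_*^i(S^\star)}$; the required commutation $\Phi^t_{f_*(S^\star)}\circ f\circ(\Psi^t_{S^\star})^{-1}|_{\Psi^t_{S^\star}(E_{S^\star})} = R_{S^\star}|_{\Psi^t_{S^\star}(E_{S^\star})}$ is then immediate from $\Phi^t_{f_*^{i+1}(S^\star)}\circ\widehat H_{f_*^i(S^\star)} = R_{f_*^i(S^\star)}\circ\Phi^t_{f_*^i(S^\star)}$ restricted to the $\mathcal E^\star$-piece, where $\widehat H$ and $H$ agree.

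Finally, I would verify the potential identities by the same inductive computation as Step 3 of Lemma \ref{b4e}: for $\gamma\in\partial_1$ the covering $R_{f_*^i(S^\star)}\colon \Delta_i\setminus\{b\}\to\Delta_{i+1}\setminus\{a\}$ has degree $\deg(f|_{\beta_\gamma})$, so $\varpi(\Phi^t_{f_*^i(S^\star)}(\gamma)) = \varpi(\Phi^t_{f_*^{i+1}(S^\star)}(\widehat H_{f_*^i(S^\star)}(\gamma)))/\deg(f|_{\beta_\gamma}) = \sigma_t(\gamma)$; for $\gamma\in\partial_2$ the relation $\varpi(\Phi^t_{f_*^i(S^\star)}(\gamma)) = \varpi(\Phi^t_{f_*^{i+1}(S^\star)}(f(\gamma)))/\deg(f|_\gamma)$ matches the defining recursion $\sigma_t(\gamma)=\sigma_t(f(\gamma))/\deg(f|_\gamma)$ on strictly pre-periodic $\mathcal S^\star$-pieces; and the statement about $\beta_\gamma$ follows from the commutation in part 1 applied to $\Psi^t_{S^\star}=\Phi^t_{S^\star}$, giving $\varpi(\Phi^t_{S^\star}(\beta_\gamma)) = \varpi(\Phi^t_{f_*(S^\star)}(f(\beta_\gamma)))/\deg(f|_{\beta_\gamma}) = \sigma_t(f(\beta_\gamma))/\deg(f|_{\beta_\gamma})$, unwinding to a marked disk of $R_{f_*^{k-1}(S^\star)}\circ\cdots\circ R_{S^\star}$ by composing the semi-conjugacies. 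The main obstacle I anticipate is purely bookkeeping: tracking which marked disk of which return map $f_j$ a given boundary equipotential lands in as one descends the pre-periodic chain into the periodic cycle, and checking that the "largest annulus" conventions from Section \ref{a3} are compatible with the potentials prescribed at each stage — but no new idea beyond Lemma \ref{b4e} and Lemma \ref{b4c} is needed, which is why the lemma is stated as an immediate analogue.
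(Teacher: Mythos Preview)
Your proposal is correct and follows exactly the approach the paper intends: the paper's own ``proof'' of Lemma \ref{b4f} is the single sentence ``With the same argument as that of Lemma \ref{b4e}, we can show that\ldots'', together with the preparatory paragraph defining the marked disks of $R_{f_*^{k-1}(S^\star)}\circ\cdots\circ R_{S^\star}$, and you have correctly reconstructed that argument by descending the pre-periodic chain from the periodic end furnished by Lemma \ref{b4e}. Two small cleanups: the modification $H\rightsquigarrow\widehat H$ is only needed for $\gamma\in\partial_1$ (for $\gamma\in\partial_2$ one has $\gamma=\beta_\gamma$ and there is nothing to adjust), and the inequality $\deg(f|_{\beta_\gamma})\sigma_t(\gamma)>\sigma_t(f(\beta_\gamma))$ is a consequence of part~3 of Lemma~\ref{b4c} (the term $\alpha=\beta_\gamma$ in that sum already gives it), not part~1 as you cite.
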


\

We decompose $\mathcal{E}^\star$ into $\mathcal{E}^\star_{ess}\sqcup
\mathcal{E}^\star_{A}\sqcup \mathcal{E}^\star_{D}$, where

$\bullet$
  $\mathcal{E}^\star_{ess}=\{E_{S^\star};
S^\star\in \mathcal{S}^\star\}$, it consists of all
$\mathcal{E}^\star$-pieces parallel to some
$\mathcal{S}^\star$-piece;

$\bullet$
 $\mathcal{E}^\star_{A}$
is the collection of all $\mathcal{E}^\star$-pieces $E^\star$
contained essentially in an annular component of
$S^\star-E_{S^\star}$ for some $\mathcal{S}^\star$-piece $S^\star$
(here, `essential' means at least one boundary curve of $E^\star$ is
non-peripheral in $\mathbb{\overline{C}}-P$);

$\bullet$
  $\mathcal{E}^\star_{D}=\mathcal{E}^\star-(\mathcal{E}^\star_{ess}\sqcup
\mathcal{E}^\star_{A})$. One may verify that each
$\mathcal{E}^\star_{D}$-piece is contained in a disk component of
$S^\star-E_{S^\star}\cup (\cup\mathcal{E}^\star_{ess})$ for some
$\mathcal{S}^\star$-piece $S^\star$.


\begin{figure}[h]
\centering{
\includegraphics[height=6cm]{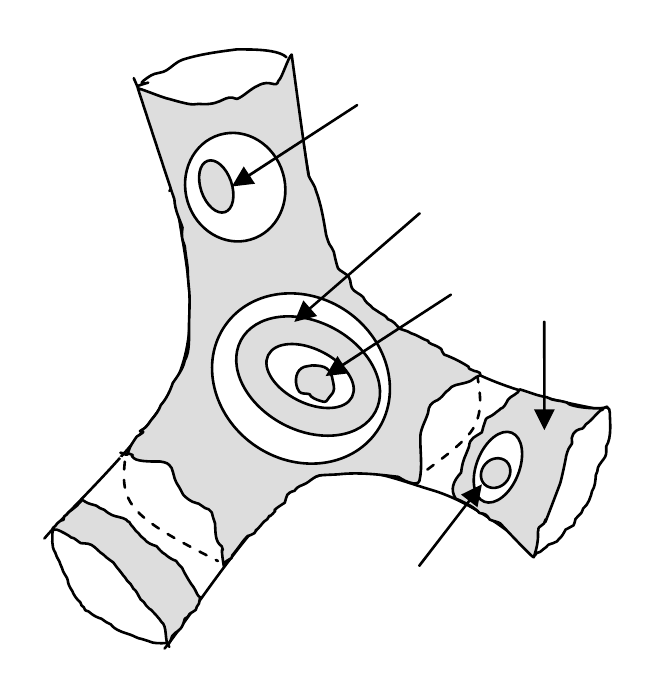}  \put(-90, 30){$S^\star$}  \put(-120, 54){$E^\star_0$} \put(-75, 20){$E^\star_4$} \put(-75,
140){$E^\star_1$} \put(-60, 120){$E^\star_2$} \put(-57,
100){$E^\star_3$}\put(-120, 8){$E^\star_6$}
  \put(-90, 70){$\bullet$} \put(-35, 90){$E^\star_5$}
\put(-10, 54){$\gamma_1$} \put(-115, 162){$\gamma_2$} \put(-150,
5){$\gamma_3$}
 \caption{Different types  of $\mathcal{E}^\star$-pieces: Here, $S^\star$ is
 an
 $\mathcal{S}^\star$-piece with boundary $\partial S^\star=\gamma_1\cup \gamma_2\cup\gamma_3$.
 $E^\star_0$ is an $\mathcal{E}^\star_{ess}$-piece.
  $E^\star_5$ and $E^\star_6$ are $\mathcal{E}^\star_A$-pieces. $E^\star_1,
   E^\star_2$, $E^\star_3$ and $E^\star_4$ are $\mathcal{E}^\star_D$-pieces.}}
\end{figure}

In the following, for every $\mathcal{E}^\star_{A}$-piece $E^\star$,
we will construct a holomorphic model for $f|_{E^\star}$. Given an
$\mathcal{E}^\star_{A}$-piece $E^\star$, first notice that $E^\star$
has no intersection with the marked set $P$. As we did before, we
also associate a Riemann sphere $\mathbb{\overline{C}}(E^\star)$ for
$E^\star$. We mark a point in each component of
$\mathbb{\overline{C}}(E^\star)-E^\star$, and let $P(E^\star)$ be
the collection of all these marked points. We can get a marked disk
extension of $f|_{E^\star}$, say
$H_{E^\star}:\mathbb{\overline{C}}(E^\star)\rightarrow
\mathbb{\overline{C}}(f(E^\star))$, such that
$H_{E^\star}|_{E^\star}=f|_{E^\star}$,
$H_{E^\star}(P(E^\star))\subset P(f(E^\star))$ and all
 critical values  (if any) of $H_{E^\star}$ are contained in
$P(f(E^\star))$. Let $\Phi^t_{E^\star}:
\mathbb{\overline{C}}(E^\star)\rightarrow \mathbb{\overline{C}} $ be
a quasi-conformal map such that
$R_{E^\star}:=\Phi^t_{f(E^\star)}\circ
H_{E^\star}\circ(\Phi^t_{E^\star})^{-1}$ is holomorphic. We give a
remark that if we change $\Phi^t_{f(E^\star)}$ to another
quasi-conformal map $\Phi^{t_1}_{f(E^\star)}$ isotopic to
$\Phi^t_{f(E^\star)}$ rel $P(f(E^\star))$, then we can modify
$\Phi^t_{E^\star}$ to a new map $\Phi^{t_1}_{E^\star}$, isotopic to
$\Phi^t_{E^\star}$ rel $P(E^\star)$, such that
$R_{E^\star}=\Phi^{t_1}_{f(E^\star)}\circ
H_{E^\star}\circ(\Phi^{t_1}_{E^\star})^{-1}$. This means that once
we get the holomorphic map $R_{E^\star}$, we can always assume that
it is independent of the parameter $t$. We set
$\Psi^t_{E^\star}=\Phi^t_{E^\star}$.

The  $\mathcal{E}^\star_{A}$-piece $E^\star$ has exactly two
boundary curves $\alpha$ and $\beta$ which are non-peripheral and
homotopic to each other in $\mathbb{\overline{C}}-P$. By the choice
of $\Phi^t_{S^\star}$ for $S^\star\in\mathcal{S}^\star$, both
$\Phi^t_{f(E^\star)}(f(\alpha))$ and $\Phi^t_{f(E^\star)}(f(\beta))$
are  equipotentials in the marked disks of some  $f_j$ (defined in
Lemma \ref{b4e}) or some $R_{f_*^{k-1}(S^\star)}\circ\cdots\circ
R_{S^\star}$. We denote the marked disk  that contains
$\Phi^t_{f(E^\star)}(f(\alpha))$ (resp.
$\Phi^t_{f(E^\star)}(f(\beta))$) by $(\Delta_a, a)$ (resp.
$(\Delta_b,b)$). It can happen that $(\Delta_a, a)=(\Delta_b,b)$.
Let $\Delta_\alpha$  (resp. $\Delta_\beta$) be the component of
$R_{E^\star}^{-1}(\Delta_a)$  (resp. $R_{E^\star}^{-1}(\Delta_b)$)
that contains $\Phi^t_{E^\star}(\alpha)$ (resp.
$\Phi^t_{E^\star}(\beta)$). Then $\Delta_\alpha$ (resp.
$\Delta_\beta$) contains a marked point in $P(E^\star)$, say
$z_\alpha$ (resp. $z_\beta$). The marked disks
$(\Delta_\alpha,z_\alpha)$ and $(\Delta_\beta,z_\beta)$ are called
the marked disks of $R_{E^\star}$. They are independent of the
choice of $t$. Clearly, $\Phi^t_{E^\star}(\alpha)$ is an
equipotential in the   marked disk $(\Delta_\alpha,z_\alpha)$ and
$\Phi^t_{E^\star}(\beta)$ is an equipotential in the   marked disk
$(\Delta_\beta,z_\beta)$, with potentials \bess
\varpi(\Phi^t_{E^\star}(\alpha))&=&\frac{\varpi(\Phi^t_{f(E^\star)}(f(\alpha)))}{\deg{(f|_\alpha})}=\frac{\sigma_t(f(\alpha))}{\deg(f|_\alpha)},\\
\varpi(\Phi^t_{E^\star}(\beta))&=&\frac{\varpi(\Phi^t_{f(E^\star)}(f(\beta)))}{\deg{(f|_\beta})}=\frac{\sigma_t(f(\beta))}{\deg(f|_\beta)}.\eess

We denote by $A(E^\star)\subset\mathbb{\overline{C}}(E^\star)$ the
annulus bounded by $\alpha$ and $\beta$. By the `reversed Gr\"otzsch
inequality' (see Theorem 2.2.3 in \cite{W}, or Lemma B.1 in
\cite{CT1}), there is a constant $C(E^\star)$, independent of the
parameter $t$, such that

$${\rm mod} (\Phi^t_{E^\star}(A(E^\star)))\leq
\frac{\sigma_t(f(\alpha))}{\deg(f|_\alpha)}+\frac{\sigma_t(f(\beta))}{\deg(f|_\beta)}+C(E^\star).$$

\begin{figure}[h]
\centering{
\includegraphics[height=6cm]{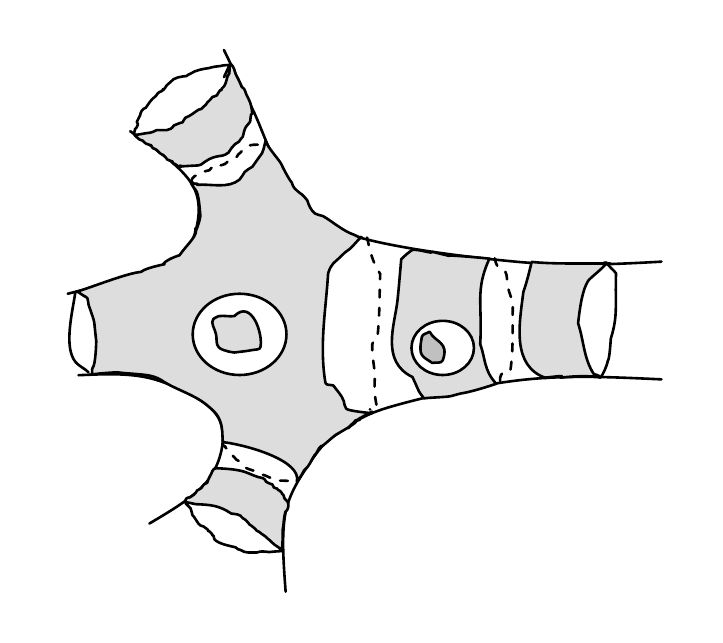}  \put(-90, 30){$S^\star$}
 \put(-160, 80){$E^\star_0$} \put(-80, 90){$E^\star_1$}  \put(-52, 80){$E^\star_2$} \put(-28, 80){$\gamma$} \put(-105, 80){$\beta_\gamma$}
\put(-160, 155){$\gamma_1$} \put(-190, 80){$\gamma_2$}  \put(-140,
15){$\gamma_3$}
 \caption{A $\mathcal{S}^\star$-piece $S^\star$ with boundary $\partial S^\star=\gamma\cup \gamma_1\cup\gamma_2\cup\gamma_3$.
 Here, $E^\star_0$ is the $\mathcal{E}^\star_{ess}$-piece parallel to $S^\star$, $E^\star_1$ and $E^\star_2$ are two
 $\mathcal{E}^\star_A$-pieces between $\gamma$  and $\beta_\gamma$.}}
\end{figure}

\subsection{ $\lambda(\Gamma,f)<1$ implies Gr\"otzsch inequality}
\label{a5}

For any $\mathcal{S}^\star$-piece $S^\star$ and any
$\gamma\in\partial_1(S^\star)$, let $A_{S^\star}^\gamma$ be the
annulus bounded by $\gamma$ and $\beta_\gamma$. By the construction
of $\Phi^t_{S^\star},
\Psi^t_{S^\star}:\mathbb{\overline{C}}(S^\star)\rightarrow\mathbb{\overline{C}}$,
both $\Phi^t_{S^\star}(\gamma)$ and $\Psi^t_{S^\star}(\beta_\gamma)$
are equipotentials. We denote the annulus  between
$\Phi^t_{S^\star}(\gamma)$ and $\Psi^t_{S^\star}(\beta_\gamma)$ by
$A^t(S^\star,\gamma)$. It's obvious that

$${\rm
mod}(A^t(S^\star,\gamma))=\varpi(\Phi^t_{S^\star}(\gamma))-\varpi(\Psi^t_{S^\star}(\beta_\gamma))
=\sigma_t(\gamma)-\frac{\sigma_t(f(\beta_\gamma))}{\deg(f|_{\beta_\gamma})}.$$

Then we have the following

\begin{lem}[Large parameter implies Gr\"otzsch inequality]\label{b4g}  When $t$ is large enough, for any $\mathcal{S}^\star$-piece $S^\star$ and any
$\gamma\in\partial_1(S^\star)$, we have
$$\sum_{\mathcal{E}^\star_A\ni E^\star\subset \overline{A_{S^\star}^\gamma}}{\rm
mod}(\Psi_{E^\star}^t(A(E^\star)))< {\rm
mod}(A^t(S^\star,\gamma)),$$ where the sum is taken over all
the $\mathcal{E}^\star_A$-pieces contained in
$\overline{A_{S^\star}^\gamma}$.
\end{lem}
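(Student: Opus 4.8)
The plan is to bound the left-hand side by summing, over the $\mathcal{E}^\star_A$-pieces lying in $\overline{A_{S^\star}^\gamma}$, the reversed Gr\"otzsch estimate established at the end of Section~\ref{a4}, and then to recognise the resulting bound as essentially the left-hand side of the inequality of Lemma~\ref{b4c}.3 --- which is exactly where the algebraic hypothesis $\lambda(\Gamma,f)<1$ (through Lemma~\ref{b4ccc} and the choice of $v$ and of the numbers $\rho$ in Section~\ref{a1}) gets spent.

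First I would describe the geometry of $\overline{A_{S^\star}^\gamma}$. As an annular component of $S^\star-E_{S^\star}$ it contains no point of $P$, so every curve of $f^{-1}(\Sigma^\star)$ meeting it is either null-homotopic or homotopic to $\gamma$ in $\mathbb{\overline{C}}-P$; using that $\Gamma$ is $(f,P)$-stable (and $\gamma\in\partial_1(S^\star)\subset\Gamma^\star$) one checks that the curves of $f^{-1}(\Sigma^\star)\cup\partial(S^\star)\cup\partial(E_{S^\star})$ contained in $\overline{A_{S^\star}^\gamma}$ form one nested family $\gamma=c_0\supset c_1\supset\cdots\supset c_N=\beta_\gamma$, all homotopic to $\gamma$, and that the closed sub-annuli into which these curves cut $\overline{A_{S^\star}^\gamma}$ are strictly alternately $\mathcal{E}^\star_A$-pieces and components of $f^{-1}(A^\delta)$, $\delta\in\Sigma$. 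Since $\gamma$ already bounds the removed annulus $A^{\pi(\gamma)}$ --- itself a component of $f^{-1}(A^{f(\pi(\gamma))})$ --- on its other side, while $\beta_\gamma$ is a boundary curve of the $\mathcal{E}^\star$-piece $E_{S^\star}$, the sub-annulus abutting $\gamma$ is an $\mathcal{E}^\star_A$-piece and the one abutting $\beta_\gamma$ a component of $f^{-1}(A^\delta)$; hence $N$ is even, the $\mathcal{E}^\star_A$-pieces of $\overline{A_{S^\star}^\gamma}$ are exactly those bounded by $c_{2i}$ and $c_{2i+1}$, and each of $c_0,\dots,c_{N-1}$ is a boundary curve of precisely one of them (whereas $c_N=\beta_\gamma$ of none).

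Next I would carry out the summation. For an $\mathcal{E}^\star_A$-piece $E^\star$ with boundary curves $c_j,c_{j+1}$, the reversed Gr\"otzsch estimate together with $\Psi^t_{E^\star}=\Phi^t_{E^\star}$ yields $\mathrm{mod}(\Psi^t_{E^\star}(A(E^\star)))\le \sigma_t(f(c_j))/\mathrm{deg}(f|_{c_j})+\sigma_t(f(c_{j+1}))/\mathrm{deg}(f|_{c_{j+1}})+C(E^\star)$ with $C(E^\star)$ independent of $t$, so summing and using the counting above,
$$\sum_{\mathcal{E}^\star_A\ni E^\star\subset\overline{A_{S^\star}^\gamma}}\mathrm{mod}\big(\Psi^t_{E^\star}(A(E^\star))\big)\ \le\ \sum_{j=0}^{N-1}\frac{\sigma_t(f(c_j))}{\mathrm{deg}(f|_{c_j})}+C_1,\quad C_1:=\sum_{E^\star}C(E^\star),$$
and after adding $\sigma_t(f(\beta_\gamma))/\mathrm{deg}(f|_{\beta_\gamma})=\sigma_t(f(c_N))/\mathrm{deg}(f|_{c_N})$ to both sides the first right-hand sum becomes $\sum_{j=0}^{N}\sigma_t(f(c_j))/\mathrm{deg}(f|_{c_j})$. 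Each $c_j$ is a component of $f^{-1}(\Sigma^\star)$ contained in $S^\star=S^\star_\gamma$ and homotopic to $\gamma$; I would split $\{c_0,\dots,c_N\}$ according to whether $f(c_j)\in\Gamma^\star$ or $f(c_j)\in\Gamma_0^\star$. On the curves of the second kind $\sigma_t$ is a constant --- a modulus inside a rotation disk, by the definition of $\sigma_t$ on $\Gamma_0^\star$ --- so their contribution is a constant $C_2$ independent of $t$ (here the case $\gamma\in\Gamma_1^\star$, in which $f(\gamma)=f(c_0)$ itself lies in $\Gamma_0^\star$, must be watched). The curves of the first kind are distinct components of $f^{-1}(\Gamma^\star)$ in $S^\star_\gamma$ homotopic to $\gamma$, so Lemma~\ref{b4c}.3 bounds their contribution by $\sum_{\beta\in\Gamma^\star}\sum_{\alpha\sim\gamma,\ \alpha\subset S^\star_\gamma}\sigma_t(\beta)/\mathrm{deg}(f\colon\alpha\to\beta)<\sigma_t(\gamma)$. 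Altogether,
$$\sum_{\mathcal{E}^\star_A\ni E^\star\subset\overline{A_{S^\star}^\gamma}}\mathrm{mod}\big(\Psi^t_{E^\star}(A(E^\star))\big)+\frac{\sigma_t(f(\beta_\gamma))}{\mathrm{deg}(f|_{\beta_\gamma})}\ \le\ \sum_{\beta\in\Gamma^\star}\ \sum_{\alpha\sim\gamma,\ \alpha\subset S^\star_\gamma}\frac{\sigma_t(\beta)}{\mathrm{deg}(f\colon\alpha\to\beta)}+C_1+C_2.$$

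Finally I would let $t$ be large. By the construction of Section~\ref{a1} both sides of the inequality of Lemma~\ref{b4c}.3 are linear in $t$, with $\sigma_t(\gamma)=t\,\rho(S_\gamma,\pi(\gamma))\,v(\pi(\gamma))$, and the strict inequality there is one between their (positive, $t$-independent) leading coefficients; hence the slack $\sigma_t(\gamma)-\sum_{\beta,\alpha}\sigma_t(\beta)/\mathrm{deg}(f\colon\alpha\to\beta)$ is $t$ times a strictly positive constant --- this is precisely the point at which $\lambda(\Gamma,f)<1$ is used. Since $C_1+C_2$ is a fixed finite constant, for every $t$ exceeding $(C_1+C_2)$ divided by that constant the two displays force
$$\sum_{\mathcal{E}^\star_A\ni E^\star\subset\overline{A_{S^\star}^\gamma}}\mathrm{mod}\big(\Psi^t_{E^\star}(A(E^\star))\big)<\sigma_t(\gamma)-\frac{\sigma_t(f(\beta_\gamma))}{\mathrm{deg}(f|_{\beta_\gamma})}=\mathrm{mod}\big(A^t(S^\star,\gamma)\big),$$
as claimed. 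I expect the genuine difficulty to be not analytic but combinatorial: establishing the strict alternation and the boundary-count of the first step, and separating cleanly in the third step the finitely many $t$-independent contributions coming from curves $c_j$ with $f(c_j)\in\Gamma_0^\star$; once that is done, the reversed Gr\"otzsch inequality and Lemma~\ref{b4c}.3 deliver the estimate automatically.
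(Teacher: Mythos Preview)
Your proposal is correct and follows essentially the same route as the paper's proof: bound each $\mathrm{mod}(\Psi^t_{E^\star}(A(E^\star)))$ by the reversed Gr\"otzsch estimate, recognise the resulting sum (after adding the $\beta_\gamma$ term) as $\sum_{\beta\in\Sigma^\star}\sum_{\alpha\sim\gamma,\,\alpha\subset S^\star_\gamma}\sigma_t(\beta)/\deg(f\colon\alpha\to\beta)$, split it into the $\Gamma^\star$-part and the $\Gamma_0^\star$-part, control the latter by a $t$-independent constant and the former via Lemma~\ref{b4c}.3, and finally let $t\to\infty$. Your explicit description of the alternating structure $c_0,\dots,c_N$ is a helpful elaboration of what the paper compresses into the single asserted identity; two small points of phrasing are worth tightening --- the $\mathcal{E}^\star_A$-pieces need not literally be annuli (they may carry additional null-homotopic boundary components, though only the two essential ones $\alpha_{E^\star},\beta_{E^\star}$ enter the estimate), and the left side of Lemma~\ref{b4c}.3 is in general affine rather than linear in $t$ (some $\sigma_t(\beta)$ may be constants when the recursion terminates in $\partial_0$) --- but neither affects the argument, since what you actually use is that the slack grows at least linearly in $t$, which the proof of Lemma~\ref{b4c}.3 delivers.
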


\begin{proof}
It suffices to show that when $t$ is large enough,
$$\sum_{\mathcal{E}^\star_A\ni E^\star\subset \overline{A_{S^\star}^\gamma}}\Bigg(
\frac{\sigma_t(f(\alpha_{E^\star}))}{\deg(f|_{\alpha_{E^\star}})}
+\frac{\sigma_t(f(\beta_{E^\star}))}{\deg(f|_{\beta_{E^\star}})}+C(E^\star)\Bigg)+
\frac{\sigma_t(f(\beta_\gamma))}{\deg(f|_{\beta_\gamma})}<
t\cdot\rho(S_\gamma, \pi(\gamma))\cdot v(\pi(\gamma)),$$ where
$\alpha_{E^\star}$ and $\beta_{E^\star}$ are the boundary curves of
$E^\star$,   homotopic  to $\gamma$ in $\mathbb{\overline{C}}-P$.

One can verify that
 \bess  \sum_{\mathcal{E}^\star_A\ni E^\star\subset \overline{A_{S^\star}^\gamma}}\Bigg(
\frac{\sigma_t(f(\alpha_{E^\star}))}{\deg(f|_{\alpha_{E^\star}})}
+\frac{\sigma_t(f(\beta_{E^\star}))}{\deg(f|_{\beta_{E^\star}})}\Bigg)+
\frac{\sigma_t(f(\beta_\gamma))}{\deg(f|_{\beta_\gamma})}=\sum_{\beta\in
\Sigma^\star} \sum_{\alpha\sim\gamma,\alpha\subset S^\star_\gamma
}\frac{\sigma_t(\beta)}{{\rm deg}(f:\alpha\rightarrow\beta)}.\eess

Since $\Sigma^\star=\Gamma_0^\star\cup \Gamma^\star$, we can
decompose the sum into two parts:
$$I=\sum_{\beta\in \Gamma^\star}
\sum_{\alpha\sim\gamma,\alpha\subset S^\star_\gamma
}\frac{\sigma_t(\beta)}{{\rm deg}(f:\alpha\rightarrow\beta)}, \
II=\sum_{\beta\in\Gamma_0^\star}
\sum_{\alpha\sim\gamma,\alpha\subset S^\star_\gamma
}\frac{\sigma_t(\beta)}{{\rm deg}(f:\alpha\rightarrow\beta)}.$$

It follows from the proof of Lemma \ref{b4c} that $I\leq t
\omega(\gamma)$, where
$$\omega(\gamma):=\frac{
\rho(S_{f(\gamma)}, \pi({f(\gamma)}))v(\pi({f(\gamma)}))}{{\rm
deg}(f|_\gamma)}+\sum_{\delta\in \Gamma}\sum_{
\pi(\gamma)\sim\alpha\subset{S}_{\gamma}\setminus{\pi(\gamma)}}
\frac{v(\delta)}{{\rm deg}(f:\alpha\rightarrow\delta)},$$ if
$f(\gamma)\in \Gamma^\star$ (or equivalently
$\gamma\in\Gamma_2^\star\cup \Gamma_3^\star\cup\cdots$); and
$$\omega(\gamma):=\sum_{\delta\in
\Gamma}\sum_{
\pi(\gamma)\sim\alpha\subset{S}_{\gamma}\setminus{\pi(\gamma)}}
\frac{v(\delta)}{{\rm deg}(f:\alpha\rightarrow\delta)},$$ if
$f(\gamma)\in \Gamma_0^\star$ (or equivalently
$\gamma\in\Gamma_1^\star$).

For the second term, we have

$$II\leq\sum_{A\in\mathcal{A}}\sum_{\alpha\in
f^{-1}(\Gamma_0)\setminus\Gamma_0}\frac{{\rm
mod}(A)}{\deg(f|_\alpha)} ,$$
 where $\mathcal{A}$
is the collection of all rotation annuli of $(f,P)$.

So if we choose $t$ large enough such that for any
$\gamma\in\cup_{S^\star\in\mathcal{S}^\star}
\partial_1(S^\star)$,
\bess
\sum_{E^\star\in\mathcal{E}^\star_A}C(E^\star)+\sum_{A\in\mathcal{A}}\sum_{\alpha\in
f^{-1}(\Gamma_0)\setminus\Gamma_0}\frac{{\rm
mod}(A)}{\deg(f|_\alpha)}<t\Big(\rho(S_\gamma, \pi(\gamma))\cdot
v(\pi(\gamma))- \omega(\gamma)\Big), \eess
  then the conclusion
follows (notice that by the choice of the number $\rho$, we have
 $\rho(S_\gamma,
\pi(\gamma))\cdot v(\pi(\gamma))- \omega(\gamma)>0$ for all
$\gamma\in\Gamma^\star$).
\end{proof}

\subsection{Proof of the sufficiency of  Theorem \ref{b4a} ($\Gamma\neq\emptyset$)} \label{a6}

Now, we are ready to complete the proof  of  Theorem \ref{b4a}. Here
is a fact used in the proof, which is equivalent to the Gr\"otzsch
inequality. Let $A,B\subset\mathbb{\overline{C}}$ be two annuli. We
say that $B$ can be embedded into $A$ essentially and
holomorphically if there is a holomorphic injection
$\phi:B\rightarrow A$ such that $\phi(B)$ separates the  two
boundary components of $A$.

\vskip0.3cm
\noindent{\bf Fact} {\it Let $A,A_1,\cdots, A_n\subset\mathbb{\overline{C}}$
be annuli, then $A_1,\cdots, A_n$ can be embedded into   $A$
essentially and holomorphically such that the closures of the images
of $A_i$'s are mutually disjoint if and only if
$$\sum_{i=1}^n {\rm mod}(A_i)< {\rm mod}(A).$$}

\noindent\textit{Proof of the sufficiency of  Theorem \ref{b4a},
assuming   $\Gamma\neq\emptyset$.} \label{aab}
The idea of the proof is to glue the holomorphic models in a suitable fashion
along the stable multicurve $\Gamma$.

Recall that for each $\mathcal{S}^\star$-piece $S^\star$, we use $S$
to denote the  $\mathcal{S}$-piece that contains $S^\star$. For each
curve $\gamma\in \Sigma $,    $A^\gamma$ is the annular neighborhood
of $\gamma$ chosen at the beginning of Section \ref{a3}. The
collection of all rotation annuli of $(f,P)$ is still denoted by
$\mathcal{A}$.

 For each  $\mathcal{S}^\star$-piece
$S^\star$, we extend $\Phi_{S^\star}^t:S^\star\rightarrow
\Phi_{S^\star}^t(S^\star)$ to a quasi-conformal homeomorphism
$\Phi_{S}:{S}\rightarrow \Phi_{S}(S)$ such that $\Phi_{S}$ is
holomorphic in $(S-S^\star)\cap(\cup\mathcal{A})$.

We first choose $t$ large enough such that Lemma \ref{b4g} holds.
This means, one can embedded $\Psi_{E^\star}^t(E^\star)$
holomorphically into the interior of $\Phi_S(S)$ for each
$\mathcal{E}^\star_A$-piece $E^\star$ contained in $S$ according to
the original order of their non-peripheral boundary curves  so that
the embedded images are mutually disjoint.  In other words, there is
a quasi-conformal homeomorphism  $\psi_S:S\rightarrow \Phi_S(S)$ such
that

$\bullet$ $\psi_S|_{\partial S}=\Phi_S|_{\partial S}$ and $\psi_S$
is isotopic to $\Phi_S$ rel $\partial S\cup (S\cap P)$. Moreover,
$\psi_S|_{S\cap(\cup\mathcal{A})}=\Phi_S|_{S\cap(\cup\mathcal{A})}$.

$\bullet$ $\psi_S|_{E_{S^\star}}=\Psi_{S^\star}^t|_{E_{S^\star}}$,
where $E_{S^\star}$ is the unique $\mathcal{E}^\star$-piece parallel
to $S^\star$.

$\bullet$ For each curve $\gamma\in  \partial_1(S)$, $\Phi_S(S\cap
A^\gamma)=\psi_S(S\cap A^\gamma)$.

$\bullet$ For every $\mathcal{E}^\star_A$-piece $E^\star$ with
$E^\star\subset S$, the map $\Psi_{E^\star}^t\circ \psi_S^{-1}$ is
holomorphic in $\psi_S(E^\star)$.

We define a subset $\mathcal{E}_{A}$ of $\mathcal{E}$ by
$\mathcal{E}_{A}=\{E; E^\star\in \mathcal{E}^\star_{A}\}$. Let
$\mathcal{D}(S)$ be the collection of all disk components of
$S-E_S\cup(\cup_{\mathcal{E}_A\ni E\subset S}E)$, here $E_{S}$ is
the unique $\mathcal{E}$-piece parallel to $S$. For each $D\in
\mathcal{D}(S)$, we construct a quasi-conformal homeomorphism
$\zeta_D:D\rightarrow \psi_S(D)$, whose Beltrami coefficient
satisfies
$$\mu_{\zeta_D}(z)=\sum_{\mathcal{E}\ni E\subset D}\chi_E(z)\mu_{\Phi_{f(E)}\circ f}(z),$$
here the sum is taken over all $\mathcal{E}$-pieces contained
in $D$. We further require $\zeta_D(p)=\psi_S(p)$ if $D$ contains a
marked point $p\in P$.

Let $\Gamma_S$ be the collection of all  boundary curves of
$\cup_{D\in \mathcal{D}(S)} D$.  For each $\gamma\in \Gamma_S$,
notice that $f(\gamma)\in \Sigma$.  Let $A^\gamma$ be the component
of $f^{-1}(A^{f(\gamma)})$ containing $\gamma$. It's obvious that
$A^\gamma$ is an annular neighborhood of $\gamma$. We define a
quasi-conformal homeomorphism $\Psi_S:S\rightarrow \Phi_S(S)$ by

\begin{equation*}
\Psi_S(z)=\begin{cases}
 \zeta_D(z),\ \  &z\in D, D\in \mathcal{D}(S),\\
\psi_{S}(z),\ \ &z\in  S-(\cup_{D\in \mathcal{D}(S)}D)\cup(\cup_{\gamma\in \Gamma_S}A^\gamma),\\
q.c \  interpolation, \ \ &z\in \cup_{\gamma\in
\Gamma_S}A^\gamma-\cup_{D\in \mathcal{D}(S)} D.
\end{cases}
\end{equation*}

The map $\Psi_S$ satisfies:

$\bullet$ $\Psi_S|_{\partial S}=\Phi_S|_{\partial S}$ and $\Psi_S$
is isotopic to $\Phi_S$ rel $\partial S\cup (S\cap P)$. Moreover,
$\Psi_S|_{S\cap(\cup\mathcal{A})}=\Phi_S|_{S\cap(\cup
\mathcal{A})}$.

$\bullet$ For every $\mathcal{E}^\star_{ess}\cup
\mathcal{E}^\star_A$-piece $E^\star\subset S$, the map
$\Phi_{f(E)}\circ f\circ \Psi_S^{-1}$ is holomorphic in
$\Psi_S(E^\star)$.

$\bullet$ For every $\mathcal{E}$-piece ${E}\subset \cup_{D\in
\mathcal{D}(S)} D$, the map $\Phi_{f(E)}\circ f\circ \Psi_S^{-1}$ is
holomorphic in $\Psi_S(E)$.

Now, we define a quasi-conformal map
$\Theta:\mathbb{\overline{C}}\rightarrow \mathbb{\overline{C}}$ by
$\Theta|_S=\Psi_S^{-1}\circ \Phi_S$ for all $S\in\mathcal{S}$. It's
obvious that  $\Theta$ is isotopic to the identity map rel $P$.
Moreover, for each curve $\gamma\in \Gamma$, we have
$\Theta(\gamma)=\gamma$ and $A^\gamma\subset\Theta^{-1}(A^\gamma)$.
Let $\Phi:\mathbb{\overline{C}}\rightarrow\mathbb{\overline{C}}$ be
the quasi-conformal map whose Beltrami coefficient satisfies
$$\mu_\Phi(z)=\sum_{S\in\mathcal{S}}\chi_S(z)\mu_{\Phi_S}(z), \  z\in\mathbb{\overline{C}}.$$

Set $\Psi=\Phi\circ\Theta^{-1}$. Then $(f,P)$ is q.c-equivalent to
the Herman map $(g,Q):=(\Phi\circ f\circ\Psi^{-1}, \Phi(P))$ via
$(\Phi,\Psi)$.

One can verify that $g$ is holomorphic outside
$X:=\Psi(\cup_{\gamma\in \Gamma\cup (\cup_{S\in\mathcal{S}
}\Gamma_S)}A^\gamma)$. To see this, notice that if $E^\star\in
\mathcal{E}^\star_{ess}\cup \mathcal{E}^\star_A$ and $E^\star$ is
contained in some $\mathcal{S}$-piece $S$, then the decomposition
$$g|_{\Psi(E^\star)}=(\Phi\circ\Phi_{f(E)}^{-1})\circ(\Phi_{f(E)}\circ
f\circ\Psi_S^{-1})\circ(\Phi_S\circ\Phi^{-1})|_{\Psi(E^\star)}$$
implies that $g$ is holomorphic in $\Psi(E^\star)$ since each factor
is holomorphic. If $E\in \mathcal{E}$ and $E\subset D\in
\mathcal{D}(S)$, then
$$g|_{\Psi(E)}=(\Phi\circ\Phi_{f(E)}^{-1})\circ(\Phi_{f(E)}\circ
f\circ\zeta_D^{-1})\circ(\Phi_S\circ\Phi^{-1})|_{\Psi(E)},$$  so $g$
is holomorphic in $\Psi(E)$.



The last step is to apply the quasi-conformal surgery. For each curve
$\gamma\in \Gamma$, let $\iota(\gamma)$ be the first integer
$p\geq1$ such that $f^{p}(\gamma)\in \Gamma_0$ and
$L=\max_{\gamma\in \Gamma}\iota(\gamma)$.
 One may verify by induction that for
any $j\geq1$,
$$g^{-j}(\Psi(\cup\mathcal{A}))=\Psi((\Theta\circ f)^{-j}(\cup\mathcal{A}))\supset \Psi(\cup_{\gamma\in \Gamma, \iota(\gamma)\leq j}A^\gamma).$$

In particular, $g^{-L-1}(\Psi(\cup\mathcal{A}))\supset X$. Let
$\sigma_0$ be the standard complex structure in
$\mathbb{\overline{C}}$. Define a $(g,Q)$-invariant complex
structure $\sigma$ by

\begin{equation*}
\sigma=\begin{cases}
 (g^k)^*(\sigma_0),\ \  &\text{ in } g^{-k}(\Psi(\cup\mathcal{A}))\setminus g^{-k+1}(\Psi(\cup\mathcal{A})), \ k\geq1,\\
\sigma_0,\ \ &\text{ in } \mathbb{\overline{C}}-\cup_{k\geq1}(
g^{-k}(\Psi(\cup\mathcal{A}))\setminus
g^{-k+1}(\Psi(\cup\mathcal{A}))).
\end{cases}
\end{equation*}

Since $(g,Q)$ is  holomorphic outside $X$, the Beltrami coefficient
$\mu$ of $\sigma$ satisfies $\|\mu\|_\infty<1$. By Measurable
Riemann Mapping Theorem, there is a quasi-conformal map
$\zeta:\mathbb{\overline{C}}\rightarrow \mathbb{\overline{C}}$ such
that $\zeta^*(\sigma_0)=\sigma$. Let $R=\zeta\circ
g\circ\zeta^{-1}$, then $R$ is a rational map and $(f, P)$ is
q.c-equivalent to $(R, \zeta\circ\Phi(P))$ via
$(\zeta\circ\Phi,\zeta\circ\Psi)$. \hfill $\Box$

\section{Analytic part: renormalizations}\label{3-6}

In this section, we discuss rational-like maps, renormalizations of rational maps and prove Theorem \ref{b3-11}.

\subsection{Rational-like\ maps} \label{3-4-1}

A {\it rational-like map} $g: U\rightarrow V$ is a proper and
holomorphic map between two
 multi-connected domains such that  $\overline{U}\subset
 V\subset \mathbb{\overline{C}}$ and the complementary set $\mathbb{\overline{C}}-{X}$  of $X\in\{U,V\}$ consists of finitely
 many  topological disks. In our discussion, we always assume $V\neq
 \mathbb{\overline{C}}$ and the degree of $g$ is at least two.
 The {\it filled
  Julia set } is defined by $K(g)=\bigcap_{n\geq 1}g^{-n}(V)$,  the {\it Julia set } is defined by
  $J(g)=\partial K(g)$. 
  The Julia set $J(g)$ is not necessarily connected even if $K(g)$ is connected (but in the polynomial-like case,
  the connectivity of $K(g)$ always implies
   the connectivity of $J(g)$ by the Maximum Modulus Principle).

     Two rational-like maps $g_1$ and $g_2$ are {\it hybrid equivalent} if there is
 a quasi-conformal conjugacy $\phi$ between $g_1$ and $g_2$, defined
 in a neighborhood of $K(g_1)$, such
 that $\overline{\partial}\phi=0$ on $K(g_1)$. We call
 $\phi$ a {\it hybrid conjugacy} between $g_1$ and $g_2$. These definitions are
 simply the generalizations of
 Douady-Hubbard's definitions for polynomial-like maps \cite{DH3}.

The following is an analogue of Douady-Hubbard's straightening
theorem:

 \begin{thm}[Straightening theorem] \label{b5a0}  Let $g: U\rightarrow V$
be a rational-like map of degree $d\geq2$, then

1. The map $g$ is hybrid equivalent to a rational map $R$ of degree
$d$.

2. If $K(g)$ is connected, then $g$ is hybrid equivalent to a
rational map $R$ of degree $d$, which is postcritically finite
outside $\phi(K(g))$. Here $\phi$ is the hybrid conjugacy. Such $R$
is unique up to M\"obius conjugation.
\end{thm}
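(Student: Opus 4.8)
The plan is to adapt the classical Douady--Hubbard straightening construction for polynomial-like maps to the rational-like setting, the extra input being a gluing across \emph{several} boundary disks rather than a single one. First I would set up the exterior model. Since $\mathbb{\overline{C}}\setminus U$ and $\mathbb{\overline{C}}\setminus V$ each consist of finitely many topological disks, I would shrink $V$ slightly to arrange smooth (say real-analytic) boundary curves, so that $g:U\to V$ is a proper holomorphic covering of degree $d$ away from its finitely many critical points, and $V\setminus\overline{U}$ is a finite disjoint union of annuli. The idea is then to extend $g$ across $V\setminus\overline{U}$ to a branched covering $\widehat{g}$ of $\mathbb{\overline{C}}$ onto itself. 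Concretely, for each disk $D_i$ of $\mathbb{\overline{C}}\setminus V$ one fills in a disk so that $\widehat g$ maps it properly with the prescribed boundary degree onto the corresponding disk of $\mathbb{\overline{C}}\setminus g(U)$ (exactly the ``marked disk extension'' philosophy of Section \ref{3-11}), being holomorphic there with at most the center as a critical point; on each annulus $V\setminus\overline U$ one interpolates quasiconformally between $g|_{\partial U}$ and the chosen boundary map. This produces a quasiregular branched cover $\widehat g:\mathbb{\overline{C}}\to\mathbb{\overline{C}}$ of degree $d$ which agrees with $g$ on $U$ and is holomorphic outside the interpolation annuli $A=V\setminus\overline U$.

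Next I would build a $\widehat g$-invariant complex structure. Starting from the standard structure $\sigma_0$, pull it back along the iterates of $\widehat g$: define $\sigma=(\widehat g^{\,k})^*\sigma_0$ on $\widehat g^{-k}(A)\setminus \widehat g^{-k+1}(A)$ for $k\geq 1$ and $\sigma=\sigma_0$ elsewhere. Because $\widehat g$ is holomorphic off $A$, every point of $\mathbb{\overline{C}}$ passes through $A$ at most once under backward iteration that matters, so the Beltrami coefficient $\mu$ of $\sigma$ has $\|\mu\|_\infty<1$ (its dilatation is bounded by that of the single quasiconformal interpolation). By the Measurable Riemann Mapping Theorem there is a quasiconformal $\phi:\mathbb{\overline{C}}\to\mathbb{\overline{C}}$ with $\phi^*\sigma_0=\sigma$, and $R:=\phi\circ\widehat g\circ\phi^{-1}$ is then holomorphic, i.e. a rational map of degree $d$. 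Since $\sigma=\sigma_0$ on $K(g)$ (no point of $K(g)$ ever enters $A$), $\overline\partial\phi=0$ on $K(g)$, so $\phi$ is a hybrid conjugacy between $g$ and $R$ near $K(g)$; this proves part~1.

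For part~2, assume $K(g)$ is connected. Then each disk of $\mathbb{\overline{C}}\setminus V$ lies in a single complementary component of $K(g)$, and one can choose in the extension step the filling disks and marked centers so that the orbit of every critical point of $\widehat g$ lying outside $K(g)$ is eventually captured by one of these centers and then stays finite; equivalently, after straightening, $R$ is postcritically finite outside $\phi(K(g))$. Uniqueness up to M\"obius conjugacy is the real crux and I expect it to be the main obstacle: given two such straightenings $R_1,R_2$ with hybrid conjugacies $\phi_1,\phi_2$, the map $\psi=\phi_2\circ\phi_1^{-1}$ is a quasiconformal conjugacy near $K(R_1)$ that is conformal on $K(R_1)$, and one must promote it to a global M\"obius map. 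The standard argument: extend $\psi$ to a quasiconformal $\Psi$ of $\mathbb{\overline{C}}$ conjugating $R_1$ to $R_2$ near $K(R_1)$, then spread the Beltrami coefficient of $\Psi$ around using the dynamics. The part that is genuinely more delicate than in the polynomial-like case is controlling the behaviour on $\mathbb{\overline{C}}\setminus K(R_i)$, where $R_i$ is merely postcritically finite (not expanding) and the Julia set need not be connected; here one invokes the rigidity half of Thurston's theorem (Theorem \ref{b3-3-a})---the postcritically finite restriction has no invariant line fields and no $(2,2,2,2)$ orbifold because of the extra marked structure---to conclude that $\Psi$ can be taken conformal everywhere, hence M\"obius. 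I would organize the write-up so that the existence (parts~1 and the existence half of~2) is the quasiconformal-surgery argument above, and uniqueness is reduced cleanly to the no-invariant-line-field/Thurston rigidity statement.
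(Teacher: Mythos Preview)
Your existence argument for part~1 is essentially the paper's: extend $g$ quasiregularly across the complementary disks, pull back the standard structure, and integrate. One technicality you gloss over is that $V\setminus\overline{U}$ need not consist of annuli --- a disk component of $\mathbb{\overline{C}}\setminus U$ may contain zero or several disk components of $\mathbb{\overline{C}}\setminus V$ --- but this is harmless for part~1 since the paper simply extends $G$ disk-by-disk on the components of $\mathbb{\overline{C}}\setminus U$. For part~2 existence it matters more: the paper inserts a preliminary ``hole-filling'' step (citing \cite{CT1}) to pass to a restriction $g|_{U'}:U'\to V'$ for which every component of $V'\setminus\overline{U'}$ really is an annulus or a disk and all postcritical points in $V'$ lie in $K(g)$; only then does the marked-disk extension produce a postcritically finite exterior.

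The genuine gap is in your uniqueness argument. Invoking Thurston's theorem is misguided: $R_1,R_2$ are not postcritically finite globally, only outside $\phi_i(K(g))$, and there is no ``postcritically finite restriction'' to which Theorem~\ref{b3-3-a} applies. The paper's route is direct and does not use Thurston at all. The key observation --- and this is where the hypothesis $\deg R=d$ is essential, as the paper stresses --- is that $R$ must permute the components of $\mathbb{\overline{C}}\setminus\phi(K(g))$: because there is no excess degree, every such component is eventually periodic, and on each periodic component $X$ the return map $R^p|_X$ is a proper self-map with finite critical orbits, hence holomorphically conjugate to $z\mapsto z^k$ for some $k\ge 2$. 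This gives canonical B\"ottcher-type coordinates on the exterior. One then builds a pair $(\varphi_0,\varphi_1)$ with $\varphi_0 R_1=R_2\varphi_1$, equal to $\phi_2\circ\phi_1^{-1}$ on $\phi_1(U')$, holomorphic and identical in a neighborhood $N$ of the exterior superattracting cycles, and isotopic rel $\phi_1(K(g))\cup P_{R_1}$; iterated lifting produces $\varphi_n$ with $\overline\partial\varphi_n=0$ on $\phi_1(K(g))\cup R_1^{-n}(N)$, and since the Lebesgue measure of the complement of this set tends to zero, the limit is the desired M\"obius conjugacy. Your phrase ``spread the Beltrami coefficient'' gestures in this direction but skips the step that makes it work: one must first \emph{construct} the compatible pair $(\varphi_0,\varphi_1)$ extending holomorphically across the exterior, and that requires the power-map structure of the complement, not an appeal to Thurston rigidity.
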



\begin{rem}\label{b5a2011} 1.  A rational-like map $g: U\rightarrow V$ can be  hybrid equivalent to a rational map  of
degree greater than $d$.

2. In the second statement of Theorem \ref{b5a0}, if we do not
require the degree of $R$, then $R$ may be not unique up to M\"obius
conjugation even if $R$ is postcritically finite outside
$\phi(K(g))$. 
 For example, we can consider the McMullen map:
$f_\lambda(z)=z^n+\lambda/z^n$ with $n\geq3$. Here  $\lambda$ is a
complex parameter such that $f_\lambda$ is postcritically finite and
the Julia set is a Sierpinski curve. We denote by $B_\lambda$ the
immediate attracting basin of $\infty$. In this case,   $f_\lambda$
is strictly expanding on $\partial B_\lambda$.
There is 
 an annular neighborhood $A$
of $\partial B_\lambda$ such that $f_\lambda|_A: A\rightarrow
f_\lambda(A)$ is a  rational-like map. It is  hybrid equivalent to
the power map $z\mapsto z^n$, with degree lower than that of
$f_\lambda$. More details can be found in \cite{QWY}.

3. If $K(g)$ is connected and $\mathbb{\overline{C}}-K(g)$ consists
of two components, then there are two annuli $U',V'$ such that
$K(g)\Subset U'\Subset V' \Subset V$ and the restriction $g|_{U'}:
U'\rightarrow V'$ is a rational-like map. In this case, $K(g)$ is a
quasi-circle.
\end{rem}

\begin{proof} 1. The proof is a standard surgery procedure. By shrinking $V$ a little bit,  we may
assume that each boundary curve of $U$ and $V$ is a quasi-circle. We
then extend $g: U\rightarrow V$ to a quasi-regular branched covering
$G:\mathbb{\overline{C}}\rightarrow \mathbb{\overline{C}}$ such that
$G$ is holomorphic in $\mathbb{\overline{C}}-\overline{V}$ and $G$
maps each component $U_k$ of $\mathbb{\overline{C}}-{U}$ onto a
connected component $V_j$ of $\mathbb{\overline{C}}-{V}$, with
degree equal to $\deg{(g|_{\partial U_k})}$. Such extension keeps
the degree. By pulling back the standard complex structure
$\sigma_0$ on $\mathbb{\overline{C}}-{V}$ via $G$, we get a
$G$-invariant complex structure

\begin{equation*}
\sigma=\begin{cases}
 (G^k)^*(\sigma_0),\ \  &\text{ in } G^{-k}(\mathbb{\overline{C}}-{V}), \ k\geq1,\\
\sigma_0,\ \ &\text{ in } K(g).
\end{cases}
\end{equation*}

The  Beltrami coefficient $\mu$ of $\sigma$ satisfies
$\mu|_{K(g)}=0$ and $\|\mu\|_\infty<1$. Let  $\phi$ solve the
Beltrami equation $\overline{\partial} \phi=\mu \partial \phi$. Then
$R=\phi\circ G\circ\phi^{-1}$ is a rational map and $\phi$ is a
hybrid conjugacy between $g$ and $R$.

2. By a  hole-filling process (see Theorem 5.1 in \cite{CT1}, or
Proposition 6.5.1 in \cite{W}), we can find a suitable restriction
$g|_{U'}: U'\rightarrow V'$ of $g$
 with
$K(g)\Subset U' \Subset V'\Subset V$ such that

a). All postcritical points of  $g|_{U'}$ in $V'$ are contained in
$K(g)$.

b). Each connected component of $V'-\overline{U'}$ is either an
annulus or a disk.

Note that such $V'$ can be chosen arbitrarily close to the filled
Julia set $K(g)$. (To see this, one may replace $V'$ by $g^{-k}(V')$
for some large $k$, and a), b) still holds.)

In this way, each component $U_i$ of
$\mathbb{\overline{C}}-\overline{U'}$ either  is contained in $V'$
or contains a unique component $V_j$ of
$\mathbb{\overline{C}}-\overline{V'}$.  In the former case, we mark
a point $p\in U_i$ and get a marked disk $(U_i,p)$; in the latter
case, we mark a point $p\in V_j$, and get two marked disks $(V_j,p)$
and $(U_i,p)$. We extend $g|_{U'}$ to a quasi-regular branched
covering $G:\mathbb{\overline{C}}\rightarrow \mathbb{\overline{C}}$
such that

1). For each component $U_i$ of
$\mathbb{\overline{C}}-\overline{U'}$, $G$ maps the marked disk
$(U_i,p)$ to  the marked  disk $(V_k,q)$, where $V_k$ is the
component of $\mathbb{\overline{C}}-\overline{V'}$ whose boundary is
$g(\partial U_i)$. We require that $G(p)=q$ and  the local degree of
$G$ at $p$ is equal to $\deg{(g|_{\partial U_i})}$.

2). We further require that $G$ is holomorphic in
$\mathbb{\overline{C}}-\overline{V'}$.

By pulling back the standard complex structure on
$\mathbb{\overline{C}}-{V'}$, we can get a $G$-invariant complex
structure whose Beltrami coefficient $\mu$ satisfies $\mu|_{K(g)}=0$
and $\|\mu\|_\infty<1$. Let  $\phi$ solve the Beltrami equation
$\overline{\partial} \phi=\mu \partial \phi$. Then $R=\phi\circ
G\circ\phi^{-1}$ is a rational map, postcritically finite outside
$\phi(K(g))$, as required.

To prove the uniqueness, we need investigate
 some mapping properties of $R$,  a
rational map of degree $d$, to which $g|_{U'}$ is hybrid equivalent
via $\phi$, and
 postcritically finite outside $\phi(K(g))$.  We assume $V'$ is sufficiently close to $K(g)$
 so that
$\phi$ is defined on $V'$. Then $g|_{U'}$ induces a suitable
restriction $R|_{\phi(U')}$. Let $\mathcal{X}_1$ be the collection
of all components of $\mathbb{\overline{C}}-\phi(K(g))$ which
intersect with the boundary curves of $\phi(V')$ and $\mathcal{X}_2$
be the collection of all components of
$\mathbb{\overline{C}}-\phi(K(g))$ which intersect with the boundary
curves of $\phi(U')$. It's obvious that
$\mathcal{X}_1\subset\mathcal{X}_2$. Since the degree of $R$ is
equal to $d$ (this is very important), we have that
$$\{U {\text \ is\ a\ component\ of\ } R^{-1}(X); X\in \mathcal{X}_1\}=\mathcal{X}_2.$$
Thus for each $X\in \mathcal{X}_2$, $R(X)\in \mathcal{X}_2$. This
implies that each $X\in \mathcal{X}_2$ is eventually periodic under
the map $R$. Suppose $X\in \mathcal{X}_2$ is $R$-periodic, with
period $p$. Since $R$ is poscritically finite outside $\phi(K(g))$,
$R^p|_X:X\rightarrow X$ is proper and each critical point in $X$ has
finite orbit. Thus  $R^p|_X$ is conformally conjugate to $z\mapsto
z^d$, where $d={\deg}(R^p|_X)\geq2$ (see
\cite{DH1} Lemma 4.1 for a proof of this fact). It follows that for all $X\in \mathcal{X}_2$,
the proper map $R|_X:X\rightarrow R(X)$ has only one possible
critical point, which is eventually mapped to a superattracting
cycle. Base on these  observations, we are now ready to  prove the
uniqueness part of the theorem.

Suppose that $R_1$ and $R_2$ are two rational maps of degree $d$,
both are hybrid equivalent to $g|_{U'}$  and poscritically finite
outside $\phi_1(K(g))$ and $\phi_2(K(g))$,  respectively. Here,
$\phi_i$ is a hybrid conjugacy between $g|_{U'}$ and $R_i$, $i=1,2$.
We assume that $V'$ is sufficiently close to $K(g)$ such that
$\phi_i$ is defined on $U'$. Then $g|_{U'}$ induces two restrictions
$R_i|_{\phi_i(U')}, i=1,2$ and a hybrid conjugacy
$\phi=\phi_2\circ\phi_1^{-1}$ between them. One can
  construct a pair of quasi-conformal maps $\varphi_0,\varphi_1:
  \mathbb{\overline{C}}\rightarrow \mathbb{\overline{C}}$ such that

a). $\varphi_0\circ R_1= R_2\circ\varphi_1$ on
$\mathbb{\overline{C}}$.

b). $\varphi_0,\varphi_1$ are isotopic rel $\phi_1(K(g))\cup
P_{R_1}$ and
$\varphi_0|_{\phi_1(U')}=\varphi_1|_{\phi_1(U')}=\phi|_{\phi_1(U')}$.

c). $\varphi_0,\varphi_1$ are holomorphic and identical in a
neighborhood $N$ of all superattracting cycles of $R_1$ in
$\mathbb{\overline{C}}-\phi_1(K(g))$.

Then  there is a sequence of quasi-conformal maps
$\{\varphi_n,n\geq0\}$ such that $\varphi_n\circ R_1=
R_2\circ\varphi_{n+1}$ and $\varphi_n$ is isotopic to
$\varphi_{n+1}$ rel $R_1^{-n}(\phi_1(U')\cup  P_{R_1}\cup N)$. The
quasi-conformal map $\varphi_n$ satisfies $\overline{\partial}
\varphi_n=0$  on $\phi_1(K(g))\cup R_1^{-n}(N)$. The sequence
$\{\varphi_n\}$ has a limit quasi-conformal map $\varphi=\lim
\varphi_n$. Since the Lebesgue measure of
$\mathbb{\overline{C}}-\phi_1(K(g))\cup R_1^{-n}(N)$ tends to zero
as $n\rightarrow\infty$, the map $\varphi$ satisfies
$\overline{\partial} \varphi=0$ outside a zero measure set.  It is
 in fact a holomorphic conjugacy between $R_1$ and $R_2$.
\end{proof}

\subsection{Herman-Siegel renormalization} \label{3-4-2-1}

In this section, we will discuss the renormalizations of Herman
rational maps.

Let $(f,P)$ be  a Herman rational map. The decomposition procedure
in Section \ref{3-1} yields  finitely many $f_*$-cycles of
$\mathcal{S}$-pieces:
$$S_\nu\mapsto f_*(S_\nu)\mapsto\cdots\mapsto f_*^{p_\nu-1}(S_\nu)\mapsto f_*^{p_\nu}(S_\nu)=S_\nu,\ \   1\leq \nu\leq n,$$
where $S_\nu$ is a representative of the $\nu$-th  cycle and $p_\nu$
is the period of  $S_\nu$.

For $i\in[1,n]$, let $V_i=S_i$ and $U_i$ be the unique component of
$f^{-p_i}(S_i)$ 
 parallel to  $S_i$.
The triple $(f^{p_i},U_i,V_i)$ can be considered as a
renormalization of $(f,P)$. In general, $U_i$ is not   contained in
the interior of $V_i$ (for example, if  some boundary curve $\gamma$
of $V_i$ is also a curve in $\Gamma_0$, then $\gamma$ is necessarily
a boundary curve of $U_i$). For this reason, we call
$(f^{p_i},U_i,V_i)$ a Herman-Siegel (`HS' for short) renormalization
of $(f,P)$.

We should show that $\deg(f^{p_i}|_{U_i})\geq2$ for all $i\in[1,n]$.
In fact,  if $\Gamma=\emptyset$, then all boundary curves of  $U_i$
are contained in $\Gamma_0$ and $U_i$ contains at least one critical point of
$f^{p_i}|_{U_i}$. In this case, $\deg(f^{p_i}|_{U_i})\geq2$. If
$\Gamma\neq\emptyset$, then it follows from Theorem \ref{b4b} that
$\lambda(\Gamma,f)<1$. We conclude from Lemma \ref{b3aa} that
$\deg(f^{p_i}|_{U_i})\geq2$.

 The filled
  Julia set $K_i$ and Julia set $J_i$ of the  HS renormalization $(f^{p_i},U_i,V_i)$ are  defined as follows:
\bess K_i=\cap_{k\geq 0}(f^{p_i}|_{U_i})^{-k}(U_i), \ J_i= K_i\cap
J(f).\eess
Note that $\partial K_i$ is not
   a reasonable definition of  the Julia set because $\partial K_i$  may contain a curve  in $\Gamma_0$.
One may check that $K_i$ is connected. Moreover, $J_i=\partial K_i$
if and only if $\partial(V_i)\cap \Gamma_0=\emptyset$ (in this case,
$K_i$ is contained in the interior of $V_i$).  We say that
$(f^{p_i},U_i,V_i)$ is hybrid equivalent to a rational map $R$, if
there is a quasi-conformal map $\phi$ defined in an open set  $N_i$
such that

a). $K_i\setminus \partial U_i\subset N_i\subset U_i$;

 b).
$\overline{\partial} \phi=0$ on $K_i\setminus\partial U_i$;

 c).
$\phi\circ f^{p_i}=R\circ \phi$ in $(f^{p_i}|_{U_i})^{-1}(N_i)\cap N_i$.

\begin{thm} [Herman-Siegel renormalization]\label{b5a3}   Let $(f,P)$ be a Herman rational map
and $(f^{p_i},U_i,V_i), i\in [1,n]$ be all the  HS renormalizations
defined above. Then

1.   For each $i\in[1,n]$, the HS renormalization
$(f^{p_i},U_i,V_i)$ is hybrid equivalent to a rational map $R_i$ of
degree $\deg(f^{p_i}|_{U_i})$ which is postcritically finite outside
$\phi(K_i)$. Here $\phi$ is the hybrid conjugacy. Such $R_i$ is
unique up to M\"obius conjugation.

2.  The Julia set $J(f)$ has zero Lebesgue measure (resp. carries no
invariant line fields) if and only if for each $i\in [1,n]$, the
Julia set $J_i$ has zero Lebesgue measure(resp. carries no invariant
line fields).
\end{thm}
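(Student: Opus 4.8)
A HS renormalization $(f^{p_i},U_i,V_i)$ fails to be a genuine rational-like map only when $\partial(V_i)\cap\Gamma_0\neq\emptyset$, in which case $U_i$ meets $\partial V_i$ along curves lying inside rotation annuli; this is the sole obstruction to invoking Theorem~\ref{b5a0}. The plan is to remove it by the marked disk extension of Section~\ref{3-1}: across each $\gamma\in\partial(V_i)\cap\Gamma_0$ glue a round disk via the linearizing coordinate $\phi_j$ of the ambient rotation annulus, extend $f^{p_i}$ by the corresponding irrational rotation, and mark the center of each glued disk. One obtains a genuine rational-like map $g_i\colon U_i'\to V_i'$ with $\overline{U_i'}\Subset V_i'$ (a restriction of the Siegel map $(h_i,P_i)$ of Section~\ref{3-1}, or of the Thurston map $(h_i,P_i)$ when no gluing was needed), whose filled Julia set $K(g_i)$ is connected since each component of $\mathbb{\overline{C}}(S_i)\setminus K(g_i)$ is an iterated preimage of one of the finitely many marked disks, hence a topological disk (the argument of Lemma~\ref{b2a2}). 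Theorem~\ref{b5a0}(2) applied to $g_i$ then produces a rational map $R_i$ of degree $\deg(g_i)=\deg(f^{p_i}|_{U_i})$, postcritically finite outside $\phi(K(g_i))$ and unique up to M\"obius conjugation, with hybrid conjugacy $\phi$. Restricting $\phi$ to a neighbourhood $N_i\Subset U_i$ of $K_i$ gives the hybrid equivalence between $(f^{p_i},U_i,V_i)$ and $R_i$ in the sense of Section~\ref{3-4-2-1}; and since the extra part $K(g_i)\setminus K_i$ consists of rotation disks of $g_i$ and their preimages, whose interiors carry no postcritical points of $R_i$ while the boundary circle of each such disk is the image of a rotation-domain boundary component of $(f,P)$ and therefore already lies in $\phi(K_i)$, we get $P_{R_i}\setminus\phi(K_i)=P_{R_i}\setminus\phi(K(g_i))$, a finite set. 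Finally $\deg(f^{p_i}|_{U_i})\ge2$ was established in Section~\ref{3-4-2-1} (using Theorem~\ref{b4b} and Lemma~\ref{b3aa}), so $\deg R_i\ge2$.

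\noindent\textbf{Part 2, necessity.} Trivially $J_i\subset J(f)$, so zero area for $J(f)$ forces zero area for each $J_i$. If some $J_i$ carried an invariant line field $\mu_i$ for $f^{p_i}|_{U_i}$, transport it: set $\mu=(f^j)^{*}\mu_i$ on $f^j(J_i)$ for $0\le j<p_i$ (consistent because $f^{p_i}(K_i)\subset K_i$ and $\mu_i$ is $f^{p_i}$-invariant), then pull back by all branches of $f^{-m}$ and extend by $0$. This is a measurable $f$-invariant Beltrami differential of modulus $1$ on a positive-area set, contradicting the absence of invariant line fields on $J(f)$; hence no $J_i$ carries one.

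\noindent\textbf{Part 2, sufficiency.} Assume every $J_i$ has zero area (resp.\ carries no invariant line field), and partition
$$J(f)=\Big(\bigcup_{m\ge0}f^{-m}\big(\textstyle\bigcup_{i=1}^{n}J_i\big)\Big)\sqcup\mathcal{T},\qquad \mathcal{T}=\big\{z\in J(f):f^{m}(z)\notin\textstyle\bigcup_{i}K_i\ \text{for all }m\ge0\big\}.$$
On the first set, non-singularity of rational maps for Lebesgue measure gives zero area for each $f^{-m}(J_i)$, hence for the countable union; and a positive-area invariant line field supported there would, after push-forward by a suitable $f^{m}$, restrict to a positive-area invariant line field on some $J_i$ — excluded. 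For the residual set the claim is that, after deleting a countable set, $\mathcal{T}$ lies in the radial (conical) Julia set $J_{\mathrm{rad}}(f)$: a point of $\mathcal{T}$ escapes every renormalization, so — using preperiodicity of the $f_*$-action on $\mathcal{S}$-pieces, the condition $\lambda(\Gamma,f)<1$ together with the Gr\"otzsch-type estimates of Section~\ref{3-3-3}, and the fact that $(f,P)$ is postcritically finite off $\overline{R_f}$ with every boundary component of every rotation domain captured by some $K_i$ — its forward orbit returns infinitely often to a region bounded away, in the hyperbolic metric of $\mathbb{\overline{C}}\setminus P_f$, from $P_f$, whence a Koebe-distortion argument yields univalent pull-backs of definite-size disks along a subsequence, i.e.\ radiality. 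By a theorem of McMullen (\cite{McM2}), $J_{\mathrm{rad}}(f)$ has zero area unless $f$ is a Latt\`es map; $f$ has a Herman ring, so it is not Latt\`es, and therefore $\mathcal{T}$ has zero area and a fortiori supports no invariant line field. Combining the two pieces: $J(f)$ has zero area, and any invariant line field on $J(f)$ would be supported with positive area on one of them, both excluded. (One may equivalently phrase the hypotheses on each $J_i$, via quasiconformality of $\phi_i$ and the hybrid condition $\overline{\partial}\phi_i=0$ on $K_i$, as the same statements for $J(R_i)\cap\phi_i(K_i)$.)

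\noindent\textbf{Main obstacle.} The delicate step is the inclusion (modulo a countable set) $\mathcal{T}\subset J_{\mathrm{rad}}(f)$: one must exclude an orbit in $J(f)$ that evades every renormalization yet lingers near the rotation domains or near the sets $K_i$. This is precisely where the combinatorial control of Sections~\ref{3-1}--\ref{3-3-3} — preperiodicity of $f_*$ on $\mathcal{S}$-pieces and the modulus growth forced by $\lambda(\Gamma,f)<1$ (via the Gr\"otzsch inequality of Lemma~\ref{b4g}) — has to be converted into the metric separation from $P_f$ that feeds Koebe's theorem.
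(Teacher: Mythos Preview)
Your Part 1 is correct and matches the paper's (omitted) argument via marked disk extension followed by Theorem~\ref{b5a0}.

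For Part 2, your necessity argument and your partition of $J(f)$ into $\bigcup_m f^{-m}(\bigcup_i J_i)$ and a residual set $\mathcal{T}$ coincide with the paper's. Your route to $\mathrm{Leb}(\mathcal{T})=0$ via the radial Julia set is a valid alternative to the paper's, and the two are closely related: the paper shows that every $z\in\mathcal{T}$ satisfies $\limsup_n d(f^n(z),P_f)>0$, then invokes McMullen's Ergodic-or-Attracting dichotomy (Theorem~\ref{b5c}) directly. Radiality is a mild strengthening of this conclusion, proven by the same Koebe mechanism, so neither approach buys more than the other here.

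The genuine gap is in your justification for the key step $\mathcal{T}\subset J_{\mathrm{rad}}(f)$: you point to the wrong tools. The condition $\lambda(\Gamma,f)<1$ and the Gr\"otzsch inequality of Lemma~\ref{b4g} concern holomorphic embedding of annuli for the gluing construction in the realization proof; they play no role in this measure-theoretic argument and would not yield the metric separation you need. What actually works --- and what the paper does cleanly --- is pure itinerary combinatorics from Section~\ref{3-1}: for $z\in\mathcal{T}$ one has $f^{n_j}(z)\in E_{n_j}$ with $E_{n_j}\in\mathcal{E}\setminus\mathcal{E}_{ess}$ for infinitely many $n_j$; each such piece contains at most one point of $P_f$ and has $\partial E_{n_j}\subset F(f)$. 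A short three-case analysis (is $E_{n_j}\cap P_f$ empty? if not, does the marked point lie in $F(f)$ or in a repelling grand orbit? does its forward orbit accumulate on $P_f'\cap J(f)$?) then gives $\limsup d(f^n(z),P_f)>0$; the last case uses only that the finitely many recurring pieces $E_1,\dots,E_m$ satisfy $f^{-M}(E_1\cup\cdots\cup E_m)\cap P_f=\emptyset$ for some $M$. No modulus estimate enters.

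So your strategy is sound, but the ``main obstacle'' you flag is resolved by elementary piece combinatorics plus Theorem~\ref{b5c}, not by the analytic machinery of Section~\ref{3-3-3}. Replacing your invocation of $\lambda(\Gamma,f)<1$ and Lemma~\ref{b4g} with this case analysis closes the gap.
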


Here, we say a rational map  $f$  carries
an invariant line field if there is a measurable Beltrami differential
$\mu=\mu(z){d\bar{z}}/{dz}$ supported on a measurable subset $E\subset J(f)$
such that $E$ has  positive measure and $f^*\mu=\mu$ a.e. (here, $f^*\mu:=\mu(f(w))\frac{\overline{f'(w)}d\bar{w}}{f(w)dw}$).
The definition can be generalized to $f^{p_i}|_{U_i}$ similarly.

The proof of the first statement  is essentially the same  as that of
Theorem \ref{b5a0}, and the straightening map $R_i$ is either a
Siegel rational map or a Thurston rational map. We omit the details
here.
The proof of the second statement is based on the following (\cite{McM2}, Theorem 3.9):

\begin{thm}[Ergodic or attracting] \label{b5c} Let $f$ be a rational map of degree at least
two, then either

$\bullet$ $J(f)=\mathbb{\overline{C}}$ and the action of $f$ on
$\mathbb{\overline{C}}$ is ergodic, or

$\bullet$ the spherical distance $d(f^n(z),P_f)\rightarrow 0$ for
almost every $z\in J(f)$ as $n\rightarrow\infty$.
\end{thm}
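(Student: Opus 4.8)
The plan is to prove the ergodic‑or‑attracting dichotomy by the classical pull‑back/density argument, working on $\overline{\mathbb{C}}$ with the spherical metric and with $2$‑dimensional Lebesgue (area) measure. If $\mathrm{area}(J(f))=0$ the second alternative holds vacuously, so I may assume $\mathrm{area}(J(f))>0$. Set
$$Z=\{z\in J(f):\ d(f^n(z),P_f)\to 0\},\qquad E=J(f)\setminus Z=\{z\in J(f):\ \limsup_n d(f^n(z),P_f)>0\}.$$
Both are Borel, and since a $\limsup$ of an orbit is unchanged by dropping finitely many initial terms, both are completely invariant: $f^{-1}(E)=E$, $f^{-1}(Z)=Z$ (so in particular $f(E)\subseteq E$). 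If $\mathrm{area}(E)=0$ the second alternative of the theorem holds and we are done, so the remaining task is: \emph{assuming $\mathrm{area}(E)>0$, show that $J(f)=\overline{\mathbb{C}}$ and that $f$ is ergodic}, i.e. every completely invariant measurable set has zero or full area.

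First I would record a univalent pull‑back lemma showing every point of $E$ is ``radial''. If $z\in E$, choose $\delta>0$ and $n_k\to\infty$ with $d(f^{n_k}(z),P_f)>\delta$; then the round spherical disk $B_k:=B(f^{n_k}(z),\delta)$ is disjoint from $P_f$, hence from all critical values of $f,\dots,f^{n_k}$, so $f^{n_k}\colon f^{-n_k}(B_k)\to B_k$ is a covering and the component containing $z$ is mapped homeomorphically onto $B_k$. Let $g_k\colon B_k\to\overline{\mathbb{C}}$ be the resulting univalent inverse branch with $g_k(f^{n_k}(z))=z$, and put $W_k:=g_k\bigl(B(f^{n_k}(z),\delta/2)\bigr)$. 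By the Koebe distortion theorem (applied in finitely many fixed charts covering $\overline{\mathbb{C}}$, the spherical and Euclidean metrics being comparable there) $g_k$ has distortion bounded by an absolute constant on $B(f^{n_k}(z),\delta/2)$; and a standard normal‑families argument — if $\mathrm{diam}\,W_k\not\to 0$ one extracts a non‑constant univalent limit of the $g_k$, whose inverses are the $f^{n_k}$, contradicting $z\in J(f)$ — forces $\mathrm{diam}\,W_k\to 0$.

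The technical heart is the density‑transfer step. Let $A$ be any completely invariant Borel set of positive area all of whose points are radial in the above sense (we shall apply this to $A=E$ and, later, to $A=E\cap B$). Pick $z\in A$ that is simultaneously a Lebesgue density point of $A$ and radial, and keep the notation $B_k$, $g_k$, $W_k$. Since $g_k$ is a branch of $f^{-n_k}$ and $A$ is completely invariant, $g_k$ carries $A\cap B(f^{n_k}(z),\delta/2)$ bijectively onto $A\cap W_k$; combined with the bounded distortion of $g_k$, the density of $A$ in $W_k$ (which tends to $1$ because $z$ is a density point and $\mathrm{diam}\,W_k\to0$) is transferred, up to a fixed power of the distortion constant, to the density of $A$ in $B(f^{n_k}(z),\delta/2)$ — which therefore also tends to $1$. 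Passing to a subsequence with $f^{n_k}(z)\to\zeta\in J(f)$, we conclude that $A$ has full area in the fixed disk $B(\zeta,\delta/2)$. As the Fatou set is open, $B(\zeta,\delta/2)\subseteq J(f)$, so $J(f)$ has nonempty interior; being closed and forward invariant, and using that $\bigcup_{n\ge0}f^n(U)$ omits at most two points for every open $U$ meeting $J(f)$, this gives $J(f)=\overline{\mathbb{C}}$. The same blowing‑up property, together with $f(A)\subseteq A$ and the fact that the locally $C^{\infty}$ map $f$ sends null sets to null sets, yields $A\supseteq\bigcup_n f^n\!\bigl(B(\zeta,\delta/2)\bigr)=\overline{\mathbb{C}}$ up to a null set, i.e. $\mathrm{area}(A)$ is full.

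Finally I would assemble the dichotomy. Applying the density‑transfer step to $A=E$ gives $J(f)=\overline{\mathbb{C}}$ and $\mathrm{area}(E)$ full. For ergodicity, let $B$ be any completely invariant Borel set with $\mathrm{area}(B)>0$; then $A:=E\cap B$ is completely invariant, has positive area (as $E$ has full area), and lies in $E$, so all its points are radial; by the density‑transfer step $\mathrm{area}(A)$, hence $\mathrm{area}(B)$, is full. Thus every completely invariant set has zero or full area and $f$ is ergodic on $\overline{\mathbb{C}}$ — the first alternative. The main obstacle in carrying this out is the density‑transfer step: one must control the geometry of the pulled‑back neighborhoods $W_k$ (that they genuinely shrink to $z$, which is where $z\in J(f)$ is used), and make the Koebe distortion estimates uniform across the sphere, in particular near $P_f$ and $\infty$; once these are in place, the Lebesgue density theorem and the blowing‑up property of the Julia set finish the proof.
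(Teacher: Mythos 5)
This statement is not proved in the paper at all: it is quoted verbatim from McMullen (\cite{McM2}, Theorem 3.9), so there is no internal argument to compare yours against. What you have written is, in outline, the standard Lyubich--McMullen proof of that cited theorem (radial points via univalent pull-backs off the postcritical set, Koebe distortion, Lebesgue density transfer, and the blow-up property of the Julia set), and the architecture is sound: the reduction to ``$\mathrm{area}(E)>0$ forces $J=\overline{\mathbb{C}}$ and ergodicity'', the use of complete invariance to push density through the inverse branches, and the final bootstrapping of full measure via $f^n(B(\zeta,\delta/2))$ are all correct.

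One step deserves to be made explicit, because as stated it proves slightly less than you need. In the shrinking lemma you conclude from a non-constant univalent limit $g=\lim g_k$ that $f^{n_k}\to g^{-1}$ near $z$ and declare this ``contradicts $z\in J(f)$''. Membership in $J(f)$ only says the \emph{full} family $\{f^n\}$ is non-normal near $z$; locally uniform convergence of one subsequence is not, by the definition alone, a contradiction. You need the classical supplementary fact that no subsequence of iterates can converge locally uniformly on an open set meeting $J(f)$ --- e.g.\ pick a repelling periodic point $p$ in that open set (repelling cycles are dense in $J$), note $|(f^{n_k})'(p)|\to\infty$ while convergence would force the derivatives to converge; alternatively, avoid the normal-families route altogether by choosing $\delta<\operatorname{diam}J(f)$ at the outset: if $\operatorname{diam}W_k\not\to0$ then $W_k\supseteq B(z,c)$ by Koebe, and the strong blow-up property gives $f^{n_k}(B(z,c))\supseteq J(f)$ for large $k$, contradicting $f^{n_k}(B(z,c))\subseteq B(f^{n_k}(z),\delta/2)$. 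You should also say where normality of $\{g_k\}$ comes from (either Montel, after disposing of the trivial case $\#P_f\le 2$ where $J$ is a circle of zero area, or directly from the Koebe bound on the spherical derivatives $|g_k^{\#}|$). With these points filled in, your proposal is a complete and correct proof of the quoted theorem.
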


\noindent\textit{Proof of 2 of  Theorem \ref{b5a3}.} Let
$\mathcal{E}_{ess}\subset\mathcal{E}$ be the collection of all
$\mathcal{E}$-pieces (defined in Section \ref{3-1}) that are parallel to  the $\mathcal{S}$-pieces.
 Each element $E\in \mathcal{E}\setminus\mathcal{E}_{ess}$ contains at most one point in
the postcritical set $P_f$.  Moreover, the boundary of $E$ is
contained in the Fatou set $F(f)$.

We can define an itinerary map by:

\begin{equation*}
iter :\begin{cases}
 J(f)\rightarrow \mathcal{E}^{\mathbb{N}},\\
z\mapsto (E_0(z), E_1(z), E_2(z),\cdots).
 \end{cases}
 \end{equation*}
where $E_k(z)$ is the unique $\mathcal{E}$-piece  containing
$f^k(z)$.

Given a point $z\in J(f)$ with itinerary $iter(z)=(E_0(z), E_1(z),
E_2(z),\cdots)$, one can verify that $z\in
\cup_{k\geq0}f^{-k}(J_1\cup\cdots\cup J_n)$ if and only if there is
an integer $N$ (depending on $z$) such that for all $k\geq N$,
$E_k(z)\in \mathcal{E}_{ess}$. Moreover, the set
$\cup_{k\geq0}f^{-k}(J_1\cup\cdots\cup J_n)$ contains all
  the boundaries of
rotation domains, together with their preimages.

 This implies that if $z\in
J(f)-\cup_{k\geq0}f^{-k}(J_1\cup\cdots\cup J_n)$, then there exists
a sequence of integers $\{n_j; j\geq1\}$ such that $E_{n_j}(z)\in
\mathcal{E}\setminus\mathcal{E}_{ess}$ for all $j\geq1$.  By passing
to a subsequence, we assume  $\{E_{n_j}(z);j\geq1\}$ satisfies
either of the following three properties:

1. $E_{n_j}(z)\cap P_f=\emptyset$ for all $j\geq1$.

2. For all $j\geq1$, $E_{n_j}(z)\cap P_f\neq\emptyset$ and
$E_{n_j}(z)$ contains a  point in $P_f$. This point is  contained
either in the Fatou set or in the grand orbit of a repelling cycle.

3.  For all $j\geq1$, $E_{n_j}(z)\cap P_f\neq\emptyset$ and
$E_{n_j}(z)$ contains a point in $P_f$ and the forward orbit of this
point accumulates on
  $P'_f\cap J(f)$, where $P'_f$ is the accumulation set of the postcritical set $P_f$.

In the first two cases, one may easily check that $\limsup
d(f^n(z),P_f)>0$.

In the last case, the set  $\{E_{n_j}(z); j\geq1\}$ can be rewritten
as $\{E_1,\cdots, E_m\}$, which is a finite subset of
$\mathcal{E}\setminus\mathcal{E}_{ess}$. Note that each $E_k$ is
contained in a disk component or an annular component of $\cup
\mathcal{S}-\cup \mathcal{E}_{ess}$, there is an integer $M>0$ such
that
 $f^{-M}(E_1\cup\cdots\cup E_m)\cap P_f=\emptyset$. If $\limsup d(f^n(z),P_f)=0$, then there
exists a sequence of integers $\{\ell_j\}$ such that
$d(f^{\ell_j}(z),(E_1\cup\cdots\cup E_m)\cap P_f)\rightarrow0$ as
$j\rightarrow\infty$. It follows that $f^{\ell_j-M}(z)\in
f^{-M}(E_1\cup\cdots\cup E_m)$ for all large $j$. Since the boundary
of each component of $f^{-M}(E_1\cup\cdots\cup E_m)$ is contained in
the grand orbits of the Herman rings of $f$,  there is a number
$\epsilon(z)>0$ such that $d(f^{\ell_j-M}(z), P_f)\geq \epsilon(z)$
for all large $j$. But this contradicts the assumption that $\limsup
d(f^n(z),P_f)=0$. So in this case, we also have $\limsup
d(f^n(z),P_f)>0$.


Thus, for  any $z\in J(f)-\cup_{k\geq0}f^{-k}(J_1\cup\cdots\cup
J_n)$, we have $$\limsup d(f^n(z),P_f)>0.$$  It follows from Theorem
\ref{b5c} that the Lebesgue measure of
$J(f)-\cup_{k\geq0}f^{-k}(J_1\cup\cdots\cup J_n)$ is zero.
This means $Leb(J(f))=0$ if and only if for each $k\in [1,n]$,
$Leb(J_k)=0$ (here, we use $Leb$ to denote the Lebesgue measure).

 Suppose that $J(f)$ carries an invariant line field. That is,
there is a measurable Beltrami differential $\mu$ supported on a
positive measure subset $E$ of $J(f)$ such that $f^*\mu=\mu$ a.e,
and $|\mu|=1$ on $E$. Let $\mu_k=\mu|_{J_k}$ for $k\in [1,n]$. It
follows from the above argument that there  exists $\ell\in [1,n]$
with $Leb(J_\ell\cap E)>0$.  Then the relation
$(f^{p_\ell}|_{U_\ell})^*\mu_\ell=\mu_\ell$ implies that $\mu_\ell$
is an invariant line field of $f^{p_\ell}|_{U_\ell}$. Conversely, suppose that $\mu_\ell$ is an invariant line field of
$f^{p_\ell}|_{U_\ell}$, then  the Beltrami differential defined by
 $$\mu=\mu_\ell+\sum_{k\geq0} ((f^{k+1})^*\mu_\ell-(f^{k})^*\mu_\ell)$$  is an invariant
line field  of $f$.    \hfill $\Box$

\subsection{Proof of Theorem \ref{b3-11}} \label{3-4-3}

 First, we need some lemmas.

\begin{lem}[Q.c-equivalence implies q.c-conjugacy]\label{b5d}   Let $(f,P)$ and $(g,Q)$ be two
 HST rational maps. If  $(f,P)$ and
 $(g,Q)$ are q.c-equivalent via a
pair of quasi-conformal maps $(\phi_0,\phi_1)$, then there is a  quasi-conformal map $\phi$, holomorphic in the
Fatou set $F(f)$ (probably empty), such that  $\phi f=g \phi$.
\end{lem}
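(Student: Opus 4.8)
The plan is to run the standard ``pulling back quasi-conformal maps along the dynamics'' argument, exactly as in the proof of the necessity of Theorem \ref{b4a} (Section \ref{3-3-1}). Starting from the isotopy $H_0:[0,1]\times\mathbb{\overline{C}}\rightarrow\mathbb{\overline{C}}$ rel $P$ joining $\phi_0$ to $\phi_1$ (which exists by the definition of q.c-equivalence), I would lift inductively: given $H_k$ with $H_k(t,f(z))=g(H_{k+1}(t,z))$ and basepoint $H_{k+1}(0,\cdot)=\phi_{k+1}$, set $\phi_{k+2}=H_{k+1}(1,\cdot)$. This produces a sequence $\phi_0,\phi_1,\phi_2,\dots$ of quasi-conformal homeomorphisms, all with the same maximal dilatation $K(\phi_0)$ (lifting under a holomorphic branched cover does not increase dilatation), satisfying $\phi_k\circ f=g\circ\phi_{k+1}$, and with $\phi_{k+1}$ isotopic to $\phi_k$ rel $f^{-k}(P)$. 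In particular each $\phi_k$ agrees with $\phi_0$ on $P$, and more generally $\phi_{k+1}=\phi_k$ on $f^{-k}(P)\supset P_f$-relevant orbits.

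The second step is to extract a limit $\phi=\lim_{k\to\infty}\phi_k$. Since the family $\{\phi_k\}$ is uniformly quasi-conformal, it is precompact (in the topology of uniform convergence on $\mathbb{\overline{C}}$, after a suitable normalization fixing three points, e.g.\ three points of $P$); I would show that the whole sequence converges, not just a subsequence, by the usual telescoping estimate on $\|\mu_{\phi_{k+1}}-\mu_{\phi_k}\|$ on the relevant regions, or more simply by noting that the deformations $\phi_{k+1}\circ\phi_k^{-1}$ become trivial on larger and larger backward-invariant sets. The limit $\phi$ is quasi-conformal with $K(\phi)\le K(\phi_0)$, and passing to the limit in $\phi_k\circ f=g\circ\phi_{k+1}$ gives the conjugacy $\phi\circ f=g\circ\phi$. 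Because $\phi_k=\phi_{k-1}$ on $f^{-(k-1)}(P)$, the limit $\phi$ restricted to $\bigcup_{k\ge0}f^{-k}(P)$ equals $\phi_0$ there, so in particular $\phi(P)=Q$ and $\phi$ is isotopic to $\phi_0$ rel $P$.

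The main obstacle — and the step requiring genuine argument rather than routine lifting — is showing that $\phi$ is \emph{holomorphic on the Fatou set} $F(f)$. The grand orbit of $P$ is dense in the relevant Fatou components that are eventually periodic (superattracting basins, Siegel disks, Herman rings of $f$, and their preimages), and on these components the $\phi_k$ stabilize in the limit to a conjugacy between the linear/rotational models; since a quasi-conformal map that is conformal on a dense subset of its domain is conformal there, one gets holomorphicity on the periodic Fatou components and hence on all of $F(f)$ by pulling back. For this I would invoke the structure of Fatou components of HST rational maps: every periodic Fatou component is a superattracting domain, a Siegel disk, or a Herman ring (no attracting-with-nonzero-multiplier, parabolic, or Baker domains occur, and the marked set $Q\supset P_g$ together with the dynamics pins down the conformal model), and on each the iterated lifts converge to the canonical linearizing/B\"ottcher coordinate composed with $\phi_0$'s boundary data, which is holomorphic. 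Alternatively, and perhaps cleanest: one shows that the Beltrami coefficient $\mu_\phi$ of the limit is supported on the Julia set $J(f)$, because on $F(f)$ the measure of the ``bad set'' where $\mu_{\phi_k}$ can still be nonzero shrinks to zero under backward iteration (the complement of $\bigcup_{k}f^{-k}(\text{linearizable cores})$ in $F(f)$ has zero Lebesgue measure), so $\mu_\phi=0$ a.e.\ on $F(f)$; being quasi-conformal with vanishing Beltrami coefficient on the open set $F(f)$, $\phi$ is holomorphic there. I would carry out this measure-theoretic version, as it parallels the surgery arguments already used repeatedly in Sections \ref{3-3}--\ref{3-3-3} and in the proof of Theorem \ref{b5a0}.
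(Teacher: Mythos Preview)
Your lifting argument is correct and matches the paper's strategy, but there is a genuine gap in the step where you conclude holomorphicity on $F(f)$. The problem is with superattracting basins. By Definition~\ref{def3-4-a} the maps $\phi_0,\phi_1$ are required to be holomorphic only on the rotation domains $R_f$; nothing forces them to be holomorphic near superattracting cycles. Consequently there are no ``linearizable cores'' in those basins on which $\mu_{\phi_0}$ already vanishes, and your measure-theoretic argument (b) collapses: the set where $\mu_{\phi_k}\ne 0$ need not shrink inside a superattracting basin. Argument (a) fails too: the assertion that a quasiconformal map conformal on a dense set is conformal is false, and in any case the iterated lifts do not automatically converge to the B\"ottcher model.

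A concrete obstruction: take $f=g:z\mapsto z^2$ with $P=\{0,\infty\}$ and $\phi_0=\phi_1:z\mapsto z|z|^s$ for small $s\ne 0$. This is a valid q.c-equivalence pair (they are equal, hence trivially isotopic rel $P$, and $R_f=\emptyset$), and the identity $\phi_0(z^2)=\phi_1(z)^2$ holds, so every lift $\phi_k$ equals $z|z|^s$. The limit is $z|z|^s$, a quasiconformal self-conjugacy of $z^2$ that is \emph{not} holomorphic on $F(f)=\mathbb{\overline{C}}\setminus S^1$. Of course the lemma is still true here (take $\phi=\mathrm{id}$), but your construction does not find it.

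The paper closes this gap by a preliminary modification: before lifting, it alters $\phi_0,\phi_1$ in a neighborhood $N_{SA}$ of each superattracting cycle so that they become holomorphic and equal there, using matching B\"ottcher coordinates for $f$ and $g$ (this is possible within the isotopy class since the local degrees agree). After this adjustment one has $\phi_0=\phi_1$ on $R_f\cup N_{SA}$, the lifted sequence satisfies $\phi_{k+1}=\phi_k$ on $f^{-k}(P\cup N_{SA})$, and now $\overline{\bigcup_k f^{-k}(P\cup N_{SA})}=\mathbb{\overline{C}}$ so the sequence converges; the limit is holomorphic on $\bigcup_k f^{-k}(R_f\cup N_{SA})=F(f)$. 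You should insert this B\"ottcher-coordinate modification step; the rest of your outline then goes through.
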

\begin{proof}
We first deal with the case $J(f)=\mathbb{\overline{C}}$. In that case, $(f,P)$ is postcritically finite. If $(f,P)$ is not a  Latt\`es map, then $(f,P)$ and $(g,Q)$ are
M\"obius conjugate  by Thurston's theorem. If   $(f,P)$ is a Latt\`es map, then there is a sequence of quasiconformal  maps $\phi_k$ such that  $\phi_k f=g \phi_{k+1}$ and $\phi_k$ is isotopic to
$\phi_{k+1}$ rel $f^{-k}(P)$. Since all $\phi_k$ have bounded dilatations and $\overline{\cup_{k\geq0}f^{-k}(P)}=J(f)=\mathbb{\overline{C}}$, the sequence $\phi_k$ converges to
a quasi-conformal map which is in fact a  conjugacy between  $(f,P)$ and $(g,Q)$.

In the following, we assume $J(f)\neq\mathbb{\overline{C}}$. By the definition of
q.c-equivalence,  $\phi_0$ and $\phi_1$ are holomorphic and
identical
 in the union of all rotation domains $R_f$ of $f$ (if any).
If $f$ has a superattracting cycle $z_0\mapsto z_1\mapsto \cdots
\mapsto z_{p-1}\mapsto z_p=z_0$, then we can modify $\phi_0$ and
$\phi_1$ such that they are holomorphic and identical near the
cycle. The modification is as follows:

First, note that for any $0\leq i < p$,
$\phi_0(z_i)(=\phi_1(z_i))$ is a superattracting point of $g$. We
can choose a neighborhood $U_i$ of $z_i$ (resp. $V_i$ of
$\phi_0(z_i)$), a B\"ottcher coordinate $B_i^f: U_i\rightarrow
\mathbb{D}$ (resp. $B_{i}^g: V_{i}\rightarrow \mathbb{D}$), such
that the following diagram commutes: $$ \xymatrix{&
 U_i \ar[r]^{B_i^f}
\ar[d]_{f} &\mathbb{D} \ar[d]^{z\mapsto z^{d_i}}
 &  V_{i}\ar[l]_{B_{i}^g}\ar[d]^{g}\\
&  U_{i+1}\ar[r]_{B_{i+1}^f}
 &\mathbb{D}  &
 V_{i+1} \ar[l]^{B_{i+1}^g}
 }
$$
where $d_i$ is the local degree of $f$ at $z_i$. By suitable choices
of the neighborhoods $U_i$ and  the B\"ottcher coordinates, we may
assume that  $\phi_0$ and $\phi_1$ satisfy
$\phi_0|_{U_i}=\phi_1|_{U_i}=(B_i^g)^{-1}\circ B_i^f $. A suitable
modification elsewhere guarantees  $\phi_0 f=g \phi_1$.

In this way,  $\phi_0$ and $\phi_1$ can be made holomorphic in a
neighborhood $N_{SA}$ of all superattracting cycles of $f$ (if any).
 Then we construct a sequence of q.c maps $\{\phi_k;k\geq0\}$
by $\phi_k f=g \phi_{k+1}$ so that $\phi_k$ is isotopic to
$\phi_{k+1}$ rel $f^{-k}(P\cup N_{SA})$. Since $\overline{\cup_{k\geq0}f^{-k}(P\cup N_{SA})}=\overline{\mathbb{C}}$, the sequence $\phi_k$ has a
unique limit $\phi$, holomorphic in
  $\cup_{k\geq0}f^{-k}(R_f\cup N_{SA})=F(f)$, as required.
\end{proof}

Now let $M_1(J(f),f)$ be the space of invariant line fields carried by $(f,P)$ (we define $M_1(J(f),f)$ to be  $\{0d\overline{z}/dz\}$ if $(f,P)$ carries no
invariant line field). It's known
 from McMullen and Sullivan \cite{McS} that $M_1(J(f),f)$  is either a single point or  a finite-dimensional polydisk.
From Lemma \ref{b5d}, we have immediately:

\begin{lem} \label{b5e} $M_{qc}(f,P)\cong M_1(J(f),f)$.
\end{lem}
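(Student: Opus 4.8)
The plan is to exhibit an explicit bijection $\Psi\colon M_1(J(f),f)\to M_{qc}(f,P)$ and check that it respects the complex-analytic structure. Given $\mu\in M_1(J(f),f)$, extend it by zero to a Beltrami differential on $\mathbb{\overline{C}}$; since $\mu$ is supported on $J(f)$ and $f$-invariant there, and the Fatou set $F(f)$ is forward invariant, $\mu$ is $f$-invariant on all of $\mathbb{\overline{C}}$, and $\mu\equiv 0$ on $R_f\subset F(f)$. By the Measurable Riemann Mapping Theorem there is a quasiconformal map $\phi^\mu$ with Beltrami coefficient $\mu$, unique up to post-composition with a M\"obius map, and $f_\mu:=\phi^\mu\circ f\circ(\phi^\mu)^{-1}$ is a rational map. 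Because $\phi^\mu$ is holomorphic on $R_f$, the map $(f_\mu,\phi^\mu(P))$ is again a HST rational map and $(\phi^\mu,\phi^\mu)$ is a q.c-equivalence between $(f,P)$ and $(f_\mu,\phi^\mu(P))$; hence $f_\mu\in R_{qc}(f,P)$ and its M\"obius class $\Psi(\mu):=[f_\mu]\in M_{qc}(f,P)$ is well defined. By the Ahlfors--Bers theorem the normalized solution $\mu\mapsto\phi^\mu$ depends holomorphically on $\mu$, so $\Psi$ is holomorphic.

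First I would prove surjectivity, which is exactly where Lemma \ref{b5d} is used. Given $g\in R_{qc}(f,P)$, Lemma \ref{b5d} provides a quasiconformal map $\phi$ with $\phi f=g\phi$ that is holomorphic on $F(f)$. Then $\mu_\phi$ satisfies $\|\mu_\phi\|_\infty<1$, is $f$-invariant (as $\phi$ conjugates $f$ to the holomorphic map $g$), and vanishes on $F(f)\supset R_f$, hence is supported on $J(f)$; thus $\mu_\phi\in M_1(J(f),f)$. (If $(f,P)$ carries no invariant line field, then after normalizing $\mu_\phi/|\mu_\phi|$ on a positive-measure level set one would obtain an invariant line field unless $\mu_\phi=0$; so $\mu_\phi=0$, consistently with the convention $M_1(J(f),f)=\{0\}$.) Since $\phi$ and $\phi^{\mu_\phi}$ have the same Beltrami coefficient, $\phi=M\circ\phi^{\mu_\phi}$ for a M\"obius map $M$, whence $g=\phi f\phi^{-1}=M f_{\mu_\phi}M^{-1}$ and $[g]=\Psi(\mu_\phi)$.

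The main obstacle is injectivity. Suppose $\Psi(\mu)=\Psi(\nu)$, i.e. $f_\nu=Mf_\mu M^{-1}$ for some M\"obius $M$. Then $\psi:=(\phi^\nu)^{-1}\circ M\circ\phi^\mu$ is a quasiconformal homeomorphism of $\mathbb{\overline{C}}$ with $\psi\circ f=f\circ\psi$, holomorphic on $F(f)$, which carries the complex structure determined by $\mu$ to the one determined by $\nu$; the task is to conclude $\mu=\nu$ almost everywhere. When $Leb(J(f))=0$, or $J(f)=\mathbb{\overline{C}}$ and $(f,P)$ is not a Latt\`es map, one has $M_1(J(f),f)=\{0\}$ and, by Lemma \ref{b5d} together with Thurston's rigidity (Theorem \ref{b3-3-a}), $M_{qc}(f,P)$ is a single point, so injectivity is automatic. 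The genuine case, $Leb(J(f))>0$ with $f$ carrying invariant line fields, is handled by the rigidity theory of McMullen and Sullivan \cite{McS}: the quasiconformal automorphism $\psi$, being holomorphic on the Fatou set, has trivial ``Fatou part'', and its ``Julia part'' is precisely the $f$-invariant Beltrami differential $\mu_\psi$ on $J(f)$; the McMullen--Sullivan description of the group of quasiconformal automorphisms of $f$, modulo the finite group of M\"obius automorphisms commuting with $f$, then forces $\mu$ and $\nu$ to determine the same point of $M_1(J(f),f)$. Finally, the Ahlfors--Bers dependence shows that $\Psi^{-1}$ is holomorphic as well, so $\Psi$ is a biholomorphism and $M_{qc}(f,P)\cong M_1(J(f),f)$.
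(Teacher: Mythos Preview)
Your proof takes essentially the same approach as the paper's: both set up the correspondence between $f$-invariant Beltrami differentials supported on $J(f)$ and M\"obius classes of rational maps q.c-conjugate to $(f,P)$, with Lemma~\ref{b5d} supplying the conjugacy holomorphic on $F(f)$ needed for surjectivity. The paper's argument is much terser---it simply asserts that each class in $M_{qc}(f,P)$ determines a unique $\phi$ up to post-composition by a M\"obius map and declares the converse ``immediate''---so your explicit discussion of injectivity via McMullen--Sullivan and of holomorphic dependence via Ahlfors--Bers goes beyond what the paper actually provides.
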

\begin{proof}  By Lemma \ref{b5d},  $M_{qc}(f,P)$ is the space of all
 rational maps (up to M\"obius conjugation) q.c-conjugate to $(f,P)$.  Moreover, each element of $M_{qc}(f,P)$ corresponds to
  a unique quasiconformal map $\phi$ up to post-composition a M\"obius map so that $\phi$ is holomorphic in the Fatou set $F(f)$. This induces a
   unique Beltrami differential   $\mu_\phi\in M_1(J(f),f)$.
  The converse is immediate.
\end{proof}

\noindent{\it Proof of Theorem \ref{b3-11}.}
 Let
$(f,P)$ be a Herman rational map and $(f^{p_i},U_i,V_i), i\in [1,n]$ be  all its  HS renormalizations
defined in Theorem \ref{b5a3}, whose straightening maps are denoted by $(h_i,P_i)$ respectively. We may renumber them so that
$(h_i,P_i)_{1\leq i \leq m}$ are Siegel rational maps and the rest are Thurston rational maps.
Let $M_1(J_i, f^{p_i}|_{U_i})$ be the space of invariant line fields carried by $(f^{p_i},U_i,V_i)$. By Theorem \ref{b5a3},
 we have $M_1(J_i, f^{p_i}|_{U_i})\cong M_1(J(h_i),h_i)$. By Lemma \ref{b3aa}, none of $(h_i,P_i)_{m<i \leq n}$ is a Latt\`es map, thus
 $M_1(J(h_i),h_i)$ is a singleton.  To prove Theorem \ref{b3-11}, it suffices to show
  $$M_{1}(J(f),f)\cong M_1(J_1, f^{p_1}|_{U_1})\times\cdots\times M_1(J_m, f^{p_m}|_{U_m}).$$

  We define $\Phi:M_{1}(J(f),f)\rightarrow M_1(J_1, f^{p_1}|_{U_1})\times\cdots\times M_1(J_m, f^{p_m}|_{U_m})$ by
  $\Phi(\mu)=(\mu|_{J_1},\cdots,\mu|_{J_m})$. Its inverse is given by
  $$\Phi^{-1}(\mu_{1},\cdots,\mu_{m})=\sum_{1\leq \ell\leq m}\Big(\mu_\ell+\sum_{k\geq0} ((f^{k+1})^*\mu_\ell-(f^{k})^*\mu_\ell)\Big).
  $$

Thus $\Phi$ is a isomorphism.
\hfill $\Box$

\section{Corollaries}\label{3-x}

In this section, we shall prove Theorems \ref{b3-111} and \ref{b3-kk}.

\noindent{\it Proof of Theorem \ref{b3-111}.}
 Let $(f,P)$ be  an  unobstructed  Herman map. By Theorem \ref{b3-1}, there are finitely many  unobstructed  Siegel maps and Thurston maps,
 say $(h_k,P_k)_{1 \leq k\leq n}$,  such that the rational realization of  $(f,P)$ depends on that of  $(h_k,P_k)_{1 \leq k\leq n}$.
 We may renumber them so that the first $m$ maps are Siegel maps and the rest are Thurston maps. The decomposition procedure implies that
  $m\leq n_{RD}(f)+2n_{RA}(f)$. By Lemma \ref{b3aa}, none of $(h_k,P_k)_{m<k \leq n}$ has an
orbifold with signature $(2,2,2,2)$. Thus by Thurston's theorem, all $(h_k,P_k)_{m<k\leq n}$ have rational realizations.
The rational realization of  $(f,P)$ actually  depends on that of  $(h_k,P_k)_{1 \leq k\leq m}$.
\hfill $\Box$

  To prove Theorem \ref{b3-kk}, we need the following

\begin{thm}[Characterization of Siegel disk, \cite{Z2}]\label{b3-kkk}
 Let $(S,Z)$ be a Siegel map.
    Suppose that  $(S,Z)$ has only one fixed rotation disk $D$ of bounded type rotation number and $Z\setminus \overline{D}$ is a finite set.
  Then $(S,Z)$ is
c-equivalent to a rational function $(R,Q)$ if and only if
$(S,Z)$ has no Thurston obstructions.
The rational function $(R,Q)$    is unique up to M\"obius conjugation.
\end{thm}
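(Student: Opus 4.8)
\textbf{Proof proposal for Theorem \ref{b3-kkk}.} This is a Thurston-type realization theorem for a restricted class of Siegel maps, due to Zhang \cite{Z2}; the plan is to prove necessity by the Marked McMullen Theorem, sufficiency by removing the Siegel disk via a bounded-type surgery and invoking a known characterization in the geometrically finite category, and uniqueness by promoting a combinatorial equivalence to a conjugacy and ruling out invariant line fields.

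\emph{Necessity.} Suppose $(S,Z)$ is c-equivalent to a rational map $(R,Q)$ via $(\phi,\psi)$. Exactly as in the proof of the necessity of Theorem \ref{b4a}, an $(S,Z)$-stable multicurve $\Gamma$ is carried to the $(R,Q)$-stable multicurve $\phi(\Gamma)$ in $\mathbb{\overline C}\setminus Q$, with the same transition matrix (the transition data is a homotopy invariant and $\phi\simeq\psi$ rel $Z$), so $\lambda(\phi(\Gamma),R)=\lambda(\Gamma,S)$. Now $R$ is postcritically infinite, since $\psi|_D$ conjugates $S|_D$ to a conformal model and produces a fixed Siegel disk of $R$ whose closure, like the closures of all rotation domains of $R$, is contained in $Q=\phi(Z)$. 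By Theorem \ref{b4b} we get $\lambda(\phi(\Gamma),R)\le 1$, and equality would force $\phi(\Gamma)$ to contain a curve lying in a periodic Siegel disk or Herman ring of $R$, which is impossible since $\phi(\Gamma)\subset\mathbb{\overline C}\setminus Q$. Hence $\lambda(\Gamma,S)<1$, so $(S,Z)$ has no Thurston obstruction.

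\emph{Sufficiency.} Assume $(S,Z)$ has no Thurston obstruction; as in Section \ref{3-3} we may assume $(S,Z)$ is quasi-regular. Since $\theta$ is of bounded type, the Swiatek--Herman theory gives a quasisymmetric conjugacy between the circle dynamics on $\partial D$ and the rigid rotation $R_\theta$; cutting out $D$ and gluing in, along a collar of $\partial D$, the standard Blaschke model (whose interior dynamics is a superattracting fixed map of local degree $\deg(S,c)$ at a boundary critical point $c$) produces a quasi-regular, geometrically finite branched covering $(S_1,Z_1)$ with $Z_1$ finite and $S_1(Z_1)\subset Z_1$, in which the Siegel disk has been replaced by a superattracting basin. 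This surgery is supported near $\overline D$, which meets no essential curve (every multicurve lies in $\mathbb{\overline C}\setminus Z\subset\mathbb{\overline C}\setminus\overline D$), so $(S,Z)$ and $(S_1,Z_1)$ have the same multicurves with the same transition matrices, and $(S_1,Z_1)$ is again unobstructed. By the (marked) Thurston-type theorem for geometrically finite branched coverings of Cui and Tan \cite{CT2}, $(S_1,Z_1)$ is c-equivalent to a rational map $(R_1,Q_1)$. Finally, the circle dynamics on the boundary of the superattracting basin of $R_1$ is a real-analytic critical circle map of rotation number $\theta$, hence quasiconformally conjugate to $R_\theta$ by bounded type; performing the inverse Ghys--Douady--Herman--Shishikura surgery — excising the basin, pasting $R_\theta$ back onto a disk, and straightening the resulting quasi-regular map by the Measurable Riemann Mapping Theorem (the Beltrami coefficient being supported on the collar and its backward orbit, with sup norm bounded away from $1$ by the a priori bounds) — yields a rational map $(R,Q)$ that is q.c-equivalent, hence c-equivalent, to $(S,Z)$.

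\emph{Uniqueness and the main obstacle.} If $(S,Z)$ is c-equivalent to rational maps $(R_1,Q_1)$ and $(R_2,Q_2)$, these are c-equivalent to one another, and an argument parallel to Lemma \ref{b5d} (lifting the equivalence through the dynamics and passing to a limit, using that both maps are holomorphic on their Siegel disks, where bounded type makes the conformal model unique) promotes it to a quasiconformal conjugacy $\varphi$ with $\overline\partial\varphi=0$ on the Fatou set; since $J(R_1)$ has zero Lebesgue measure for this class — by an itinerary argument of the type used in the proof of Theorem \ref{b5a3}(2), reducing measure questions on $J(R_1)$ to the postcritically finite pieces outside $\overline D$ together with the a priori bounds for the bounded-type disk — $R_1$ carries no invariant line field, $\overline\partial\varphi=0$ a.e., and $\varphi$ is a M\"obius conjugacy. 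I expect the main difficulty to be the pair of surgeries in the sufficiency argument, and specifically the reversibility of the Siegel/Blaschke surgery: this rests entirely on the bounded-type hypothesis, through Swiatek--Herman quasisymmetric rigidity of critical circle maps and Petersen's a priori bounds, which are exactly what guarantee that the basin boundary dynamics of $R_1$ is quasiconformally conjugate to $R_\theta$ and that the straightening Beltrami coefficient is admissible. The compatibility of the two directions relies on the soft but essential observation that the surgery region around $\overline D$ carries no curve homotopic to an essential multicurve, so the Thurston obstruction condition transfers unchanged between $(S,Z)$ and $(S_1,Z_1)$.
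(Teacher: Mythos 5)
First, a remark on scope: the paper itself gives no proof of Theorem \ref{b3-kkk} — it is quoted verbatim from Zhang \cite{Z2} and used as an input to Theorem \ref{b3-kk} — so there is no internal argument to compare yours with; what follows assesses your sketch on its own. Your necessity argument is fine and is exactly the paper's standard use of Theorem \ref{b4b} (as in the necessity of Theorem \ref{b4a}): the marked set of the realization contains the closures of all rotation domains, so $\lambda<1$.

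The sufficiency step, however, has a genuine gap. You propose to ``cut out $D$ and glue in a Blaschke model whose interior dynamics is a superattracting fixed map of local degree $\deg(S,c)$,'' keeping $S$ outside $\overline D$, and to land in the geometrically finite category with $Z_1$ finite. This cannot be done: $S$ maps $D$ onto $D$ conformally, so $S|_{\partial D}$ is a degree-one circle homeomorphism, and any filling of $D$ compatible with these boundary values is itself a homeomorphism of $D$ — it cannot have a critical fixed point of local degree $\geq 2$ (changing the boundary degree would force changing the map outside $\overline D$ as well). The genuine Ghys--Douady--Herman/Shishikura--Zakeri surgery in this direction instead glues in the full disk dynamics of the model Blaschke product, which raises the global degree and produces a branched covering with an invariant circle on which the boundary critical orbit is dense; its postcritical set therefore contains that whole circle, so the intermediate map is not postcritically finite, not geometrically finite, and $Z_1$ is not finite — the theorem of Cui--Tan/Cui--Jiang you invoke (note also that \cite{CT2} in this paper is the hyperbolic--parabolic manuscript, not the geometrically finite characterization) does not apply. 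Realizing precisely this class of ``critical-circle-map models'' is the hard content of Zhang's theorem, so as written your argument assumes what is to be proved. Two further points need repair even in a corrected scheme: the claim that the surgery leaves all transition matrices unchanged is not automatic once the degree increases (curves acquire extra preimages through the glued disk), and in the uniqueness step the assertion that $J(R_1)$ has zero Lebesgue measure is itself a nontrivial theorem for bounded-type Siegel maps (Petersen--Zakeri type complex bounds), not a consequence of the soft itinerary argument of Theorem \ref{b5a3}(2).
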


\noindent{\it Proof of Theorem \ref{b3-kk}.}   We may assume that $(f,P)$ is unobstructed.
  By Theorem \ref{b3-111}, there are two Siegel maps  $(h_1,P_1), (h_2,P_2)$  such that the rational realization of  $(f,P)$ depends on that of   $(h_1,P_1), (h_2,P_2)$. Each of $(h_k,P_k)$ has only one fixed rotation disk $D_k$ of bounded type rotation number and $P_k\setminus \overline{D_k}$ is a finite set. By Theorem \ref{b3-kkk} and Theorem \ref{b3-111}, $(f,P)$ has an rational realization. The converse follows from Theorem \ref{b4b}.

   The rigidity part: Note  that  any two rational realizations of $(f,P)$  are c-equivalent. It's known in \cite{Z1} that for any rational map,
    the boundary of a Siegel disk with bounded type rotation number is  a quasi-circle. This implies any two rational realizations of $(f,P)$  can be made q.c-equivalent.
    So $M_{top}(f,P)\cong M_{qc}(f,P)$.
    It follows from Theorems  \ref{b3-11} and  \ref{b3-kkk} that 
    $M_{qc}(f,P)$ is  a singleton. So dose $M_{top}(f,P)$.
\hfill $\Box$




\end{document}